\theoremstyle{definition}
\newtheorem{Definition}[subsection]{Definition}
\newtheorem{Construction}[subsection]{Construction}
\newtheorem{Convention}[subsection]{Convention}
\newtheorem{Notation}[subsection]{Notation}
\newtheorem{Example}[subsection]{Example}
\newtheorem{Remark}[subsection]{Remark}
\theoremstyle{plain}
\newtheorem{Lemma}[subsection]{Lemma}
\newtheorem{Proposition}[subsection]{Proposition}
\newtheorem{Theorem}[subsection]{Theorem}
\newtheorem{Corollary}[subsection]{Corollary}
\newcommand{\FFun}[1]{#1\mathrm{FoldFun}}
\newcommand{\dFFun}[1]{#1\mathbb{F}\mathrm{old}\mathbb{F}\mathrm{un}}
\newcommand{\Fun}{\mathrm{Fun}}
\newcommand{\bFun}{\mathbf{Fun}}
\newcommand{\GD}{\mathcal{GD}}
\newcommand{\PsFun}{\mathcal{P}s\mathcal{F}\mathnormal{un}}
\newcommand{\UPsFun}{\mathcal{UP}s\mathcal{F}\mathnormal{un}}
\newcommand{\PsNat}{\mathbf{PsNat}}
\newcommand{\rtwoFun}{2\mathrm{Fun}}
\newcommand{\btwoFun}{\mathbf{2Fun}}
\newcommand{\twoFun}{2\mathcal{F}\mathnormal{un}}
\newcommand{\twoFunps}{\twoFun^{\mathrm{ps}}}
\newcommand{\Comp}{\mathcal{C}\mathnormal{omp}}
\newcommand{\rComp}{\mathrm{Comp}}
\newcommand{\Cart}{\mathrm{Cart}}
\newcommand{\Set}{\mathbf{Set}}
\newcommand{\Cat}{\mathbf{Cat}}
\newcommand{\cCat}{\mathcal{C}at}
\newcommand{\twoCat}{\mathbf{2Cat}}
\newcommand{\twooneCat}{\mathbf{(2,1)Cat}}
\newcommand{\FoldCat}{\mathbf{FoldCat}}
\newcommand{\dComp}{\mathbb{C}\mathrm{omp}}
\newcommand{\red}{\mathrm{red}}
\newcommand{\op}{\mathrm{op}}
\newcommand{\coop}{\mathrm{coop}}
\newcommand{\twoop}{\mathrm{co}}
\newcommand{\rh}{\mathrm{h}}
\newcommand{\rv}{\mathrm{v}}
\newcommand{\hid}{\one^{\mathrm{h}}}
\newcommand{\vid}{\one^{\mathrm{v}}}
\newcommand{\adj}{\mathrm{adj}}
\newcommand{\pr}{\mathrm{pr}}
\newcommand{\Ob}{\mathrm{Ob}}
\newcommand{\Mor}{\mathrm{Mor}}
\newcommand{\Hor}{\mathrm{Hor}}
\newcommand{\Ver}{\mathrm{Ver}}
\newcommand{\rSq}{\mathrm{Sq}}
\newcommand{\Ar}{\mathcal{A}\mathnormal{r}}
\newcommand{\Tr}{\mathcal{T}\mathnormal{r}}
\newcommand{\Sq}{\mathcal{S}\mathnormal{q}}
\newcommand{\Inv}{\mathrm{Inv}}
\newcommand{\bzero}{\mathbf{0}}
\newcommand{\bA}{\mathbf{A}}
\newcommand{\bB}{\mathbf{B}}
\newcommand{\bC}{\mathbf{C}}
\newcommand{\bD}{\mathbf{D}}
\newcommand{\bI}{\mathbf{I}}
\newcommand{\bO}{\mathbf{O}}
\newcommand{\bT}{\mathbf{T}}
\newcommand{\cA}{\mathcal{A}}
\newcommand{\cB}{\mathcal{B}}
\newcommand{\cC}{\mathcal{C}}
\newcommand{\cD}{\mathcal{D}}
\newcommand{\cE}{\mathcal{E}}
\newcommand{\cG}{\mathcal{G}}
\newcommand{\cH}{\mathcal{H}}
\newcommand{\cL}{\mathcal{L}}
\newcommand{\cP}{\mathcal{P}}
\newcommand{\cR}{\mathcal{R}}
\newcommand{\cS}{\mathcal{S}}
\newcommand{\cT}{\mathcal{T}}
\newcommand{\cV}{\mathcal{V}}
\newcommand{\dC}{\mathbb{C}}
\newcommand{\dD}{\mathbb{D}}
\newcommand{\dE}{\mathbb{E}}
\newcommand{\dK}{\mathbb{K}}
\newcommand{\dQ}{\mathbb{Q}}
\newcommand{\one}{\mathrm{id}}
\newcommand{\id}{\mathrm{id}}
\newcommand{\res}{\mathbin{|}}
\newcommand\aprime{a$'$}
\newcommand\bprime{b$'$}
\newcommand\cprime{c$'$}
\begin{document}
\numberwithin{equation}{subsection}
\title{Gluing pseudo functors via $n$-fold categories}
\author{Weizhe Zheng\thanks{Morningside Center of Mathematics, Academy of Mathematics and Systems Science, Chinese
Academy of Sciences, Beijing 100190, China; email:
\texttt{wzheng@math.ac.cn}}}
\date{}
\maketitle

\footnotetext{Keywords: pseudo functor, gluing data, $2$-category, $n$-fold
category, hypercube}\footnotetext{Mathematics Subject Classification 2010:
18D05}

\begin{abstract}
Gluing of two pseudo functors has been studied by Deligne, Ayoub, and
others in the construction of extraordinary direct image functors in
\'etale cohomology, stable homotopy, and mixed motives of schemes. In this
article, we study more generally the gluing of finitely many pseudo
functors. Given pseudo functors $F_i\colon \cA_i\to \cD$ defined on
sub-$2$-categories $\cA_i$ of a $2$-category $\cC$, we are concerned with
the problem of finding pseudo functors $\cC\to \cD$ extending $F_i$ up to
pseudo natural equivalences. With the help of $n$-fold categories, we
organize gluing data for $n$ pseudo functors into $2$-categories. We
establish general criteria for equivalence between such $2$-categories for
$n$ pseudo functors and for $n-1$ pseudo functors, which can be applied
inductively to the gluing problem. Results of this article are used in
\cite{sixop} to construct extraordinary direct image functors in \'etale
cohomology of Deligne-Mumford stacks.
\end{abstract}

\paragraph{Note.} This article is accepted for publication in the \emph{J. Homotopy
Relat.\ Struct.} The final publication will be available at Springer via
\url{http://dx.doi.org/10.1007/s40062-016-0126-2}. \textbf{The numberings in
the published version differ from this preprint version.}

\section*{Introduction}
The extraordinary direct image functor $Rf_!$, one of Grothendieck's six
operations, between derived categories of \'etale sheaves, was constructed
in SGA 4 XVII \cite{Deligne}. For a morphism of schemes $f$ in a suitable
category of schemes $\bC$, $Rf_!$ is a functor between derived categories
and functoriality of the construction $f\mapsto Rf_!$ is encoded by a pseudo
functor $F\colon \bC\to \cD$, where $\cD=\cCat$ is the $2$-category of
categories. There are obvious candidates for the restrictions $F_{\bA}\colon
\bA\to \cD$ and $F_{\bB}\colon \bB\to \cD$ of $F$ to the subcategories $\bA$
and $\bB$ consisting of open immersions and proper morphisms, respectively.
The problem is thus to glue the pseudo functors $F_{\bA}$ and $F_{\bB}$,
namely to find a pseudo functor $F$ extending $F_{\bA}$ and $F_{\bB}$ up to
pseudo natural equivalences. Deligne developed a theory for gluing two
pseudo functors with the same target $2$-category $\cD$ and defined on
subcategories $\bA$ and $\bB$ of any category $\bC$
\cite[Section~3]{Deligne}\footnote{Deligne's original formulation focuses on
the case $\cD=\cCat$ and uses the equivalent language of cofibered
categories \cite[Section~8]{Grothendieck}, but the argument extends to any
target $2$-category $\cD$.}. Throughout this article, $2$-categories and
$2$-functors are assumed to be strict. Here $\bA$ and $\bB$ are assumed to
contain all objects of $\bC$ and $F_{\bA}$ and $F_{\bB}$ are assumed to be
identical on objects. Deligne assumes that every morphism $f$ of $\bC$ is
compactifiable in the sense that it can be decomposed (noncanonically) into
$f=pj$, where $j$ is in $\bA$ and $p$ is in $\bB$. One may then define
$F(f)$ to be $F_{\bB}(p)F_{\bA}(j)$, but the latter depends on the
noncanonical decomposition $f=pj$ and is not a priori functorial.  A
necessary condition for $F_{\bA}$ and $F_{\bB}$ to glue is that for every
commutative square $D$
\[\xymatrix{X\ar[r]^j\ar[d]_q &Y\ar[d]^p \\ Z\ar[r]^i &W}\]
where $i,j$ are in $\bA$ and $p,q$ are in $\bB$, we have an invertible
$2$-cell $G_D\colon F_{\bA}(i)F_{\bB}(q)\Rightarrow F_{\bA}(p)F_{\bB}(j)$,
compatible with compositions and units. The triple $(F_{\bA},F_{\bB},(G_D))$
is called a \emph{gluing datum}. Deligne shows that under mild assumptions
on $\bB$ and $\bC$, there exists an (essentially unique) pseudo functor $F$
associated to every given gluing datum. Deligne's theory has been applied to
other contexts, such as the construction of $Rf_!$ between triangulated
categories of mixed motives \cite[Proposition 2.2.7]{CisinskiD}. Ayoub
developed a variant of Deligne's theory \cite[Th\'eor\`eme~1.3.1]{Ayoub} in
order to construct $Rf_!$ relative to stable homotopical pseudo functors.

In this article, we study more generally the problem of gluing finitely many
pseudo functors with the same target. The case of three pseudo functors is
necessary for the construction of $Rf_!$ and base change morphisms in
\cite{sixop} for a morphism $f$ of Deligne-Mumford stacks, because
compactification of coarse spaces only allows us to decompose a morphism
into a sequence of three morphisms.

Let $\cC$ be a $(2,1)$-category, namely a $2$-category whose $2$-cells are
all invertible. Let $\cD$ be a $2$-category. Let $\cA_1,\dots,\cA_n$ be
locally full sub-$2$-categories of $\cC$ each of which contains all objects
of $\cC$. A \emph{gluing datum} is a pair $((F_i),(G_{ij}))$, where each
$F_i\colon \cA_i\to \cD$ is a pseudo functor, and each $G_{ij}=(G_{ijD})$ is
a collection of invertible $2$-cells such that $(F_i,F_j,G_{ij})$ is a
gluing data for two pseudo functors as above, subject to one extra condition
for cubes. We organize gluing data into a $2$-category
$\GD_{\cA_1,\dots,\cA_n}(\cC,\cD)$ (see Remark \ref{r.gd} for an explicit
description in the case $n=2$ and Remarks \ref{r.gdn}, \ref{r.gdna} for
explicit descriptions in the general case). Our first main result is for the
gluing of two pseudo functors.

\begin{Theorem}\label{p.Del}
  Let $\cC$ be a $(2,1)$-category and let $\cA$ and $\cB$ be locally full
  sub-$2$-categories of $\cC$, each containing all objects of $\cC$.
  Assume the following:
  \begin{enumerate}
    \item \label{p.Del1} For every morphism $f\colon X\to Y$ of $\cC$,
        there exist a morphism $j\colon X\to Z$ of $\cA$, a morphism
        $p\colon Z\to Y$ of $\cB$, and a $2$-cell $\alpha\colon pj
        \Rightarrow f$ of $\cC$.
    \item \label{p.Del2} Every diagram $X\to Y \leftarrow Z$ in $\cB$ can
        be completed into a commutative square with $2$-cell in $\cB$,
        Cartesian in $\cC$.
   \end{enumerate}
  Then the descent $2$-functor (as described in Remark \ref{r.QD})
  \[Q^2_\cD\colon \PsFun(\cC,\cD)\to \GD_{\cA,\cB}(\cC,\cD)\]
  is a $2$-equivalence for every $2$-category $\cD$.
\end{Theorem}

This is a common generalization of the results of Deligne and Ayoub, as our
assumptions are less restrictive. Our result is also more precise in the
sense that we establish a $2$-equivalence of $2$-categories, whereas
previous results only dealt with \emph{objects} of
$\GD_{\cA_1,\cA_2}(\cC,\cD)$. This precision is useful for the construction
of pseudo natural transformations. We refer the reader to Remark \ref{r.DA}
for more details on this comparison of results.

Our main result for gluing more than two pseudo functors is the following.

\begin{Theorem}\label{t.main}
Let $\cC$ and $\cA_1,\dots, \cA_n$ be as above with $n\ge 3$. Let $\cB$ be a
locally full sub-$2$-category of $\cC$ containing $\cA_1$ and $\cA_2$. Under
suitable assumptions on $\cA_1,\dots,\cA_n$, $\cB$, $\cC$, for every
$2$-category $\cD$, the canonical $2$-functor (as described in Remark
\ref{r.Qphi})
\[Q_\cD\colon \GD_{\cB,\cA_3,\dots,\cA_n}(\cC,\cD) \to
\GD_{\cA_1,\dots,\cA_n}(\cC,\cD)
\]
is a $2$-equivalence.
\end{Theorem}

We refer the reader to Theorem \ref{c.glue'} for a more precise statement.

Theorem \ref{t.main} reduces the gluing of $n$ pseudo functors to the gluing
of $n-1$ pseudo functors and can be applied recursively. Combining with
Theorem \ref{p.Del}, we obtain sufficient conditions for the descent
$2$-functor $Q^n_\cD\colon \PsFun(\cC,\cD)\to
\GD_{\cA_1,\dots,\cA_n}(\cC,\cD)$ to be a $2$-equivalence. We illustrate
this in the case $n=3$.

\begin{Corollary}\label{c.main}
Let $\cC$ be a $(2,1)$-category. Let $\cA_1,\cA_2,\cA_3$ be locally full
sub-$2$-categories of $\cC$ each of which contains all objects of $\cC$.
Assume that $\cC$ admits pseudo fiber products, and there exists a locally
full sub-$2$-category $\cB$ of $\cC$, containing $\cA_1,\cA_2$, and
satisfying the following conditions:
\begin{enumerate}
\item For every morphism $f$ of $\cC$, there exist a morphism $j$ of
    $\cB$, a morphism $p$ of $\cA_3$, and a $2$-cell $pj\Rightarrow f$ of
    $\cC$.
\item For every morphism $f$ of $\cB$, there exist a morphism $j$ of
    $\cA_1$, a morphism $p$ of $\cA_2$, and a $2$-cell $pj\Rightarrow f$
    of $\cC$.
\item For every morphism $f$ of $\cB\cap \cA_3$, there exist a morphism
    $j$ of $\cA_1\cap \cA_3$, a morphism $p$ of $\cA_2\cap \cA_3$, and a
    $2$-cell $pj\Rightarrow f$ of $\cC$.
\item The sub-$2$-categories $\cA_1,\cA_2,\cA_3,\cB$ are stable under base
    change in $\cC$. The sub-$2$-categories $\cA_3$ and $\cB$ are stable
    under taking diagonals in $\cC$.
\end{enumerate}
Then the descent $2$-functor
\[Q_\cD^3\colon\PsFun(\cC,\cD)\to
\GD_{\cA_1,\cA_2,\cA_3}(\cC,\cD)
\]
is a $2$-equivalence.
\end{Corollary}

\begin{proof}
Indeed, $Q^3_\cD$ can be decomposed as
\[\PsFun(\cC,\cD)\xrightarrow{Q^2_\cD} \GD_{\cB,\cA_3}(\cC,\cD)\xrightarrow{Q_\cD}
\GD_{\cA_1,\cA_2,\cA_3}(\cC,\cD),
\]
where both are $2$-equivalences by Theorems \ref{p.Del} and \ref{c.glue'}.
\end{proof}

There are other ways to combine the above theorems. In \cite[Proposition
1.5]{sixop}, we deduce from the above theorems and Proposition \ref{p.gluet}
a case where $Q^2_\cD$ is a $2$-equivalence but for which condition
\ref{p.Del1} of Theorem \ref{p.Del} is not satisfied.

Note that when $Q^n_\cD$ is a $2$-equivalence, given any gluing datum
$((F_i),(G_{ij}))$, there exists a pseudo functor $F$ extending $(F_i)$ up
to pseudo natural isomorphisms.

We study the gluing of pseudo functors in the framework of (strict) $n$-fold
categories. An $n$-fold category structure is an extended categorical
structure consisting of hypercubes up to dimension $n$, endowed with
composition laws in each of the $n$ directions. This often encodes more
information than a higher category structure of the same dimension, such as
an $n$-category. There is, however, a rich interplay between extended
categories and higher categories, of which the most relevant part to our
study is the relation between $n$-fold categories and $2$-categories. Given
a $2$-category $\cC$, we construct an $n$-fold category $\dQ^n\cC$ of
hypercubes in $\cC$, extending the double category of up-squares for $n=2$
due to Bastiani and Ehresmann \cite[2.C.1, p.~272]{BEhr}. We also consider
the $n$-fold subcategory $\dQ_{\cA_1,\dots,\cA_n}\cC$ spanned by hypercubes
whose edges in direction $i$ are in $\cA_i$. The construction $\dQ^n$ admits
a left adjoint, carrying an $n$-fold category $\dC$ to its reduced
$2$-category of paths $\cT^\red_n\dC$, related to Gray's tensor product of
$2$-categories. We further construct a $(2,1)$-category $\cL\cT_n\dC$,
variant of $\cT^\red_n\dC$. The $2$-category of gluing data
$\GD_{\cA_1,\dots,\cA_n}(\cC,\cD)$ can be identified with the $2$-category
$\twoFunps(\cE,\cD)$ of $2$-functors
$\cE=\cL\cT_n\dQ_{\cA_1,\dots,\cA_n}\cC\to \cD$, pseudo natural
transformations, and modifications. Properties of the $2$-category of gluing
data are established by studying the $(2,1)$-category $\cE$.

The article is organized as follows. In Section~\ref{s.1}, we fix some
conventions and prove some preliminary results on $2$-categories. We
introduce the $2$-category of paths $\cT\cC$ in a $2$-category $\cC$, which
allows one to straighten pseudo functors to $2$-functors. In
Section~\ref{s.2}, after recalling the definition of an $n$-fold category,
we investigate the relation between $2$-categories and $n$-fold categories.
We construct the $n$-fold category $\dQ^n\cC$ of hypercubes and the reduced
$2$-categories of paths $\cT_n^\red\dC$, and we establish the aforementioned
adjunction. A variant of the adjunction is used to define the descent
$2$-functor $Q^n\colon \PsFun(\cC,\cD)\to \GD_{\cA_1,\dots,\cA_n}(\cC,\cD)$.
In Section~\ref{s.3}, we study functorial properties of these constructions
with respect to the index set $\{1,\dots,n\}$, and construct $2$-functors
between various $2$-categories of gluing data. In Sections \ref{s.two} and
\ref{s.finiteglue}, we apply these constructions to study the gluing of
pseudo functors. In Section \ref{s.two}, we study the case of two pseudo
functors and prove Theorem \ref{p.Del}, extending results of Deligne and
Ayoub. We deduce the theorem from a general criterion involving the
$2$-category of compactifications. In Section \ref{s.finiteglue}, we study
the case of finitely many pseudo functors and prove Theorem \ref{t.main}. In
Sections \ref{s.6} through~\ref{s.adjoint}, we develop several tools for the
application of the main theorems. In Sections~\ref{s.6} and \ref{s.7}, we
introduce Cartesian gluing data, first for two pseudo functors, and then for
finitely many pseudo functors. Cartesian gluing data are an alternative set
of gluing data that only makes use of Cartesian squares instead of
commutative squares with $2$-cells, and are easier to construct in
applications. We show that the $2$-category of Cartesian gluing data is
isomorphic to the $2$-category of gluing data under mild conditions. In
Section~\ref{s.adjoint}, we check the axioms for gluing data in the case
when the data are constructed from base change maps via adjunctions.
Finally, we include the proof of a preliminary result in Section
\ref{s.proof} for completeness.

For the convenience of the reader, an index of notation is provided at the
very end of the article.

In joint work with Yifeng Liu \cite{LZ1}, we establish analogues of some
results of this article in the $\infty$-categorical setting, which are used
in \cite{LZ2} to construct Grothendieck's six operations on Artin stacks. We
remark that specific features of $2$-categories have been exploited in this
article and the full generality of our results cannot be deduced from
\cite{LZ1}.

\subsection*{Acknowledgments}
In the course of this work the author has benefited very much from
conversions with Yifeng Liu. The author also thanks Joseph Ayoub, Johan de
Jong, Ofer Gabber, and Luc Illusie for useful discussions. The author is
grateful to the referees for the numerous comments they made on previous
versions of this article. This work was partially supported by China's
Recruitment Program of Global Experts; National Natural Science Foundation
of China Grant 11321101; Hua Loo-Keng Key Laboratory of Mathematics, Chinese
Academy of Sciences; National Center for Mathematics and Interdisciplinary
Sciences, Chinese Academy of Sciences.

\section{Preliminaries on $2$-categories}\label{s.1}
In this section, we fix some conventions and notation on $2$-categories and
record some preliminary results. In particular, we introduce the
$2$-category of paths $\cT\cC$ in a $2$-category $\cC$ (Definition
\ref{d.T0}), which allows one to straighten pseudo functors to $2$-functors.

Throughout the article, we reserve the symbol $\cong$ for isomorphisms. The
symbol will not be used for equivalences or bi-equivalences, which are
instead stated verbally. We denote categories by bold letters ($\bC$, $\bD$,
etc.), $2$-categories by script letters ($\cC$, $\cD$, etc.), $n$-fold
categories by blackboard bold letters ($\dC$, $\dD$, etc.).

The notion of $2$-category was introduced by Ehresmann \cite[note
bibliographique, p.~324]{Ehr65} and B\'enabou \cite[p.~3824]{Benabou}. As in
\cite[Chapter~7]{Borceux}, $2$-categories, $2$-functors, and $2$-natural
transformations are assumed to be strict. Pseudo functor preserves
composition and unit up to coherent invertible $2$-cells. A
\emph{$2$-equivalence} is a $2$-functor $F\colon \cC\to \cD$ such that there
exist a $2$-functor $G\colon \cD\to \cC$ and $2$-natural \emph{isomorphisms}
$\one_\cC\cong GF$ and $FG\cong \one_\cD$. In this case we say that $F$ and
$G$ are $2$-quasi-inverses of each other. A \emph{bi-equivalence} is a
pseudo functor $F\colon \cC\to \cD$ such that there exist a pseudo functor
$G\colon \cD\to \cC$ and pseudo natural equivalences $\one_\cC\Rightarrow
GF$ and $FG\Rightarrow \one_\cD$ (some authors refer to such pseudo functor
as ``$2$-equivalence'' \cite[Corollary 1.5.26 (i)]{GabberR}). In this case
we say that $F$ and $G$ are pseudo inverses of each other. We use the term
\emph{morphism} for $1$-cell of a $2$-category.

Let us recall the notion of pseudo fiber product in a $2$-category $\cC$,
which is a type of ``$2$-limit'' in the terminology of \cite[Definition
1.6.1 (i)]{GabberR} or ``pseudo-bilimit'' in the terminology of
\cite[Chaper~7]{Borceux}. Let $Z\xrightarrow{i}W\xleftarrow{p} Y$ be
morphisms in $\cC$. For any object $X$ of $\cC$, consider the category
$\cC(X,Z)\times_{\cC(X,W)}\cC(X,Y)$ of triples $(q,j,\alpha)$ as shown by
the square
\[
  \xymatrix{X\ar[r]^j\ar[d]_q\drtwocell\omit{^\alpha} & Y \ar[d]^p\\
  Z\ar[r]^i & W,}
\]
where $\alpha$ is an invertible $2$-cell. A \emph{pseudo fiber product} of
$Z\xrightarrow{i}W\xleftarrow{p} Y$ is an object $X$ of $\cC$ equipped with
an object $(q,j,\alpha)$ of $\cC(X,Z)\times_{\cC(X,W)}\cC(X,Y)$, such that
for every object $X'$ of $\cC$, the functor $\cC(X',X)\to
\cC(X',Z)\times_{\cC(X',W)}\cC(X',Y)$ induced by composition with
$(q,j,\alpha)$ is an equivalence of categories. In this case the square is
called \emph{Cartesian}. Given a Cartesian square the morphism $q$ is called
a \emph{base change} of $p$ by $i$. By the diagonal of a morphism $f\colon
X\to Y$ in $\cC$, we mean the morphism $X\to X\times_Y X$, unique up to
equivalence, where $X\times_Y X$ is the pseudo fiber product. We will also
consider \emph{strict} fiber products of categories and $2$-categories.

\begin{Convention}\label{s.small}\index{cat@$\Cat$}\index{cat2@$\twoCat$}
Unless otherwise stated, all categories and $2$-categories are assumed to be
small (and strict). Big categories and big $2$-categories are occasionally
used as a linguistic tool to simplify the narrative. We let $\Cat$ denote
the big category of categories and functors and $\twoCat$ denote the big
category of $2$-categories and $2$-functors.
\end{Convention}

Definition \ref{d.twoone} and Notation \ref{n.op} below are standard.

\begin{Definition}\label{d.twoone}
A \emph{$(2,1)$-category} is a $2$-category whose $2$-cells are
    invertible.
\end{Definition}

\begin{Notation}\label{n.op}\index{-co@$(-)^\twoop$, $(-)^\coop$!$\cC^\twoop$, $\cC^\coop$}
Let $\cC$ be a $2$-category. We denote by $\cC^{\coop}$ (resp.\
$\cC^{\twoop}$) the $2$-category obtained
  from $\cC$ by reversing the morphisms and $2$-cells (resp.\ $2$-cells
  only). In other words $\Ob(\cC^{\coop})=\Ob(\cC^{\twoop})=\Ob(\cC)$,
  and, for any pair of objects $X$ and $Y$ of $\cC$, we have
  $\cC^{\coop}(Y,X)=\cC^{\twoop}(X,Y)=\cC(X,Y)^{\op}$.

If $\cC$ is a $(2,1)$-category, inversion of the $2$-cells defines an
isomorphism $\cC\cong\cC^{\twoop}$.
\end{Notation}

\begin{Notation}[Functor categories]\label{n.U}\index{Fun@$\Fun(\bC,\bD)$,
$\bFun(\bC,\bD)$}\index{2Fun@$\rtwoFun(\cC,\cD)$, $\btwoFun(\cC,\cD)$,
$\twoFun(\cC,\cD)$}\index{2Funps@$\twoFunps(\cC,\cD)$}\index{UPsFun@$\UPsFun(\cC,\cD)$}\index{PsFun@$\PsFun(\cC,\cD)$}
Let $\bC$ and $\bD$ be categories. We let $\Fun(\bC,\bD)$ denote the set of
functors $\bC\to \bD$. We let $\bFun(\bC,\bD)$ denote the category of
functors $\bC\to \bD$ and natural transformations.

Let $\cC$ and $\cD$ be $2$-categories. We denote by $\rtwoFun(\cC,\cD)$ the
set of $2$-functors $\cC\to \cD$. We denote by $\btwoFun(\cC,\cD)$ the
category of $2$-functors $\cC\to \cD$ and $2$-natural transformations. We
denote by $\twoFun(\cC,\cD)$ the $2$-category of $2$-functors $\cC\to \cD$,
$2$-natural transformations, and modifications. We denote by
$\twoFunps(\cC,\cD)$ the $2$-category of $2$-functors $\cC\to \cD$,
\emph{pseudo} natural transformations, and modifications. We denote by
$\UPsFun(\cC,\cD)$ the $2$-category of strictly unital pseudo functors
$\cC\to \cD$, pseudo natural transformations, and modifications. We denote
by $\PsFun(\cC,\cD)$ the $2$-category of pseudo functors $\cC \to \cD$,
pseudo natural transformations, and modifications.
\end{Notation}

For $2$-categories $\cC$, $\cD$, and $\cE$, we have
\[\twoFun(\cC,\twoFun(\cD,\cE))\cong \twoFun(\cC\times \cD,\cE).\]

For any pseudo functor $G\colon \cD\to \cE$, composition with $G$ provides a
pseudo functor $G\circ -\colon \PsFun(\cC,\cD)\to \PsFun(\cC,\cE)$, which is
a $2$-functor if $G$ is a $2$-functor. By contrast, for any pseudo functor
$F\colon \cC\to \cD$, composition with $F$ provides a $2$-functor
\[-\circ F\colon \PsFun(\cD,\cE)\to \PsFun(\cC,\cE).\]
Let $\epsilon\colon F\Rightarrow F'$ be a pseudo natural transformation.
Then $\epsilon$ induces only a \emph{pseudo} natural transformation $-\circ
F\Rightarrow -\circ F'$ in general. However, if $\epsilon(X)$ is an identity
for every object $X$ of $\cC$, then $\epsilon$ induces a $2$-natural
transformation between the $2$-functors $\UPsFun(\cD,\cE)\to
\PsFun(\cC,\cE)$ induced by $F$ and $F'$.

We will need to work over a base $2$-category as follows.

\begin{Definition}[$\cD$-$2$-Category]\label{d.equiv}
Let $\cD$ be a $2$-category. A \emph{$\cD$-$2$-category} is a pair $(\cC,F)$
consisting of a $2$-category $\cC$ and a $2$-functor $F\colon \cC\to \cD$.
If $(\cB,E)$ and $(\cC,F)$ are $\cD$-categories, we define the category of
$\cD$-$2$-functors $\btwoFun_{\cD}((\cB,E),(\cC,F))$ to be the strict fiber
at $E$ of the functor $\btwoFun(\cB,\cC)\to \btwoFun(\cB,\cD)$ induced by
$F$. Objects and morphisms of this category are called
\emph{$\cD$-$2$-functors} and \emph{$\cD$-$2$-natural transformations},
respectively. Thus a $\cD$-$2$-functor $(\cB,E)\to(\cC,F)$ is a $2$-functor
$G\colon \cB\to \cC$  such that $E=FG$. If $G,H\colon (\cB,E)\to (\cC,F)$
are $\cD$-$2$-functors, a $\cD$-$2$-natural transformation is a $2$-natural
transformation $\alpha\colon G\Rightarrow H$ such that $F*\alpha\colon
FG\Rightarrow FH$ is $\one_E$. We say that a $\cD$-$2$-functor $G\colon
(\cB,E)\to (\cC,F)$ is a \emph{$\cD$-$2$-equivalence} if there exist a
$\cD$-$2$-functor $H\colon (\cC,F)\to (\cB,E)$ and $\cD$-$2$-natural
isomorphisms $\one_\cC\Rightarrow GH$ and $HG\Rightarrow \one_\cB$. In this
case we say that $G$ and $H$ are \emph{$\cD$-$2$-quasi-inverses} of each
other.
\end{Definition}

We do not consider $\cD$-modifications because we will only be interested in
$\cD$-$2$-categories whose strict fibers are categories. A
$\cD$-$2$-equivalence $\cB\to \cC$ induces a $2$-equivalence between the
strict fiber $2$-categories $\cB_X\to \cC_X$ for every object $X$ of $\cD$.

Let us introduce some terminology on faithfulness.

\begin{Definition}[Faithfulness, fullness]\label{s.faith}
Let $\cC$ and $\cD$ be $2$-categories and let $F\colon \cC\to
    \cD$ be a pseudo functor. Given a property $(P)$ on functors, We say that $F$ is
    \emph{locally $(P)$} if for
    every pair of objects $X$ and $Y$ of $\cC$, the functor
  \[  F_{XY}\colon\cC(X,Y)\to \cD(FX,FY) \]
is $(P)$. In particular, we say that $F$ is a \emph{local equivalence} if
$F_{XY}$ is an equivalence of categories for all $X$ and $Y$ (this property
is called ``fully faithful'' in \cite[Definition 1.5.11 (ii)]{GabberR}).

We say that $F$ is \emph{$2$-faithful} if $F$ is locally fully faithful and
the underlying functor of $F$ is faithful in the $1$-categorical sense. We
say that $F$ is \emph{$2$-fully faithful} if $F$ is locally fully faithful
and the underlying functor of $F$ is fully faithful in the $1$-categorical
sense.  In other words, $F$ is $2$-fully faithful if and only if $F_{XY}$ is
an isomorphism of categories for all $X$ and $Y$ (this property is called
``strongly faithful'' in \cite[Definition 1.5.11 (ii)]{GabberR}).

We say that $F$ is \emph{pseudo surjective} if for every object $Y$ of
$\cD$, there exists an object $X$ of $\cC$ and an equivalence $FX\to Y$ in
$\cD$.

We say that a sub-$2$-category $\cC$ of $\cD$ is \emph{locally full} if for
all morphisms $f,g$ in $\cC$, the collection of $2$-cells between $f$ and
$g$ in $\cC$ is the same as in $\cD$. We say that a sub-$2$-category $\cC$
of $\cD$ is $2$-full if for all objects $X$ and $Y$ in $\cC$, the category
$\cC(X,Y)$ equals the category $\cD(X,Y)$, or equivalently, if the inclusion
$2$-functor $\cC\to \cD$ is $2$-fully faithful.
\end{Definition}

\begin{Example}
The sub-$2$-categories $\twoFunps(\cC,\cD)\subseteq
\UPsFun(\cC,\cD)\subseteq \PsFun(\cC,\cD)$ are $2$-full.
\end{Example}

Recall that a pseudo functor $F$ is a bi-equivalence if and only if $F$ is a
pseudo surjective local equivalence \cite[Definition 1.5.11 (iii), Corollary
1.5.26 (i)]{GabberR}). A $2$-functor $F$ is a $2$-equivalence if and only if
$F$ is $2$-fully faithful and essentially surjective in the $1$-categorical
sense. This extends to $\cD$-$2$-functors as follows.

\begin{Lemma}\label{l.faith}
A $\cD$-$2$-functor $G\colon \cB\to \cC$ is a $\cD$-$2$-equivalence if and
only if $G$ is $2$-fully faithful and for every object $Y$ of $\cC$, there
exists an object $X$ of $\cB$ and an isomorphism $\epsilon(Y)\colon GX\to Y$
in $\cC$ whose image in $\cD$ is an identity.
\end{Lemma}

\begin{proof}
The ``only if'' part is clear. For the ``if'' part, we construct a
$\cD$-$2$-quasi-inverse $H\colon \cC\to \cB$ carrying $Y$ to $X$ as follows.
For objects $Y$ and $Y'$ of $\cC$, we take $H_{Y,Y'}\colon \cC(Y,Y')\to
\cC(HY, HY')$ to be the unique functor such that $G_{HY,HY'}F_{Y,Y'}$ is
given by the formula $f\mapsto \epsilon({Y'})^{-1}f\epsilon(Y)$. Then
$\epsilon\colon GH\Rightarrow \id_\cC$ is a $\cD$-$2$-natural isomorphism.
For each object $X$ of~$\cB$, we let $\eta(X)\colon HGX\to X$ be the unique
morphism such that $G(\eta(X))=\epsilon({GX})$. Then $\eta\colon
HG\Rightarrow \id_\cB$ is a $\cD$-$2$-natural isomorphism.
\end{proof}

\begin{Notation}\label{d.bar}\leavevmode
\begin{enumerate}
\item Let $S$ be a set and let $\cD$ be a $2$-category. We view $S$ as a
    discrete $2$-category and denote by $\cD^S$ the $2$-category of
    $2$-functors $S\to \cD$. An object of $\cD^S$ is a map $S\to
    \Ob(\cD)$. A morphism $\alpha\colon F\to F'$ of $\cD^S$ is a family
  \[\left(\alpha(X)\colon FX \to F'X\right)_{X\in S}\]
  of morphisms of $\cD$. A $2$-cell $\Xi\colon \alpha \Rightarrow \beta$
  of $\cD^S$ is a family
  \[\left(\Xi(X)\colon \alpha(X) \Rightarrow \beta(X)\right)_{X\in S}\]
  of $2$-cells of $\cD$.

\item \label{d.bar2}\index{-@$\lvert-\rvert$} Let $\cC$ and $\cD$ be
    $2$-categories. We view $\PsFun(\cC,\cD)$ as a
    $\cD^{\Ob(\cC)}$-$2$-category via the forgetful $2$-functor
  \[\lvert -\rvert\colon \PsFun(\cC,\cD)\to \cD^{\Ob(\cC)}\]
  induced by the inclusion $\Ob(\cC)\to \cC$.
  \end{enumerate}
\end{Notation}

Note that the forgetful $2$-functor $\lvert -\rvert$ is locally faithful,
and the strict fibers are categories.

The following simple technique for modifying a pseudo functor will be of
use.

\begin{Lemma}\label{l.ps}
Let $\cC$ and $\cD$ be $2$-categories, let $F\colon \cC\to \cD$ be a pseudo
functor, let $H$ be an object of $\cD^{\Ob(\cC)}$, and let $\eta\colon
\lvert F\rvert \to H$ be an equivalence in $\cD^{\Ob(\cC)}$. Assume that to
every morphism $f\colon X\to Y$ of $\cC$ is associated a square in $\cD$
\[\xymatrix{FX\ar[r]^{Ff}\ar[d]_{\eta(X)}\drtwocell\omit{^{\alpha_f}} & FY\ar[d]^{\eta(Y)}\\
HX\ar[r]^{g_f}& HY}
\]
where $\alpha_f$ is an invertible $2$-cell. Then there exists a unique pair
$(G,\epsilon)$, where $G\colon \cC\to \cD$ is a pseudo functor and
$\epsilon\colon F\to G$ is a pseudo natural equivalence, such that $\lvert
G\rvert = H$, $\lvert \epsilon\rvert =\eta$, $G(f)=g_f$ and
$\epsilon(f)=\alpha_f$ for every morphism $f$ of $\cC$.
\end{Lemma}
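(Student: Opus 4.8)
The strategy is to transport the entire pseudofunctor structure of $F$ across the equivalence $\eta$, in the only way consistent with the prescribed data $G(f)=g_f$ and $\epsilon(f)=\alpha_f$. First I would establish uniqueness, which forces everything: if such a $(G,\epsilon)$ exists, then pseudonaturality of $\epsilon$ says that for each $f\colon X\to Y$ the square with sides $\eta(X),\eta(Y),Ff,Gf$ is filled by the $2$-cell $\epsilon(f)$; since we demand $Gf=g_f$ and $\epsilon(f)=\alpha_f$, the value of $G$ on morphisms and of $\epsilon$ on morphisms is determined, and $\lvert G\rvert=H$, $\lvert\epsilon\rvert=\eta$ fix the rest. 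So $G$ and $\epsilon$ are forced on the nose; what remains is to verify that these forced choices actually assemble into a pseudofunctor and a pseudonatural equivalence.

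For existence, I would first choose, for each object $X$, a $2$-cell $\eta(X)^{-1}$ realizing $\eta(X)$ as an equivalence together with invertible unit/counit $2$-cells (this is possible since $\eta$ is an equivalence in $\cD^{\Ob(\cC)}$, so each $\eta(X)$ is an equivalence in $\cD(HX,FX)^{\mathrm{op}}$-style componentwise; concretely each $\eta(X)\colon FX\to HX$ has a quasi-inverse $\eta(X)^{-1}$). Then I define the coherence isomorphisms of $G$ by conjugating those of $F$: for composable $f\colon X\to Y$, $h\colon Y\to Z$, the comparison $G(h)\circ G(f)=g_h\circ g_f \Rightarrow g_{hf}=G(hf)$ is obtained by pasting $\alpha_f$, $\alpha_h$, the coherence $F(h)\circ F(f)\Rightarrow F(hf)$ of $F$, and $\alpha_{hf}^{-1}$, all glued along the unit/counit $2$-cells of the equivalences $\eta(Y),\eta(Z)$ as needed to make the boundary match; the unitality comparison $\id_{HX}\Rightarrow g_{\id_X}$ is built the same way from the unitality comparison of $F$ and $\alpha_{\id_X}^{-1}$. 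With these definitions, $\lvert\epsilon\rvert=\eta$ and $\epsilon(f)=\alpha_f$ by construction.

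It then remains to check three bundles of axioms: (i) the pseudofunctor axioms for $(G$, coherences$)$ — associativity pentagon and the two unit triangles; (ii) that $\epsilon\colon F\Rightarrow G$ is pseudonatural, i.e.\ its coherence hexagon/square relating $\epsilon(hf)$ to $\epsilon(h)$, $\epsilon(f)$ and the coherences of $F$ and $G$; and (iii) that $\epsilon$ is an equivalence, which is automatic once it is pseudonatural, because each component $\epsilon(X)=\eta(X)$ is an equivalence in $\cD$ and a componentwise-equivalence pseudonatural transformation is an equivalence in the functor $2$-category. All three are formal diagram chases: every $2$-cell in sight is invertible, the defining equations for $G$'s coherences were obtained precisely by conjugating $F$'s, and the equations to be checked become, after cancelling matched unit/counit pairs, exactly the corresponding equations already known to hold for $F$. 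The bookkeeping is the only real work; the main obstacle is organizing the pasting diagrams so that the cancellation of the chosen unit/counit $2$-cells for the $\eta(X)$ is manifest, after which each axiom for $G$ and for $\epsilon$ reduces mechanically to the same axiom for $F$. I would present (i) and (ii) by drawing the relevant cube/prism of $2$-cells and indicating the cancellations, leaving the routine verification to the reader, and note (iii) as an immediate consequence of the componentwise statement.
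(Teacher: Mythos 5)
Your plan is correct; the paper gives no proof of this lemma, stating it as an immediate consequence of the definitions, and your transport-of-structure argument is exactly what is intended. One point to make explicit in the uniqueness step: besides $G(f)=g_f$, the action of $G$ on $2$-cells and its coherence constraints are also forced, because whiskering by the equivalence $\eta(X)$ gives a fully faithful functor $\cD(HX,HY)\to\cD(FX,HY)$; using this full faithfulness to \emph{define} those data (rather than conjugating by chosen quasi-inverses) also lets every axiom for $G$ and for $\epsilon$ be reflected directly from the corresponding axiom for $F$, and avoids having to promote each $\eta(X)$ to an adjoint equivalence so that your unit/counit cancellations are legitimate.
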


\begin{proof}
Indeed, it remains to define the constraints of $G$, which are uniquely
determined by the constraints of $F$.
\end{proof}

Applying the lemma to the unital constraints of any pseudo functor $F\colon
\cC\to \cD$, we obtain a strictly unital pseudo functor $G\colon \cC\to \cD$
and a pseudo natural transformation $\epsilon\colon F\Rightarrow G$ with
$\epsilon(X)=\id_X$ for every object $X$ of $\cC$. By Lemma \ref{l.faith},
we obtain the following.

\begin{Proposition}\label{p.UPs}
  Let $\cC$ and $\cD$ be $2$-categories. Then the inclusion $\UPsFun(\cC,\cD)\subseteq
  \PsFun(\cC,\cD)$ is a $\cD^{\Ob(\cC)}$-$2$-equivalence.
\end{Proposition}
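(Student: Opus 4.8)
The plan is to deduce the proposition directly from Lemma~\ref{l.ps} by
applying it to the unital constraints of a pseudofunctor. Recall that a
pseudofunctor $F\colon\cC\to\cD$ comes equipped, for every object $X$ of
$\cC$, with an invertible $2$-cell (the unital constraint)
$\iota_X\colon \one_{FX}\Rightarrow F(\one_X)$, and with composition
constraints for composable morphisms; being strictly unital means each
$\iota_X$ is an identity $2$-cell, i.e.\ $F(\one_X)=\one_{FX}$. So the goal
is, given an arbitrary pseudofunctor $F$, to produce a strictly unital
pseudofunctor $G$ together with a pseudonatural equivalence $F\to G$ whose
image under $\lvert-\rvert\colon\PsFun(\cC,\cD)\to\cD^{\Ob(\cC)}$ is an
identity, and moreover to do this in a way that exhibits the inclusion as a
$\cD^{\Ob(\cC)}$-equivalence rather than merely to match objects.

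First I would fix, for every object $X$ of $\cC$, the object $HX := FX$ and
the morphism $\eta(X):=\one_{FX}$; thus $H=\lvert F\rvert$ as an object of
$\cD^{\Ob(\cC)}$ and $\eta$ is the identity equivalence. For every morphism
$f\colon X\to Y$ of $\cC$ I would set $g_f := Ff$ and take $\alpha_f$ to be
the canonical invertible $2$-cell filling the square with sides $Ff$,
$\one_{FX}$, $\one_{FY}$, $g_f=Ff$, namely the composite of the unitor
$2$-cells of $\cD$ (for an actual $2$-category $\cD$ these unitors are
identities, so $\alpha_f$ is just the identity $2$-cell on $Ff$; I keep the
notation general to match Lemma~\ref{l.ps}). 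Applying Lemma~\ref{l.ps} to
this data yields a unique pair $(G,\epsilon)$ with $\lvert G\rvert=H$,
$\lvert\epsilon\rvert=\eta$, $G(f)=Ff$, $\epsilon(f)=\alpha_f$, and $G$ a
pseudofunctor, $\epsilon\colon F\to G$ a pseudonatural equivalence. The
content is then to check that this particular $G$ is strictly unital. Because
$\epsilon$ is a pseudonatural equivalence over the identity of
$\cD^{\Ob(\cC)}$, the unital constraint $\iota^G_X$ of $G$ is obtained from
$\iota^F_X$ by conjugating/pasting with the $2$-cells
$\epsilon(\one_X)=\alpha_{\one_X}$ and the components $\epsilon(X)=\one_{FX}$;
tracing through the compatibility axiom of a pseudonatural transformation with
respect to identity morphisms forces $\iota^G_X=\one_{\one_{GX}}$, i.e.\
$G(\one_X)=\one_{GX}$. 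That is the one genuine verification in the argument.

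Having produced, for each object $F$ of $\PsFun(\cC,\cD)$, an object $G$ of
$\UPsFun(\cC,\cD)$ with an isomorphism (in fact equivalence)
$\epsilon\colon F\to G$ whose image in $\cD^{\Ob(\cC)}$ is an identity, I
would now invoke the criterion recorded just before
Notation~\ref{d.bar}: a $2$-fully faithful $\cD$-functor $G\colon\cB\to\cC$
is a $\cD$-equivalence provided that for every object $Y$ of $\cC$ there is
an object $X$ of $\cB$ and an isomorphism $GX\to Y$ in $\cC$ whose image in
$\cD$ is an identity. Here $\cB=\UPsFun(\cC,\cD)$, $\cC=\PsFun(\cC,\cD)$,
the base is $\cD^{\Ob(\cC)}$, and the inclusion is $2$-fully faithful by the
Example following Definition~\ref{s.faith} (the inclusions
$\twoFun\subseteq\UPsFun\subseteq\PsFun$ are $2$-fully faithful). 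The
essential-surjectivity-over-the-base hypothesis is exactly what the previous
paragraph supplies, taking $X=G$ and the isomorphism to be
$\epsilon^{-1}\colon G\to F$ (an isomorphism in $\PsFun(\cC,\cD)$ since
$\epsilon$ is a pseudonatural equivalence with identity component at every
object, hence invertible), whose image in $\cD^{\Ob(\cC)}$ is the identity
on $\lvert F\rvert$. This gives the $\cD^{\Ob(\cC)}$-equivalence.

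The main obstacle I anticipate is purely bookkeeping: verifying that the $G$
produced by Lemma~\ref{l.ps} really is strictly unital, which amounts to
pasting-diagram manipulation with the unitors of $\cD$ and the coherence
axioms for $\epsilon$ as a pseudonatural transformation. There is no
conceptual difficulty — the lemma already does the hard work of building the
pseudofunctor and the equivalence — but one must be careful that the unique
$G$ furnished by the lemma, when the input squares $\alpha_f$ are the
canonical unitor composites, has its unital constraints collapse to
identities; this is where the specific choice $\eta(X)=\one_{FX}$ and
$\alpha_{\one_X}=$ (unitor) is used. Once that is in hand, the passage from
``object-wise replacement'' to ``$\cD^{\Ob(\cC)}$-equivalence'' is immediate
from the cited criterion and the $2$-full faithfulness of the inclusion.
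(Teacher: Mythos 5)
Your overall strategy is the right one, and the second half of your argument (combining $2$-full faithfulness of the inclusion with object-wise essential surjectivity over the base, via the criterion stated before Notation~\ref{d.bar}) is exactly what is needed. But there is a genuine error in the input you feed to Lemma~\ref{l.ps}. You set $g_f:=Ff$ for \emph{every} morphism $f$, including the identities, and take $\alpha_f$ to be the identity $2$-cell on $Ff$ (the unitors of a strict $2$-category). By the uniqueness clause of the lemma, the pair $(G,\epsilon)$ determined by $H=\lvert F\rvert$, $\eta=\one$, $G(f)=Ff$ and $\epsilon(f)=\one_{Ff}$ is just $(F,\one_F)$: the coherence constraints of $G$ are transported from those of $F$ along identity $2$-cells, so nothing changes. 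In particular $G(\one_X)=F(\one_X)$, which need not equal $\one_{FX}$, and $G$ is strictly unital only if $F$ already was. The step you flag as ``the one genuine verification'' --- that pseudonaturality of $\epsilon$ forces $\iota^G_X$ to be an identity --- cannot go through, because with your data every $2$-cell in sight is already an identity and no constraint is imposed.

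The repair is precisely what the paper's one-line proof indicates by ``applying the lemma to the unital constraints of $F$'': for an identity morphism $\one_X$ you must take $g_{\one_X}:=\one_{FX}$ (not $F(\one_X)$) and $\alpha_{\one_X}$ to be the unital constraint $\iota_X\colon \one_{FX}\Rightarrow F(\one_X)$ of $F$, suitably inverted to fit the orientation of the square in Lemma~\ref{l.ps}; for non-identity $f$ keep $g_f=Ff$ and $\alpha_f=\one_{Ff}$. Then the $G$ produced by the lemma satisfies $G(\one_X)=\one_{GX}$, and its unital constraint, obtained by conjugating that of $F$ with $\epsilon(\one_X)=\iota_X^{\pm1}$, collapses to the identity, so $G$ is strictly unital by construction rather than by a separate verification. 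With that substitution the rest of your argument --- $\epsilon$ is an equivalence in $\PsFun(\cC,\cD)$ lying over the identity of $\cD^{\Ob(\cC)}$, the inclusion is $2$-fully faithful, hence it is a $\cD^{\Ob(\cC)}$-equivalence --- is correct as written.
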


The proof of the following proposition is straightforward. For completeness
we give the proof in Section \ref{s.proof}.

\begin{Proposition}\label{p.equiv}
Let $F\colon \cC\to \cD$ be a pseudo functor such that $\Ob(F)\colon
\Ob(\cC)\to \Ob(\cD)$ is a bijection. For every $2$-category $\cE$, we
consider the $\cE^{\Ob(\cC)}$-$2$-functor
  \[\Phi_\cE\colon \PsFun(\cD,\cE)\to \PsFun(\cC,\cE)\]
  induced by $F$.

  \begin{enumerate}
  \item \label{p.equiv1} If for every morphism $g$ of $\cD$, the identity
      $2$-cell $\id_g$ can be decomposed as $g\Rightarrow Ff\Rightarrow
      g$, then $\Phi_\cE$ is $2$-faithful for every $2$-category $\cE$.

  \item \label{p.equiv2} The following conditions are equivalent:
\begin{enumerate}
\item \label{p.equiva} The pseudo functor $F$ is a bi-equivalence.

\item \label{p.equivb} There exists a pseudo functor $G\colon \cD\to
    \cC$ such that $\Ob(G)$ is the inverse of $\Ob(F)$ and there are
    pseudo natural isomorphisms $\eta\colon \one_\cC\Rightarrow GF$ and
    $\epsilon\colon FG\Rightarrow \one_\cD$ such that $\eta(X)=\one_X$
    and $\epsilon(Y)=\one_Y$ for all objects $X$ of $\cC$ and $Y$ of
    $\cD$.

\item \label{p.equivc} The $2$-functor $\Phi_\cE$ is an
    $\cE^{\Ob(\cC)}$-$2$-equivalence for every $2$-category $\cE$.

\item \label{p.equivd} The $2$-functor $\Phi_\cC$ is pseudo surjective
    and the $2$-functor $\Phi_\cD$ is a local equivalence.
\end{enumerate}
  \end{enumerate}
\end{Proposition}

Note that $\Phi_\cE$ is locally faithful. If $F$ is locally essentially
surjective, then the condition in \ref{p.equiv1} is satisfied. Moreover, the
condition in \ref{p.equiv1} is equivalent to the local essential
surjectivity of $F$ if $\cD$ is a $(2,1)$-category.

Pseudo functors can be straightened to $2$-functors via the $2$-category of
paths as follows.

\begin{Definition}[$2$-Category of paths $\cT\cC$ in a $2$-category $\cC$]\label{d.T0}\index{T@$\cT$}
  Let $\cC$ be a $2$-category. We define the \emph{$2$-category $\cT
  \cC$ of paths in $\cC$} as follows. The objects of $\cT\cC$ are the objects of $\cC$.
  A \emph{morphism} $X\to Y$ of $\cT \cC$ is a path
  \[X=X_0\xrightarrow{f_1} X_1\to \dots \to X_{m-1} \xrightarrow{f_m} X_m=Y, \quad m\ge 0\]
  where each $f_i$ is a morphism of $\cC$ for $1\le i\le m$. The identity morphism $\id_X\colon X\to
  X$ in $\cT\cC$ is the path of length~$0$. Composition of morphisms is given by concatenation of paths and is denoted by $*$.
  An \emph{atomic $2$-cell} between two paths sharing a source and a target is one of the
  following:
  \begin{enumerate}
    \item (creation of
        unit)\index{iota@$\iota_{g,f}$}\index{tzheta@$\theta_{g,f}$}
        $\iota_{g,f}\colon g* f\Rightarrow g* \one^\cC_Y *f$ or (deletion
        of unit) $\theta_{g,f}\colon g* \one^\cC_Y
        * f\Rightarrow g* f$, where $X\xrightarrow{f} Y\xrightarrow{g} Z$
        is a sequence of paths, and $\one^{\cC}_Y$ is the identity
        morphism on $Y$ in $\cC$ viewed as a path of length $1$.

    \item (composition of
        morphisms)\index{gzamma@$\gamma_{g,h',h,f}$}\index{delta@$\delta_{g,h',h,f}$}
        $\gamma_{g,h',h,f}\colon g* h'* h * f\Rightarrow g* (h'h)
        * f$ or (decomposition of morphism) $\delta_{g,h',h,f}\colon g* (h'h)
        * f\Rightarrow g* h'* h * f$, where $X\xrightarrow{f} Y\xrightarrow{h} Y'
        \xrightarrow{h'} Y''\xrightarrow{g} Z$ is a sequence of paths,
        with $h$ and $h'$ of length $1$ and $h'h$ being their composition
        in $\cC$.

    \item ($2$-cell of $\cC$)\index{sigma@$\sigma_{g,\alpha,f}$}
        $\sigma_{g,\alpha,f}\colon g*h*f\Rightarrow g*h'*f$, where
        $f\colon X\to Y$ and $g\colon Y\to Z$ are paths, and $h,h'\colon
        Y\to Z$ are morphisms of $\cC$, and $\alpha\colon h\Rightarrow h'$
        is a $2$-cell of $\cC$.
  \end{enumerate}
  A \emph{pre-$2$-cell} $f\Rightarrow g$ is a sequence of atomic
  $2$-cells $f=f_0\Rightarrow f_1\Rightarrow \dots \Rightarrow f_{n-1}\Rightarrow
  f_n=g$. Vertical composition of pre-$2$-cells, denoted by $\circ$, is given by concatenation.
  Horizontal composition (or whiskering) of pre-$2$-cells with morphisms is also given by concatenation: If $f,g\colon X\to Y$, $h\colon W\to
  X$, $h'\colon Y\to Z$ are paths and $\alpha\colon f\Rightarrow g$ is a pre-$2$-cell, then one has the
  pre-$2$-cell $h'*\alpha*h\colon h'*f*h\Rightarrow
  h'*g*h$. Consider systems which associate to every pair of paths $(f,g)$ sharing a source and a target,
  an equivalence relation on the set of pre-$2$-cells $f\Rightarrow
  g$. We say that one system $\sim$ is finer than another system
  $\sim'$ if $\alpha\sim \beta$ implies $\alpha\sim'\beta$. There is
  a finest system $\sim$ satisfying the following conditions:
  \begin{enumerate}
    \item \label{d.T01} (Stability under vertical and horizontal
        composition) If $f',f,g,g'\colon X\to Y$, $h\colon W\to X$,
        $h'\colon Y\to Z$ are paths, $\alpha\sim \beta\colon f\Rightarrow
        g$, $\eta\colon f'\Rightarrow f$, $\eta'\colon g\Rightarrow g'$
        are pre-$2$-cells, then $\eta'\circ \alpha \circ \eta \sim
        \eta'\circ \beta \circ\eta$ and $h'*\alpha*h\sim h'*\beta*h$.

    \item \label{d.T02} (Interchange law) If $f,f'\colon X\to Y$,
        $g,g'\colon Y\to Z$ are paths, $\alpha\colon f\Rightarrow f'$ and
        $\beta\colon g\Rightarrow g'$ are pre-$2$-cells, then $(f'*\beta)
        \circ (\alpha*g)\sim (\alpha*g')\circ (f*\beta)$.

    \item \label{d.T03} (Creation and deletion of unit) For every object
        $X$ of $\cC$,
        \[\theta_{\one_X,\one_X}\circ \iota_{\one_X,\one_X}\sim
        \one_{\one_X},\quad \iota_{\one_X,\one_X}\circ
        \theta_{\one_X,\one_X}\sim \one_{\one_X}.\]

    \item \label{d.T04} (Unit and composition of morphisms) For every
        morphism $f\colon X\to Y$ of $\cC$,
    \[\iota_{f,\one_X}\sim \delta_{\one_Y,f,\one^{\cC}_X,\one_X},\quad
        \iota_{\one_Y,f}\sim \delta_{\one_Y,\one^{\cC}_Y,f,\one_X},
        \]

    \item \label{d.T05} (Composition and decomposition of morphisms) For
        every sequence $X\xrightarrow{f} Y\xrightarrow{g} Z$ of morphisms
        in $\cC$,
        \[\delta_{\one_Z,g,f,\one_X}\circ
\gamma_{\one_Z,g,f,\one_X}\sim
        \one_{g*f},\quad \gamma_{\one_Z,g,f,\one_X}\circ
        \delta_{\one_Z,g,f,\one_X}\sim \one_{gf}.\]

    \item \label{d.T06} (Associativity of composition of morphisms) For
        every sequence $X\xrightarrow{f} Y\xrightarrow{g} Z\xrightarrow{h}
        W$ of morphisms in $\cC$,
        \[
        \gamma_{\one_W,h,gf,\one_X}\circ
        \gamma_{h,g,f,\one_X}\sim
        \gamma_{\one_W,hg,f,\one_X}\circ \gamma_{\one_W,h,g,f}.
        \]

    \item \label{d.T07} (Unit $2$-cell) For every morphism $f\colon X\to
        Y$ of $\cC$, we have
        \[\sigma_{\id_Y,\id_f^\cC,\id_X}\sim \id_f,\]
        where $\id_f^\cC$ is the identity $2$-cell on $f$ in $\cC$.

    \item \label{d.T08} (Vertical composition of $2$-cells) For a sequence
        of $2$-cells $g\xRightarrow{\alpha}g' \xRightarrow{\beta}g''$ in
        $\cC$, where $g,g',g''\colon X\to Y$, we have
        \[
        \sigma_{\id_Y,\beta,\id_X}\circ\sigma_{\id_Y,\alpha,\id_X}\sim \sigma_{\id_Y,\beta\alpha,\id_X}. \]

    \item \label{d.T09} (Horizontal composition of $2$-cells with
        morphisms)  For morphisms $f\colon X\to Y$ and $h\colon Z\to W$ of
        $\cC$, and a $2$-cell $\alpha\colon g\Rightarrow g'$ of $\cC$,
        where $g,g'\colon Y\to Z$, we have
        \begin{gather*}
        \sigma_{\id Z,\alpha,f}\sim \delta_{\id_Z,g',f,\id_X}\circ \sigma_{\id_Z,\alpha f,\id_X}\circ\gamma_{\id_Z,g,f,\id_X},\\
        \sigma_{h,\alpha,\id_Y}\sim \delta_{\id_W,h,g',\id_Y}\circ \sigma_{\id_W,h\alpha ,\id_Y}\circ\gamma_{\id_W,h,g,\id_Y}.
        \end{gather*}

\end{enumerate}
\end{Definition}

In the terminology of Street \cite[page 547]{Street}, we have given a
presentation of the $2$-category $\cT\cC$ consisting of the underlying graph
of $\cC$, the derivation scheme given by the atomic $2$-cells with $f$ and
$g$ being identities, and the relation on $2$-cells given by conditions
\ref{d.T03} through \ref{d.T09}. The $2$-category of objects, morphisms, and
pre-$2$-cells in the above definition is the free $2$-category on the
computad consisting of the graph and the derivation scheme.

It follows from conditions \ref{d.T03}, \ref{d.T05}, \ref{d.T07}, and
\ref{d.T08} that $\cT\cC$ is a $(2,1)$-category whenever $\cC$ is a
$(2,1)$-category.

We define the \emph{reduction $2$-functor} $R\colon \cT\cC\to
\cC$\index{R@$R$} as follows. We take the identity on objects. To a path
$f_m*\dots
* f_1$ (where each $f_i$ is a morphism of $\cC$) we associate its composition $f_m\dotsm f_1$ in $\cC$. To all atomic $2$-cells except $\sigma$
we associate identities, and to $\sigma_{g,\alpha,f}$ we associate the
horizontal composition $g*\alpha *f$ in $\cC$. We define a pseudo functor
$G\colon \cC\to \cT\cC$ as follows. We take the identity on objects. To a
morphism $f$ we associate $f$, considered as a path of length $1$. The
coherence constraint is given by $\iota$ and~$\gamma$. To a $2$-cell
$\alpha\colon f\Rightarrow g$, where $f,g\colon X\to Y$, we associate
$\sigma_{\id_Y,\alpha,\id_X}$. We have $RG=\one_\cC$.

\begin{Proposition}\label{p.TC}
The reduction $2$-functor $R$ is a bi-equivalence, with $G$ being a pseudo
inverse.
\end{Proposition}

\begin{proof}
We construct a pseudo natural isomorphism $\eta\colon
\id_{\cT\cC}\Rightarrow GR$ carrying objects to identities as follows. To a
morphism $f_m*\dots*f_1$ of $\cT\cC$, we associate the $2$-cell
$f_m*\dots*f_1\Rightarrow f_m\dotsm f_1$ given by $\iota$ (creation of unit)
for $m=0$, identity for $m=1$, and $\gamma$ (composition) for $m\ge 2$.
\end{proof}

The following result says that pseudo functors with source $\cC$ can be
straightened to $2$-functors with source $\cT\cC$.

\begin{Proposition}\label{l.PsFun}
Composition with $G$ induces an isomorphism $\twoFunps(\cT\cC,\cD)\cong
\PsFun(\cC,\cD)$ of $2$-categories.
\end{Proposition}

\begin{proof}
The inverse is easily constructed, as follows. A pseudo functor $H\colon
\cC\to \cD$ induces a $2$-functor $\cT\cC\to \cD$ carrying a path $f_m*\dots
* f_1$ to $H(f_m)\dotsm H(f_1)$ and carrying the classes of $\iota$ and
$\gamma$ to the $2$-cells of the coherence constraint.
\end{proof}

By the above propositions, the inclusion $\twoFunps(\cT\cC,\cD)\subseteq
\PsFun(\cT\cC,\cD)$ is a $\cD^{\Ob(\cC)}$-$2$-equivalence. This has the
following generalization, which will be used later.

\begin{Lemma}\label{l.PsFun2}
Let $\cC$ and $\cD$ be $2$-categories. Assume that the underlying category
of $\cC$ is a free category on a graph $S\rightrightarrows \Ob(\cC)$. Then
the inclusion $\twoFunps(\cC,\cD)\subseteq \PsFun(\cC,\cD)$ is a
$\cD^{\Ob(\cC)}$-$2$-equivalence.
\end{Lemma}

The assumption means that there exists a class $S$ of morphisms of $\cC$
such that every morphism of $\cC$ is a composition in a unique way of
morphisms in $S$.

\begin{proof}
By Lemma \ref{l.faith}, it suffices to show that for every strictly unital
pseudo functor $H\colon \cC\to \cD$, there exists a $2$-functor $H'\colon
\cC\to \cD$ and a pseudo natural isomorphism $\epsilon\colon H\Rightarrow
H'$ carrying objects to identities. For any morphism $f=f_m\dotsm f_1$ of
$\cC$ with $f_i\in S$, we take $H'(f)=H(f_m)\dotsm H(f_1)$ and take
$\epsilon(f)$ to be the $2$-cell $H(f_m)\dotsm H(f_1)\Rightarrow H(f_m\dotsm
f_1)$ provided by the unital constraint for $m=0$, identity for $m=1$, and
composition constraint for $m\ge 2$. For any $2$-cell $\alpha\colon
f\Rightarrow g$, the $2$-cell $H'(\alpha)$ is given by $H(\alpha)$ and the
unital and composition constraints.
\end{proof}

We finish this section with a couple of constructions that will be used
later.

\begin{Construction}[Adjoints of the inclusion functor $\Cat\to \twoCat$]\label{c.cO}\index{O@$\bO$}
For a $2$-category $\cC$, we define a category $\bO\cC$ with the same
objects as $\cC$ by $(\bO\cC)(X,Y)=\pi_0(\cC(X,Y))$ for objects $X$ and $Y$
of $\cC$. Here $\pi_0$ is the set of connected components. We obtain a
functor $\bO\colon \twoCat\to \Cat$. The $2$-functor $\cC\to \bO\cC$
carrying a morphism to its connected component exhibits $\bO$ as a left
adjoint to the inclusion functor $\Cat\to \twoCat$. The latter also admits a
right adjoint $\twoCat\to \Cat$ sending a $2$-category to its underlying
category and a $2$-functor to its underlying functor.

A \emph{pseudo} functor $F\colon \cC\to \cD$ also induces a functor $\bO
F\colon \bO\cC\to \bO\cD$. A pseudo natural transformation $\alpha\colon
F\to G$ induces a natural transformation $\bO\alpha\colon \bO F\to \bO G$.
If there exists a modification $\alpha\to \beta$, then $\bO \alpha=\bO
\beta$. In particular, $\bO$ carries bi-equivalences to equivalences and
natural equivalences to natural isomorphisms.
\end{Construction}

\begin{Construction}[Adjoints of the inclusion functor $\twooneCat\to \twoCat$]\label{c.twoone}\index{L@$\cL$}
Let $\cC$ be a $2$-category. We define two $(2,1)$-categories, $\cL\cC$ and
$\cR\cC$, with the same objects as $\cC$, as follows. For objects $X$ and
$Y$ of~$\cC$, $(\cR\cC)(X,Y)$ is the greatest subgroupoid of $\cC(X,Y)$ and
$(\cL\cC)(X,Y)$ is the fundamental groupoid of $\cC(X,Y)$, namely the
category obtained from $\cC(X,Y)$ by inverting all morphisms of $\cC(X,Y)$
\cite[Section I.1]{GZ} (there is no set-theoretic issue by Convention
\ref{s.small}). Thus $\cR\cC$ is the maximal sub-$(2,1)$-category of $\cC$.
If we let $\twooneCat$ denote the big category of $(2,1)$-categories and
$2$-functors, we obtain functors $\cL,\cR\colon \twoCat\to \twooneCat$. The
``localization'' $2$-functor $\cC\to \cL\cC$ and the inclusion $2$-functor
$\cR\cC\to \cC$ exhibit $\cL$ and $\cR$ as left and right adjoints of the
inclusion $2$-functor $\twooneCat\to \twoCat$, respectively.

The operation $\cL$ also acts on pseudo functors, pseudo natural
transformations, and modifications, so that $\cL$ preserves bi-equivalences
and pseudo natural equivalences. The same holds for $\cR$ except that $\cR$
does not act on modifications in general, but only on invertible
modifications. Moreover, the ``localization'' $2$-functor $\cC\to \cL\cC$
and the inclusion $2$-functor $\cR\cC\to \cC$ induce isomorphisms of
$2$-categories
\[\PsFun(\cL\cC,\cD)\to \PsFun(\cC,\cD),\quad \PsFun(\cD,\cR\cC)\to \cR\PsFun(\cD,\cC),\]
for every $(2,1)$-category $\cD$. For any $2$-category $\cD$, the
$2$-functor $\PsFun(\cL\cC,\cD)\to \PsFun(\cC,\cD)$ identifies
$\PsFun(\cL\cC,\cD)$ with the $2$-full sub-$2$-category of $\PsFun(\cC,\cD)$
spanned by pseudo functors that factor through $\cR\cD$.
\end{Construction}

Our general discussion on categories and $2$-categories ends here. In the
next section we add $n$-fold categories to the picture.

\section{$2$-Categories and $n$-fold categories}\label{s.2}
In this section, after recalling the definitions of $n$-fold categories and
$n$-fold functors in Definition \ref{s.nfcat}, we investigate the relation
between $2$-categories and $n$-fold categories. The functor $\dK^n\colon
\Cat\to n\FoldCat$ carrying a category to its $n$-fold category of
hypercubes (Example \ref{ex.rhos} \ref{ex.rho2}) admits a left adjoint
$\bT_n\colon n\FoldCat\to \Cat$ carrying an $n$-fold category $\dC$ to its
category of paths (Remark \ref{r.adj}). One goal of this section is to
establish an analogue of this adjunction with $\Cat$ replaced by $\twoCat$
(Proposition \ref{p.adj}). To do this, we extend $\dK^n$ to a functor
$\dQ^n\colon \twoCat \to n\FoldCat$ carrying a $2$-category $\cD$ to its
$n$-fold category of hypercubes $\dQ^n\cD$ (Definition \ref{d.Q}, extending
the double category of up-squares \cite[2.C.1, p.~272]{BEhr} in the case
$n=2$). We also construct a left adjoint $\cT^\red_n\colon n\FoldCat\to
\twoCat$ (Definition \ref{d.T}) carrying $\dC$ to its reduced $2$-category
of paths $\cT^\red_n\dC$, which is a refinement of $\bT_n\dC$. This
adjunction is related to Gray's tensor product of $2$-categories. The
functors fit into the following diagram:
\[\xymatrix{\Cat\ar@<0.5ex>[r]\ar@/^2pc/[rr]_-{\dK^n} & \twoCat\ar@<0.5ex>[r]^-{\dQ^n}\ar@<0.5ex>[l]^{\bO} & n\FoldCat.\ar@<0.5ex>[l]^-{\cT_n^\red}\ar@/^2pc/[ll]_-{\bT_n}}\]
We also construct a variant $\cT_n$ of $\cT^\red_n$, closely related to
pseudo functors. We use these constructions to define the $2$-category of
gluing data and the descent $2$-functor $Q^n\colon \PsFun(\cC,\cD)\to
\GD_{\cA_1,\dots,\cA_n}(\cC,\cD)$, which are the main objects of study of
this article.

The notion of $n$-fold categories was introduced by Ehresmann
\cite[D\'efinition~15, p.~396]{Ehr}. His original definition proceeds by
induction on $n$. For our purpose, it is more convenient to adopt a direct
combinatorial definition: an $n$-fold category is a collection  $(\dC_J)_J$
of sets, where $J$ runs through subsets of $\{1,\dots,n\}$ and $\dC_J$ can
be visualized as a set of hypercubes of dimension $\# J$, endowed with
various sources, targets, units, and compositions. To specify the
compatibility between these data, it is convenient to introduce the
following notation. Note that the map carrying $J$ to its characteristic
function $\chi_J$\index{czhi@$\chi_J$} is a bijection from the set of
subsets of $\{1,\dots,n\}$ onto $\{0,1\}^n$.

\begin{Notation}\label{n.cI}\index{I@$\bI$}
  For $m\ge 0$, we let $[m]$ denote the totally ordered set
  $\{0,1,\dots, m\}$. We let $\bI$ denote the
  category whose objects are $[0]$ and $[1]$ and whose morphisms are
  nondecreasing maps. In other words, the morphisms of $\bI$ are the identity maps,
  the face maps $d^1_0, d^1_1\colon [0]\to [1]$ which skip $0$ and $1$ respectively, and the degeneracy map $s\colon [1]\to [0]$.
  We denote by $\Set$ the big category of sets. A category $\bC$ can be viewed as a functor $\bC\colon \bI^{\op}\to \Set$ with
  $\Ob(\bC)=\bC([0])$ and $\Mor(\bC)=\bC([1])$ endowed with a
  composition map
  \[\bC([1])\mathrel{\mathop{\times}\limits_{\bC(d^1_1),\bC([0]),\bC(d^1_0)}}\bC([1])\to \bC([1]).\]
  We omit the maps $\bC(d^1_1)$ and $\bC(d^1_0)$ when no confusion arises.

  For $n\ge 0$, we identify objects of $\bI^n$ with elements of $\{0,1\}^n$ and let
  $\epsilon_i$\index{epsilon@$\epsilon_i$} denote the element such that $\epsilon_i(i)=1$ and
  $\epsilon_i(j)=0$ for $j\neq i$. For an object of $\alpha$, we let
  $\Sigma\alpha$ denote the sum $0\le \alpha_1+\dots +\alpha_n \le n$.
\end{Notation}

The following definition is similar to \cite[Definition
2.2]{FP}\footnote{Our axiom \ref{s.nfcat2} appears to be missing in
\cite[Definition 2.2]{FP}.}.

\begin{Definition}\label{s.nfcat}\index{FoldCat@$n\FoldCat$}\index{=@$\circ^i$}\index{FoldFun@$\FFun{n}(\dC,\dD)$}
  An \emph{$n$-fold category} is a functor $\dC\colon (\bI^n)^{\op}\to \Set$ endowed
  with a composition map
  \[\circ^i\colon \dC(\alpha')\times_{\dC({\alpha})}\dC(\alpha') \to
  \dC(\alpha')\]
  for every $1\le i\le n$ and every pair $(\alpha,\alpha')$ of objects of $\bI^n$
  satisfying $\alpha'=\alpha+\epsilon_i$, such that the following axioms
  hold:
  \begin{enumerate}
    \item For all $i$ and $(\alpha,\alpha')$ as above, the composite
        $\bI^{\op}\xrightarrow{\iota} (\bI^n)^{\op} \xrightarrow{\dC}
        \Set$, where $\iota$ is the functor carrying $[0]$ to $\alpha$ and
        $[1]$ to $\alpha'$, is a category.

    \item \label{s.nfcat2} (functoriality) For every $1\le i\le n$ and
        every morphism $\beta\to \alpha$ in $\bI^n$ satisfying
        $\alpha_i=\beta_i=0$, the diagram
    \[
      \xymatrix{\dC(\alpha')\times_{\dC(\alpha)}\dC(\alpha') \ar[r]^-{\circ^i}\ar[d] & \dC(\alpha')\ar[d]\\
      \dC(\beta') \times_{\dC(\beta)} \dC(\beta') \ar[r]^-{\circ^i} & \dC(\beta'),}
    \]
    where $\alpha'=\alpha+\epsilon_i$, $\beta'=\beta+\epsilon_i$,
    commutes.

    \item (interchange law) For all $1\le i<j\le n$ and every object
        $\alpha$ of $\bI^n$ satisfying $\alpha_i=\alpha_j=0$, the diagram
    \[\xymatrix{X\ar[r]^-{\circ^i\times\circ^i}\ar[d]_{\circ^j\times\circ^j}
    & \dC(\bar \alpha')\times_{\dC(\alpha')}\dC(\bar \alpha')\ar[d]^{\circ^j}\\
    \dC(\bar \alpha')\times_{\dC(\bar \alpha)}\dC(\bar \alpha') \ar[r]^-{\circ^i}
    &\dC(\bar \alpha')}
    \]
    commutes. Here $\alpha'=\alpha+\epsilon_i$,
    $\bar\alpha=\alpha+\epsilon_j$,
    $\bar\alpha'=\alpha+\epsilon_i+\epsilon_j$, and $X$ is the limit of
    the diagram
    \[\xymatrix{\dC(\bar \alpha')\ar[r]\ar[d] & \dC(\alpha') & \dC(\bar \alpha')\ar[d]\ar[l]\\
    \dC(\bar \alpha) & & \dC(\bar \alpha)\\
    \dC(\bar \alpha')\ar[r]\ar[u] & \dC(\alpha') & \dC(\bar \alpha').\ar[l]\ar[u]}\]
  \end{enumerate}

  Elements of the set $\Ob(\dC)\coloneqq \dC(\bzero)$, where $\bzero=(0,\dots,0)$, are
  called \emph{objects} of $\dC$. Elements of $\dC(\epsilon_j)$ are called \emph{morphisms in direction $j$}.
  Elements of $\dC(\alpha)$ are called \emph{$\alpha$-hypercubes} (or \emph{$J$-hypercubes} for $\alpha=\chi_J$) of $\dC$.

  An \emph{$n$-fold functor} $\dC\to \dD$ between $n$-fold categories $\dC$ and $\dD$ is a natural
  transformation compatible with the composition maps. We let
  $\FFun{n}(\dC,\dD)$ denote the set of $n$-fold functors from $\dC$
  to $\dD$.

  We let $n\FoldCat$ denote the big category of $n$-fold categories and $n$-fold
  functors.
\end{Definition}

We have an isomorphism $1\FoldCat\cong \Cat$.

\begin{Example}\index{h@$\rh$}\index{v@$\rv$}
  A $2$-fold category is called a \emph{double category} \cite[D\'efinition~10,
  p.~389]{Ehr}. A double category $\dC$ consists of a set $\Ob(\dC)=\dC(0,0)$ of objects,
  a set $\Hor(\dC)=\dC(1,0)$ of horizontal morphisms, a set $\Ver(\dC)=\dC(0,1)$ of vertical morphisms, and a set
  $\rSq(\dC)=\dC(1,1)$ of squares, equipped with various sources, targets,
  and associative and unital compositions. We will sometimes enumerate the elements of the set of directions \{1,2\} using the
  notation $\rh=1$ and $\rv=2$. Functoriality implies that given two squares
  that are horizontally composable
  as shown in the diagram
  \[\xymatrix{X_1\ar[r]^j \ar[d]\ar@{}[rd]|D & X_2\ar[r]^{j'}\ar[d]\ar@{}[rd]|{D'} & X_3\ar[d]\\
  Y_1\ar[r]^{i} & Y_2\ar[r]^{i'} & Y_3,}
  \]
  the upper arrow of the horizontal composition $D'\circ^{\rh} D$ is $j'\circ
  j$,
  and the lower arrow is $i'\circ i$. The same holds for vertical
  composition. The interchange law means that given four squares as shown in
  the diagram
  \[\xymatrix{X_1\ar[r] \ar[d]\ar@{}[rd]|D & X_2\ar[r]\ar[d]\ar@{}[rd]|{D'} & X_3\ar[d]\\
  Y_1\ar[d]\ar[r]\ar@{}[rd]|E & Y_2\ar[d]\ar[r]\ar@{}[rd]|{E'} & Y_3\ar[d]\\
  Z_1\ar[r] & Z_2\ar[r] & Z_3, }
  \]
  the two ways of composing them produce the same square:
  \[(E'\circ^{\rh} E) \circ^{\rv} (D'\circ^{\rh} D)=(E'\circ^{\rv} D')\circ^{\rh}(E\circ^{\rv}
  D).
  \]
\end{Example}

\begin{Remark}
  Fiore and Paoli defined the $n$-fold nerve functor, which is a
  fully faithful functor from the big category $n\FoldCat$ to
the big category of $n$-fold simplicial sets \cite[Definition 2.14,
Proposition 2.17]{FP}. As in the case $n=1$ (see for example
\cite[Proposition 1.1.2.2]{Lurie}; see also \cite[Proposition
VI.2.2.3]{Illusie}), the
  essential image of the functor consists of $n$-fold simplicial sets
  satisfying the unique right lifting property with respect to the inclusions
  \[\Lambda_k^{m_i}\boxtimes
  \left(\mathop{\boxtimes}\limits_{\substack{1\le j\le n\\j\neq i}} \Delta^{m_j}\right) \subseteq \Delta^{m_1}\boxtimes\dots
  \boxtimes \Delta^{m_n},\]
  $1\le i\le n$, $0< k< m_i$, and $m_1,\dots, m_n\ge 0$. Here $\Lambda_k^{m_i}\subseteq
  \Delta^{m_i}$ denotes the $k$-th horn in the $m_i$-simplex. Multisimplicial
  sets satisfying suitable lifting properties play an essential role in the theory
  of gluing functors between $\infty$-categories developed in \cite{LZ1}.
\end{Remark}

\begin{Notation}[External product $\dC\boxtimes \dD$ of an $m$-fold category $\dC$ and an $n$-fold category $\dD$]\index{=@$\boxtimes$}
Let $\dC$ be an $m$-fold category and let $\dD$ be an $n$-fold category. We
let $\dC \boxtimes \dD$ denote their \emph{external product}, which is the
$(m+n)$-fold category $\dE$ given by $\dE(\alpha)=\dC(\beta)\times
\dD(\gamma)$ for $\alpha=(\beta_1,\dots,\beta_m,\gamma_1,\dots,\gamma_n)$,
with $\circ^i_\dE$ for $1\le i\le m$ given by the $\circ^i_\dC$  and
$\circ^{m+i}_\dE$ for $1\le i\le n$ given by $\circ^{i}_{\dD}$. We get a
functor
\[\boxtimes \colon m\FoldCat \times n\FoldCat \to (m+n)\FoldCat.\]

In particular, given categories $\bC_1,\dots, \bC_n$, the external product
$\dC=\bC_1\boxtimes \dots \boxtimes \bC_n$ is the $n$-fold category defined
in \cite[Definition 2.9]{FP}. We have $\dC(\alpha)=\prod_{1\le i\le
    n}\bC_i(\alpha_i)$ with $\circ^i$ given by the composition in
    $\bC_i$.
\end{Notation}

For any $n$-fold category $\dD$ and any object $\alpha$ of $\bI^n$, the set
$\dD(\alpha)$ of $\alpha$-hypercubes can be identified with
$\FFun{n}(\alpha_1\boxtimes\dots \boxtimes\alpha_n,\dD)$.

\begin{Remark}\index{FoldFun@$\dFFun{n}(\dC,\dD)$}
Let $\dC$ and $\dD$ be $n$-fold categories. The $n$-fold functors $\dC\to
\dD$ can be organized into an $n$-fold category
  $\dFFun{n}(\dC,\dD)$ as follows. For any object $\alpha$ of $\bI^n$, we define $\dFFun{n}(\alpha)\coloneqq \FFun{n}((\alpha_1\boxtimes \dots \boxtimes \alpha_n)\times \dC,\dD)$.
If $\dE$ is another $n$-fold category, we have
\[\dFFun{n}(\dC,\dFFun{n}(\dD,\dE))\cong \dFFun{n}(\dC\times
  \dD,\dE).
\]
\end{Remark}

We now define the pullback functor $\phi^*$ for a map $\phi$ between two
sets of directions.

\begin{Definition}[The functor $\phi^*$]\label{d.phi*}\index{-*@$(-)^*$!$\phi^*$}
 Let $\phi\colon \{1,\dots m\}\to\{1,\dots, n\} $ be a map
    and let $\dC$ be an $n$-fold category. We define an $m$-fold category $\phi^*
    \dC$  by
\begin{equation}\label{e.phistar}
    (\phi^*\dC)(\alpha)\coloneqq \FFun{n}\left(\mathop\boxtimes_{j=1}^n \prod_{\phi(i)=j}
    \alpha_i,\dC\right)
\end{equation}
    for every object $\alpha$ of $\bI^m$. In particular, we have $(\phi^*\dC)(\epsilon_i)=\dC(\epsilon_{\phi(i)})$. For $1\le i \le m$, the composition $\circ^i$ in $\phi^*\dC$ is given by
    $\circ^{\phi(i)}$ in $\dC$. More formally, for objects $\alpha$ and $\alpha'$ of $\bI^m$ satisfying
    $\alpha'=\alpha+\epsilon_i$, the composition $\circ^i$ is given by the map
    \[(\phi^*\dC)(\alpha')\times_{(\phi^*\dC)(\alpha)}(\phi^*\dC)(\alpha')\cong
    \FFun{n}\left(\mathop\boxtimes_{j=1}^n\prod_{\phi(k)=j} \tilde\alpha_k,\dC\right) \to (\phi^*\dC) (\alpha')
    \]
    induced by $d^2_1$, where
    \[\tilde\alpha_k=\begin{cases}
      [2] & \text{if $k=i$,}\\
      \alpha_k & \text{if $k\neq i$.}
    \end{cases}\]
We obtain a functor $\phi^*\colon n\FoldCat\to m\FoldCat$.
\end{Definition}

For a sequence of maps $\{1,\dots,l\}\xrightarrow{\psi} \{1,\dots
m\}\xrightarrow{\phi}\{1,\dots, n\}$, we have $(\phi\psi)^*\cong
\psi^*\phi^*$.

\begin{Example}[Special cases of the functor $\phi^*$]\label{ex.rhos}\leavevmode
\begin{enumerate}
\item Let $\iota_i\colon \{1\}\to \{1,\dots,n\}$ be the map with image
    $\{i\}$. For an $n$-fold category $\dC$, we have
    $\Ob(\iota_i^*\dC)\cong \dC(0,\dots,0)$ and $\Mor(\iota_i^*\dC)\cong
    \dC(\epsilon_i)$. More generally, the functor $\phi^*$ for $\phi$
    strictly increasing has been studied by Fiore and Paoli \cite[Notation
    2.12]{FP}.

\item \label{ex.rho2} Let $\pr_n\colon \{1,\dots, n\}\to \{1\}$. We denote
    the functor $\pr_n^*$ by $\dK^n$\index{Kn@$\dK^n$}. For a category
    $\bC$, we have $(\dK^n\bC)(\alpha)=\Fun(\alpha_1\times\dots \times
    \alpha_n,\cC)$. In other words, $\alpha$-hypercubes of $\dK^n\bC$ are
    the commutative hypercubes of dimension $\Sigma \alpha$ in $\bC$. In
    particular, morphisms in each direction are the morphisms of $\bC$. We
    call $\dK^n\bC$ the \emph{$n$-fold category of hypercubes in $\bC$}.
    This was defined by Ehresmann \cite[end of p.~398]{Ehr} and, in the
    case $n=2$, was called the double category of quartets
    (\emph{quatuors} in French) \cite[Proposition 12, p.~394]{Ehr}.

\item \index{-t@$(-)^t$!$\dC^t$} Let $t\colon \{1,2\} \to \{1,2\}$ be the
    map swapping $1$ and $2$. Then $\dC^t\coloneqq t^*\dC$ is the
    transpose of $\dC$ in the sense that $\Ob(\dC^t)=\Ob(\dC)$,
    $\Hor(\dC^t)=\Ver(\dC)$,
  $\Ver(\dC^t)=\Hor(\dC)$, and $\rSq(\dC^t)$ is obtained from $\rSq(\dC)$
  by transposing the squares.
\end{enumerate}
\end{Example}

\begin{Remark}[The functor $\phi_!$]
For any map $\phi\colon \{1,\dots,m\}\to \{1,\dots,n\}$, the functor
$\phi^*$ admits a left adjoint $\phi_!\colon m\FoldCat\to n\FoldCat$. Let
$\dC$ be an $m$-fold category. In the case where $\phi$ is injective,
$\phi_!\dC$ can be easily described by the formula
$(\phi_!\dC)(\alpha)=\dC(\alpha\circ \phi)$ for every object $\alpha$ of
$\bI^n$, where $\alpha\circ \phi$ is the object of $\bI^m$ given by
$(\alpha\circ \phi)_i=\alpha_{\phi(i)}$, and composition in direction $i$ of
$\dC$ provides composition in direction $\phi(i)$ of $\phi_!\dC$. In
general, the assignment $\alpha\mapsto \dC(\alpha\circ \phi)$ does not
provide an $n$-fold category, since composition in direction $j$ is not
well-defined whenever $\#\phi^{-1}(j)>1$. In general, $\phi_!\dC$ is the
fundamental $n$-fold category \cite[Proposition 2.18]{FP} of the $n$-fold
simplicial set $(\alpha_1,\dots,\alpha_n)\mapsto
\FFun{n}(\alpha_{\phi(1)}\boxtimes \dots \boxtimes \alpha_{\phi(m)},\dC)$,
where each $\alpha_i$ is a (combinatorial) simplex.

For an $m$-fold category $\dC$ and an $n$-fold category $\dD$, we have
$\dC\boxtimes \dD\cong (\phi_!\dC)\times (\psi_!\dD)$, where $\phi\colon
\{1,\dots,m\}\to \{1,\dots,m+n\}$ is the inclusion and $\psi\colon
\{1,\dots,n\}\to \{1,\dots,m+n\}$ is the map given by $\psi(i)=m+i$. In
\eqref{e.phistar}, we have $\boxtimes_{j=1}^n\prod_{\phi(k)=j}\alpha_k\cong
\phi_!(\alpha_1\boxtimes \dots \boxtimes \alpha_m)$.
\end{Remark}

For the map $\pr_n\colon \{1,\dots,n\}\to \{1\}$, the category
$(\pr_n)_!\dC$ associated to an $n$-fold category $\dC$ is the fundamental
category of the simplicial set $[m]\mapsto \FFun{n}([m]^{\boxtimes n},\dC)$.
A priori this construction involves $\dC(1,\dots,1)$ hence hypercubes of
$\dC$ of all dimensions. However, we have the following simpler description.

\begin{Definition}[Category of paths $\bT_n\dC$ in an $n$-fold category
$\dC$]\label{d.cT}\index{Tn@$\bT_n$} Let $\dC$ be an $n$-fold category. We
define the \emph{category $\bT_n \dC$ of paths in $\dC$} as follows. The
objects of $\bT_n \dC$ are the objects of $\dC$. A \emph{morphism} $X\to Y$
of $\bT_n \dC$ is an equivalence class of paths
  \[X=X_0\xrightarrow{f_1} X_1\xrightarrow{f_2} \dots \xrightarrow{f_{m-1}} X_{m-1} \xrightarrow{f_m} X_m=Y, \quad m\ge 0\]
where $f_i\in \coprod_{1\le k\le n} \dC(\epsilon_k)$, $1\le i\le m$ under
the equivalence relation $\sim$ generated by the following conditions:
  \begin{enumerate}
    \item (unit) $g* f\sim g* \one^k_Y *f$, where $X\xrightarrow{f}
        Y\xrightarrow{g} Z$ is a sequence of paths, $\one^k_Y\in
        \dC(\epsilon_k)$ is identity morphism of $Y$.

    \item (composition) $g* h'* h * f\sim g* (h'h)
        * f$, where $X\xrightarrow{f} Y\xrightarrow{h} Y'
        \xrightarrow{h'} Y''\xrightarrow{g} Z$ is a sequence of paths, $h$
        and $h'$ belong to the same $\dC(\epsilon_k)$ and $h'h$ is their
        composition in $\dC$.

    \item (square) $g*i*q*f\sim g*p*j*f$, where $i,j\in \dC(\epsilon_k)$,
        $p,q\in \dC(\epsilon_{k'})$, $k<k'$, $D\in
        \dC(\epsilon_k+\epsilon_{k'})$ is of the form
\begin{equation}\label{e.dCsquare}
    \xymatrix{X\ar[r]^j\ar[d]_q\ar@{}[rd]|D & Y\ar[d]^p\\
    Z\ar[r]^i & W,}
\end{equation}
    $g\colon W\to W'$ and $f\colon X'\to X$ are paths.
  \end{enumerate}
Here $*$ denotes concatenation of paths. The identity morphism $\one_X\colon
X\to X$ in $\bT_n\dC$ is given by the path of length~$0$. Composition of
morphisms is given by concatenation of paths.

We obtain a functor $\bT_n \colon n\FoldCat\to \Cat$.
\end{Definition}

In other words, $\bT_n\dC$ is the quotient of the free category on the graph
$\coprod_{1\le k\le n}\dC(\epsilon_k)\rightrightarrows \Ob(\dC)$ by the
relations (1) through (3) above.

Note that $\bT_n\dC$ does not depend on hypercubes of $\dC$ of dimension $>
2$. We have an isomorphism $\bT_n\dC\cong (\pr_n)_!\dC$ that is the identity
on objects and carries the class of $f\in \dC(\epsilon_i)$ to the class of
the image of $f$ under the degeneracy map $\dC(\epsilon_i)\to
\dC([1],\dots,[1])$. The inverse carries the class of a hypercube $C\in
\dC(\alpha)$ to the class of any path from the initial vertex to the final
vertex of the hypercube, the class of the path being independent of the
choice of the path.

For $n$-fold categories $\dC$ and $\dD$, we have an isomorphism
$\bT_n(\dC\times\dD)\cong (\bT_n\dC)\times (\bT_n\dD)$ that is the identity
on objects and carries the class of $(f,g)\in (\dC\times \dD)(\epsilon_k)$
to $([f],[g])$, where $[f]$ and $[g]$ denote the classes of $f$ and $g$,
respectively. The inverse carries $([f_m*\dots* f_1],[g_l*\dots*g_1])$ to
the path $(f_m,\id)*\dots *(f_1,\id)*(\id, g_l)*\dots *(\id,g_1)$, which is
equivalent to the path $(\id, g_l)*\dots *(\id,g_1)*(f_m,\id)*\dots
*(f_1,\id)$ by conditions (2) and (3) above.

For an $m$-fold category $\dC$ and an $n$-fold category $\dD$, we have
$\bT_{m+n}(\dC\boxtimes \dD)\cong (\bT_m \dC)\times (\bT_n\dD)$. In
particular, for categories $\bC_1,\dots,\bC_n$, we have
$\bT_n(\bC_1\boxtimes \dots \boxtimes \bC_n)\cong \bC_1\times \dots \times
\bC_n$.

\begin{Remark}[Adjunction between $\bT_n$ and $\dK^n$]\label{r.adj}
Let $\dK^n=\pr_n^*\colon \Cat\to n\FoldCat$ be the functor carrying a
category to its $n$-fold category of hypercubes as in Example \ref{ex.rhos}
\ref{ex.rho2}. Let us describe the adjunction between $\bT_n$ and $\dK^n$
more explicitly. For every category $\bD$, we have an isomorphism of
categories $\bT_n\dK^n\bD\cong \bD$ which is the identity on objects and
carries the morphism represented by a path $f_m*\dots *f_1$, where $f_i\in
\coprod_{1\le k\le n}(\dK^n\bD)(\epsilon_k)$ to the composite $f_m\dotsm
f_1$. For every $n$-fold category $\dC$, we have an $n$-fold functor $\dC\to
\dK^n\bT_n\dC$ carrying a hypercube $C\in \dC(\alpha)$, corresponding to an
$n$-fold functor $\alpha_1\boxtimes \dots \boxtimes \alpha_n \to \dC$, to
the functor $\alpha_1\times \dots \times \alpha_n\cong
\bT_n(\alpha_1\boxtimes \dots \boxtimes \alpha_n)\to \bT_n\dC$. These
constructions exhibit $\bT_n$ as a left adjoint of $\dK^n$. Furthermore, we
have an isomorphism of $n$-fold categories
\begin{equation}\label{e.adj}
\dK^n\bFun(\bT_n\dC,\bD)\cong \dFFun{n}(\dC,\dK^n\bD),
\end{equation}
functorial in $\dC$ and $\bD$. Indeed, we have
\[\Fun(\alpha_1\times \dots \times \alpha_n,\bFun(\bT_n\dC,\bD))\cong \Fun(\bT_n((\alpha_1\boxtimes \dots \boxtimes \alpha_n)\times\dC),\bD) \cong \FFun{n}((\alpha_1\boxtimes \dots \boxtimes \alpha_n)\times\dC,\dK^n\bD).\]
\end{Remark}

The remainder of this section is devoted to $2$-categorical analogues of the
above. For the inclusion map $\iota_i\colon \{1\}\to \{1,\dots,n\}$ with
image $i$, the functor $\iota_i^*\colon n\FoldCat\to \Cat$ has the following
$2$-categorical refinement.

\begin{Definition}\index{H@$\cH$}\index{Hij@$\cH_{i,j}$}\index{V@$\cV$}
  To any double category $\dC$, one associates the \emph{underlying
  horizontal $2$-category} $\cH \dC$ and the \emph{underlying vertical
  $2$-category} $\cV \dC$. The underlying category of $\cH \dC$ is
  $\iota_1^*\dC=(\Ob(\dC),\Hor(\dC))$ and the underlying category of $\cV \dC$ is
  $\iota_2^*\dC=(\Ob(\dC),\Ver(\dC))$. A $2$-cell $\alpha\colon f\Rightarrow g$ in $\cH \dC$ is a square in $\dC$ of the boundary
\begin{equation}\label{e.sqh}
    \xymatrix{X\ar[r]^g\ar@{=}[d] & Y\ar@{=}[d]\\
  X\ar[r]^f & Y.}
\end{equation}
A $2$-cell $\alpha\colon f\Rightarrow g$ in $\cV\dC$ is a square in $\dC$ of
the boundary
\begin{equation}\label{e.sqv}
      \xymatrix{X\ar@{=}[r]\ar[d]_f & X\ar[d]^g\\
      Y\ar@{=}[r] & Y.}
\end{equation}
We have isomorphisms $\cH(\dC^t)\cong(\cV\dC)^{\twoop}$,
$\cV(\dC^t)\cong(\cH\dC)^{\twoop}$ (see Notation \ref{n.op}).

More generally, to an $n$-fold category $\dC$, and $1\le i,j \le n$, $i\neq
j$, one associates the $2$-category
\[\cH_{i,j}\dC=\begin{cases}
\cH(\iota_{i,j}^*\dC) & \text{if $i<j$,}\\
\cV(\iota_{j,i}^*\dC) & \text{if $i>j$,}
\end{cases}
\]
where $\iota_{i,j}\colon \{1,2\}\to \{1,\dots,n\}$ is the map sending $1$ to
$i$ and $2$ to $j$. We obtain a functor $\cH_{i,j}\colon n\FoldCat\to
\twoCat$.
\end{Definition}

We now proceed to give a $2$-categorical analogue of the adjunction between
$\bT_n$ and $\dK^n$. We start by $2$-categorical refinements of $\bT_n$.

\begin{Definition}[$2$-Categories of paths $\cT_n\dC$ and $\cT^\red_n \dC$ in an $n$-fold category
$\dC$]\label{d.T}\index{Tn@$\cT_n$}
  Let $\dC$ be an $n$-fold category. We define the \emph{$2$-category $\cT_n
  \dC$ of paths in
  $\dC$} as follows. The objects of $\cT_n\dC$ are the object of $\dC$.
  A \emph{morphism} $X\to Y$ of $\cT_n \dC$ is a path
  \[X=X_0\xrightarrow{f_1} X_1\to \dots \to X_{m-1} \xrightarrow{f_m} X_m=Y, \quad m\ge 0\]
  where $f_i\in \coprod_{1\le k\le n} \dC(\epsilon_k)$, $1\le i\le m$. The identity morphism $\one_X\colon X\to
  X$ in $\cT_n\dC$
  is the path of length~$0$. Composition of morphisms is given by concatenation of paths and is denoted by $*$.
  An
  \emph{atomic $2$-cell} between two paths sharing a source and a target is one of the
  following:
  \begin{enumerate}
    \item (creation of
        unit)\index{iotak@$\iota^k_{g,f}$}\index{tzhetak@$\theta^k_{g,f}$}
        $\iota^k_{g,f}\colon g* f\Rightarrow g* \one^k_Y *f$ or (deletion
        of unit) $\theta^k_{g,f}\colon g* \one^k_Y
        * f\Rightarrow g* f$, where $X\xrightarrow{f} Y\xrightarrow{g} Z$
        is a sequence of paths, $\one^k_Y\in \dC(\epsilon_k)$ is identity
        morphism of $Y$.

    \item
        (composition)\index{gzamma@$\gamma_{g,h',h,f}$}\index{delta@$\delta_{g,h',h,f}$}
        $\gamma_{g,h',h,f}\colon g* h'* h
        * f\Rightarrow g* (h'h)
        * f$ or (decomposition) $\delta_{g,h',h,f}\colon g* (h'h)
        * f\Rightarrow g* h'* h * f$, where $X\xrightarrow{f} Y\xrightarrow{h} Y'
        \xrightarrow{h'} Y''\xrightarrow{g} Z$ is a sequence of
        paths, $h$ and $h'$ belong to the same $\dC(\epsilon_k)$
        and $h'h$ is their composition in $\dC$.

    \item (square)\index{sigma@$\sigma_{g,D,f}$} $\sigma_{g,D,f}\colon
        g*i*q*f\Rightarrow g*p*j*f$, where $i,j\in \dC(\epsilon_k)$,
        $p,q\in \dC(\epsilon_{k'})$, $k<k'$, $D\in
        \dC(\epsilon_k+\epsilon_{k'})$ is of the form \eqref{e.dCsquare},
    $g\colon W\to W'$ and $f\colon X'\to X$ are paths.
  \end{enumerate}
  A \emph{pre-$2$-cell} $f\Rightarrow g$ is a sequence of atomic
  $2$-cells $f=f_0\Rightarrow f_1\Rightarrow \dots \Rightarrow f_{n-1}\Rightarrow
  f_n=g$. Vertical composition of pre-$2$-cells, denoted by $\circ$, is given by concatenation.
  Horizontal composition of pre-$2$-cells with morphisms is also given by concatenation: If $f,g\colon X\to Y$, $h\colon W\to
  X$, $h'\colon Y\to Z$ are paths and $\alpha\colon f\Rightarrow g$ is a pre-$2$-cell, then one has the
  pre-$2$-cell $h'*\alpha*h\colon h'*f*h\Rightarrow
  h'*g*h$. Consider systems which associate to every pair of paths $(f,g)$ sharing a source and a target,
  an equivalence relation on the set of pre-$2$-cells $f\Rightarrow
  g$. We say that one system $\sim$ is finer than another system
  $\sim'$ if $\alpha\sim \beta$ implies $\alpha\sim'\beta$. There is
  a finest system $\sim$ satisfying the following conditions:
  \begin{enumerate}
    \item \label{d.T1} (Stability under horizontal and vertical
        composition) If $f,g\colon X\to Y$, $h\colon W\to X$, $h'\colon
        Y\to Z$ are paths, $\alpha\sim \beta\colon f\Rightarrow g$,
        $\eta\colon f'\Rightarrow f$, $\eta'\colon g\Rightarrow g'$ are
        pre-$2$-cells, then $\eta'\circ \alpha \circ \eta \sim \eta'\circ
        \beta \circ\eta$ and $h'*\alpha*h\sim h'*\beta*h$.

    \item \label{d.T2} (Interchange law) If $f,f'\colon X\to Y$,
        $g,g'\colon Y\to Z$ are paths, $\alpha\colon f\Rightarrow f'$ and
        $\beta\colon g\Rightarrow g'$ are pre-$2$-cells, then $(f'*\beta)
        \circ (\alpha*g)\sim (\alpha*g')\circ (f*\beta)$.

    \item \label{d.T3} (Creation and deletion of unit) For $1\le k\le n$
        and every object $X$ of $\dC$,
        \[\theta^k_{\one_X,\one_X}\circ \iota^k_{\one_X,\one_X}\sim
        \one_{\one_X},\quad \iota^k_{\one_X,\one_X}\circ
        \theta^k_{\one_X,\one_X}\sim \one_{\one^k_X}.\]

    \item \label{d.T4} (Unit and composition) For $1\le k\le n$ and
        $f\colon X\to Y$ belonging to $\dC(\epsilon_k)$,
    \[\iota^k_{f,\one_X}\sim \delta_{\one_Y,f,\one^k_X,\one_X},\quad
        \iota^k_{\one_Y,f}\sim \delta_{\one_Y,\one^k_Y,f,\one_X},
        \]

    \item \label{d.T5} (Composition and decomposition) For $1\le k\le n$
        and every sequence $X\xrightarrow{f} Y\xrightarrow{g} Z$ of
        morphisms in $\dC(\epsilon_k)$,
        \[\delta_{\one_Z,g,f,\one_X}\circ
\gamma_{\one_Z,g,f,\one_X}\sim
        \one_{g*f},\quad \gamma_{\one_Z,g,f,\one_X}\circ
        \delta_{\one_Z,g,f,\one_X}\sim \one_{gf}.\]

    \item \label{d.T6} (Associativity of composition) For $1\le k\le n$
        and every sequence $X\xrightarrow{f} Y\xrightarrow{g}
        Z\xrightarrow{h} W$ of morphisms in $\dC(\epsilon_k)$,
        \[
        \gamma_{\one_W,h,gf,\one_X}\circ
        \gamma_{h,g,f,\one_X}\sim
        \gamma_{\one_W,hg,f,\one_X}\circ \gamma_{\one_W,h,g,f}.
        \]

    \item \label{d.T7} (Unit square) For $1\le k, k'\le n$ ($k\neq k'$)
        and $f\colon X\to Y$ belonging to $\dC(\epsilon_k)$, we have
        \[\begin{cases}
        \sigma_{\one_Y,D,\one_X}\circ\iota^{k'}_{f,\one_X}\sim
        \iota^{k'}_{\one_Y,f} & \text{if $k<k'$,}\\
        \sigma_{\one_Y,D,\one_X}\circ\iota^{k'}_{\one_Y,f}\sim
        \iota^{k'}_{f,\one_X} &\text{if $k'<k$,}
        \end{cases}
        \]
        where $D=s(f)\in \dC(\epsilon_k+\epsilon_{k'})$ is the
    identity square
    \begin{equation}\label{e.vidsquare}
        \xymatrix{X\ar[r]^f\ar@{=}[d] &Y\ar@{=}[d]\\
        X\ar[r]^f & Y.}
    \end{equation}

    \item \label{d.T8} (Composition of squares) For $1\le k,k'\le n$
        ($k\neq k'$) and $D''=D'\circ^{k} D$ in
        $\dC(\epsilon_k+\epsilon_{k'})$ of the form
  \begin{equation}\label{e.hsquare}
  \xymatrix{X_1\ar[r]^j \ar[d]_{p_1}\ar@{}[rd]|D & X_2\ar[r]^{j'}\ar[d]^{p_2} \ar@{}[rd]|{D'} &
  X_3\ar[d]^{p_3}\\
  Y_1\ar[r]^{i} & Y_2\ar[r]^{i'} & Y_3,}
  \end{equation}
we have
        \[\begin{cases}\gamma_{p_3,j',j,\one_{X_1}}\circ
        \sigma_{\one_{Y_3},D',j}\circ
        \sigma_{i',D,\one_{X_1}}\sim
        \sigma_{\one_{Y_3},D'',\one_{X_1}}\circ\gamma_{\one_{Y_3},i',i,p_1} & \text{if $k<k'$,}\\
\gamma_{\one_{Y_3},i',i,p_1}\circ
        \sigma_{i',D,\one_{X_1}}\circ
        \sigma_{\one_{Y_3},D',j}\sim
        \sigma_{\one_{Y_3},D'',\one_{X_1}}\circ
        \gamma_{p_3,j',j,\one_{X_1}} & \text{if $k'<k$.}
\end{cases}
        \]

\item \label{d.T9} (Commutativity of cubes) For $1\le k<k'<k''\le n$ and
    $C\in \dC(\epsilon_k+\epsilon_{k'}+\epsilon_{k''})$ of the form
     \[
    \xymatrix{X'\ar[dd]_{x}\ar[dr]^{q'}\ar[rr]^{b'} && Y'\ar[dd]^(.3){y}|\hole \ar[dr]^{p'}\\
    & Z'\ar[dd]^(.3){z}\ar[rr]^(.3){a'} && W'\ar[dd]^w \\
    X\ar[dr]_q\ar[rr]^(.3){b}|\hole && Y\ar[dr]^p\\
    & Z\ar[rr]^a && W }
     \]
    where $a,b,a',b'\in \dC(\epsilon_k)$, $p,q,p',q'\in
    \dC(\epsilon_{k'})$, $x,y,z,w\in \dC(\epsilon_{k''})$, we have
    \[\sigma_{\one_W,I,b'}\circ \sigma_{p,J',\one_{X'}}\circ \sigma_{\one_W,K,x}
    \sim \sigma_{w,K',\one_{X'}}\circ \sigma_{\one_W,J,q'}\circ \sigma_{a,I',\one_{X'}},\]
    where $I,I',J,J',K,K'$ are respectively the right, left, front, back,
    bottom, top faces of the cube.
  \end{enumerate}
  A $2$-cell $f\Rightarrow g$ of $\cT_n\dC$ is an equivalence class of pre-$2$-cells
  under this system of equivalence relations. Composition of $2$-cells
  is given by composition of pre-$2$-cells.

We define the \emph{reduced $2$-category $\cT^\red_n \dC$ of paths in $\dC$}
as follows. Objects of $\cT^\red_n \dC$ are objects of $\dC$. A
\emph{morphism} $X\to Y$ of $\cT^\red_n \dC$ is an equivalence class of
paths under the equivalence condition generated by conditions (1) and (2)
(unit, composition) of Definition \ref{d.cT}. The definition of $2$-cells in
$\cT^\red_n\dC$ is similar to the above, with only atomic $2$-cells of type
(3) (square). The atomic $2$-cells of types (1) and (2) (creation and
deletion of unit, composition and decomposition) appearing in conditions
\ref{d.T3} through \ref{d.T8} are replaced by identity pre-$2$-cells. In
particular, conditions \ref{d.T3} through \ref{d.T6} become tautological.

We obtain functors $\cT_n,\cT^\red_n\colon n\FoldCat\to \twoCat$.
\end{Definition}

As the (last) referee pointed out, the definition of $\cT_n\dC$ above is a
presentation of the $2$-category in the terminology of Street \cite[page
547]{Street}. The presentation consists of the graph $\coprod_{1\le k\le
n}\dC(\epsilon_k)\rightrightarrows \Ob(\dC)$, the derivation scheme given by
the atomic $2$-cells with $f$ and $g$ being identities, and the relation on
$2$-cells given by conditions \ref{d.T03} through \ref{d.T09}. Note that the
underlying category of $\cT_n\dC$ is a free category, so that Lemma
\ref{l.PsFun2} applies.

For a $1$-fold category $\bC$, the $2$-category $\cT_1\bC$ is $\cT\bC$ in
Definition \ref{d.cT} and $\cT^\red_1\bC$ can be identified with $\bC$. Note
that $\cT_n\dC$ and $\cT^\red_n\dC$ do not depend on hypercubes of $\dC$ of
dimension $>3$. This is justified by a theorem of Gray that we will recall
soon.

\begin{Remark}
The functors $\cT_n$ and $\cT_n^\red$ are refinements of $\bT_n$, in the
sense that we have isomorphisms $\bT_n\cong \bO\cT_n\cong \bO \cT^\red_n$,
where $\bO\colon \twoCat\to \Cat$ is the functor in Construction \ref{c.cO}.
\end{Remark}

We define the \emph{reduction $2$-functor} $R\colon \cT_n\dC\to
\cT^\red_n\dC$\index{R@$R$} as follows. On objects we take the identity. To
a morphism, we associate its equivalence class. To the class of
$\sigma_{g,D,f}$, we associate the class of $\sigma_{g,D,f}$. To the class
of atomic $2$-cells of types (1) and (2), we associate identities. We have
the following analogue of Proposition \ref{p.TC}.

\begin{Proposition}\label{p.TredC}
The reduction $2$-functor $R \colon \cT_n\dC\to \cT^\red_n\dC$ is a
bi-equivalence.
\end{Proposition}

Despite the proposition, we need both $\cT_n$ and $\cT^\red_n$, the latter
for the $2$-categorical refinement of the adjunction in Remark \ref{r.adj},
and the former for applications to pseudo functors.

\begin{proof}
We construct a strictly unital pseudo functor $G\colon \cT^\red_n\dC\to
\cT_n\dC$ satisfying $RG=\id_{\cT^{\red}_n\dC}$ and a pseudo natural
isomorphism $\eta\colon \id_{\cT_n\dC} \Rightarrow GF$ carrying $X$ to
$\id_X$ as follows. Note that the length of the target of each atomic
$2$-cell of type $\theta$ (deletion of unit) or $\gamma$ (composition) is
one less than the source. Thus, for any path $f$ in $\dC$, there exists a
sequence of atomic $2$-cells $f\Rightarrow \dots \Rightarrow f^\red$ of
types $\theta$ and $\gamma$ such that $f^\red$ is not the source of any
$\theta$ or $\gamma$. We denote the composite $2$-cell in $\cT_n\dC$ by
$\eta(f)$, which does not depend on choices by conditions \ref{d.T4} (unit
and composition) and \ref{d.T6} (associativity) of Definition \ref{d.T}. For
an atomic $2$-cell $\alpha\colon f_1\Rightarrow f_2$ of type $\theta$ or
$\gamma$, we have $\eta(f_1)=\eta(f_2)\alpha$. In particular,
$f_1^\red=f_2^\red$. Thus $f^\red$ depends only on the equivalence class of
$f$, and can be characterized as the unique path of minimal length in the
class of $f$. For any morphism $g$ of $\cT_n^{\red}\dC$, we take $G(g)$ to
be the unique path of minimal length representing $g$. For a sequence of
morphisms $X\xrightarrow{g} Y\xrightarrow{h} Z$ in $\cT^\red_n \dC$, the
composition constraint of $G$ is given by $\eta(G(h)G(g))$. An atomic
$2$-cell $\beta\colon g_1\Rightarrow g_2$ in $\cT^\red_n \dC$ can be lifted
to an atomic $2$-cell $\alpha\colon f_1\Rightarrow f_2$ in $\cT_n\dC$. We
take $G(\beta)$ to be the composite $G(f_1)\xRightarrow{\eta({f_1})^{-1}}
g_1\xRightarrow{\alpha} g_2\xRightarrow{\eta({f_2})} G(f_2)$.
\end{proof}

Our next goal is to extend the functor $\dK^n\colon \Cat\to n\FoldCat$
(Example \ref{ex.rhos} \ref{ex.rho2}) to a functor $\dQ^n\colon \twoCat\to
n\FoldCat$. In the case $n=2$, for a $2$-category $\cC$, $\dQ^2\cC$ is the
\emph{double category of up-squares} of $\cC$ defined by Bastiani and
Ehresmann \cite[2.C.1, p.~272]{BEhr}. In the case $\cC=\cCat$, the double
category $\dQ^2\cCat$ was studied earlier by Ehresmann and called the double
category of quintets \cite{Ehr2}. Objects of $\dQ^2\cC$ are objects of
$\cC$, horizontal morphisms are morphisms of $\cC$, vertical morphisms are
morphisms of~$\cC$, and squares are up-squares in $\cC$, namely, diagrams in
$\cC$ of the form
  \begin{equation}\label{e.2}
  \xymatrix{X\ar[r]^j\ar[d]_q\drtwocell\omit{^\alpha} & Y \ar[d]^p\\
  Z\ar[r]^i & W.}
  \end{equation}
In general, for $\alpha$ in $\bI^n$, we define $\alpha$-hypercubes of
$\dQ^n\cC$ to be the set of commutative hypercubes of dimension $\Sigma
\alpha$ with $2$-cells in $\cC$. The directions of the $2$-cells are
determined by the natural order on the set $\{1,\dots,n\}$.

By a theorem of Gray \cite{Gray2}, a commutative hypercube of dimension $n$
with $2$-cells is encoded by the $2$-category
$\gamma_n=\cT^\red_n([1]^{\boxtimes n})$, despite the fact that no
hypercubes of dimension $>3$ appear in the definition of $\cT^\red_n$. More
precisely, the theorem says that $\gamma_n$ locally a partially ordered set,
and hence commutative in the sense that, given any pair $(f,g)$ of
morphisms, there exists at most one $2$-cell $f\Rightarrow g$.

\begin{Notation}[The $\alpha$-hypercube $\gamma_{\alpha}$]\label{e.gamma}\index{gzammaalpha@$\gamma_\alpha$}
Let $\alpha=(\alpha_1,\dots,\alpha_n)$ be an object of $\bI^n$ (each
$\alpha_i$ being either $[0]$ or $[1]$). We define the \emph{(commutative)
$\alpha$-hypercube $\gamma_\alpha$ with $2$-cells} to be
$\cT^\red_n(\alpha_1\boxtimes \dots \boxtimes \alpha_n)$.
\end{Notation}

Of course for the $2$-category $\gamma_{\alpha}$ is isomorphic to
$\gamma_{\Sigma \alpha}$, but the above notation makes it clear that
$\gamma_\alpha$ is functorial with respect to $\alpha$.

Let us recall in our notation the explicit description of the hypercube by
Street \cite[Section 6]{Street}. We have $\Ob(\gamma_\alpha)=\alpha_1\times
\dots \times \alpha_n$. Consider a path $f_1*\dots*f_{\# J}$ from the
initial vertex to the final vertex of a $J$-hypercube, each $f_k$ in
direction $i_k\in J$. The path is uniquely determined by the total order
$i_1<_f\dots<_fi_{\#J}$ on $J$ (or equivalently, by the permutation of $J$
that carries the natural order to $f$). The partial order on the set of such
paths is precisely the weak order on permutations. Let us give a description
in terms of total orders, which is more natural. For objects $a$ and $b$ of
$\gamma_\alpha$, the category of morphisms $\gamma_\alpha(a,b)$ is given by
the partially ordered set
  \[\begin{cases}
    \text{set of total orders on $J_{a,b}=\{1\le i\le n\mid a_i\neq b_i\}$} & \text{if $a\le b$,}\\
    \emptyset & \text{otherwise.}
  \end{cases}\]
For total orders $f$ and $g$ on $J_{a,b}$, there exists a $2$-cell
$f\Rightarrow g$ if and only if $\Inv(f)\subseteq \Inv(g)$. In this case
there exists a unique $2$-cell. Here $\Inv(f)$ denote the set of pairs
$(i,j)\in J_{a,b}^2$ for which $i<j$ but $i>_f j$ ($i$ greater than $j$
under the order $f$). For $f\colon a\to b$ and $g\colon b\to
  c$, the composition $g\circ f$ is the unique order on $J_{a,c}=J_{b,c}\coprod
  J_{a,b}$ extending $f$ and $g$ such that $i<_{g\circ f}j$ for all
  $i\in J_{b,c}$ and $j\in J_{a,b}$.

\begin{Remark}\label{r.straight}
Note that $\gamma_\alpha$ is also the locally full sub-$2$-category of
$\cT_n(\alpha_1\boxtimes \dots \boxtimes \alpha_n)$ spanned by paths that do
not contain $\id^k$. Let $S$ be the set of edges of the hypercube
$\gamma_\alpha$, namely the set of morphisms $f\colon a\to b$ of
$\gamma_\alpha$ such that $b=a+\epsilon_i$ for some $i$. Then the underlying
category of $\gamma_\alpha$ is free on the graph $S\rightrightarrows
\Ob(\gamma_\alpha)$, so that Lemma \ref{l.PsFun2} applies.
\end{Remark}

\begin{Definition}[The $n$-fold category $\dQ^n\cC$ of hypercubes in a $2$-category
$\cC$]\label{d.Q}\index{Q@$\dQ^n$, $\dQ_{\cA_1,\dots,\cA_n}\cC$} Let $\cC$
be a $2$-category. We define an $n$-fold category $\dQ^n \cC$ by
$(\dQ^n\cC)(\alpha)=\rtwoFun(\gamma_\alpha,\cC)$. For $\alpha,\alpha'\in
\bI^n$ satisfying $\alpha'=\alpha+\epsilon_i$, the composition $\circ^i$ in
direction $i$ is given by the map
  \[(\dQ^n\cC)(\alpha')\times_{(\dQ^n\cC)(\alpha)}(\dQ^n\cC)(\alpha')\cong
    \rtwoFun(\cB,\cC) \to (\dQ^n\cC) (\alpha')
    \]
induced by $d^2_1$, where $\cB=\cT^{\red}_n({\tilde \alpha}_1\boxtimes \dots
\boxtimes {\tilde \alpha}_n)$ ($\tilde \alpha_j$ as in Definition
\ref{d.phi*}).

We obtain a functor $\dQ^n\colon \twoCat\to n\FoldCat$.

Now let $\cA_1,\dots, \cA_n$ be locally full sub-$2$-categories of $\cC$,
each containing all objects of $\cC$. We denote by
$\dQ_{\cA_1,\dots,\cA_n}\cC$ the largest $n$-fold subcategory of $\dQ^n\cC$
such that
$(\dQ_{\cA_1,\dots,\cA_n}\cC)(\epsilon_i)=\rtwoFun(\gamma_{\epsilon_i},\cA_i)$.
In other words, $\alpha$-hypercubes of $\dQ_{\cA,\dots,\cA_n}\cC$ are
commutative hypercubes of dimension $\Sigma \alpha$ with $2$-cells in $\cC$,
in the directions $i$ satisfying $\alpha_i=1$, such that every edge in
direction $i$ is in $\cA_i$.
\end{Definition}

For a category $\bC$, we have $\dQ^n\bC\cong \dK^n\bC$. For $n=1$, the
$1$-fold category $\dQ^1\cC$ is the underlying category of $\cC$. By
definition, $\dQ^n\cC=\dQ_{\cC,\dots,\cC}\cC$. For the map $\iota_i\colon
\{1\}\to \{1,\dots,n\}$ with image of $i$, the category
$\iota_i^*\dQ_{\cA_1,\dots,\cA_n}\cC$ is isomorphic to the underlying
category of $\cA_i$. Moreover, for $i\neq j$, we have an isomorphism of
$2$-categories $\cH_{i,j}(\dQ_{\cA_1,\dots,\cA_n}\cC)\cong \cA_i$.

\begin{Example}\label{ex.up}
In the case $n=2$, let $\cA$ and $\cB$ be locally full sub-$2$-categories of
$\cC$, each containing all objects of $\cC$. The objects of the double
category $\dQ_{\cA,\cB}\cC$ are the objects of $\cC$. Horizontal morphisms
are morphisms of $\cA$, vertical morphisms are morphisms of~$\cB$, and
squares are \emph{$(\cA,\cB)$-squares in $\cC$}, namely, diagrams in $\cC$
of the form \eqref{e.2} where the horizontal morphisms $i,j$ are in $\cA$
and the vertical morphisms $p,q$ are in $\cB$. We have isomorphisms
$\cH(\dQ_{\cA,\cB}\cC)\cong \cA$ and $\cV(\dQ_{\cA,\cB}\cC)\cong \cB$.
\end{Example}

\begin{Construction}[Adjunction between $\cT^\red_n$ and $\dQ^n$]
Let $\dC$ be an $n$-fold category. We construct an $n$-fold functor
\begin{equation}\label{e.QT}
\dC\to \dQ^n\cT^\red_n\dC
\end{equation}
as follows. To a hypercube $C\in \dC(\alpha)$, corresponding to an $n$-fold
functor $\alpha_1\boxtimes \dots \boxtimes \alpha_n\to \dC$, we associate
the $2$-functor $\gamma_\alpha\to \cT^\red_n\dC$ obtained by applying
$\cT^\red_n$.

Let $\cD$ be a $2$-category. We construct a $2$-functor
\begin{equation}\label{e.TQ}\index{Enred@$E^\red_n$}
E^\red_n\colon \cT^\red_n\dQ^n\cD\to \cD
\end{equation}
as follows. On objects we take the identity. To a path $f_m*\dots* f_1$,
where $f_i\in \coprod_{1\le k\le n} (\dQ^n \cD)(\epsilon_k)$, we associate
the composite $f_m\dotsm f_1$. We take $F(\sigma_{g,D,f})$ to be the
$2$-cell induced by $D$.
\end{Construction}

As in Remark \ref{r.adj}, it is easy to check the following.

\begin{Proposition}\label{p.adj}
The above construction exhibits $\cT^\red_n\colon n\FoldCat\to \twoCat$ as a
left adjoint of $\dQ^n\colon \twoCat\to n\FoldCat$. In other words, we get a
bijection
\[\rtwoFun(\cT^\red_n\dC,\cD)\cong \FFun{n}(\dC,\dQ^n\cD),
\]
functorial in $\dC$ and $\cD$.
\end{Proposition}

For $n=1$, this adjunction reduces to the one mentioned at the end of
Construction \ref{c.cO}.

\begin{Remark}\label{r.Gray}
In the case where $\dC=\bC_1\boxtimes \dots \boxtimes\bC_n$ is the external
product of categories $\bC_1, \dots, \bC_n$, this adjunction was studied by
Gray \cite[Section I.4]{Gray}. In this case, the $2$-category
$\cT^\red_n(\bC_1\boxtimes \dots \boxtimes \bC_n)$ is Gray's tensor product
$\bC_1\otimes \dots \otimes \bC_n$,\footnote{The identification with Gray's
tensor product follows from the definition for $n=2$ and from Proposition
\ref{p.adj} for $n\ge 3$.} and $n$-fold functors $\bC_1\boxtimes \dots
\boxtimes\bC_n\to \dQ^n\cD$ are precisely Gray's ``quasi-functors'' of
several variables $\bC_1\times \dots \times \bC_n \to \cD$. We emphasize
that this is only a special case of Gray's theory, which defines more
generally tensor products of $2$-categories in relation to the $2$-category
of $2$-functors, \emph{lax} natural transformations, and modifications.
\end{Remark}

The functor $\dK^n$ is fully faithful and we have $\bT_n\dK^n\bD\cong \bD$.
For $n\ge 2$, the $2$-functor $E_n^\red$ is a bijection on objects and a
surjection on morphisms and $2$-cells, so that the functor $\dQ^n$ is
faithful. On the other hand, $E_n^\red$ is seldom an isomorphism and $\dQ^n$
is never fully faithful. Nonetheless, we have the following $2$-categorical
analogue.

\begin{Proposition}\label{p.Fequiv}
Let $\cD$ be a $2$-category. For $n\ge 2$, the $2$-functor $E^\red_n\colon
\cT^\red_n\dQ^n\cD\to \cD$ is a
  bi-equivalence. More generally, if $\cA_1,\dots,\cA_n$ are locally full
  sub-$2$-categories of $\cD$ each of which contains all objects of $\cD$ and
  $\cA_i=\cD$ for some $1\le i\le n$, then the $2$-functor $\cT^\red_n\dQ_{\cA_1,\dots,\cA_n}\cD\to
  \cD$ induced by $E^\red_n$ is a bi-equivalence.
\end{Proposition}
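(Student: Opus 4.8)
The plan is to verify condition \ref{p.equivb} of Proposition \ref{p.equiv} for $F$ together with the pseudofunctor $G_i\colon \cD\to \bT\dQ^n\cD$ of \eqref{e.TQ2} (for an arbitrary auxiliary direction $j\neq i$). Since $\lvert F\rvert\colon \Ob(\bT\dQ^n\cD)\to\Ob(\cD)$ and $\lvert G_i\rvert$ are both the identity map, it is enough to show that $FG_i=\one_\cD$ and that there is a pseudonatural transformation $\eta\colon \one_{\bT\dQ^n\cD}\Rightarrow G_iF$ with $\eta(X)=\one_X$ for all objects $X$; then $\epsilon=\one_\cD\colon FG_i\to\one_\cD$ supplies the remaining datum. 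The equality $FG_i=\one_\cD$ is read off the definitions: $G_i$ sends a morphism $f$ of $\cD$ to the length-$1$ path given by the edge $f$ in direction $i$, with coherence constraints built from $\iota^i$ and $\gamma$, and a $2$-cell $\alpha$ to a composite of atomic transformations whose only factor besides instances of $\iota^j,\theta^j$ is $\sigma$ of the up-square induced by $\alpha$ (cf.\ \eqref{e.HtoT}); applying $F$ collapses that path to $f$, kills every $\iota,\theta,\gamma$ factor and every coherence constraint, and carries $\sigma$ of that up-square to the $2$-cell it induces, namely $\alpha$.

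To build $\eta$ I introduce, for each morphism $f\colon X\to Y$ of $\cD$ and each $k\neq i$, a \emph{straightening $2$-cell} $\tau^f_k\colon f^{(i)}\Rightarrow f^{(k)}$, where $f^{(k)}$ denotes the length-$1$ path given by $f$ in direction $k$: it is $\sigma_{\one,D,\one}$, trimmed by instances of $\iota,\theta$, for the identity-type up-square $D$ of $\dQ^n\cD$ whose only non-identity edges are two copies of $f$, one in direction $k$ and one in direction $i$, and whose $2$-cell is the identity. This $2$-cell is invertible, with inverse coming from the companion identity square, the two composites being identities by relations \ref{d.T7} and \ref{d.T8}; set $\tau^f_i=\one_{f^{(i)}}$. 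Now for a morphism $p=(f_m,\dots,f_1)$ of $\bT\dQ^n\cD$, with $f_l$ in direction $k_l$ and composite $\bar p=f_m\cdots f_1$ in $\cD$, so that $G_iF(p)$ is the length-$1$ path $\bar p^{(i)}$, define $\eta(p)\colon \bar p^{(i)}\Rightarrow p$ to be the instances of $\delta$ decomposing $\bar p^{(i)}$ into $(f_m^{(i)},\dots,f_1^{(i)})$ along direction $i$, followed by $\tau^{f_m}_{k_m}*\dots*\tau^{f_1}_{k_1}$ (applied one edge at a time and concatenated using \ref{d.T1}), which lands on $p$; and put $\eta(X)=\one_X$.

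It then remains to check that $\eta$ is pseudonatural. The unit condition, comparing $\eta(\one_X)$ with the unit constraints of $\one_{\bT\dQ^n\cD}$ and of $G_iF$, follows from \ref{d.T3} and \ref{d.T4}. The composition condition --- that $\eta(q*p)$ is the pasting of $\eta(q)$, $\eta(p)$ and the composition constraint of $G_iF$, which is an instance of $\gamma$ along direction $i$ --- is the substantive point; after unwinding both sides it reduces to the associativity relation \ref{d.T6} for $\gamma,\delta$, the relation \ref{d.T8} governing horizontal composites of squares (used to slide the straightenings past the $\delta$'s), and the exchange relations \ref{d.T1}, \ref{d.T2}. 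I expect this bookkeeping, together with the invertibility of the $\tau^f_k$, to be the only genuine obstacle; the rest is formal. Granting it, Proposition \ref{p.equiv}\ref{p.equiv2} shows that $F$ is a $2$-equivalence.

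For the more general statement one runs the same argument taking $i$ to be an index with $\cA_i=\cD$. Every edge and square used above already lies in $\dQ_{\cA_1,\dots,\cA_n}\cD$: the edge $f^{(i)}$ in the image of $G_i$ is admissible because $\cA_i=\cD$; and in the straightening square attached to an edge $f$ occurring in a path of $\bT\dQ_{\cA_1,\dots,\cA_n}\cD$, the copy of $f$ in direction $k$ lies in $\cA_k$ by assumption on the path, the copy in direction $i$ lies in $\cA_i=\cD$, and the other edges are identities. Hence $G_i$ factors through $\bT\dQ_{\cA_1,\dots,\cA_n}\cD$ and the whole construction stays there; the hypothesis $n\ge 2$ is exactly what provides the auxiliary direction $j\neq i$ used to define $G_i$ via \eqref{e.TQ2}.
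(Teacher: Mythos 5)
Your proposal is correct and follows essentially the same route as the paper: the quasi-inverse is the pseudofunctor $G_i$ of \eqref{e.TQ2} for an index $i$ with $\cA_i=\cD$, one has $FG_i=\one_\cD$ on the nose, and the pseudonatural equivalence between $G_iF$ and the identity is built from exactly the degenerate squares induced by $\one_f$ that the paper uses, padded with $\iota,\theta,\gamma,\delta$. The paper records the components only on the generating edges $f^k$ and leaves the compatibility with $2$-cells as a routine verification, which is the same bookkeeping you defer; your explicit extension to arbitrary paths and the invertibility of the straightening $2$-cells are implicit in the paper's argument.
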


For $n=1$, $E_1^\red$ is the inclusion of the underlying category of $\cD$
into $\cD$, which is seldom a bi-equivalence. The functor $\dQ^1$ is not
faithful.

\begin{proof}
We begin with a construction for an $n$-fold category $\dC$. For $1\le i,j
\le n$ and $i\neq j$, we define a $2$-functor
\begin{equation}\label{e.HtoT}
  \cH_{i,j}\dC\to \cT_n^\red\dC
\end{equation}
as follows. On objects, we take the identity. To $f\in \dC(\epsilon_i)$, we
associate $f$. If $i<j$ (resp.\ $i>j$), to any $2$-cell $\alpha\colon
f\Rightarrow g$ in $\cH_{i,j}\dC$, where $f,g\colon X\to Y$,
  we associate the $2$-cell
\begin{equation}\label{e.Fequiv2}
  f=f*\one^j_X \xRightarrow{\sigma_{\one_Y,D,\one_X}}
  \one^j_Y*g = g\quad \text{(resp.\ $f=\one^j_Y*f
  \xRightarrow{\sigma_{\one_Y,D,\one_X}}
  g*\one^j_X = g$)},
\end{equation}
  where $D\in \dC(\epsilon_i+\epsilon_j)$ is the square induced by $\alpha$ of the boundary \eqref{e.sqh} (resp.\
  \eqref{e.sqv}). Taking $\dC=\dQ^n\cD$, we get a $2$-functor
\begin{equation}\label{e.TQ2}
G_i^\red\colon \cD\cong \cH_{i,j}\dQ^n\cD \to \cT_n^\red\dQ^n\cD.
\end{equation}
Note that in this case the $2$-cell \eqref{e.Fequiv2} does not depend on the
choice of $j$ by conditions \ref{d.T7} (unit square) and \ref{d.T9} (cube)
of Definition \ref{d.T}.

We still use $E^\red_n$ and $G^\red_i$ to denote the induced $2$-functors
between $\cT^\red_n\dQ_{\cA_1,\dots,\cA_n}\cD$ and $\cD$. We have $E^\red
G^\red_i=\one_\cD$. We define a pseudo natural isomorphism $\epsilon\colon
G^\red_iE^\red\Rightarrow \one_{\cT_n\dQ_{\cA_1,\dots,\cA_n}\cD}$ as
follows.
  For any object $X$ of $\cD$, we take $\epsilon(X)=\one_X$. For any
  morphism $f\colon X\to Y$ of $\cD$, we denote by $f^k\in (\dQ^n\cD)(\epsilon_k)$ the image of $f$. We take
  $\epsilon(f^i)=\one_{f^i}$, and take $\epsilon(f^k)$, $i<k$ (resp.\ $i>k$) to be the
  composition
  \[f^i=
  f^i*\one^k_Y \xRightarrow{\sigma_{\one_Y,D,\one_X}}
  f^k*\one^i_X = f^k \quad\text{(resp.\ $f^i=
  \one^k_Y*f^i \xRightarrow{\sigma_{\one_Y,E,\one_X}}
  \one^i_X*f^k = f^k$)},\]
  where $D,E\in (\dQ^n \cD)(\epsilon_i+\epsilon_k)$ are
  the squares
  \[\xymatrix{X\ar@{=}[r]\ar@{=}[d]\ar@{}[rd]|D & X\ar[d]^{f} & X\ar[r]^f\ar[d]_f\ar@{}[rd]|E &Y\ar@{=}[d]\\
  X\ar[r]^f & Y & Y\ar@{=}[r] & Y}\]
  induced by $\one_f$. It is straightforward to check that this definition is compatible with $2$-cells of $\cT_n^\red\dQ_{\cA_1,\dots,\cA_n}\cD$.
\end{proof}

Since our goal is to study pseudo functors between $2$-categories, it would
be nice to have a version of the above adjunction for pseudo functors.
Between double categories, a notion of double pseudo functors, weakly
compatible with compositions in both directions, was defined by Shulman
\cite[Definition 6.1]{Shulman}\footnote{This is more general than the notion
of pseudo double functors by Fiore \cite[Definition 6.4]{Fiore}, which are
weakly compatible with horizontal composition and strictly compatible with
vertical composition.}. For $n\ge 3$, however, to define $n$-fold pseudo
functors between $n$-fold categories, one would need, for each direction
$i$, to choose an auxiliary direction $j\neq i$ that provides the
pseudoness. Fortunately, we will only need $n$-fold pseudo functors from an
$n$-fold category $\dC$ to $\dQ^n \cD$, where $\cD$ is a $2$-category. In
this case, no choice of directions is necessary. Taking our clue from
Propositions \ref{l.PsFun} and \ref{p.adj}, we regard
$\twoFunps(\cT_n\dC,\cD)$ as the $2$-category of \emph{$n$-fold pseudo
functors} from $\dC$ to $\dQ^n \cD$.

\begin{Remark}\label{d.mixedPsFun}
As in Proposition \ref{p.adj}, it is possible to describe
$\twoFunps(\cT_n\dC,\cD)$ in terms of $\dC$ and $\dQ^n\cD$, as follows.
\emph{Objects} are pairs $(F,(F_i)_{1 \le i \le n})$ consisting of a natural
transformation $F\colon \dC\to \dQ^n\cD$, where $\dC$ and $\dQ^n\cD$ are
viewed as functors $(\bI_+^{\op})^n\to \Set$, and pseudo functors $F_i\colon
\iota_i^*\dC\to \cD$ extending $F_{\bzero}$ and $F_{\epsilon_i}$. Here
$\bI_+$ is the subcategory of $\bI$ (Notation \ref{n.cI}) spanned by the
strictly increasing maps. For all $1\le k, k'\le n$ ($k\neq k'$), and $D\in
\dC(\epsilon_k+\epsilon_{k'})$, we denote by $G_D$ the $2$-cell in
      $F_{\epsilon_k+\epsilon_{k'}}(D)$. The pair $(F,(F_i))$ is subject to the following
conditions:
\begin{itemize}
  \item[(a)] For $f\colon X\to Y$ belonging to $\dC(\epsilon_k)$,
      the following triangle (resp.\ with the horizontal arrow
      reversed)
      \[\xymatrix{F_k(f)\ar@=[rd] \ar@=[d] \\
      F_k(f) F_{k'}(\one^{k'}_X)\ar@=[r]^{G_D} & F_{k'}(\one^{k'}_Y)F_k(f) }
      \]
      commutes if $k<k'$ (resp.\ $k'<k$), where $D=s(f)\in
      \dC(\epsilon_k+\epsilon_{k'})$ is the identity square
      \eqref{e.vidsquare}.

  \item[(b)] For $D''=D'\circ^{k} D$ in
      $\dC(\epsilon_k+\epsilon_{k'})$ of the form
      \eqref{e.hsquare}, the following pentagon (resp.\ with the
      horizontal arrows reversed)
\[\xymatrix{F_k(i')F_{k}(i)F_{k'}(p_1)\ar@=[r]^{G_D}\ar@=[d]
& F_k(i')F_{k'}(p_2)F_k(j) \ar@=[r]^{G_{D'}}
& F_{k'}(p_3)F_k(j')F_k(j)\ar@=[d]\\
F_k(i'i)F_{k'}(p_1)\ar@=[rr]^{G_{D''}}
&& F_{k'}(p_3)F_k(j'j)}
\]
      commutes if $k<k'$ (resp.\ $k'<k$).
\end{itemize}

A \emph{morphism} $\mu\colon F\to F'$ is a collection $(\mu_i)_{1\le i\le
n}$ of morphisms (i.e.\ pseudo natural transformations) $\mu_i\colon F_i\to
F'_i$ of $\PsFun(\iota_i^*\dC,\cD)$ satisfying $\lvert \mu_1\rvert =\dots
=\lvert \mu_n\rvert$ (Notation \ref{d.bar} \ref{d.bar2}) and such that for
every square \eqref{e.dCsquare} in $\dC$, the cube
\[    \xymatrix{FX\ar[dd]_{\alpha X}\ar[dr]^{F_{k'}(q)}\ar[rr]^{F_k(j)} && FY\ar[dd]^(.3){\alpha Y}|\hole \ar[dr]^{F_{k'}(p)}\\
    & FZ\ar[dd]^(.3){\alpha Z}\ar[rr]^(.3){F_k(i)} && FW\ar[dd]^{\alpha W} \\
    F'X\ar[dr]_{F'_{k'}(q)}\ar[rr]^(.3){F'_k(j)}|\hole && F'Y\ar[dr]_{F'_{k'}(p)}\\
    & F'Z\ar[rr]_{F'_k(i)} && F'W }
\]
is commutative. Here the top, bottom, front, back, right, and left faces are
respectively given by $F_{\epsilon_k+\epsilon_{k'}}(D)$,
$F'_{\epsilon_k+\epsilon_{k'}}(D)$, $\mu_k(i)$, $\mu_k(j)$, $\mu_{k'}(p)$,
and $\mu_{k'}(q)$.

A \emph{$2$-cell} $\Xi\colon \mu\Rightarrow \nu$
  consists of a collection $(\Xi_i)_{1\le i\le n}$ of $2$-cells (i.e.\ modifications)
  $\Xi_i\colon \mu_i\Rightarrow  \nu_i$ of $\PsFun(\iota_i^*\dC,\cD)$ such that $\lvert \Xi_1 \rvert = \dots=\lvert
  \Xi_n \rvert$. In other words, a $2$-cell $\mu\Rightarrow \nu$ of $\twoFunps(\dC,\cD)$ is a
  function from $\Ob(\dC)$ to
  the set of $2$-cells of $\cD$ which is a modification $\mu_i\Rightarrow \nu_i$ for all $1\le i\le n$ at the same time.

\emph{Composition} is given by composition in $\cD$.

The $2$-category $\twoFunps(\cL\cT_n\dC,\cD)$ can be identified with the
$2$-full sub-$2$-category spanned by objects for which all the $G_D$ are
invertible, where $\cL\colon \twoCat\to \twooneCat$ is the functor in
Construction \ref{c.twoone}.
\end{Remark}

\begin{Construction}[The $2$-functor $E_n\colon \cT_n\dQ^n\cC\to
\cT\cC$]\label{c.E}\index{En@$E_n$} Let $\cC$ be a $2$-category. We
construct a variant
\begin{equation}\label{e.ET}
E_n\colon \cT_n\dQ^n\cC\to \cT\cC
\end{equation}
of the adjunction $2$-functor \eqref{e.TQ} as follows. On objects we take
the identity. To a path $f_m*\dots*f_1$ in $\dQ^n\cC$, we associate the path
$f_m*\dots*f_1$ in $\cC$. To the $2$-cells $\iota^k$ and $\theta^k$
(creation and deletion of unit), we associate $\iota$ and $\theta$. To the
$2$-cells $\gamma$ and $\delta$ (composition and decomposition), we
associate $\gamma$ and $\delta$. Finally, to the $2$-cell $\sigma_{g,D,f}$
(square), we associate the composite
\[(E_ng)*i*q*(E_nf) \xRightarrow{\gamma_{E_ng,i,q,E_nf}} (E_ng)*iq*(E_nf) \xRightarrow{\sigma_{E_ng,\alpha,E_nf}} E_ng*pj*E_nf\xRightarrow{\delta_{E_ng,p,j,E_nf}} E_ng*p*j*E_nf,\]
where $\alpha\colon iq\Rightarrow pj$ is the $2$-cell in $D$. We have a
commutative square
\[\xymatrix{\cT_n\dQ^n\cC\ar[r]^-{E_n}\ar[d]_{R} &\cT\cC\ar[d]^{R_\cC}\\
\cT^\red_n\dQ^n\cC\ar[r]^-{E^\red_n} & \cC,}
\]
where the vertical arrows are the reduction $2$-functors and are
bi-equivalences by Propositions \ref{p.TC} and \ref{p.TredC}. For $n\ge 2$,
the lower horizontal arrow is a bi-equivalence by Proposition
\ref{p.Fequiv}, so that $E_n\colon \cT_n\dQ^n\cC\to \cT\cC$ is a
bi-equivalence as well.

This construction is related to pseudo functors as follows. For any
$2$-category $\cD$, the $2$-functor $E_n$ induces a
$\cD^{\Ob(\cC)}$-$2$-functor
\[ \PsFun(\cC,\cD)\cong \twoFunps(\cT\cC,\cD)\to \twoFunps(\cT_n\dQ^n\cC,\cD),\]
which is a $\cD^{\Ob(\cC)}$-$2$-equivalence for $n\ge 2$ by Proposition
\ref{p.equiv} and Lemma \ref{l.PsFun2}.

For reference in the next section, for $n\ge 2$, we define $2$-functors
\begin{equation}\label{e.Gi}
G_i\colon \cT\cC\to \cT_n\dQ^n\cC,
\end{equation}
pseudo inverses of $E_n$, as follows. The definition is similar to
\eqref{e.TQ2}. On objects we take the identity. To a path $f_m*\dots*f_1$ in
$\cC$, we associate the path $f_m^i*\dots*f_1^i$ in $\dQ^n\cC$, where
$f_j^i\in (\dQ^n\cC)(\epsilon_i)$ is $f_j$ considered as a morphism in
direction $i$. We take $G_i(\iota_{g,f})=\iota_{G_ig,G_if}^i$ and
$G_i(\gamma_{g,h',h,f})=\gamma_{G_i g, h'^i ,h^i,G_if}$. Choose $1\le j\le
n$ with $j\neq i$. We take $G_i(\sigma_{g,\alpha,f})$ to be the $2$-cell
\[G_i(g)*h^i*G_i(f)\xRightarrow{\iota^j}G_i(g)*h^i*\id^j_Y*G_i(f)\xRightarrow{\sigma_{G_ig,D,G_if}}G_i(g)*\id^j_Z*h'^i*G_i(f)\xRightarrow{\theta^j}G_i(g)*h'^i*G_i(f)\]
for $i<j$ and the $2$-cell
\[G_i(g)*h^i*G_i(f)\xRightarrow{\iota^j}G_i(g)*\id^j_Y*h^i*G_i(f)\xRightarrow{\sigma_{G_ig,D,G_if}}G_i(g)*h'^i*\id^j_Z*G_i(f)\xRightarrow{\theta^j}G_i(g)*h'^i*G_i(f)\]
for $i>j$. Here $D$ denotes the square induced by $\alpha$. The $2$-cell
does not depend on the choice of $j$.
\end{Construction}

\begin{Construction}[The descent $2$-functor $Q^n_\cD$]\index{Qn@$Q^n$}
Let $\cC$ be a $(2,1)$-category. Consider the $2$-functor
\[
\cL E_n\colon \cL\cT_n\dQ^n\cC\to \cL\cT\cC\cong \cT\cC
\]
obtained from \eqref{e.ET} by applying the functor $\cL$ (Construction
\ref{c.twoone}). This induces, for every $2$-category $\cD$, a
$\cD^{\Ob(\cC)}$-$2$-functor
\begin{equation}\label{e.preQ}
 \PsFun(\cC,\cD)\cong \twoFunps(\cT\cC,\cD)\to \twoFunps(\cL\cT_n\dQ^n\cC,\cD),
\end{equation}
For $n\ge 2$, the $2$-functor $\cL E_n$ is a bi-equivalence and
\eqref{e.preQ} is a $\cD^{\Ob(\cC)}$-$2$-equivalence.

One main goal of this article is to study more generally, for locally full
sub-$2$-categories $\cA_1,\dots,\cA_n\subseteq \cC$ each of which contains
all objects of $\cC$, the \emph{descent $2$-functor}
\begin{equation}\label{e.Q}
Q^n= Q^n_{\cD}\colon \PsFun(\cC,\cD)\cong \twoFunps(\cT\cC,\cD)\to \twoFunps(\cL\cT_n\dQ_{\cA_1,\dots,\cA_n}\cC,\cD)
\end{equation}
induced by the $2$-functor
\[
\cL E_n\colon \cL\cT_n\dQ_{\cA_1,\dots,\cA_n}\cC\to \cT\cC
\]
obtained by restriction via the inclusion
$\dQ_{\cA_1,\dots,\cA_n}\cC\subseteq \dQ^n\cC$.
\end{Construction}

\begin{Definition}[$2$-Category of gluing data]\label{d.gd}\index{GD@$\GD_{\cA,\cB}(\cC,\cD)$}\index{GD@$\GD_{\cA_1,\dots,\cA_n}(\cC,\cD)$}
For $n\ge 2$, we call
\[\GD_{\cA_1,\dots,\cA_n}(\cC,\cD)\coloneqq \twoFunps(\cL\cT_n\dQ_{\cA_1,\dots,\cA_n}
\cC, \cD)
\]
the $2$-category of \emph{gluing data} from $\cC$ to~$\cD$, relative to
$\cA_1,\dots,\cA_n$. For $n=1$, we put
$\GD_{\cA_1}(\cC,\cD)\coloneqq\PsFun(\cA_1,\cD)$.
\end{Definition}

We will give more explicit descriptions of the $2$-category of gluing data
later (see Remark \ref{r.gd} for the case $n=2$ and Remarks \ref{r.gdn},
\ref{r.gdna} for the general case), which also justifies our convention for
the case $n=1$. In order to treat the cases $n=1$ and $n\ge 2$ uniformly, we
adopt the following notation for $n\ge
2$:\index{TnQ@$\overline{\cT_n\dQ}_{\cA_1,\dots,\cA_n}\cC$,
$\overline{\cT_n^\red\dQ}_{\cA_1,\dots,\cA_n}\cC$}
\begin{gather*}
\overline{\cT_n\dQ}_{\cA_1,\dots,\cA_n}\cC\coloneqq\cT_n\dQ_{\cA_1,\dots,\cA_n}\cC,\quad \overline{\cT_1\dQ}_{\cA_1}\cC\coloneqq\cT\cA_1,\\
\overline{\cT_n^\red\dQ}_{\cA_1,\dots,\cA_n}\cC\coloneqq\cT^\red_n\dQ_{\cA_1,\dots,\cA_n}\cC,\quad \overline{\cT_1^\red\dQ}_{\cA_1}\cC\coloneqq\cA_1.
\end{gather*}

By Proposition \ref{p.equiv} and Lemma \ref{l.PsFun2}, we have the following
criterion.

\begin{Proposition}\label{p.GD}
Let $\cL E^\red_n\colon \cL\cT^\red_n\dQ_{\cA_1,\dots,\cA_n}\cC\to \cC$ be
the $2$-functor induced by \eqref{e.TQ}. Consider the descent $2$-functor
$Q^n_\cD$ \eqref{e.Q}.
\begin{enumerate}
\item \label{p.GD1} If $ \cL E^\red_n$ is locally essentially surjective,
    then $Q^n_\cD$ is $2$-faithful for every $2$-category $\cD$.
\item \label{p.GD2} The following conditions are equivalent:
\begin{enumerate}
\item The $2$-functor $\cL E^\red_n$ is a bi-equivalence.
\item The $2$-functor $Q^n_\cD$ is a bi-equivalence for every
    $2$-category $\cD$.
\item The $2$-functor $Q^n_\cD$ is a $\cD^{\Ob(\cC)}$-$2$-equivalence
    for every $2$-category $\cD$.
\end{enumerate}
\end{enumerate}
\end{Proposition}

Combining with Proposition \ref{p.Fequiv}, we obtain the following.

\begin{Corollary}\label{c.QD}
Assume $n\ge 2$ and $\cA_i=\cC$ for some $1\le i\le n$. Then the descent
$2$-functor
\[Q^n_\cD\colon \PsFun(\cC,\cD)\to \GD_{\cA_1,\dots,\cA_n}(\cC,\cD)\]
is a $\cD^{\Ob(\cC)}$-$2$-equivalence for every $2$-category $\cD$.
\end{Corollary}

The corollary does not produce \emph{new} pseudo functors from $\cC$ to
$\cD$, since a pseudo functor from $\cC$ to $\cD$ is part of the gluing
datum by the assumption $\cA_i=\cC$. In Section \ref{s.two}, we will give a
more substantial application of Proposition \ref{p.GD} for the gluing of two
pseudo functors (case $n=2$) defined on \emph{proper} sub-$2$-categories.
For the gluing of more than two functors (case $n>2$) defined on proper
sub-$2$-categories, we will do so indirectly, by relating the gluing problem
for $n$ pseudo functors to the gluing problem for $n-1$ pseudo functors via
the constructions of the following section.

\section{Functoriality of gluing data with respect to the index set $\{1,\dots,n\}$}\label{s.3}

In this section, we record some functorial properties, with respect to the
index set $\{1,\dots, n\}$, of gluing data and of the operations $\cT_n$ and
$\dQ^n$ defining them. Such properties will play an essential role in
relating gluing problems for different numbers of pseudo functors in
Section~\ref{s.finiteglue}. Section \ref{s.two} does not depend on the
construction of this section.

We start with the case where the map $\phi\colon \{1,\dots, m\} \to
\{1,\dots,n\}$ between index sets is nondecreasing.

\begin{Construction}[Functoriality of $\cT_n$ for $\phi$ nondecreasing]\label{c.Tfun}
  Let $\phi\colon \{1,\dots, m\} \to \{1,\dots,n\}$ be a nondecreasing
  map and let $\dC$ be an $n$-fold category. We define $2$-functors
  \begin{equation}\label{e.Tphi}
  \cT_m \phi^* \dC\to \cT_n \dC,\quad \cT^\red_m \phi^* \dC\to \cT^\red_n \dC
  \end{equation}
as follows. To an object $X$, we associate $X$. To a morphism $f^k$, where
$f\in (\phi^*\dC)(\epsilon_k)= \dC(\epsilon_{\phi(k)})$, we associate
$f^{\phi(k)}$. To an atomic $2$-cell in $\cT_m \phi^*\dC$ or $\cT_m^\red
\phi^*\dC$ of type $\iota$, $\theta$ (unit), $\gamma$, $\delta$
((de)composition), or $\sigma_{g,D,f}$ with $D\in
(\phi^*\dC)(\epsilon_k+\epsilon_{k'})=
\dC(\epsilon_{\phi(k)}+\epsilon_{\phi(k')})$ satisfying $\phi(k)\neq
\phi(k')$, we associate the corresponding atomic $2$-cell in $\cT_n \dC$ or
$\cT_n^\red \dC$. An element $D\in \dC(\epsilon_k+\epsilon_{k'})$ with
$k<k'$ and $\phi(k)=\phi(k')=l$ corresponds to a commutative square in
$\iota_l^*\dC$, and we take $\cT_n(\sigma_{g,D,f})$ to be the composite
\[g*i*q*f\xRightarrow{\gamma_{g,i,q,f}} g*iq*f=g*pj*f \xRightarrow{\delta_{g,p,j,f}} g*p*j*f\]
and take $\cT^\red_n(\sigma_{g,D,f})$ to be the identity. The two
$2$-functors are compatible with
  reduction $\cT_n\to \cT^\red_n$.

For a sequence of nondecreasing maps
\begin{equation}\label{e.lmn}
  \{1,\dots, l\} \xrightarrow{\psi} \{1,\dots, m\} \xrightarrow{\phi}
  \{1,\dots,n\},
\end{equation}
the composition
  \[\cT_l(\phi\psi)^*\dC\cong\cT_l{\psi}^*\phi^*\dC\to \cT_m\phi^*\dC\to
  \cT_n\dC\]
equals the $2$-functor induced by $\phi\psi$. The same holds for
$\cT^\red_n$.

Consider the natural transformation induced by \eqref{e.Tphi}
\begin{equation}\label{e.Tphis}
\cT_m^\red\to \cT_m^\red\phi^*\phi_! \to \cT^\red_n\phi_!,
\end{equation}
which is a natural isomorphism when $\phi$ is injective (hence strictly
increasing).
\end{Construction}

\begin{Construction}[Functoriality of $\dQ^n$ for $\phi$ nondecreasing]\label{c.Qfun}
By adjunction \eqref{e.Tphis} induces a natural transformation
\begin{equation}\label{e.Qfun}
\phi^*\dQ^n\cC\to \dQ^m\cC.
\end{equation}
When $\phi$ is injective (hence strictly increasing), \eqref{e.Qfun} is a
natural isomorphism, and we have
\[
\phi^*\dQ_{\cA_1,\dots, \cA_n} \cC\cong \dQ_{\cA_{\phi(1)},\dots ,\cA_{\phi(m)}}\cC
\]
for any $2$-category $\cC$ and locally full sub-$2$-categories
$\cA_1,\dots,\cA_n$ of $\cC$ each containing all objects of $\cC$.

For a sequence of nondecreasing maps \eqref{e.lmn}, the composition
\[(\phi\psi)^*\dQ^n \cong{\psi}^*\phi^*\dQ^n \to {\psi}^*\dQ^m\to
  \dQ^l\]
equals the natural transformation induced by $\phi\psi$.
\end{Construction}

\begin{Construction}[Functoriality of $\overline{\cT_n\dQ^n}$ for $\phi$
nondecreasing]\label{c.TQfun}\index{Ephi@$E_\phi$, $E^\red_\phi$} Let
$\phi\colon \{1,\dots, m\}\to \{1,\dots,n\}$ be a nondecreasing map and let
$\cC$ be a $2$-category. We define $2$-functors
\[E_\phi\colon \overline{\cT_m\dQ^m}\cC\to \overline{\cT_n\dQ^n}\cC, \quad E_\phi^\red\colon \overline{\cT_m^\red\dQ^m}\cC\to\overline{\cT_n^\red\dQ^m}\cC,\]
compatible with reduction, as follows.

For $m=n=1$, we take $E_\phi\colon \cT\cC\to \cT\cC$ and $E_\phi^\red\colon
\cC\to \cC$ to be the identities. For $m=1$ and $n\ge 2$, we take
\[E_\phi\colon \overline{\cT_1\dQ^1}\cC=\cT\cC\to \cT_n\dQ^n\cC,\quad E^\red_\phi\colon \overline{\cT^\red_1\dQ^1}\cC=\cC\to \cT^\red_n\dQ^n\cC\]
to be the $2$-functors $G_{\phi(1)}$ \eqref{e.Gi} and $G^\red_{\phi(1)}$
\eqref{e.TQ2}.  For $m\ge 2$ and $n=1$, we take
\[E_\phi=E_m\colon \cT_m\dQ^m\cC\to \cT\cC,\quad E_\phi^\red=E_m^\red\colon
\cT_m^\red\dQ^m\cC\to \cC.
\]
For $m,n\ge 2$, we construct $E_\phi$ as follows. To an object $X$, we
associate $X$. To a morphism $f_a*\dots *f_1$, where $f_i\in \coprod_{1\le
k\le m}(\dQ^m \cC)(\epsilon_{k})$ for $1\le i\le a$, we associate
$f_a*\dots* f_1$. To the atomic $2$-cell $\sigma_{g,D,f}$ of
$\cT_m\dQ^m\cC$, where $D\in \dQ^m\cC(\epsilon_k+\epsilon_{k'})$ with
$\phi(k)=\phi(k')=l$, $k<k'$, we associate the composition
\[g*i*q*f\xRightarrow{\gamma_{g,i,q,f}} g*iq*f\xRightarrow{g*\alpha*f} g*pj*f \xRightarrow{\delta_{g,p,j,f}} g*p*j*f,\]
where $\alpha\colon iq\Rightarrow pj$ is the image of the $2$-cell in $D$
under the $2$-functor $G_l\colon \cT\cC\to \cT_n\dQ^n\cC$ (the assumption
$n\ge 2$ is used in the definition of $G_l$). To other atomic $2$-cells in
$\cT_m \dQ^m\cC$, we associate the corresponding atomic $2$-cells in $\cT_n
\dQ^n\cC$. The $2$-functor $E^\red_\phi$ is defined similarly, with $G_l$
replaced by $G_l^\red$.

If $\cA_1,\dots,\cA_m,\cB_1,\dots,\cB_n$ are locally full sub-$2$-categories
of $\cC$, each containing all objects of $\cC$, such that $\cA_i\subseteq
\cB_{\phi(i)}$ for all $1\le i\le m$, then $E_{\phi}$ restricts to a
$2$-functor
\[\overline{\cT_m\dQ}_{\cA_1,\dots,\cA_m}\cC \to \overline{\cT_n\dQ}_{\cB_1,\dots,\cB_n} \cC\]
and similarly for $E^\red_\phi$.

For a sequence of nondecreasing maps $\{1,\dots, l\} \xrightarrow{\psi}
\{1,\dots, m\} \xrightarrow{\phi} \{1,\dots,n\}$, the composition
\[\overline{\cT_l\dQ^l}\cC \xrightarrow{E_{
\psi}} \overline{\cT_m\dQ^m}\cC \xrightarrow{E_{\phi}} \overline{\cT_n\dQ^n}\cC
\]
equals $E_{\phi\psi}$. Similarly $E_{\phi\psi}^\red=E_\phi^\red
E_\psi^\red$.
\end{Construction}

The following is a generalization of Proposition \ref{p.Fequiv}.

\begin{Proposition}\label{p.Fphiequiv}
Let $\phi\colon \{1,\dots, m\}\to \{1,\dots,n\}$ be a nondecreasing map with
$n\ge 2$ and let $\psi$ be a section of $\phi$. Let $\cA_1,\dots,\cA_m$ be
locally full sub-$2$-categories of $\cC$, each containing all objects of
$\cC$, such that $\cA_i\subseteq \cA_{\psi\phi(i)}$ for all $1\le i \le m$.
Then
\[
E_\phi\colon\overline{\cT_m\dQ}_{\cA_1,\dots,\cA_m} \cC\to \overline{\cT_n\dQ}_{\cA_{\psi(1)},\dots,\cA_{\psi(n)}}\cC,\quad
E_{\psi}\colon \overline{\cT_n\dQ}_{\cA_{\psi(1)},\dots,\cA_{\psi(n)}}\cC \to \overline{\cT_m\dQ}_{\cA_1,\dots,\cA_m} \cC
\]
are pseudo inverses of each other. The same holds for $E^\red_\phi$ and
$E^\red_{\psi}$.
\end{Proposition}

\begin{proof}
We have $E_{\phi}E_{\psi}=E_{\one}=\one$. Furthermore, one can construct a
pseudo natural isomorphism $E_{\psi}E_{\phi}\Rightarrow \one$ as in the
proof of Proposition \ref{p.Fequiv}.
\end{proof}

\begin{Remark}\label{r.t}
Let $t\colon \{1,\dots, n\} \to \{1,\dots,n\}$ be the map sending $i$ to
$n+1-i$. For an $n$-fold category $\dC$ and $2$-categories $\cC$ and $\cD$,
we have isomorphisms
\[
\cT_n (t^* \dC) \cong (\cT_n\dC)^{\twoop},\quad\cT_n^\red (t^* \dC) \cong (\cT_n^\red\dC)^{\twoop},\qquad
  t^*(\dQ_{\cA_1,\dots,\cA_n}\cC)\cong\dQ_{\cA_1^\twoop,\dots,\cA_n^\twoop}\cC^\twoop.
\]
\end{Remark}

When $\phi$ is not nondecreasing, the functoriality involves inverting
$2$-cells.

\begin{Construction}[Functoriality of $\cL\cT$ and $\dQ^n$ for $\phi$ arbitrary]
Let $\phi\colon \{1,\dots,m\}\to \{1,\dots,n\}$ be an arbitrary map. We have
the following analogues of Constructions \ref{c.Tfun} and \ref{c.Qfun}. For
any $n$-fold category $\dC$, we have $2$-functors
\begin{equation}\label{e.LTphi}
  \cL\cT_m\phi^* \dC \to \cL\cT_n \dC, \quad \cL\cT_m^\red\phi^* \dC \to \cL\cT_n^\red \dC
\end{equation}
sending $\sigma_{g,D,f}$, with $D\in \dC(\epsilon_i+\epsilon_j)$, $i < j$,
$\phi(i)>\phi(j)$, to $\sigma^{-1}_{g,D,f}$. Consider the composite natural
transformation
\[\cL\cT_m \to \cL\cT_m \phi^*\phi_!\to \cL\cT_n\phi_!.\]
By adjunction, we obtain, for any $(2,1)$-category $\cC$, an $m$-fold
functor
\[
  \phi^*\dQ^n\cC \to \dQ^m\cC,
\]
which is an isomorphism when $\phi$ is injective.
\end{Construction}

\begin{Construction}[Functoriality of $\cL\overline{\cT_n\dQ^n}$ and of gluing
data]\label{c.Qphi}\index{LEphi@$\cL E_\phi$, $\cL E^\red_\phi$} Let
$\phi\colon \{1,\dots, m\}\to \{1,\dots, n\}$ be an arbitrary map and let
$\cC$ be a $(2,1)$-category. Similarly to Construction \ref{c.TQfun}, we
have $2$-functors
\begin{equation}\label{e.Ephi}
  \cL E_{\phi}\colon \cL\overline{\cT_m\dQ^m}\cC\to \cL\overline{\cT_n\dQ^n}\cC,\quad \cL E_{\phi}^\red\colon \cL\overline{\cT_m^\red\dQ^m}\cC\to \cL\overline{\cT_n^\red\dQ^n}\cC
\end{equation}
sending $\sigma_{g,D,f}$ to $\sigma^{-1}_{g,D^*,f}$, where $D\in
(\dQ^m\cC)(\epsilon_i+\epsilon_j)$, $i<j$, $\phi(i)>\phi(j)$, and $D^*$ is
the transpose of $D$ obtained by inverting the $2$-cell.

Now let $\cA_1,\dots, \cA_n,\cB_1,\dots,\cB_m$ be locally full
sub-$2$-categories of $\cC$, each containing all objects of $\cC$,
satisfying $\cB_i\subseteq \cA_{\phi(i)}$ for all $1\le i\le m$. By
restriction we obtain $2$-functors
\begin{equation}\label{e.LEphi}
  \cL E_{\phi}\colon \cL\overline{\cT_m\dQ}_{\cB_1,\dots,\cB_m}\cC\to \cL\overline{\cT_n\dQ}_{\cA_1,\dots,\cA_n}\cC,\quad \cL E_{\phi}^\red\colon \cL\overline{\cT_m^\red\dQ^m}\cC\to \cL\overline{\cT_n^\red\dQ^n}\cC
\end{equation}
Our extended goal is to study the $2$-functor
\begin{equation}\label{e.Qphi}\index{Qphi@$Q^\phi$}
Q^\phi=Q^\phi_\cD\colon \GD_{\cA_1,\dots,\cA_n}(\cC,\cD)\to \GD_{\cB_1,\dots,\cB_m}(\cC,\cD)
\end{equation}
induced by $\cL E_{\phi}$. For $n=1$ and $m\ge 2$, the $2$-functor $Q^\phi$
is the descent $2$-functor $Q^m$ \eqref{e.Q}.

For a sequence of arbitrary maps $\{1,\dots, l\} \xrightarrow{\psi}
\{1,\dots, m\} \xrightarrow{\phi} \{1,\dots,n\}$ and locally full
sub-$2$-categories $\cC_1,\dots,\cC_l$ of $\cC$, each containing all objects
of $\cC$, the composite
\[\GD_{\cA_1,\dots,\cA_n}(\cC,\cD)\xrightarrow{Q^\phi} \GD_{\cB_1,\dots,\cB_m}(\cC,\cD) \xrightarrow{Q^\psi}\GD_{\cC_1,\dots,\cC_l}(\cC,\cD)\]
equals $Q^{\phi\psi}$. In particular, for $m,n\ge 2$, the composite
\[\PsFun(\cC,\cD)\xrightarrow{Q^n} \GD_{\cA_1,\dots,\cA_n}(\cC,\cD) \xrightarrow{Q^\phi}\GD_{\cB_1,\dots,\cB_m}(\cC,\cD)\]
equals $Q^m$. If $\phi$ is a bijection and $\cB_i=\cA_{\phi(i)}$ for all
$1\le i\le m$, then $Q^\phi$ is a bijection.
\end{Construction}

By Proposition \ref{p.equiv} and Lemma \ref{l.PsFun2}, we have the following
criterion.

\begin{Proposition}\label{p.GDn}
Let $\cL E^\red_\phi\colon \cL\cT^\red_\phi\dQ_{\cA_1,\dots,\cA_n}\cC\to
\cC$ be the $2$-functor \eqref{e.LEphi}.
\begin{enumerate}
\item \label{p.GDn1} If $ \cL E^\red_\phi$ is locally essentially
    surjective, then $Q^\phi_\cD$ is $2$-faithful for every $2$-category
    $\cD$.
\item \label{p.GDn2} The following conditions are equivalent:
\begin{enumerate}
\item The $2$-functor $\cL E^\red_\phi$ is a bi-equivalence.
\item The $2$-functor $Q^\phi_\cD$ is a bi-equivalence for every
    $2$-category $\cD$.
\item The $2$-functor $Q^\phi_\cD$ is a $\cD^{\Ob(\cC)}$-$2$-equivalence
    for every $2$-category $\cD$.
\end{enumerate}
\end{enumerate}
\end{Proposition}

Later we will give an explicit (and uniform) description of the $2$-functor
$Q^\phi$ (see Remark \ref{r.QD}). However, the above construction via $\cL
E_\phi$ will be essential in proving Theorem \ref{t.main}.

\section{Gluing two pseudo functors}\label{s.two}
In this section, we study the gluing of two pseudo functors and prove
Theorem \ref{p.Del}. Throughout this section, $\cC$ is a $(2,1)$-category,
and $\cA$ and $\cB$ are locally full sub-$2$-categories that each contain
all objects of $\cC$. In particular, we have $\Ob(\cA)=\Ob(\cB)=\Ob(\cC)$.
The theorem gives sufficient conditions for the descent $2$-functor
\eqref{e.Q}
\[Q_\cD= Q_\cD^2\colon \PsFun(\cC,\cD)\to\GD_{\cA,\cB}(\cC,\cD),\]
where the target is the $2$-category of gluing data, to be a
$2$-equivalence. It is possible to state the theorem in concrete terms,
without reference to the constructions of Section~\ref{s.2}. In fact, we
will give an explicit description of the $2$-category of gluing data (Remark
\ref{r.gd}) and the descent $2$-functor (Remark \ref{r.QD}).

In Section~\ref{s.2}, the descent $2$-functor was defined more conceptually
using the operations $\cT_2$ ($2$-category of paths) and $\dQ^2$ (double
category of squares). Up to $2$-equivalences, the descent $2$-functor is
induced by the $2$-functor
\begin{equation}\label{e.E2}
E= \cL E^\red_2\colon \cL \cT^\red_2 \dQ_{\cA,\cB} \cC \to \cC.
\end{equation}
This conceptual interpretation is an essential ingredient in the proof of
Theorem \ref{p.Del}. Indeed, we will deduce Theorem \ref{p.Del} from a
criterion involving fundamental groups in the $2$-category
$\cL\cT^\red_2\dQ_{\cA,\cB}\cC$ (Theorem \ref{t.TQ}).

\begin{Remark}[Explicit description of gluing
data]\label{r.gd}\index{GD@$\GD_{\cA,\cB}(\cC,\cD)$}\index{GD@$G_D$} Let us
explicitly describe the $\cD^{\Ob(\cC)}$-$2$-category
$\GD_{\cA,\cB}(\cC,\cD)$. An \emph{object} of it is a triple
$\left(F_\cA,F_\cB,(G_D)\right)$ consisting of an object $F_\cA$ of
$\PsFun(\cA,\cD)$, an object $F_\cB$ of $\PsFun(\cB,\cD)$ satisfying $\lvert
F_\cA \rvert = \lvert F_\cB \rvert$ (recall from Notation \ref{d.bar} that
$\lvert -\rvert$ denotes the restriction to $\Ob(\cC)$), and a family of
invertible $2$-cells of~$\cD$
  \[G_D\colon F_\cA(i)F_\cB(q) \Rightarrow F_\cB(p)F_\cA(j),\]
$D$ running over $(\cA,\cB)$-squares in $\cC$ of the form \eqref{e.2}. The
triple is subject to the following conditions:
  \begin{description}
    \item[(a)] For any square $D$ of the form
  \[
  \xymatrix{X\ar@{=}[d]\ar[r]^j\drtwocell\omit{^\alpha} & Y \ar@{=}[d]\\
  X\ar[r]^i & Y,}
  \]
  the following square commutes
  \[\xymatrix{F_\cA(i)\ar@=[r]^{F_\cA(\alpha)}\ar@=[d] & F_\cA(j)\ar@=[d]\\
  F_\cA(i)F_\cB(\one_X)\ar@=[r]^{G_D} & F_\cB(\one_Y)F_\cA(j).}\]

    \item[(\aprime)] For any square $D$ of the form
  \[\xymatrix{X\ar@{=}[r]\ar[d]_q\drtwocell\omit{^\alpha} & X \ar[d]^p\\
  Y\ar@{=}[r] & Y,}\]
  the following square commutes
  \[\xymatrix{F_\cB(q)\ar@=[r]^{F_\cB(\alpha)}\ar@=[d] & F_\cB(p)\ar@=[d]\\
  F_\cA(\one_Y)F_\cB(q)\ar@=[r]^{G_D} & F_\cB(p)F_\cA(\one_X).}\]

\item[(b)] If $D$, $D'$, $D''$ are respectively the upper, lower and outer
    squares of the diagram
  \[\xymatrix{X_1\ar[r]^{i_1}\ar[d]_{q}\drtwocell\omit{^\alpha} &  Y_1\ar[d]^{p}\\
  X_2\ar[r]^{i_2}\drtwocell\omit{^\alpha'}\ar[d]_{q'} & Y_2\ar[d]^{p'}\\
  X_3\ar[r]^{i_3} & Y_3,}\]
    then the following pentagon commutes
    \[\xymatrix{F_{\cA}(i_3)F_\cB(q')F_\cB(q)\ar@=[r]^{G_{D'}}\ar@=[d]& F_\cB(p')F_\cA(i_2)F_\cB(q)\ar@=[r]^{G_{D}}& F_\cB(p')F_\cB(p)F_\cA(i_1)\ar@=[d]\\
    F_\cA(i_3)F_\cB(q'q)\ar@=[rr]^{G_{D''}} & & F_\cB(p'p)F_\cA(i_1).}\]

\item[(\bprime)] If $D$, $D'$, $D''$ are respectively the left, right and
    outer squares of the diagram
  \[\xymatrix{X_1\ar[r]^j\ar[d]_{p_1}\drtwocell\omit{^\alpha} &  X_2\ar[d]^{p_2}\ar[r]^{j'}\drtwocell\omit{^\alpha'} & X_3\ar[d]^{p_3}\\
  Y_1\ar[r]^i & Y_2\ar[r]^{i'} &Y_3,}\]
    then the following pentagon commutes
    \[\xymatrix{F_{\cA}(i')F_\cA(i)F_\cB(p_1)\ar@=[r]^{G_D}\ar@=[d]& F_\cA(i')F_\cB(p_2)F_\cA(j)\ar@=[r]^{G_{D'}}& F_\cB(p_3)F_\cA(j')F_\cA(j)\ar@=[d]\\
    F_\cA(i'i)F_\cB(p_1)\ar@=[rr]^{G_{D''}} & & F_\cB(p_3)F_\cA(j'j).}\]
  \end{description}

A \emph{morphism} $(F_\cA,F_\cB,G)\to (F'_\cA,F'_\cB,G')$ of
$\GD_{\cA,\cB}(\cC,\cD)$ is a pair $(\alpha_\cA,\alpha_\cB)$ consisting of a
morphism $\alpha_\cA\colon F_\cA \to F'_\cA$ of $\PsFun(\cA,\cD)$ and a
morphism $\alpha_\cB\colon F_\cB \to F'_\cB$ of $\PsFun(\cB,\cD)$ such that
$\lvert \alpha_\cA\rvert = \lvert \alpha_\cB \rvert$  and satisfying the
following condition
\begin{description}
\item[(m)] For any $(\cA,\cB)$-square $D$ \eqref{e.2}, the following
    hexagon commutes
  \[
  \xymatrix{\alpha_0(W)F_\cA(i)F_\cB(q)\ar@=[d]_{G_D}\ar@=[r]^{\alpha_\cA(i)}
  & F'_\cA(i)\alpha_0(Z)F_\cB(q)\ar@=[r]^{\alpha_\cB(q)}
  & F'_\cA(i)F'_\cB(q)\alpha_0(X)\ar@=[d]^{G'_D}\\
  \alpha_0(W)F_\cB(p)F_\cA(j)\ar@=[r]^{\alpha_\cB(p)}
  & F'_\cB(p)\alpha_0(Y)F_\cA(j)\ar@=[r]^{\alpha_\cA(j)}
  & F'_\cB(p)F'_\cA(j)\alpha_0(X).}
  \]
  Here $\alpha_0=\lvert \alpha_\cA\rvert = \lvert \alpha_\cB \rvert$.
\end{description}

  A \emph{$2$-cell} of $\GD_{\cA,\cB}(\cC,\cD)$ is a pair $(\Xi_\cA,\Xi_\cB)\colon (\alpha_\cA,\alpha_\cB)\Rightarrow (\alpha'_\cA,\alpha'_\cB)$ consisting of a $2$-cell $\Xi_\cA
  \colon \alpha_\cA\Rightarrow \alpha'_\cA$ of $\PsFun(\cA,\cD)$ and a $2$-cell $\Xi_\cB\colon \alpha_\cB\Rightarrow \alpha'_\cB$ of $\PsFun(\cB,\cD)$ such that $\lvert \Xi_\cA \rvert = \lvert \Xi_\cB \rvert$.

  The $\cD^{\Ob(\cC)}$-$2$-category structure of $\GD_{\cA,\cB}(\cC,\cD)$ is given by the $2$-functor defined by
  \[
  (F_\cA,F_\cB,G)\mapsto \lvert F_\cA \rvert=\lvert F_\cB \rvert,\quad
  (\alpha_\cA,\alpha_\cB)\mapsto \lvert \alpha_\cA\rvert = \lvert \alpha_\cB \rvert,\quad (\Xi_\cA,\Xi_\cB)\mapsto \lvert \Xi_\cA \rvert = \lvert \Xi_\cB\rvert.
  \]
\end{Remark}

In an object $(F_\cA,F_\cB,G)$ of $\GD_{\cA,\cB}(\cC,\cD)$, the family $G$
of $2$-cells expresses compatibility between the pseudo functors $F_\cA$ and
$F_\cB$. The necessity of such compatibility for gluing $F_\cA$ and $F_\cB$
into a pseudo functor $\cC\to \cD$ is also clear from the following remark.

\begin{Remark}[Explicit description of the descent $2$-functor]\label{r.QD}
Let us explicitly describe the descent $2$-functor
  \[
  Q_\cD\colon \PsFun(\cC,\cD)\to \GD_{\cA,\cB}(\cC,\cD).
  \]
For an object $F$ of $\PsFun(\cC,\cD)$, we have
$Q_\cD(F)=(F\res\cA,F\res\cB,G)$. Here, $F\res \cA$ and $F\res \cB$ denote
restrictions of $F$. For an $(\cA,\cB)$-square $D$ \eqref{e.2}, $G_D$ is the
composition
  \[\xymatrix{F(i)F(q)\ar@=[r]& F(iq)\ar@=[r]^{F(\alpha)} & F(pj)\ar@=[r] & F(p) F(j).}\]
For a morphism $\alpha\colon F\to F'$ of $\PsFun(\cC,\cD)$,
$Q_\cD(\alpha)$ is
  \[(\alpha\res\cA,\alpha\res\cB)\colon(F\res\cA,F\res\cB,G)\to (F'\res\cA,F'\res\cB,G').\]
For a $2$-cell $\Xi\colon \alpha\Rightarrow \alpha'$ of
$\PsFun(\cA,\cB)$, $Q_\cD(\Xi)$ is
  \[(\Xi\res\cA,\Xi\res\cB)\colon (\alpha\res\cA,\alpha\res\cB)\Rightarrow (\alpha'\res\cA,\alpha'\res\cB).\]
\end{Remark}

\begin{Remark}\label{s.gd}
The isomorphism of $\cD^{\Ob(\cC)}$-$2$-categories \eqref{e.Qphi}
  \[Q^t\colon \GD_{\cA,\cB}(\cC,\cD) \cong \GD_{\cB,\cA}(\cC,\cD)\]
associated to the map $t\colon \{1,2\}\to \{1,2\}$ swapping $1$ and $2$ can
be described as follows. To an object $(F_\cA,F_\cB,G)$ of
$\GD_{\cA,\cB}(\cC,\cD)$, we associate $(F_\cB,F_\cA,G^*)$. Here, for every
$(\cA,\cB)$-square $D$ \eqref{e.2}, $G^*_{D^*}=G_D^{-1}$, where $D^*$ is the
square obtained from $D$ by inverting $\alpha$. To a morphism
$(\alpha_\cA,\alpha_\cB)\colon (F_\cA,F_\cB,G)\to (F'_\cA,F'_\cB,G')$ of
$\GD_{\cA,\cB}(\cC,\cD)$, we associate
  \[(\alpha_\cB,\alpha_\cA)\colon (F_\cB,F_\cA,G^*)\to (F'_\cB,F'_\cA,G'{}^*).\]
To a $2$-cell $(\Xi_\cA,\Xi_\cB)\colon (\alpha_\cA,\alpha_\cB)\Rightarrow
(\alpha'_\cA,\alpha'_\cB)$ of $\GD_{\cA,\cB}(\cC,\cD)$, we associate
  \[(\Xi_\cB,\Xi_\cA)\colon (\alpha_\cB,\alpha_\cA)\Rightarrow (\alpha'_\cB,\alpha'_\cA).\]
\end{Remark}

\begin{Definition}\label{s.gen}
We say that $\cA$ and $\cB$ \emph{generate} $\cC$ if for every morphism $f$
of $\cC$, there exist morphisms $i_1,\dots, i_n$ of $\cA$ and $p_1,\dots,
p_n$ of $\cB$ and a $2$-cell $p_1 i_1 \dotsb p_n i_n\Rightarrow f$ of $\cC$.
\end{Definition}

\begin{Remark}
We have $\cA$ and $\cB$ generate $\cC$ if and only if $E\colon
\cL\cT_2^\red\dQ_{\cA,\cB}\cC\to \cC$ is locally essentially surjective. In
particular, if $\cA$ and $\cB$ generate $\cC$, then $Q_\cD$ is $2$-faithful
by Proposition \ref{p.GD} \ref{p.GD1}.
\end{Remark}

\begin{Remark}\label{r.DA}
  Let us recall the results of Deligne \cite[Proposition 3.3.2]{Deligne} and
  Ayoub \cite[Theorem~1.3.1]{Ayoub}. Both assume $\cC=\bC$ to be an (ordinary) category. Let $\cA=\bA$ and $\cB=\bB$ be subcategories of
  $\bC$, each containing all objects of $\bC$. Consider the
  conditions \ref{p.Del1} and \ref{p.Del2} of Theorem \ref{p.Del}, as well as the following
  conditions:
  \begin{itemize}
    \item[(2a)] Fiber products exist in $\bB$ and are fiber products in
        $\bC$.

    \item[(2b)] $\bC$ admits fiber products and morphisms of $\bA$ and
        $\bB$ are stable under base change by morphisms of $\bC$. (In
        particular, all isomorphisms in $\bC$ are in $\bA\cap \bB$.)
        Moreover, the diagonal of every morphism in $\bC$ is in $\bA$.
  \end{itemize}
Deligne assumes \ref{p.Del1} and (2a), while Ayoub assumes \ref{p.Del1} and
(2b). In our language, their conclusions can be stated as saying that
$Q_\cD$ induces a bijection between equivalence classes of $\PsFun(\bC,\cD)$
and $\GD_{\bA,\bB}(\bC,\cD)$. There are no implications between (2a) and
(2b). Moreover, each of (2a) and (2b) is stronger than \ref{p.Del2}. Thus
Theorem \ref{p.Del} is a common generalization of the results of Deligne and
Ayoub. Even under the assumptions of Deligne and Ayoub, the conclusion of
Theorem \ref{p.Del} is more precise in the sense that it establishes a
$2$-equivalence of $2$-categories. As we have remarked in the Introduction,
this precision is useful in the construction of pseudo natural
transformations.
\end{Remark}

By Proposition \ref{p.GD} \ref{p.GD2}, the conclusion of Theorem \ref{p.Del}
is equivalent to saying that $E\colon \cL\cT^\red_2\dQ_{\cA,\cB}\cC\to \cC$
is a bi-equivalence. We will construct a pseudo inverse $F$ as follows. The
quadruple $(Z,i,p,\alpha)$ in Theorem \ref{p.Del} \ref{p.Del1} corresponding
to the diagram
\[\xymatrix{X\ar[r]^i\ar[dr]_f \drtwocell\omit{<-1.5>\alpha}& Z\ar[d]^p\\
&Y }
\]
is a called a \emph{compactification} of $f$ (relative to $\cA$, $\cB$).
Given a choice of compactification, $p*i$ is a candidate for $Ff$. To
express the dependence on choices, we need to organize the set of
compactifications $\rComp_{\cA,\cB}(f)$ into $2$-categories.

\begin{Definition}[$2$-Category $\Comp^\cP_{\cA,\cB}(f)$ of compactifications of a morphism
$f$]\label{d.comp2}\index{Compf@$\Comp^\cP_{\cA,\cB}(f)$} Let $f\colon X\to
Y$ be a morphism of $\cC$. Let $\cP$ be a locally full sub-$2$-category of
$\cC$ containing all objects of $\cC$. We define the \emph{$2$-category of
compactifications of $f$} (relative to $\cA$, $\cB$, and $\cP$), denoted
$\Comp^\cP(f)=\Comp^\cP_{\cA,\cB}(f)$, as follows. The \emph{objects} are
compactifications of $f$ (relative to $\cA$, $\cB$, independent of $\cP$). A
\emph{morphism} $(Z,i,p,\alpha)\to (W,j,q,\beta)$ is a triple $(r, \gamma,
\delta)$ consisting of a morphism $r\colon Z\to W$ of~$\cP$, and $2$-cells
$\gamma\colon ri\Rightarrow j$ and $\delta\colon qr\Rightarrow p$ of $\cC$,
fitting in the diagram
\[\xymatrix{&Z\ar[d]^r\dduppertwocell^p{\delta^{-1}}\\
X\rtwocell\omit{<-1.5>\gamma}\drtwocell\omit{<-1.5>\beta} \ar[r]_j\ar[rd]_f\ar[ur]^i & W\ar[d]^q\\
&Y}
\]
where the outer triangle is $\alpha$. A \emph{$2$-cell}
$(r,\gamma,\delta)\Rightarrow (r',\gamma',\delta')$ is a $2$-cell $\epsilon
\colon r\Rightarrow r'$ of $\cP$ such that $\gamma=\gamma'\circ (\epsilon
i)$ and $\delta=\delta'\circ(q\epsilon)$. We omit $\cA$ and $\cB$ from the
notation when no confusion arises.
\end{Definition}

The $2$-category $\Comp^\cP(f)$ is a $(2,1)$-category, and is a locally full
sub-$2$-category of $\Comp^\cC(f)$ containing all objects of $\Comp^\cC(f)$.
We will mainly work with the category $\bO\Comp^\cP(f)$ (Construction
\ref{c.cO}). A compactification $(Z,i,p,\alpha)$ of $f$ gives a morphism
$p*i\colon X\to Y$ of $\cT^\red_2\dQ_{\cA,\cB}\cC$. A morphism
$(r,\gamma,\delta)$ of $\Comp^\cB(f)$ as above gives a $2$-cell
\[q*j= q*j*\vid_X \xRightarrow{\sigma_{q,D,\one_X}} q*r*i = qr*i
\Rightarrow p*i
\]
of $\cT^\red_2\dQ_{\cA,\cB}\cC$, where the last $2$-cell is given by the
image of $\delta$ under the pseudo functor $\cB\to
\cT^\red_2\dQ_{\cA,\cB}\cC$ induced by $G_\rv$ \eqref{e.TQ2}, and $D$ is the
square
\[\xymatrix{X\ar[r]^i\ar@{=}[d] & Z\ar[d]^r\\
X\ar[r]^j & W}
\]
given by $\gamma^{-1}$. This defines a functor
\[\bO\Comp^\cB(f)\to (\cT_2^\red\dQ_{\cA,\cB}\cC)(X,Y)^{\op}.\]
Composing with the functor inverting $2$-cells in
$\cE=\cL\cT_2^\red\dQ_{\cA,\cB}\cC$, we get a functor
\begin{equation}\label{e.ComptoE}
G_f\colon \bO\Comp^\cB(f)\to \cE(X,Y).
\end{equation}

To investigate the compatibility of $F$ with composition, we also need to
consider compactifications of sequences of morphisms. Recall that, for $n\ge
0$, $[n]$ denotes the totally ordered set $\{0,\dots, n\}$. We define the
\emph{$2$-category of $n$-simplices of $\cC$} to be the $2$-category
$\UPsFun([n],\cC)$ of strictly unital pseudo functors $[n]\to \cC$ (Notation
\ref{n.U}). A sequence of morphisms $X_0\xrightarrow{f_1} \dotso
\xrightarrow{f_n}X_n$ defines a $2$-functor $[n]\to \cC$, which we denote by
$(f_n,\dots,f_1)$. Let $\nabla_n$ be the partially ordered set
\[\{(k,l)\in[n]\times [n]\mid k\ge l\}.\]
The diagonal embedding $[n]\to \nabla_n$ induces a $2$-functor
$\UPsFun(\nabla_n,\cC)\to \UPsFun([n],\cC)$. The following generalization of
Definition \ref{d.comp2} slightly generalizes \cite[D\'efinition
3.2.5]{Deligne} and \cite[Section 1.3.1]{Ayoub}.

\begin{Definition}[$2$-Category $\Comp^\cP_{\cA,\cB}(\sigma)$ of compactifications of an $n$-simplex $\sigma$]\index{Compsigma@$\Comp^\cP_{\cA,\cB}(\sigma)$}
Let $\sigma$ be an $n$-simplex of $\cC$ and let $\cP$ be a locally full
sub-$2$-category of $\cC$ containing all objects of $\cC$. The
\emph{$2$-category of compactifications of $\sigma$} (relative to $\cA$,
$\cB$, and $\cP$), denoted $\Comp^\cP(\sigma)=\Comp^\cP_{\cA,\cB}(\sigma)$,
is the locally full sub-$2$-category of the strict fiber product
  \[\UPsFun(\nabla_n,\cC)\times_{\UPsFun([n],\cC)}\{\sigma\}\]
spanned by pseudo functors $F$ such that $F(k\to k',l)$ is a morphism of
$\cA$ and $F(k',l\to l')$ is a morphism of $\cB$, for all elements $(k,l)$
and $(k',l')$ of $\nabla_n$, and pseudo natural transformations $\alpha$
such that $\alpha(k,l)$ is a morphism in $\cP$ for every element $(k,l)$ of
$\nabla_n$. We denote the set of objects by $\rComp_{\cA,\cB}(\sigma)$,
which does not depend on $\cP$.
\end{Definition}

The $2$-category $\Comp^\cP(\sigma)$ is a $(2,1)$-category. A
compactification of $(f_n,\dots,f_1)$ can be represented by a diagram of the
form
\[\xymatrix{\bullet \ar[r]\ar[rd]_{f_1} & \bullet \ar[r]\ar[d] & \bullet\ar[r]\ar[d] & \cdots\ar[r] & \bullet\ar[d]
\\
&\bullet\ar[rd]_{f_2}\ar[r] & \bullet \ar[r]\ar[d] & \cdots \ar[r] & \bullet \ar[d]\\
&&\bullet\ar[rd]\ar[r] & \cdots\ar[r] & \bullet\ar[d]\\
&&&\ddots\ar[rd]_{f_n} & \vdots\ar[d]\\
&&&&\bullet}
\]
with horizontal arrows in $\cA$, vertical arrows in $\cB$, and suitable
$2$-cells.

Let $d\colon [m]\to [n]$ be a nondecreasing map and let $\sigma$ be an
$n$-simplex. We have a $2$-functor
\begin{equation}\label{e.Compd}
\Comp^\cP(\sigma)\to \Comp^\cP(\sigma\circ d)
\end{equation}
sending $F\colon \nabla_n\to\cC$ to the composition $dF\colon
\nabla_m\xrightarrow{d\times d} \nabla_n\xrightarrow{F} \cC$.

We will deduce Theorem \ref{p.Del} from the following criterion.

\begin{Theorem}\label{t.TQ}
Let $\cC$ be a $(2,1)$-category and let $\cA$ and $\cB$ be locally full
sub-$2$-categories of $\cC$, each containing all objects of $\cC$. Let
$\cE=\cL\cT_2^\red\dQ_{\cA,\cB}\cC$. Assume the following:
  \begin{enumerate}
    \item \label{t.TQ1} For any morphism $f\colon X\to Y$ of $\cC$, there
        exists a compactification $\kappa=(Z,i,p,\alpha)$ of $f$ relative
        to $\cA$, $\cB$ such that the homomorphism of fundamental groups
    \[\pi_1(\bO\Comp^\cB(f),\kappa)\to \pi_1(\cE(X,Y), p*i)\]
        induced by $G_f$ \eqref{e.ComptoE} has trivial image.
    \item \label{t.TQ2} For any sequence of two morphisms
        $X\xrightarrow{f} Y\xrightarrow{g} Z$, $\Comp^\cB(g,f)$ is
        connected.
  \end{enumerate}
  Then the $2$-functor $E\colon \cE\to \cC$ \eqref{e.E2} is a bi-equivalence.
\end{Theorem}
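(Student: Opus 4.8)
The plan is to construct a $2$-quasi-inverse $F\colon\cC\to\cE$ of $E$. Since $E$ is the identity on objects (indeed $\Ob(\cE)=\Ob(\dQ_{\cA,\cB}\cC)=\Ob(\cC)$), Proposition~\ref{p.equiv}~\ref{p.equiv2} reduces the claim to producing a pseudofunctor $F$ with $\lvert F\rvert=\id$ together with pseudonatural transformations $\eta\colon\one_\cE\to FE$ and $\epsilon\colon EF\to\one_\cC$ whose components at objects are identities. For every morphism $f\colon X\to Y$ of $\cC$ I would use hypothesis~\ref{t.TQ1} to choose a compactification $\kappa_f=(Z_f,i_f,p_f,\alpha_f)$ for which the homomorphism on fundamental groups induced by $G_f$~\eqref{e.ComptoE} is trivial, and set $F(X)=X$ and $F(f)=G_f(\kappa_f)=p_f*i_f\in\cE(X,Y)$.

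Two preliminary facts are needed. First, $\cO\Comp^\cB(f)$ is connected for every morphism $f$: the face $2$-functor $\Comp^\cB(f,\one_X)\to\Comp^\cB(f)$ is surjective on objects (extend a compactification of $f$ to the evident compactification of the $2$-simplex $(f,\one_X)$ that is trivial along the added identity), and $\Comp^\cB(f,\one_X)$ is connected by~\ref{t.TQ2}, hence so is its image, which is all of $\Comp^\cB(f)$. Second, the standard fact that a functor $H$ from a connected category to a groupoid which induces the trivial map on $\pi_1$ at some basepoint (equivalently, at every basepoint) sends any zigzag between fixed objects $a,b$ to one and the same isomorphism $Ha\simto Hb$, and that these canonical isomorphisms compose and are compatible with all morphisms. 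Applying the second fact to the functor $\cO\Comp^\cB(f)\to\cE(X,Y)$ underlying $G_f$ (legitimate by~\ref{t.TQ1} and connectedness), and using the identity $G_f=G_g\circ\phi_*$ for the isomorphism $\phi_*\colon\Comp^\cB(f)\simto\Comp^\cB(g)$ induced by a $2$-cell $\phi\colon f\Rightarrow g$ of $\cC$, I would define $F(\phi)$ to be the canonical isomorphism $F(f)=G_g(\phi_*\kappa_f)\simto G_g(\kappa_g)=F(g)$; the second fact then shows $F_{XY}\colon\cC(X,Y)\to\cE(X,Y)$ is a functor. For the coherence constraint, given $X\xrightarrow{f}Y\xrightarrow{g}W$ and a compactification $\tau$ of the $2$-simplex $(g,f)$, the atomic transformations $\sigma$ and $\gamma$ of Definition~\ref{d.T} assemble into a $2$-cell $c_\tau\colon G_g(d_0^*\tau)*G_f(d_2^*\tau)\Rightarrow G_{gf}(d_1^*\tau)$ of $\cE$ (with $d_2^*\tau\in\Comp^\cB(f)$, $d_1^*\tau\in\Comp^\cB(gf)$, $d_0^*\tau\in\Comp^\cB(g)$ via the face maps~\eqref{e.Compd}); composing $c_\tau$ with the canonical isomorphisms above yields a $2$-cell $F(g)*F(f)\Rightarrow F(gf)$, which by the connectedness of $\Comp^\cB(g,f)$ (\ref{t.TQ2}), the naturality of $c$, and the naturality of the canonical isomorphisms is independent of $\tau$ and natural in $f$ and $g$. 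The unit constraint comes from $\kappa_{\one_X}$, and the pseudofunctor axioms for $F$ reduce to the analogous compatibilities over $2$- and $3$-simplices of $\cC$ (invoking once more that $\Comp^\cB$ of a sequence is connected, deduced from~\ref{t.TQ2}). This defines the pseudofunctor $F$.

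The transformation $\epsilon$ is the easy half: its component at $X$ is $\one_X$ and its $2$-cell at $f$ is $\alpha_f\colon EF(f)=p_fi_f\Rightarrow f$ (absorbing unitors); pseudonaturality follows from the pseudofunctor axioms of the compactification diagrams, because $E(c_\tau)$ encodes exactly the compatibility of the $\alpha$'s along a $2$-simplex. The construction of $\eta\colon\one_\cE\to FE$ is the crux. Its component at $X$ is $\one_X$; for an edge $a$ of $\cA$ from $X$ to $Y$, the path $G_a(Y,a,\one_Y)$ attached to the trivial compactification of $a$ carries a canonical $2$-cell to $a$ in $\cE$ (delete the identity by a $\theta$-transformation), and composing with the canonical isomorphism $G_a(Y,a,\one_Y)\simto G_a(\kappa_a)=FE(a)$ gives $\eta(a)\colon a\Rightarrow FE(a)$; likewise for edges of $\cB$. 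For a path $u=f_m*\dots*f_1$ one sets $\eta(u)=(\text{coherence of }F)\circ(\eta(f_m)*\eta(f_{m-1}*\dots*f_1))$, which renders $\eta$ compatible with composition in $\cE$ by construction. It then remains to verify that $\eta$ satisfies the naturality condition for every $2$-cell of $\cE=\cL\bT\dQ_{\cA,\cB}\cC$, equivalently for every atomic transformation $\psi$ of $\bT\dQ_{\cA,\cB}\cC$, namely $\eta(\mathrm{target})\circ\psi=FE(\psi)\circ\eta(\mathrm{source})$; the compatibility with $\iota$, $\theta$, $\gamma$, $\delta$ is formal, while the compatibility with $\sigma_{-,D,-}$ for an $(\cA,\cB)$-square $D$ — once one unwinds that $FE(\sigma_{-,D,-})$ is $F$ of the $2$-cell of $D$ composed with canonical isomorphisms — reduces precisely to the connectedness of $\Comp^\cB$ of the pair of morphisms bounding $D$, i.e.\ to~\ref{t.TQ2}, with~\ref{t.TQ1} entering through the canonical isomorphisms. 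Granting these verifications, $(F,\eta,\epsilon)$ is as required by Proposition~\ref{p.equiv}~\ref{p.equiv2}~\ref{p.equivb}, so $E$ is a $2$-equivalence.

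I expect the main obstacle to lie in the last paragraph: checking that the purely \emph{local} data furnished by~\ref{t.TQ1} and~\ref{t.TQ2} assemble into a globally coherent pseudofunctor and pseudonatural equivalences — in particular the naturality of $\eta$ against the square transformations $\sigma$, together with the scattered pseudofunctor and pseudonaturality axioms living over $2$- and $3$-simplices of $\cC$. Each individual check is routine once the connectedness statement and the well-definedness principle for functors with trivial $\pi_1$-image are in hand, but there are many of them and the combinatorics of the atomic transformations of $\bT\dQ_{\cA,\cB}\cC$ has to be managed carefully.
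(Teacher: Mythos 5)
Your proposal follows essentially the same route as the paper's proof: choose a compactification $\kappa_f$ for each morphism, use the triviality of the $\pi_1$-image (plus connectedness of $\Comp^\cB(f)$, which you correctly extract from \ref{t.TQ2} via the degenerate $2$-simplex) to factor $G_f$ through the indiscrete category on $\Ob(\Comp^\cB(f))$, define $F$ and its coherence constraint from compactifications of $2$-simplices, and give $\eta$ and $\epsilon$ on generators exactly as the paper does. One correction is needed in your treatment of the pseudofunctor axioms for $F$: the composition (pentagon) axiom does \emph{not} require, and cannot be deduced from \ref{t.TQ2}, connectedness of $\Comp^\cB$ of a $3$-simplex $(h,g,f)$ — hypothesis \ref{t.TQ2} only concerns sequences of length two. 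What is actually needed is mere \emph{nonemptiness} of $\Comp^\cB(h,g,f)$, which follows from \ref{t.TQ1} by the inductive construction of Lemma \ref{l.empty}: a single compactification $\lambda$ maps under $G_{(h,g,f)}$ to a $3$-simplex of $\cE$ whose four faces, after transport along the canonical isomorphisms, are the already well-defined $2$-cells $F_{g,f}$, $F_{hg,f}$, $F_{h,gf}$, $F_{h,g}$, and the existence of such a $3$-simplex is precisely the required commutative pentagon. With that repair, the rest of your outline (including the deferred naturality checks for $\eta$ against the atomic transformations $\sigma_{-,D,-}$, which the paper likewise leaves implicit) matches the paper's argument.
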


Note that \ref{t.TQ2} implies that $\Comp^\cB(f)$ is connected, so that
\ref{t.TQ1} holds for every compactification $\kappa$ of~$f$.

\begin{Remark}
The proof of Theorem \ref{t.TQ} makes use of the following generalization of
\eqref{e.ComptoE}. There exists a unique way to associate, for every $n\ge
0$ and every $n$-simplex $\sigma$ of~$\cC$, a functor
\begin{equation}\label{e.ComptoEn}
G_\sigma\colon\bO\Comp^\cB(\sigma)\to \UPsFun([n],\cE),
\end{equation}
satisfying the following conditions:
\begin{itemize}
  \item[(0)] For every $0$-simplex (i.e.\ object) $X$ of $\cC$, the image
      of $G_X$ is $X$.

  \item[(1)] For every $1$-simplex (i.e.\ morphism) $f$ of $\cC$, the
      functor $G_f$ is \eqref{e.ComptoE}.

  \item[(2)] For every $2$-simplex $\sigma$ of $\cC$, the functor
      $G_\sigma$ sends a compactification as partly
shown by the diagram
\[\xymatrix{X\ar[r]_i\rruppertwocell^k{^\alpha} & X'\ar[r]_\ell\ar[d]_{p}\ar@{}[rd]|D& X''\dduppertwocell^r{^\beta}\ar[d]_{s}\\
& Y\ar[r]_j & Y'\ar[d]_{q}\\
&&Z}\]
to the $2$-simplex of $\cE$
\[\xymatrix{X\ar[r]_{p*i}\rruppertwocell^{r*k}{^} &Y\ar[r]_{q*j} & Z}\]
where the $2$-cell is the composition
\[q*j*p*i\xRightarrow{\sigma_{q,D,i}} q*s*\ell*i
=(qs)*(\ell i)
\xRightarrow{\beta*\alpha} r*k.
\]

\item[(3)] For any non-decreasing map $d\colon [m]\to [n]$, the diagram
    \[\xymatrix{\bO\Comp^\cB(\sigma)\ar[r]^{G_\sigma}\ar[d]_{\eqref{e.Compd}} & \UPsFun([n],\cE)\ar[d]^{\UPsFun(d,\cE)}\\
    \bO\Comp^\cB(\sigma\circ d) \ar[r]^{G_{\sigma\circ d}} & \UPsFun([m],\cE)}\]
    commutes.
\end{itemize}
It follows from (0) and (3) that the image of $G_\sigma$ lies in the
category $\UPsFun([n],\cE)\times_{\cE^{\Ob([n])}}\{\lvert \sigma\rvert\}$.
\end{Remark}

\begin{Lemma}\label{l.empty}
Let $\cC$ be a $(2,1)$-category and let $\cA$ and $\cB$ be locally full
sub-$2$-categories of $\cC$, each containing all objects of $\cC$, such that
$\rComp_{\cA,\cB}(f)$ is nonempty for every morphism $f$ of $\cC$. Then
$\rComp_{\cA,\cB}(\sigma)$ is nonempty for every $n$-simplex $\sigma$.
\end{Lemma}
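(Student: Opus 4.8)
The plan is to build a compactification of an arbitrary $n$-simplex $\sigma=(f_n,\dots,f_1)$ by repeatedly compactifying one-morphism-at-a-time and then assembling the pieces, filling in the required $2$-cells as we go. More precisely, I would induct on $n$. The case $n=0$ is trivial (an object is its own compactification), and the case $n=1$ is the hypothesis. For the inductive step, suppose a compactification of the $(n-1)$-simplex $(f_{n-1},\dots,f_1)$ is given; this is a diagram of the shape displayed just before Definition~\ref{e.Compd} with $n-1$ rows. Its bottom-right corner is an object, call it $W_{n-1}$, equipped with a morphism $X_{n-1}\to W_{n-1}$ in $\cB$ arising from the last column, and one has a $2$-cell exhibiting the composite along the staircase as (a pseudofunctor value representing) the relevant composition. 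I would then prolong the diagram by one row: take a compactification $(Z,i,p,\alpha)$ of the composite morphism $X_n = W_{n-1} \to X_{n-1}\xrightarrow{f_n}X_n$ — more carefully, of the single morphism that the last vertical leg followed by $f_n$ represents, or simply work with $f_n$ directly after choosing how the previous column ends.

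The cleaner way to organize this, which I would actually carry out, is to process the simplex from the source end. Choose a compactification $(Z_1,i_1,p_1,\alpha_1)$ of $f_1\colon X_0\to X_1$. Now $f_2\circ p_1\colon Z_1\to X_2$ is a morphism of $\cC$, so it has a compactification $(Z_2,i_2,p_2,\alpha_2)$; note $i_2\colon Z_1\to Z_2$ is in $\cA$ and $p_2\colon Z_2\to X_2$ is in $\cB$. Continuing, at stage $k$ we compactify $f_k\circ p_{k-1}\colon Z_{k-1}\to X_k$ to get $(Z_k, i_k\colon Z_{k-1}\to Z_k, p_k\colon Z_k\to X_k, \alpha_k)$. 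After $n$ stages we have a zig-zag $X_0\xrightarrow{i_1}Z_1\xrightarrow{i_2}\cdots\xrightarrow{i_n}Z_n$ in $\cA$ together with $p_k\colon Z_k\to X_k$ in $\cB$ for each $k$. From this data I would assemble a $2$-functor $F\colon \Delta_n\to\cC$ by setting $F(k,l)=Z_k$ for $l\le k$ (with $Z_0=X_0$, and placing $X_k=F(k,k)$ appropriately), letting the horizontal maps $F(k,l\to k,l)$ — i.e.\ increasing the first coordinate — be built from the $i$'s, the vertical maps be built from identities and the $p$'s, and the required $2$-cells (functoriality constraints of the pseudofunctor, and the squares $D$) come from the $\alpha_k$'s and composites thereof. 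One must check that $F$ restricts along the diagonal $[n]\to\Delta_n$ to a strictly unital pseudofunctor isomorphic to $\sigma$: this uses the $2$-cells $\alpha_k\colon p_k i_k\Rightarrow f_k\circ p_{k-1}$ to produce, inductively, a $2$-cell from the composite along the $k$-th diagonal step of $F$ to $f_k\circ f_{k-1}\circ\cdots\circ f_1$ against the corresponding composite, hence the needed isomorphism of pseudofunctors $[n]\to\cC$ (after which one replaces $F$ by an honestly $\sigma$-lying representative, which is possible since $\Comp^\cB(\sigma)$ is defined as a $2$-faithful subcategory of a strict fiber over $\{\sigma\}$ — one uses Lemma~\ref{l.ps} to straighten $F$ so that its diagonal restriction is literally $\sigma$).

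The main obstacle I anticipate is purely bookkeeping: verifying that the collection of objects $Z_k$, morphisms $i_k,p_k$, and $2$-cells derived from the $\alpha_k$ really does organize into a functor $\Delta_n\to\cC$ satisfying the compatibility constraints (associativity pentagons and so on) that a pseudofunctor out of the poset $\Delta_n$ requires — in particular, that every square of $\Delta_n$ gets a coherent $2$-cell and that these are compatible on triples. Since $\Delta_n$ is a poset (so there is at most one morphism between any two objects) and every $2$-cell we use is invertible (we are in a $(2,1)$-category), the coherence is automatic once the $2$-cells are specified on the generating squares; the only real content is that the squares not on the "staircase" can be filled, and for those one may take identity-based fillers because the corresponding composites in $\cC$ literally agree, or one propagates the $\alpha_k$'s. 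There is no nontrivial $2$-Cartesianness or squarability hypothesis needed here — only that every morphism admits \emph{some} compactification — so the argument is elementary; the care is in not dropping any constraint. Once $F$ is produced and straightened to lie over $\sigma$, it witnesses that $\Comp_{\cA,\cB}(\sigma)\neq\emptyset$.
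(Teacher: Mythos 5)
Your construction does not produce an object of $\Comp_{\cA,\cB}(\sigma)$ for $n\ge 2$, and the gap is not bookkeeping but a missing family of morphisms. A compactification of $\sigma$ is a pseudofunctor $F\colon\Delta_n\to\cC$, and among the data it must supply are the $\cA$-morphisms $F\bigl((k,k)\to(k+1,k)\bigr)\colon X_k\to F(k+1,k)$ for every $k$ --- these are the horizontal edges leaving the diagonal, i.e.\ the first legs of the compactifications of each $f_{k+1}$ in the displayed staircase. Your single-row construction sets $F(k+1,l)=Z_{k+1}$ for $l\le k$ with identity vertical maps, which forces $F(k+1,k)=Z_{k+1}$; but your data contains only $i_{k+1}\colon Z_k\to Z_{k+1}$ and $p_k\colon Z_k\to X_k$ (the latter pointing the wrong way, and in $\cB$), and no morphism $X_k\to Z_{k+1}$ at all, let alone one in $\cA$. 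This works for $k=0$ only because $Z_0=X_0$. So from $n=2$ onward the functor $F$ is simply not defined on all of $\Delta_n$, and no choice of $2$-cells can repair that. The staircase is genuinely two-dimensional: each interior vertex $F(k,l)$ with $0<l<k$ must receive an $\cA$-morphism from the left and a $\cB$-morphism from above, and it has to be manufactured by its own application of the hypothesis; $n$ applications along the top row cannot suffice.

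The paper's proof supplies exactly these vertices by inducting on the width of the diagonal band $\Delta_{n,j}=\{(k,l):0\le k-l\le j\}$. Given $F$ on $\Delta_{n,j-1}$, the morphism $F\bigl((k+j-1,k)\to(k+j,k+1)\bigr)$ is already defined (it is a vertical $\cB$-edge followed by a horizontal $\cA$-edge), and compactifying this single morphism of $\cC$ produces the new object $F(k+j,k)$ together with its incoming $\cA$-edge and outgoing $\cB$-edge; the remaining morphisms and coherence cells are then composites of old ones. Your first paragraph (extend a compactification of $(f_{n-1},\dots,f_1)$ by one column) is pointed in a workable direction, but note that adding the last column requires $n$ successive compactifications of such mixed composites, one per new vertex $(n,n-1),(n,n-2),\dots,(n,0)$, not the single one you describe --- and it is precisely this version of the argument, organized by diagonals rather than columns, that the paper carries out. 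I would also caution that coherence for a pseudofunctor out of the poset $\Delta_n$ is not literally ``automatic'': one still has to specify the constraints $F(g)F(f)\Rightarrow F(gf)$ and check the cocycle condition, though with the paper's choice (composites of the chosen $2$-cells) this is routine.
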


Recall that $\rComp_{\cA,\cB}(\sigma)$ denotes the set of objects of
$\Comp_{\cA,\cB}^\cP(\sigma)$, which is independent of $\cP$.

\begin{proof}
Consider the subset
  \[\nabla_{n,j}=\{(k,l)\in [n]\times [n]\mid 0\le k-l\le j\}\subseteq \nabla_n\]
for $0\le j\le n$. We show by induction on $j$ that there exists a strictly
unital pseudo functor $F\colon \nabla_{n,j}\to \cC$ such that $F$ composed
with the diagonal embedding $[n]\to \nabla_{n,j}$ equals $\sigma$ and that
$F(k\to k',\ell)$ is in $\cA$, $F(k',\ell\to \ell')$ is in $\cB$. The case
$j=0$ is trivial as the diagonal embedding $[n]\to \nabla_{n,0}$ is an
isomorphism. For $j\ge 1$, assume that $F\colon \nabla_{n,j-1}\to \cC$ has
been constructed. Applying the hypothesis, we get a decomposition
$F((k+j-1,k)\to (k+j,k+1))\cong pi$ for $0\le k\le n-j$. We take $F(k+j-1\to
k+j,k)=i$ and $F(k+j,k\to k+1)=p$. In general, for $P\le Q$ in
$\nabla_{n,j}$ with $P\neq Q$, we take
\[F(P\to Q)=F(Q'\to Q) F(P'\to Q') F(P\to P')\]
where $P',Q'\in \nabla_{n,j-1}$ are given by
\[P'=\begin{cases}
  P &\text{if $P\in \nabla_{n,j-1}$,}\\
  P+(0,1)&\text{if $P\not\in \nabla_{n,j-1}$,}
\end{cases}\qquad
Q'=\begin{cases}
  Q &\text{if $Q\in \nabla_{n,j-1}$,}\\
  Q-(1,0)&\text{if $Q\not\in \nabla_{n,j-1}$.}
\end{cases}\]
The coherence constraint of $F$ is given by the obvious $2$-cells. For
$j=n$, we have $\nabla_{n,n}=\nabla_n$ so that $\rComp_{\cA,\cB}(\sigma)$ is
nonempty.
\end{proof}

\begin{proof}[Proof of Theorem \ref{t.TQ}]
For any set $S$, we denote by $S\sptilde$ the contractible groupoid of
underlying set $S$. In other words, we have $\Ob(S\sptilde)=S$ and for all
elements $X$ and $Y$ of $S$, there exists a unique morphism in $S\sptilde$
from $X$ to $Y$. By assumption, for any morphism $f\colon X\to Y$ of~$\cC$,
$G_f$ \eqref{e.ComptoE} factorizes through a functor $G_f\sptilde\colon
\rComp(f)\sptilde\to\cE(X,Y)$. For every $f$, choose an element $\kappa_f$
of $\rComp(f)$ such that $\kappa_{\one_X}=(X,\one_X,\one_X,\one_{\one_X})$
for every object $X$ of $\cC$. We denote by $d^n_i\colon [n-1]\to [n]$ and
$s^n_i\colon [n+1]\to [n]$, $0\le i\le n$, the face and degeneracy maps,
respectively.

We construct a strictly unital pseudo functor $F\colon \cC\to \cE$ as
follows. On objects we let $F$ be the identity. For any morphism $f$ of
$\cC$, we take $Ff=G_f(\kappa_f)$. In particular, $F(\id_X)=\id_X$. A
$2$-cell $\alpha\colon f\Rightarrow g$ of $\cC$ induces an isomorphism of
$2$-categories $H_\alpha\colon \Comp^\cB(f)\to \Comp^\cB(g)$ such that $G_g
H_\alpha = G_f$. We take $F\alpha=G_g\sptilde(H_\alpha(\kappa_f)\to
\kappa_g)$.

We construct the coherence constraint of $F$ as follows. Let
$X\xrightarrow{f} Y \xrightarrow{g} Z$ be a sequence of morphisms of $\cC$.
We consider the $2$-simplex $(g,f)$ of $\cC$ and the $2$-functor
$G_{g,f}\coloneqq G_{(g,f)}$ \eqref{e.ComptoEn}. For any object $\lambda$ of
$\Comp^\cB(g,f)$, the image $G_{g,f}(\lambda)$ is a $2$-simplex of $\cE$
with edges $G_f(d^2_2 \lambda)$, $G_{gf}(d^2_1\lambda)$,
$G_{g}(d^2_0\lambda)$. Applying Lemma \ref{l.ps} to $G_{g,f}(\lambda)$, we
get a unique pair $(F_\lambda,\phi_\lambda)$, where $F_\lambda$ is a
$2$-simplex of $\cE$ with edges $F(f)$, $F(gf)$, $F(g)$, and
$\phi_\lambda\colon G_{g,f}(\lambda)\to F_\lambda$ is a morphism of
$\UPsFun([2],\cE)\times_{\cE^{\Ob([2])}}\{\lvert (g,f) \rvert\}$ satisfying
\[
  \phi_\lambda(0\to 1)=G_f\sptilde(d^2_2\lambda\to \kappa_f),\quad \phi_\lambda(0\to 2)=G_{gf}\sptilde(d^2_1\lambda\to \kappa_{gf}),\quad \phi_\lambda(1\to 2)=G_g\sptilde(d^2_0\lambda\to \kappa_g).
  \]
By the uniqueness of the pair, for any morphism $\psi\colon \lambda\to
\lambda'$ of $\Comp^\cB(g,f)$, we have $F_\lambda=F_{\lambda'}$ and
$\phi_\lambda=\phi_{\lambda'}\circ G_{g,f}(\psi)$. It then follows from the
connectedness of $\Comp^\cB(g,f)$ that $F_\lambda$ does not depend on the
choice of $\lambda$ and we denote the corresponding $2$-cell of $\cE$ by
\[F_{g,f}\colon
F(g)F(f)\Rightarrow F(gf).
\]
A $2$-cell $\alpha\colon f\Rightarrow f'$ of $\cC$ induces an isomorphism of
$2$-categories $H_{g,\alpha}\colon \Comp^\cB(g,f)\to \Comp^\cB(g,f')$,
compatible with $H_\alpha$ and $H_{g\alpha}$ and such that
$G_{g,f'}H_{g,\alpha}=G_{g,f}$. Thus $F_{g,f}$ is functorial in $f$.
Similarly $F_{g,f}$ is functorial in $g$. By construction, $F_{\one_X,f}$ is
given by $G_{\one_X,f}(s^1_0 \kappa_f)=s^1_0G_f(\kappa_f)$, so that
$F_{\one_X,f}=\id_{F(f)}$. Similarly, $F_{f,\one_Y}=\id_{F(f)}$ since it is
given by $G_{f,\one_Y}(s^1_1\kappa_f)$.

Let $X\xrightarrow{f} Y \xrightarrow{g} Z\xrightarrow{h} W$ be a sequence of
morphisms of $\cC$. We consider the $3$-simplex $\sigma=(h,g,f)$ of $\cC$
and the $2$-functor $G_\sigma$ \eqref{e.ComptoEn}. For any object $\lambda$
of $\Comp^\cB(\sigma)$, the image $G_{\sigma}(\lambda)$ is a $3$-simplex of
$\cE$ with faces $G_{h,g}(d^3_0\lambda)$, $G_{h,gf}(d^3_1\lambda)$,
$G_{hg,f}(d^3_2\lambda)$, $G_{g,f}(d^3_2\lambda)$. Applying Lemma \ref{l.ps}
to $G_{\sigma}(\lambda)$, we get a pair $(F_\lambda,\phi_\lambda)$, where
$F_\lambda$ is a $3$-simplex of $\cE$ of edges $\kappa_f$, $\kappa_g$,
$\kappa_h$, $\kappa_{gf}$, $\kappa_{hg}$, $\kappa_{hgf}$, and
$\phi_\lambda\colon G_\sigma(\lambda)\to F_\lambda$ is a morphism of
$\UPsFun([3],\cE)\times_{\cE^{\Ob([3])}}\{\lvert (h,g,f) \rvert\}$ such that
\[\phi_\lambda(e)=G_{\sigma(e)}\sptilde(d_e\lambda\to \kappa_{\sigma(e)})\]
for all edges $e$ of $[3]$. Here $d_e\colon [1]\to [3]$ denotes
the map determined by $e$. By construction, $d^{3}_i
F_{\lambda}=F_{d^{3}_i\lambda}$ for $0\le i\le 3$. Thus $F_{\lambda}$
implies that the diagram
\[\xymatrix{F(h)F(g)F(f)\ar@=[r]^{F_{g,f}}\ar@=[d]_{F_{h,g}} & F(h)F(gf)\ar@=[d]^{F_{h,gf}}\\
F(hg)F(f)\ar@=[r]^{F_{hg,f}} & F(hgf)}
\]
commutes, which proves the composition axiom. This finishes the construction
of $F$.

Let $\tilde F\colon \cT\cC\to \cE$ be the $2$-functor induced by $F$.
Consider the commutative square
\[\xymatrix{\cL\cT_2 \dQ_{\cA,\cB}\cC\ar[d]_{R_\cE}\ar[r]^{\tilde E} & \cT\cC\ar[d]^{R_\cC}\\
\cE\ar[r]^E & \cC,}
\]
where $\tilde E=\cL E_2$, and the vertical arrows $R_\cC$ and $R_\cE$ are
the reduction $2$-functors. Since $R_\cC$ and $R_\cE$ are bi-equivalences,
it suffices to construct pseudo natural equivalences $E\tilde F\Rightarrow
R_\cC$ and $R_\cE\Rightarrow \tilde F \tilde E$.

We define a pseudo natural isomorphism $\epsilon\colon E\tilde F\Rightarrow
R_\cC$ sending $X$ to $\one_X$ as follows. To every morphism $f$ of $\cC$,
we associate the $2$-cell $\alpha\colon pi\Rightarrow f$ in
$\kappa_f=(Z,i,p,\alpha)$.

We define a pseudo natural isomorphism $\eta\colon R_{\cE}\Rightarrow \tilde
F\tilde E$ sending $X$ to $\one_X$ as follows. To every morphism $f$ of
$\cA$ (resp.\ $\cB$), we associate the $2$-cell
\begin{equation}\label{e.pnequiv}
f= f*\one_X^\rv
\xRightarrow{\sigma_{\one_Y,D,\one_X}} p*i \quad \text{(resp.\
$f= \one^\rh_Y*f
\xRightarrow{\sigma_{\one_Y,D',\one_X}} p*i$)},
\end{equation}
where $D$ (resp.\ $D'$) is the square
\[\xymatrix{X\ar[r]^i\ar@{=}[d]\ar@{}[rd]|D & Z\ar[d]^p & \text{(resp.}&X\ar[r]^i\ar[d]_f\ar@{}[rd]|{D'} & Z\text{)}\ar[d]^p\\
X\ar[r]^f & Y && Y\ar@{=}[r] & Y
}\]
induced by $\kappa_f$.
\end{proof}

\begin{Remark}
Similarly to \eqref{e.ComptoE}, we have a functor
\begin{equation}\label{e.CompAtoE}
G'_f\colon \bO\Comp^\cA(f)\to (\cT_2^\red\dQ_{\cA,\cB}\cC)(X,Y)\to\cE(X,Y).
\end{equation}
It sends every morphism $(r,\gamma,\delta)\colon (Z,i,p,\alpha)\to
(W,j,q,\beta)$ of $\Comp^\cA(f)$ to the composition
\[p*i= \hid_Y*p*i\xRightarrow{\sigma_{\one_Y,D,i}}
q*r*i= q*ri
\xRightarrow{\gamma} q*j,
\]
where $D$ is the square
\[\xymatrix{Z\ar[r]^r\ar[d]_p & W\ar[d]^q\\
Y\ar@{=}[r] & Y}
\]
given by $\delta^{-1}$. The analogue of Theorem \ref{t.TQ} holds with
$\Comp^\cB$ replaced by $\Comp^\cA$.

Moreover, \eqref{e.ComptoE} and \eqref{e.CompAtoE} extend to a functor
\begin{equation}\label{e.dComptoE}
\bT_2\dComp(f)\to
\cE(X,Y),
\end{equation}
where
\[
\dComp(f)=\dQ_{\bO\Comp^\cA(f),\bO\Comp^\cB(f)}\bO\Comp^\cC(f)
\]
is the double subcategory of $\dK^2(\bO\Comp^\cC(f))$.
\end{Remark}

By Proposition \ref{p.GD} \ref{p.GD2}, Theorem \ref{p.Del} follows from the
following.

\begin{Proposition}
Under the assumptions of Theorem \ref{p.Del},  for every $n$-simplex
$\sigma$ of $\cC$, the $2$-category $\Comp^\cB(\sigma)^{\coop}$ is
  directed. Moreover, the conditions of Theorem \ref{t.TQ} are satisfied.
\end{Proposition}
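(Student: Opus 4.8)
The plan is to verify the two hypotheses of Theorem~\ref{t.TQ} --- namely that for every morphism $f$ there is a compactification $\kappa$ such that $G_f$ induces the trivial map on $\pi_1$, and that $\Comp^\cB(g,f)$ is connected for every composable pair --- and, in fact, to prove the stronger statement that $\Comp^\cB(\sigma)^{\coop}$ is directed for every $n$-simplex $\sigma$, which implies both. Directedness of a $2$-category $\cX$ here should mean: $\cX$ is nonempty, any two objects admit a common object mapping to both (a cospan in $\cX^{\coop}$, i.e.\ a span in $\cX$), and any two parallel morphisms are coequalized by a further morphism; in particular $\cO\cX$ is filtered, hence $\cX$ is connected and $\pi_1(\cO\cX,\kappa)$ is trivial at every basepoint, so the image of $\pi_1(\cO\Comp^\cB(f),\kappa)\to \pi_1(\cE(X,Y),p*i)$ is automatically trivial and \ref{t.TQ1} holds; and connectedness of $\Comp^\cB(g,f)$ is immediate, giving \ref{t.TQ2}.

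First I would establish nonemptiness: by hypothesis \ref{p.Del1}, $\Comp_{\cA,\cB}(f)$ is nonempty for every morphism $f$, so Lemma~\ref{l.empty} gives that $\Comp_{\cA,\cB}(\sigma)$ is nonempty for every $n$-simplex $\sigma$. Next, for the span condition, given two compactifications $F, F'$ of $\sigma$, I would use hypothesis \ref{p.Del2}, that $(\cB,\cB)$ is squarable in $\cC$, to build a common refinement $F''$: one forms, entry by entry over $\Delta_n$, the $2$-Cartesian $(\cB,\cB)$-squares completing the relevant pairs of vertical morphisms of $F$ and $F'$ lying over the same edge of $\sigma$, and assembles them --- using the universal property of $2$-fiber products to produce the comparison $1$-cells $r$ and $r'$ of $\cB$ and the filling $2$-cells $\gamma,\delta$ --- into an object of $\Comp^\cB(\sigma)$ equipped with morphisms to $F$ and $F'$. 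This is the analogue of Deligne's construction of a common compactification; the bookkeeping over the poset $\Delta_n$ is the routine but lengthy part. Finally, for the parallel-morphisms condition: given $r, r'\colon F\to F'$ in $\Comp^\cB(\sigma)$, one must find $s\colon F'\to F''$ with $sr = sr'$; again one uses $2$-Cartesianness of the squares defining $F''$ (the equalizer-type universal property that $2$-Cartesian squares enjoy inside a $(2,1)$-category) to coequalize the two maps after one further refinement. Since $\cC$ is a $(2,1)$-category, all the relevant $2$-cells are invertible, which is what makes these universal properties usable.

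The main obstacle I expect is the span step: correctly organizing the iterated square-completions over $\Delta_n$ so that the resulting data genuinely satisfies the (strict unitality and coherence) axioms for an object of $\UPsFun(\Delta_n,\cC)$, and so that the comparison morphisms are pseudonatural transformations with components in $\cB$. One has to be careful that the squares chosen at different entries of $\Delta_n$ are compatible along the composition maps $\circ^i$, i.e.\ that pasting adjacent $2$-Cartesian squares yields the square assigned to the composite edge; this follows from the fact that a pasting of $2$-Cartesian squares is $2$-Cartesian, together with the essential uniqueness of such completions, but making it precise requires threading the universal properties through the inductive construction of Lemma~\ref{l.empty}. Once directedness is in hand, the deductions of \ref{t.TQ1} and \ref{t.TQ2}, and hence of Theorem~\ref{p.Del} via Proposition~\ref{p.GD}~\ref{p.GD2}, are formal.
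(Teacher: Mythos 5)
There is a genuine gap, and it sits exactly at the heart of the proof. You have strengthened the paper's notion of ``directed'' to include coequalization of parallel morphisms (i.e.\ filteredness of $\cO\Comp^\cB(\sigma)^{\coop}$), and you lean on that extra clause to get condition \ref{t.TQ1} of Theorem \ref{t.TQ} for free. But the definition in the paper (stated immediately after the Proposition) only requires nonemptiness and a common target for every pair of \emph{objects}; it says nothing about parallel morphisms. Your sketch of the coequalization step --- ``use the equalizer-type universal property of $2$-Cartesian squares after one further refinement'' --- does not work: the two projections $p_1,p_2\colon Z\times_Y Z\to Z$ give genuinely distinct parallel morphisms in $\Comp^\cB(f)$, and the only natural candidate for something identifying them is the diagonal $Z\to Z\times_Y Z$, which points the wrong way and is in general not a morphism of $\cB$. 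There is no refinement inside $\Comp^\cB(f)$ that coequalizes them, so filteredness is not available from hypothesis \ref{p.Del2}.

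What the paper actually does for \ref{t.TQ1} is weaker and different: directedness of the preorder on $\Ob(\Comp^\cB(f))$ gives simple connectedness of that preorder, and the remaining obstruction is reduced to showing $G_f(r_1)=G_f(r_2)$ in $\cE(X,Y)$ for every parallel pair $r_1,r_2\colon\lambda\rightrightarrows\kappa$ --- an identity in the \emph{target}, not a coequalization in the source. Proving it requires the auxiliary decomposition statement ($*$) applied to the diagonal of $Z\to Y$ (producing a factorization through a morphism $h$ of $\cA$), the companion $2$-functor $G'_f$ on $\Comp^\cA(f)$ from \eqref{e.CompAtoE}, and the extension of both to the double category $\dComp(f)$ via \eqref{e.dComptoE}, so that $p_\alpha s'*h\simeq \one$ forces $G_f(p_1s')=G_f(p_2s')$. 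None of this machinery appears in your proposal. Your treatment of nonemptiness (via Lemma \ref{l.empty}) and of the cocone condition for pairs of objects (entrywise $2$-fiber products plus the ($*$)-type replacement of a $\Comp^\cB_{\cC,\cB}$-object by a $\Comp^\cB_{\cA,\cB}$-object) matches the paper in spirit, but the verification of \ref{t.TQ1} --- the substantive part of the Proposition --- is missing.
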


Here we say that a $2$-category $\cD$ is \emph{directed} if its underlying
category is directed, namely, if it is nonempty and if for every pair of
objects $X$ and $Y$ of $\cD$, there exist an object $Z$ of $\cD$ and
morphisms $X\to Z$ and $Y\to Z$ of $\cD$. A directed $2$-category is
connected.

\begin{proof}
By Lemma \ref{l.empty}, assumption \ref{p.Del1} of Theorem \ref{p.Del1}
implies that $\Comp^\cB(\sigma)$ is nonempty. Next we show the following:
\begin{itemize}
\item[($*$)] Every morphism of $n$-simplices $\sigma\to \tau$ in $\cC$,
    where $\tau$ is an $n$-simplex of $\cB$, is isomorphic in
    $\UPsFun([n],\cC)$ to the composition $\sigma\xrightarrow{i}
    \tau'\xrightarrow{p} \tau$, where $p$ is a morphism of $n$-simplices
    of $\cB$ and $i(j)$ is a morphism of $\cA$ for all $0\le j\le n$.
\end{itemize}
We proceed by induction on $n$. The case $n=0$ is assumption \ref{p.Del1} of
Theorem \ref{p.Del}. For $n\ge
    0$, induction hypothesis provides the restrictions $p\res\{1,\dots, n\}$ and
    $i\res\{1,\dots,n\}$. Applying  assumption \ref{p.Del2} of Theorem \ref{p.Del}, we obtain a commutative
    diagram with $2$-cells in $\cC$
\[\xymatrix{\sigma(0)\ar[r]^f\ar[d]&X\ar[d]\ar[r]&\tau(0)\ar[d]\\
\sigma(1)\ar[r]^{i(1)} & \tau'(1)\ar[r]^{p(1)} & \tau(1)}
\]
where the square on the right is in $\cB$. Applying  assumption \ref{p.Del1}
of Theorem \ref{p.Del} to $f$, we get $f\cong qh$. Replacing $f$ by $h$, we
may assume $f$ is a morphism of $\cA$. Then it suffices to take $\tau'(0)=X$
with the restrictions $p\res\{0,1\}$ and $i\res\{0,1\}$ given by the
diagram.

Now let $F$ and $F'$ be two objects of $\Comp^\cB_{\cA,\cB}(\sigma)$.
Applying  assumption \ref{p.Del2} of Theorem \ref{p.Del}, we obtain
morphisms $F''\to F$ and $F''\to F'$ of $\Comp^\cB_{\cC,\cB}(\sigma)$, where
$F''(k,l)$ is a pseudo fiber product $F(k,l)\times_{\sigma(k)} F'(k,l)$ in
$\cC$. To show that $\Comp^\cB_{\cA,\cB}(\sigma)$ is directed, it then
suffices to show that, for every object $F''$ of
$\Comp^\cB_{\cC,\cB}(\sigma)$, there exists a morphism $F'''\to F''$ of
$\Comp^\cB_{\cC,\cB}(\sigma)$ such that $F'''$ is an object of
$\Comp^\cB_{\cA,\cB}(\sigma)$. We will construct a sequence
\[F'''=F_n\to \dots \to F_1\to F_0=F''\]
of morphisms of $\Comp^\cB_{\cC,\cB}(\sigma)$ such that $F_j\res{\nabla_j}$
is an object of $\Comp^\cB_{\cA,\cB}(\sigma\res[j])$, $0\le j\le n$. For
$j\ge 1$, assume $F_{j-1}$ constructed. Applying ($*$), we get a
decomposition $F_{j-1}\res\{j-1\}\times [j-1]\to \tau \to
F_{j-1}\res\{j\}\times [j-1]$. We take
$F_j\res{\nabla_{j-1}}=F_{j-1}\res{\nabla_{j-1}}$, $F_j\res\{j\}\times [j-1]
=\tau$, and $F_j\res(\nabla_n-\nabla_j)=F_{j-1}\res(\nabla_n-\nabla_j)$.
This finishes the proof of the fact that
$\Comp^\cB_{\cA,\cB}(\sigma)^{\coop}$ is directed. In particular, we have
verified condition \ref{t.TQ2} of Theorem \ref{t.TQ}.

The verification of condition \ref{t.TQ1} of Theorem \ref{t.TQ} is
reminiscent of the proof of \cite[Lemme 1.3.8]{Ayoub}. Let $f\colon X\to Y$
be a morphism of $\cC$. Endow $S=\rComp(f)$ with the following preorder:
$\kappa\le \lambda$ if and only if there exists a morphism $\lambda\to
\kappa$ in $\Comp^\cB(f)$. Since $S$ is directed, it is simply connected.
Thus it suffices to show that for every pair of morphisms $(r_1,r_2)\colon
\lambda=(W,j,q,\tau)\rightrightarrows \kappa=(Z,i,p,\sigma)$ of
$\Comp^\cB(f)$, we have $G_f(r_1)=G_f(r_2)$, so that $G_f$ factorizes
through $S$. Applying assumption \ref{p.Del2} of Theorem \ref{p.Del}, we
obtain a diagram in $\cB$
\[\xymatrix{W\times_Y W \ar[r]\ar[d] & Z\times_Y W\ar[d] \ar[r] & W\ar[d]^{r_2}\\
W\times_Y Z\ar[r]\ar[d] & Z\times_Y Z \ar[r]^{p_2}\ar[d]^{p_1} & Z\ar[d]^p\\
W\ar[r]^{r_1} & Z \ar[r]^p & Y}
\]
where all the squares are Cartesian in $\cC$. Let $q_\alpha\colon W\times_Y
W\to W$, $\alpha=1,2$ be the projections. Applying ($*$), we obtain a
decomposition
\[\xymatrix{X\ar@{=}[d]\ar[r] & W'\ar[r]^t\ar[d]_u & W\times_Y W\ar[d]^{r_1\times_Y r_2}\\
X\ar[r] & Z'\ar[r]^s &Z\times_Y Z,}
\]
where $s$, $t$ and $u$ are in $\cB$ and the horizontal arrows on the left
are in $\cA$. This induces commutative squares with $2$-cells
\[\xymatrix{W'\ar[r]^{q_\alpha t}\ar[d]_u & W\ar[d]^{r_\alpha}\\
Z'\ar[r]^{p_\alpha s} & Z}
\]
in $\Comp^\cB(f)$. Since $\cE(X,Y)$ is a groupoid, it suffices to show
$G_f(p_1 s)=G_f(p_2 s)$ (and $G_f(q_1 t)=G_f(q_2 t)$). Applying ($*$) again,
we obtain a decomposition
\begin{equation}\label{e.Del}
\xymatrix{X\ar[r]\ar[d]_i & Z'''\ar[r]^v\ar[d]_{v'} & Z'\ar[d]^s\\
Z\ar[r]^h & Z''\ar[r]^{s'} & Z\times_Y Z,}
\end{equation}
where $s'$, $v$, $v'$ are in $\cB$, the left horizontal arrows are in $\cA$,
and the composition $hs'$ of the second line is isomorphic to the diagonal.
The diagram \eqref{e.Del} induces commutative squares with $2$-cells
\[\xymatrix{Z'''\ar[r]^v\ar[d]_{v'} & Z'\ar[d]^{p_\alpha s}\\
Z''\ar[r]^{p_\alpha s'} & Z}
\]
in $\Comp^\cB(f)$. Thus it suffices to show $G_f(p_1 s')=G_f(p_2 s')$.
Consider the $2$-functor \eqref{e.dComptoE}. Note that $h$ induces a
morphism in $\Comp^{\cA}(f)$. Since $p_\alpha s'*h= \one_{(Z,i,p,\sigma)}$
in $\bT_2\dComp(f)$, its image under \eqref{e.dComptoE} is the identity, so
that $G_f(p_1 s')=G_f(p_2 s')$. One can also check more directly that
$G_f(p_\alpha s')^{-1}=G'_f(h)$, where $G'_f$ is defined in
\eqref{e.CompAtoE}. Indeed, both $G_f(p_\alpha s')^{-1}$ and $G'_f(h)$ are
equal to the morphism $p*i\Rightarrow q*j$ induced by the diagram
\[\xymatrix{X\ar[r]^i\ar@{=}[d] & Z\ar@{=}[d]\ar[r]^h& Z''\ar[d]^{p_\alpha s'}\\
X\ar[r]^i & Z\ar[d]_p\ar@{=}[r] & Z\ar[d]^p\\
&Y\ar@{=}[r] & Y.}
\]
\end{proof}

This finishes the proof of Theorem \ref{p.Del}, which relates pseudo
functors to gluing data for two sub-$2$-categories. In the next section we
relate gluing data for two sub-$2$-categories to gluing data for more
sub-$2$-categories.

\section{Gluing finitely many pseudo functors}\label{s.finiteglue}
In this section, we study the gluing of finitely many pseudo functors in
general. The main result of this section is Theorem \ref{c.glue'}, which is
the precise form of Theorem \ref{t.main}. Although the constructions of
Sections \ref{s.2} and \ref{s.3} can be avoided in the statement of the
theorem (see Remarks \ref{r.gdn}, \ref{r.gdna}, and \ref{r.Qphi}), they
allow us to give a more conceptual interpretation and are used in the proof.
We deduce Theorem \ref{c.glue'} from a criterion (Theorem \ref{p.glue'})
involving $2$-functors of type \eqref{e.E2} studies in the previous section.

Throughout this section, $\cC$ is a $(2,1)$-category, $\cA_1,\dots, \cA_n$,
$n\ge 1$, are locally full sub-$2$-categories of $\cC$, each containing all
objects of $\cC$. Let $\cD$ be a $2$-category.

\begin{Remark}[Explicit description of gluing data]\label{r.gdn}\index{GD@$\GD_{\cA_1,\dots,\cA_n}(\cC,\cD)$}
Let us explicitly describe the $\cD^{\Ob(\cC)}$-$2$-category
$\GD_{\cA_1,\dots,\cA_n}(\cC,\cD)$. An \emph{object} is a pair
  \[\left((F_i)_{1\le i\le n},(G_{ij})_{1\le i<j\le n}\right),\]
where $F_i\colon \cA_i\to \cD$ is an object of $\PsFun(\cA_i,\cD)$, and
$(F_i,F_j,G_{ij})$ is an object of $\GD_{\cA_i,\cA_j}(\cC,\cD)$, as
described in Remark \ref{r.gd}, satisfying the following condition:
  \begin{description}
     \item[(D)] For $1\le i<j<k\le n$ and any commutative cube with
         $2$-cells of the form
     \begin{equation}\label{e.3}
    \xymatrix{X'\ar[dd]_{x}\ar[dr]^{q'}\ar[rr]^{b'} && Y'\ar[dd]^(.3){y}|\hole \ar[dr]^{p'}\\
    & Z'\ar[dd]^(.3){z}\ar[rr]^(.3){a'} && W'\ar[dd]^w \\
    X\ar[dr]_q\ar[rr]^(.3){b}|\hole && Y\ar[dr]^p\\
    & Z\ar[rr]^a && W }
     \end{equation}
    where $a,b,a',b'$ are morphisms of $\cA_i$, $p,q,p',q'$ are morphisms
    of $\cA_j$, and $x,y,z,w$ are morphisms of $\cA_k$, the following
    hexagon commutes
    \[\xymatrix{F_i(a)F_j(q)F_k(x)\ar@=[r]^{G_{jkI'}}\ar@=[d]_{G_{ijK}} & F_i(a)F_k(z)F_j(q')\ar@=[r]^{G_{ikJ}}
    & F_k(w)F_i(a')F_j(q')\ar@=[d]^{G_{ijK'}}\\
    F_j(p)F_i(b)F_k(x)\ar@=[r]^{G_{ikJ'}} & F_j(p)F_k(y)F_i(b')\ar@=[r]^{G_{jkI}} & F_k(w)F_j(p')F_i(b')}\]
    where $I,I',J,J',K,K'$ are respectively the right, left,
    front, back, bottom, top faces of the cube.
  \end{description}

  A \emph{morphism} $\left((F_i),(G_{ij})\right)\to ((F'_i),(G'_{ij}))$ of $\GD_{\cA_1,\dots,\cA_n}(\cC,\cD)$ is a collection
  $(\alpha_i)_{1\le i\le n}$
  of morphisms $\alpha_i\colon F_i\to F'_i$ of $\PsFun(\cA_i,\cD)$, such that for $1\le i<j\le n$,
  \[(\alpha_i,\alpha_j)\colon (F_i,F_j,G_{ij})\to(F'_i,F'_j,G'_{ij})\]
  is a morphism of $\GD_{\cA_i,\cA_j}(\cC,\cD)$.

A \emph{$2$-cell} of $\GD_{\cA_1,\dots,\cA_n}(\cC,\cD)$ is a collection
$(\Xi_i)_{1\le i \le n}\colon (\alpha_i)_{1\le i\le n}\Rightarrow
(\alpha'_i)_{1\le i \le n}$ of $2$-cells $\Xi_i\colon \alpha_i\Rightarrow
\alpha'_i$ of $\PsFun(\cA_i,\cD)$ such that $\lvert \Xi_i \rvert = \dots
=\lvert \Xi_n \rvert$.

  The $\cD^{\Ob(\cC)}$-$2$-category structure of $\GD_{\cA_1,\dots,\cA_n}(\cC,\cD)$ is given by the $2$-functor defined by
  \[
((F_i),G)\mapsto \lvert F_1\rvert = \dots =\lvert F_n \rvert,\quad
    (\alpha_i)\mapsto \lvert \alpha_1 \rvert = \dots=\lvert \alpha_n \rvert, \quad (\Xi_i)\mapsto \lvert \Xi_1 \rvert = \dots = \lvert \Xi_n \rvert.
  \]
\end{Remark}

\begin{Remark}[Alternative description of gluing data]\label{r.gdna}
Let us give an alternative description of the objects and morphisms of
$\GD_{\cA_1,\dots,\cA_n}(\cC,\cD)$. An \emph{object} of it is a pair
$((F_i)_{1\le i\le n},(G_{ijD})_{1\le i,j\le n})$ (here we do \emph{not}
assume $i<j$), where $F_i\colon \cA_i\to \cD$ is an object of
$\PsFun(\cA_i,\cD)$ such that $\lvert F_1\rvert = \dots = \lvert F_n
\rvert$, and
  \[G_{ijD}\colon F_i(a)F_j(q) \Rightarrow F_j(p)F_i(b)\]
is an invertible $2$-cell of $\cD$, where $D$ runs over
$(\cA_i,\cA_j)$-squares in $\cC$ (Example \ref{ex.up}) of the form
  \begin{equation}\label{e.2'}
  \xymatrix{X\ar[r]^b\ar[d]_q\drtwocell\omit{^\alpha} & Y \ar[d]^p\\
  Z\ar[r]^a & W,}
  \end{equation}
satisfying condition (D) of Remark \ref{r.gdn} for all $1\le i,j,k\le n$
(again here we do \emph{not} assume $i<j<k$) and the following conditions:
  \begin{description}
    \item[(A)] For $1\le i,j\le n$ and any square $D$ of the form
  \[
  \xymatrix{X\ar[r]^b\ar@{=}[d]\drtwocell\omit{^\alpha} & Y \ar@{=}[d]\\
  X\ar[r]^a & Y}
  \]
  where $a$ and $b$ are morphisms of $\cA_i$, the following square
  commutes
  \[\xymatrix{F_i(a)\ar@=[r]^{F_i(\alpha)}\ar@=[d] &F_i(b)\ar@=[d]\\
  F_i(a)F_j(\one_X)\ar@=[r]^{G_{ijD}} &F_j(\one_Y)F_i(b).}\]

    \item[(O)] For any square $D$ \eqref{e.2'} with $i=j$, the
        following square commutes
    \[\xymatrix{F_i(a)F_i(q)\ar@=[r]^{G_{iiD}}\ar@=[d] & F_i(p)F_i(b)\ar@=[d]\\
    F_i(aq)\ar@=[r]^{F_i(\alpha)} & F_i(pb).}\]
  \end{description}
In fact, given $(G_{ij})_{1\le i < j\le n}$, it suffices to take (O) as a
definition of $G_{ii}$, and to put $G_{ji}=G^*_{ij}$ (Remark \ref{s.gd}) for
$1\le i<j\le n$. Condition (D) for the triple $(i,i,j)$ follows from axiom
(b) of Remark \ref{r.gd} for $G_{ij}$.

A  \emph{morphism} $((F_i),(G_{ij}))\to ((F'_i),(G'_{ij}))$ of
$\GD_{\cA_1,\dots,\cA_n}(\cC,\cD)$ is a collection $(\alpha_i)_{1\le i\le
n}$  of morphisms $\alpha_i\colon F_i\to F'_i$ of $\PsFun(\cA_i,\cD)$ such
that for every $(\cA_i,\cA_j)$-square $D$ \eqref{e.2'}, the following
hexagon commutes
  \[\xymatrix{\alpha_0(W)F_i(a)F_j(q)\ar@=[d]^{G_{ijD}}\ar@=[r]^{\alpha_i(a)} & F'_i(a)\alpha_0(Z)F_j(q)\ar@=[r]^{\alpha_j(q)}
  &F_i'(a)F'_j(q)\alpha_0(X) \ar@=[d]^{G'_{ijD}} \\
  \alpha_0(W)F_j(p)F_i(b)\ar@=[r]^{\alpha_j(p)} & F'_j(p)\alpha_0(Y)F_i(b)\ar@=[r]^{\alpha_i(b)} &F'_j(p)F'_i(b)\alpha_0(X).}
  \]
Here $\alpha_0=\lvert \alpha_1\rvert = \dots=\lvert \alpha_n \rvert$.
\end{Remark}

\begin{Remark}[Explicit description of the $2$-functor
$Q^\phi$]\label{r.Qphi}\index{Qphi@$Q^\phi$} Let $\phi\colon
\{1,\dots,n\}\to \{1,\dots,m\}$ be a map and let
$\cA_1,\dots,\cA_n,\cB_1,\dots,\cB_m$ be locally full sub-$2$-categories of
$\cC$, each containing all objects of $\cC$, such that $\cA_i\subseteq
\cB_{\phi(i)}$ for $1\le i\le n$.  In Construction \ref{c.Qphi} we defined a
$\cD^{\Ob(\cC)}$-$2$-functor
\[Q^{\phi}=Q^{\phi}_{\cD}\colon \GD_{\cB_1,\dots,\cB_m}(\cC,\cD)\to \GD_{\cA_1,\dots,\cA_n}(\cC,\cD)\]
via a $2$-functor $E_\phi$. The definition of $E_\phi$ in the cases $m=1$
and $m=1$ needs special treatment. The description in Remark \ref{r.gdna}
allows us to explicitly and uniformly describe $Q^\phi$ as
  \[((F_i),(G_{ij}))\mapsto ((F_{\phi(i)}),(G_{\phi(i)\phi(j)})),\quad (\alpha_i)\mapsto (\alpha_{\phi(i)}), \quad (\Xi_i)\mapsto (\Xi_{\phi(i)}).\]
It follows immediately from this description that for a sequence of maps
$\{1,\dots,n\}\xrightarrow{\phi} \{1,\dots, m\} \xrightarrow{\psi}
\{1,\dots,l\}$, we have $Q^{\psi\phi}=Q^{\phi}Q^{\psi}$.
\end{Remark}

We have the following generalization of Corollary \ref{c.QD}.

\begin{Proposition}\label{p.gluet}
Let $\phi\colon \{1,\dots, n\}\to \{1,\dots,m\}$ be a map with a section
$\psi$. Assume $\cA_i\subseteq \cA_{\psi\phi(i)}$ for all $1\le i \le n$.
Then
\[Q^\phi\colon \GD_{\cA_{\psi(1)},\dots,
\cA_{\psi(m)}}(\cC,\cD)\to \GD_{\cA_1,\dots ,\cA_n}(\cC,\cD)
\]
and $Q^{\psi}$ are $\cD^{\Ob(\cC)}$-$2$-quasi-inverses to each other.
\end{Proposition}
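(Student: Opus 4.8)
Here is the plan.

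Both $Q^{\phi}$ and $Q^{\phi'}$ are $\cD^{\Ob(\cC)}$-functors by Construction \ref{c.QD}, and by the composition law recorded there one has $Q^{\phi'}\circ Q^{\phi}=Q^{\phi\circ\phi'}$ and $Q^{\phi}\circ Q^{\phi'}=Q^{\phi'\circ\phi}$. Since $\phi'$ is a section of $\phi$ we have $\phi\circ\phi'=\id$, and $Q^{\id}$ (with matching source and target families) is the identity $\cD^{\Ob(\cC)}$-functor — a special case of the ``$Q^{\phi}$ is an isomorphism for bijective $\phi$'' clause of Construction \ref{c.QD}, or a direct inspection of the definition — so $Q^{\phi'}\circ Q^{\phi}=\id$. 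It therefore remains to produce a $\cD^{\Ob(\cC)}$-natural equivalence from $Q^{\phi}\circ Q^{\phi'}=Q^{\rho}$ to $\id_{\GD_{\cA_1,\dots,\cA_n}(\cC,\cD)}$, where $\rho:=\phi'\circ\phi\colon\{1,\dots,n\}\to\{1,\dots,n\}$ is idempotent and satisfies $\cA_i\subseteq\cA_{\rho(i)}$ for all $i$.

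For this I would proceed as follows. When $n=m=1$ there is nothing to prove. When $n,m\ge 2$ it is a formal consequence of the commutative square \eqref{e.Phisq}: applied to $\phi$ and to $\phi'$ it identifies $Q^{\phi}$ and $Q^{\phi'}$, up to the $\cD^{\Ob(\cC)}$-equivalences $\Phi$ of Remark \ref{r.Phiequiv}, with precomposition along the $2$-functors $E_{\phi}$ and $E_{\phi'}$ of \eqref{e.Ephi}, and Proposition \ref{p.Ephiequiv} makes these $2$-quasi-inverse via pseudonatural equivalences which are the identity on objects (as in the proof of Proposition \ref{p.Fequiv}), so precomposing with them gives $\cD^{\Ob(\cC)}$-quasi-inverse $2$-functors. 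To cover the remaining case $m=1$ — and to stay self-contained — I would instead construct $\theta\colon Q^{\rho}\Rightarrow\id$ directly, in the style of the proof of Proposition \ref{p.Fequiv}. For an object $\Gamma=((F_i)_{1\le i\le n},(G_{ij}))$ I put $\theta_\Gamma=(\alpha_i)_{1\le i\le n}$, where $\alpha_i\colon F_{\rho(i)}\res\cA_i\to F_i$ is the pseudonatural transformation with $\alpha_i(X)=\one_{F_i(X)}$ on objects (legitimate since $\lvert F_{\rho(i)}\rvert=\lvert F_i\rvert$) and, at a morphism $f\colon X\to Y$ of $\cA_i$, the $2$-cell obtained from $G_{\rho(i)\,i,\,D_f}\colon F_{\rho(i)}(\one_Y)F_i(f)\Rightarrow F_i(\one_Y)F_{\rho(i)}(f)$ by composing with the unital constraints of $F_{\rho(i)}$ and $F_i$, where $D_f$ is the $(\cA_{\rho(i)},\cA_i)$-square
\[\xymatrix{X\ar[r]^f\ar[d]_f\ar@{}[rd]|{D_f} & Y\ar@{=}[d]\\ Y\ar@{=}[r] & Y}\]
equipped with the identity $2$-cell. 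When $\rho(i)=i$, condition (O) of Remark \ref{r.gdna} forces $\alpha_i=\id$.

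The remaining verifications are: each $\alpha_i$ is a pseudonatural equivalence (its object components are identities and its morphism components are invertible, because all the $G_{ij}$ are by definition of $\GD$); the family $(\alpha_i)$ satisfies the hexagon in the description of morphisms in Remark \ref{r.gdna}, so that $\theta_\Gamma\colon Q^{\rho}(\Gamma)\to\Gamma$ really is a morphism of $\GD_{\cA_1,\dots,\cA_n}(\cC,\cD)$; and $\theta$ is natural in $\Gamma$. Granting these, $\theta$ is $\cD^{\Ob(\cC)}$-natural (object components identities) and a $\cD^{\Ob(\cC)}$-natural equivalence (each $\alpha_i$ an equivalence), which finishes the proof. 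The step I expect to be the real work is showing that $(\alpha_i)$ satisfies that hexagon, i.e.\ that the componentwise comparisons $F_{\rho(i)}\res\cA_i\simeq F_i$ are compatible with the gluing $2$-cells $G_{ij}$; I would deduce this by applying the cube axiom (D) of Remark \ref{r.gdna} to the cube obtained by stacking a given $(\cA_i,\cA_j)$-square over the squares $D_g$ attached to its four edges $g$. The pseudonaturality of each $\alpha_i$ and the naturality of $\theta$ reduce similarly to axioms (A), (O), (D) and are routine chases with unital and compositional constraints.
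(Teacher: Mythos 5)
Your proposal is correct and follows essentially the same route as the paper: the identity $Q^{\phi'}Q^{\phi}=\one$ plus the reduction to $E_{\phi}$, $E_{\phi'}$ being $2$-quasi-inverses (Propositions \ref{p.GDn} and \ref{p.Ephiequiv}) for $m\ge 2$, while your hand-built counit $\theta\colon Q^{\phi'\phi}\Rightarrow\one$ is exactly the paper's alternative construction via the comparison morphisms $\rho_{i,\phi'\phi(i)}$ of Construction \ref{s.gdp} and Remark \ref{s.gdp'} (your $\alpha_i$ is $\rho_{i,\phi'\phi(i)}$ up to inverting the $2$-cell). The only imprecision is in the step you flag as the real work: the hexagon is precisely properties (E) and (F) of Remark \ref{s.gdp'}, and since changing both indices of $G_{\phi'\phi(i)\,\phi'\phi(j)}$ to $G_{ij}$ involves up to four indices while axiom (D) only governs three, you need two applications of (D) to degenerate cubes (one per index) together with the cocycle condition, not a single stacked cube.
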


\begin{proof}
This follows from Proposition \ref{p.Fphiequiv}. The above description of
$Q^\phi$ and $Q^\psi$ allows us to give a more direct proof as follows. We
have $Q^{\psi}Q^{\phi}=\one$. We construct a $\cD^{\Ob(\cC)}$-$2$-natural
isomorphism $\epsilon\colon Q^{\phi}Q^{\psi}\Rightarrow \one$ as follows.
For any object $((F_i),(G_{ij}))$ of $\GD_{\cA_1,\dots,\cA_n}(\cC,\cD)$, we
associate the morphism $Q^\phi Q^{\psi}((F_i),(G_{ij})) \to
((F_i),(G_{ij}))$ given by $\rho_{i,\psi\phi (i)}\colon F_{\psi\phi
(i)}\res\cA_i\to F_i$. See Remark \ref{s.gdp'} for the definition of
$\rho_{i,\psi\phi(i)}$.
\end{proof}

Similarly to Corollary \ref{c.QD}, Proposition \ref{p.gluet} does not
produce essentially new gluing data (see however Remark \ref{r.app} for an
application of the proposition). The goal of this section is to prove a more
substantial criterion for $Q^\phi$ to be a $2$-equivalence, where
$\phi\colon \{1,\dots,n\}\to\{1,\dots,m\}$ is a surjection with $m=n-1$. In
the case $n=2$, such a criterion was given in Theorem \ref{p.Del}. Here we
assume $n\ge 3$. Without loss of generality, we may assume that $\phi$ is
nondecreasing and $\phi(1)=\phi(2)$, so that $\phi(i)=\max\{1,i-1\}$ for
$1\le i \le n$. We assume $\cB_i=\cA_{i+1}$ for $2\le i\le m$. We simplify
notation as follows. We put $\cB=\cB_1$ and consider
\begin{gather}
\notag Q_\cD=Q_\cD^\phi\colon
\GD_{\cB,\cA_3,\dots,\cA_n}(\cC,\cD) \to \GD_{\cA_1,\dots,\cA_n}(\cC,\cD),\\
\label{e.E5} E=\cL E_\phi^\red\colon \cL\cT^\red_n\dQ_{\cA_1,\dots,\cA_n}\cC \to \cL\cT^\red_{n-1}\dQ_{\cB,\cA_3,\dots,\cA_n} \cC.
\end{gather}

\begin{Remark}
If $\cA_1$ and $\cA_2$ generate $\cB$ (Definition \ref{s.gen}), then $E$ is
locally essentially surjective, so that $Q_\cD$ is $2$-faithful by
Proposition \ref{p.GDn} \ref{p.GDn1}.
\end{Remark}

To state the main result of this section, we need to introduce some
terminology.

\begin{Definition}\label{s.square}
Let $\cC$ be a $(2,1)$-category and let $\cA,\cB,\cA_i,\cA_j,\cA_k$ be
locally full sub-$2$-categories of $\cC$, each containing all objects of
$\cC$.
\begin{enumerate}
\item We say that $(\cA,\cB)$ is \emph{squarable in $\cC$} if every pair
    of morphisms $i\colon Z\to W$ in $\cA$ and $p\colon Y\to W$ in $\cB$
    with the same target can be completed into an $(\cA,\cB)$-square
    \eqref{e.2}, Cartesian in $\cC$.

\item We say that $(\cA, \cB)$ is \emph{squaring in $\cC$} if every
    $(\cA,\cB)$-square \eqref{e.2} can be decomposed as
  \begin{equation}\label{e.square}
  \xymatrix{X\ar[rd]^f\xuppertwocell[rrd]{}^j{^\delta}\xlowertwocell[rdd]{}_q{^\gamma}\\
  &X'\ar[r]^k\ar[d]^r\drtwocell\omit{^\beta} & Y\ar[d]^p\\
  &Z\ar[r]^i & W}
  \end{equation}
where $k$ is a morphism of~$\cA$, and $r$ is a morphism of $\cB$, and $f$
is a morphism of $\cA\cap \cB$, and the inner square is Cartesian in
$\cC$.

\item We say that $(\cA_i,\cA_j)$ is \emph{$\cA_k$-squaring in $\cC$} if
    every cube with $2$-cells \eqref{e.3} can be decomposed as
  \begin{equation}\label{e.cubing}
  \xymatrix{X'\ar[rrd]^{f'} \xlowertwocell[rrrdd]{}_{q'}{^\gamma'}\xuppertwocell[rrrrd]{}^{b'}{^\delta'}\ar[dd]_x \\
  &&V'\ar[rr]^(.3){c'}\ar[rd]^{r'}\ar[dd]^(.25)v && Y'\ar[rd]^{p'}\ar[dd]^(.3)y|\hole\\
  X\xlowertwocell[rrrdd]{}_{q}{^\gamma}\ar[rrd]_{f}\ar@/^/[rrrrd]^(.3){b} \xtwocell[rrrd]{}\omit{^\delta} &&& Z'\ar[rr]^(.3){a'}\ar[dd]^(.7)z && W'\ar[dd]^w\\
  && V\ar[rr]^(.3){c}|\hole \ar[dr]^r&& Y\ar[dr]^p\\
  &&& Z\ar[rr]^a && W}
  \end{equation}
where $c,c'$ are morphisms of $\cA_i$, and $r,r'$ are morphisms of
$\cA_j$, and $f,f'$ are morphisms of $\cA_i\cap \cA_j$, and $v$ is a
morphism of $\cA_k$, and the bottom face $L$ and the top face $L'$ of the
inner cube are Cartesian in $\cC$. Note that if the right face $I$ (resp.\
the front face $J$) of the inner cube is Cartesian in $\cC$, so is the
left face $I''$ (resp.\ the back face $J''$) by \cite[Corollary
3.11]{Illusie-Zheng}.
\end{enumerate}
\end{Definition}

In this terminology, condition \ref{p.Del2} of Theorem \ref{p.Del} says that
$(\cB,\cB)$ is squarable. If pseudo fiber products exist in $\cB$ and are
pseudo fiber products in $\cC$ (cf.\ condition (2a) of Remark \ref{r.DA}),
then $(\cB, \cB)$ is squarable and squaring in $\cC$. One sufficient
condition for $(\cA, \cB)$ to be squaring in $\cC$ is that pseudo fiber
products exist in $\cC$, and $\cA$ and $\cB$ are stable under base change in
$\cC$ and taking diagonals in $\cC$.

One sufficient condition for $(\cA_i,\cA_j)$ to be $\cA_k$-squaring in $\cC$
is that $\cC$ admits pseudo fiber products, and $\cA_i,\cA_j,\cA_k$ are
stable under base change in $\cC$ and taking diagonals in $\cC$.

The following is the main result of this section.

\begin{Theorem}\label{c.glue'}
Let $\cC$ be a $(2,1)$-category. Let $n\ge 3$ and let $\cA_1,\dots,
\cA_n,\cB$ be locally full sub-$2$-categories of $\cC$, each containing all
objects of $\cC$. Assume the following:
\begin{enumerate}
  \item \label{c.glue1a} For every morphism $f$ of $\cB$, there exist a
      morphism $a$ of $\cA_1$, a morphism $p$ of $\cA_2$, and a $2$-cell
      $pa \Rightarrow f$ of $\cC$.

  \item \label{c.glue1b} For $3\le i\le n$ and every morphism $f$ of
      $\cB\cap \cA_i$, there exist a morphism $a$ of $\cA_1\cap \cA_i$, a
      morphism $p$ of $\cA_2\cap \cA_i$, and a $2$-cell $pa \Rightarrow f$
      of $\cC$.

  \item \label{c.glue2} For $3\le i<j\le n$, the sub-$2$-categories
      $\cA_1\cap \cA_i\cap \cA_j$ and $\cA_2\cap \cA_i\cap \cA_j$ generate
      $\cB\cap \cA_i\cap \cA_j$ (Definition \ref{s.gen}).

  \item \label{c.glue3} $(\cA_2,\cA_2)$ is squarable in $\cB$ and every
      Cartesian $(\cA_2,\cA_2)$-square in $\cB$ is Cartesian in $\cC$. For
      $3\le i\le n$, $(\cA_2\cap \cA_i, \cA_2\cap \cA_i)$ is squarable in
      $\cB\cap \cA_i$.

  \item \label{c.glue5} For $3\le i\le n$, the sub-$2$-categories
      $\cA_1\cap \cA_i$ and $\cA_2\cap \cA_i$ are stable under base change
      in $\cC$ by morphisms of $\cA_1$ whenever such base change exists.

  \item \label{c.glue4} For $3\le i\le n$, the sub-$2$-category $\cA_1$ is
      stable under base change in $\cC$ by morphisms of $\cA_i$ whenever
      such base change exists, $(\cA_2,\cA_i)$ is squarable in $\cC$, and
      $(\cB, \cA_i)$ is squaring in $\cC$.

  \item \label{c.glue4b} For $3\le i,j\le n$ with $i\neq j$, the
      sub-$2$-category $\cA_1\cap \cA_j$ is stable under base change in
      $\cC$ by morphisms of $\cA_i$ whenever such base change exists,
      $(\cA_2\cap \cA_j,\cA_i)$ is squarable in $\cC$, and $(\cB\cap
      \cA_j,\cA_i)$ is squaring in $\cC$.

  \item \label{c.glue6} For $3\le i,j\le n$ with $i\neq j$, the pair
      $(\cB,\cA_i)$ is $\cA_j$-squaring.
\end{enumerate}
Then
\[Q_\cD\colon \GD_{\cB,\cA_3,\dots,\cA_n}(\cC,\cD) \to
\GD_{\cA_1,\dots,\cA_n}(\cC,\cD)
\]
is a $\cD^{\Ob(\cC)}$-$2$-equivalence for every $2$-category $\cD$.
\end{Theorem}

Note that assumptions \ref{c.glue3} through and \ref{c.glue6} of Theorem
\ref{c.glue'} are all satisfied if $\cC$ admits pseudo fiber products,
$\cA_1,\dots,\cA_n,\cB$ are stable under $2$-base change in $\cC$, and
$\cA_3,\dots, \cA_n,\cB$ are stable under taking diagonals in $\cC$. For
$n=3$, assumptions \ref{c.glue2}, \ref{c.glue4b}, and \ref{c.glue6}
trivially hold.

\begin{Remark}\label{r.app}
Since every surjection $\phi\colon \{1,\dots,n\}\to \{1,\dots,m\}$ (for
arbitrary $m$, $n$) is a composition of surjections satisfying $m=n-1$,
Theorem \ref{p.Del} for $n=2$ and Theorem \ref{c.glue'} for $n\ge 3$ produce
results for $Q^\phi$ for every surjection $\phi$, and in particular for the
descent $2$-functor $Q^n$ for $n\ge 2$. In the case $n=3$, this is
illustrated by Corollary \ref{c.main}. We note that there are other ways to
combine results of this paper can be combined. In fact, we deduce in
\cite[Proposition 1.5]{sixop} from Theorem \ref{p.Del}, Theorem
\ref{c.glue'} (case $n=3$), and Proposition \ref{p.gluet}, a case where
$Q^2$ is a $2$-equivalence but for which condition \ref{p.Del1} of Theorem
\ref{p.Del} is not satisfied.
\end{Remark}

We will deduce Theorem \ref{c.glue'} from a general criterion. The idea is
to consider not only the gluing of the two sub-$2$-categories $\cA_1$ and
$\cA_2$ of $\cB$, but also similar gluing problems of sub-$2$-categories
with $\cB$ replaced by the $2$-category $\Ar(\cA_i;\cB)$ whose objects are
arrows in $\cA_i$ and whose morphisms are $(\cA_i,\cB)$-squares, for all
$3\le i\le n$. To state this formally, we introduce the following.

\begin{Notation}
Let $\cS,\cS',\cR$ be locally full sub-$2$-categories of $\cC$. We denote by
$\Ar(\cS;\cR)\subseteq \UPsFun([1],\cC)$ the locally full sub-$2$-category
spanned by strictly unital pseudo functors that factor through~$\cS$ and
pseudo natural transformations $\alpha$ such that $\alpha_0$ and $\alpha_1$
are both morphisms of $\cR$. We denote by $\Tr(\cS;\cR)\subseteq
\UPsFun([2],\cC)$ the locally full sub-$2$-category spanned by strictly
unital pseudo functors that factor through $\cS$ and pseudo natural
transformations $\alpha$ such that $\alpha_0,\alpha_1,\alpha_2$ are
morphisms of $\cR$. We denote by $\Sq(\cS,\cS';\cR)\subseteq
\UPsFun([1]\times [1],\cC)$ the locally full sub-$2$-category spanned by
strictly unital pseudo functors $F$ such that $F(0\to 1,0)$ and $F(0\to
1,1)$ are morphisms of $\cS$ and $F(0,0\to 1)$ and $F(1,0\to 1)$ are
morphisms of $\cS'$ and by pseudo natural transformations $\alpha$ such that
$\alpha(i,j)$, $0\le i,j\le 1$ are all morphisms of $\cT$.
\end{Notation}

Note that if $(\cA_i,\cA_j)$ is $\cA_k$-squaring in $\cC$, then
$(\cA_i,\cA_j)$ is squaring in $\cC$ and
$(\Ar(\cA_k;\cA_i),\Ar(\cA_k;\cA_j))$ is squaring in $\Ar(\cA_k;\cC)$.

For $3\le i \le n$, we consider the $2$-functor
\[G_i\colon \cG_i\coloneqq \cL\cT^\red_2\dQ_{\Ar(\cA_i;\cA_1),\Ar(\cA_i;\cA_2)}\Ar(\cA_i,\cB)\to \Ar(\cA_i;\cB)\]
induced by \eqref{e.E2}. For a morphism $f\colon x\to y$ of
$\Ar(\cA_i,\cB)$, we let $G_i^{-1}(f)$ denote the groupoid of pairs
$(g,\pi)$, where $g\colon x\to y$ is a morphism of $\cG_i$ and $\pi\colon
G_i(g)\Rightarrow f$ is a $2$-cell of $\Ar(\cA_i;\cB)$.

\begin{Theorem}\label{p.glue'}
Let $\cC$ be a $(2,1)$-category. Let $n\ge 3$ and let $\cA_1,\dots,
\cA_n,\cB$ be locally full sub-$2$-categories of $\cC$ such that
$\cA_1,\cA_2\subseteq \cB$. Assume the following:
  \begin{enumerate}
\item \label{p.glue1} The $2$-functor $G\colon \cG\coloneqq
    \cL\cT^\red_2\dQ_{\cA_1,\cA_2}\cB\to \cB$ \eqref{e.E2} is a
    bi-equivalence.

\item \label{p.glue2} For every morphism $f\colon x\to y$ of
    $\Ar(\cA_i;\cB)$, $3\le i\le n$, the groupoid $G_i^{-1}(f)$ is
    connected.

\item \label{p.glue3} For $3\le i \le n$, the sub-$2$-categories
    $\Tr(\cA_i;\cA_1)$ and $\Tr(\cA_i;\cA_2)$ generate $\Tr(\cA_i;\cB)$.

\item \label{p.glue4} For $3\le i<j\le n$, $\Sq(\cA_i,\cA_j;\cA_1)$ and
    $\Sq(\cA_i,\cA_j;\cA_2)$ generate $\Sq(\cA_i,\cA_j;\cB)$.
  \end{enumerate}
Then the $2$-functor $E\colon \cL \cT^\red_n \dQ_{\cA_1,\dots,\cA_n}\cC\to
\cL\cT^\red_{n-1}\dQ_{\cB,\cA_3,\dots,\cA_n}\cC$ \eqref{e.E5} is a
bi-equivalence.
\end{Theorem}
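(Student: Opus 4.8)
The plan is to construct a $2$-quasi-inverse $F\colon\cE'\to\cE$ of $E$, where $\cE=\cL\bT\dQ_{\cA_1,\dots,\cA_n}\cC$ and $\cE'=\cL\bT\dQ_{\cB,\cA_3,\dots,\cA_n}\cC$, mirroring the construction of the $2$-quasi-inverse in the proof of Theorem \ref{t.TQ}. Since $E$ is the identity on objects, this suffices. By assumption \ref{p.glue1} the $2$-functor $G\colon\cG=\cL\bT\dQ_{\cA_1,\cA_2}\cB\to\cB$ is a $2$-equivalence and, being the identity on objects, admits by Proposition \ref{p.equiv} a pseudofunctor quasi-inverse $H\colon\cB\to\cG$ that is the identity on objects, together with a pseudonatural equivalence $GH\Rightarrow\one_\cB$. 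Composing $H$ with the evident $2$-functor $\cG\to\cE$ (a path of $\cA_1$- and $\cA_2$-edges of $\cB$ is in particular such a path in $\cC$ with directions among $1,\dots,n$; cf.\ \eqref{e.Tphi}), we obtain, for each morphism $f$ of $\cB$, a morphism $\widehat f$ of $\cE$ carrying a canonical isomorphism $E\widehat f\cong f$ in $\cE'$, which is an identity when $f$ is. The pseudofunctor $F$ will be the identity on objects; on a path of $\cE'$ it replaces every $\cB$-edge $f$ by $\widehat f$ and keeps every $\cA_i$-edge ($i\ge3$), relabelling direction $i-1$ as direction $i$.

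It remains to prescribe $F$ on the atomic transformations of $\bT\dQ_{\cB,\cA_3,\dots,\cA_n}\cC$ and to verify compatibility with the relations \ref{d.T1}--\ref{d.T9}. Atomic transformations not involving the $\cB$-direction, and the squares $\sigma_{g,D,f}$ with $D$ an $(\cA_i,\cA_j)$-square for $i,j\ge3$, are sent to their evident counterparts in $\cE$, and the relations among them are inherited from $\cE$. The transformations $\iota^1,\theta^1,\gamma^1,\delta^1$ supported purely in the $\cB$-direction, together with the relations \ref{d.T3}--\ref{d.T6} for $k=1$, amount precisely to the coherence data and pseudofunctor axioms of the composite $\cB\xrightarrow{H}\cG\to\cE$, so they require nothing beyond unwinding. (Equivalently, the whole construction may be phrased as building an object of $\GD_{\cB,\cA_3,\dots,\cA_n}(\cC,\cE)$ whose $\cB$-component is $\cB\xrightarrow{H}\cG\to\cE$ and whose remaining components are the evident inclusions, and then invoking the equivalence $\PsFun(\cE',\cE)\simeq\GD_{\cB,\cA_3,\dots,\cA_n}(\cC,\cE)$ of Remark \ref{r.Phiequiv}; the coherence conditions on such an object are the ones written out in Remark \ref{r.gdn}.) Thus the real content lies in the squares $\sigma_{g,D,f}$ with $D$ a $(\cB,\cA_i)$-square, $3\le i\le n$ — that is, a morphism of $\Ar(\cA_i;\cB)$ — and in the relations \ref{d.T7}, \ref{d.T8}, \ref{d.T9} coupling such squares with the $\cB$-direction.

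These are handled by assumptions \ref{p.glue2}, \ref{p.glue3}, \ref{p.glue4} respectively, in parallel with the way the hypotheses of Theorem \ref{t.TQ} concern compactifications of arrows, of triangles, and (through Lemma \ref{l.empty}) of $3$-simplices. Given a $(\cB,\cA_i)$-square $D$, assumption \ref{p.glue2} gives connectedness of $G_i^{-1}(D)$, so a compactification of $D$ — a morphism of $\cG_i=\cL\bT\dQ_{\Ar(\cA_i;\cA_1),\Ar(\cA_i;\cA_2)}\Ar(\cA_i;\cB)$ mapping to $D$, adjusted on its two $\cB$-edges to agree with the already chosen $\widehat{\phantom{f}}$ — exists and is unique up to a unique compatible $2$-cell; its image in $\cE$ serves as $F(\sigma_{g,D,f})$, and connectedness makes it independent of the choice, just as $G_f$ in the proof of Theorem \ref{t.TQ} factors through the codiscrete category on $\Ob(\Comp^\cB(f))$. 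Relations \ref{d.T7} and \ref{d.T8} for the directions $(1,i-1)$ — normalisation at an identity square, and compatibility with composing two $(\cB,\cA_i)$-squares along a $\cB$-edge — then follow from assumption \ref{p.glue3}: since $\Tr(\cA_i;\cA_1)$ and $\Tr(\cA_i;\cA_2)$ generate $\Tr(\cA_i;\cB)$, the prism of $(\cB,\cA_i)$-squares in question is a composite of prisms built from $\cA_1$- and $\cA_2$-squares, where these relations hold in $\cE$, and connectedness propagates the identity. Relation \ref{d.T9} for the directions $(1,i-1,j-1)$ — a cube with one $\cB$-edge-direction and two $\cA$-edge-directions, i.e.\ a morphism of $\Sq(\cA_i,\cA_j;\cB)$ — follows from assumption \ref{p.glue4}: such a cube decomposes into cubes assembled from $\cA_1$- and $\cA_2$-squares, where \ref{d.T9} holds in $\cE$, and the hexagon of compatibility $2$-cells is stable under this decomposition. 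Once $F$ is seen to be a well-defined pseudofunctor, one produces the pseudonatural equivalences $\epsilon\colon EF\Rightarrow\one_{\cE'}$ and $\eta\colon\one_\cE\Rightarrow FE$ as in the proof of Theorem \ref{p.Fequiv}: $\epsilon$ is built from the canonical isomorphisms $E\widehat f\cong f$ (and identities on $\cA_i$-edges), and $\eta$ from the comparison $2$-cells in $\cE$ between an $\cA_1$- or $\cA_2$-edge $f$ and the compactification $\widehat f$ of its image in $\cB$ (which exist because $H$ is quasi-inverse to $G$), with identities on $\cA_i$-edges; checking they are pseudonatural and invertible is routine.

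The main obstacle is the coherence verification just sketched. Because a $\cB$-edge carries a whole connected — but not strictly unique — space of compactifications, making $F$ respect each relation of \ref{d.T1}--\ref{d.T9} that touches the $\cB$-direction requires, for every such relation, producing one auxiliary compactification of the arrow, triangle, square, or cube at hand, transporting the pertinent diagram of $\cE$ onto the chosen data via Lemma \ref{l.ps}, and then invoking the appropriate connectedness or generation hypothesis (\ref{p.glue2}, \ref{p.glue3}, \ref{p.glue4}) to see the outcome is canonical. The delicate point is the cube relation \ref{d.T9}: one must disentangle a $\cB$-cube into its $\Sq(\cA_i,\cA_j;-)$-components using the decomposition furnished by \ref{p.glue4} and check the hexagon of compatibility $2$-cells is preserved; everything else is bookkeeping of the kind already carried out for Theorem \ref{t.TQ}.
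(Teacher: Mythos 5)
Your proposal follows essentially the same route as the paper's proof: extract a quasi-inverse $H$ of $G$ via Proposition \ref{p.equiv}, define $F$ on $\cB$-edges by $H$, use connectedness of $G_i^{-1}(D)$ to pin down $F$ on $(\cB,\cA_i)$-squares independently of choices, and invoke assumptions \ref{p.glue3} and \ref{p.glue4} for the relations \ref{d.T8} and \ref{d.T9} in the $\cB$-direction, finishing with the evident pseudonatural equivalences. The only (harmless) divergence is that you route relation \ref{d.T7} through assumption \ref{p.glue3}, whereas the paper settles it directly by choosing the identity square, resp.\ the identity on an object of $G^{-1}(f)$, as the witnessing object of $G_i^{-1}$.
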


For $3\le i\le n$, assumption \ref{p.glue3} implies that $\Ar(\cA_i;\cA_1)$
and $\Ar(\cA_i;\cA_2)$ generate $\Ar(\cA_i;\cB)$, so that the groupoid
$G_i^{-1}(f)$ in \ref{p.glue2} is nonempty. Note also that for $n=3$,
assumption \ref{p.glue4} trivially holds.

\begin{proof}
For $3\le i\le n$, let $\cB_i=\Ar(\cA_i;\cB)$. We have source and target
$2$-functors $\tau_0, \tau_1\colon \cB_i\to \cB$, sending an object $f\colon
X\to Y$ of $\cB_i$ to $X$ and $Y$, respectively. These $2$-functors induce
$2$-functors $\cG_i\to \cG$, which we still denote by $\tau_0$ and $\tau_1$.
We have $\tau_0 G_i=G \tau_0$ and $\tau_1 G_i=G \tau_1$.

By Proposition \ref{p.equiv}, as $G$ is a bi-equivalence (assumption
\ref{p.glue1} of the theorem), there exist a pseudo functor $H\colon \cB\to
\cG$ and pseudo natural isomorphisms $\eta\colon \one_\cG\Rightarrow HG$ and
$\epsilon\colon GH\Rightarrow \one_\cB$ such that $H(X)=X$,
$\eta(X)=\one_X$, and $\epsilon(X)=\one_X$ for every object $X$ of $\cB$. We
may assume that $H$ is strictly unital.

Consider the commutative square
\[\xymatrix{\tilde \cE\ar[d]_{R_{\cE}} \ar[r]^{\tilde E} & \tilde
\cD\ar[d]^{R_{\cD}}\\
\cE\ar[r]^{E} & \cD,}
\]
where $\tilde E=\cL E_\phi\colon \cL \cT_n \dQ_{\cA_1,\dots,\cA_n}\cC\to
\cL\cT_{n-1}\dQ_{\cB,\cA_3,\dots,\cA_n}\cC$ and the vertical arrows $R_\cD$
and $R_\cE$ are the reduction $2$-functors.

We construct a $2$-functor $F\colon \tilde\cD\to \cE$ as follows. For an
object $X$ of $\tilde \cD$, we take $FX=X$. For a morphism of $\tilde \cD$
of length $1$, we take $F(f^k)=f^{k+1}$ for $2\le k\le n-1$ and $f$ in
$\cA_{k+1}$ and take $F(f^1)=H(f)$ for $f$ in $\cB$. We thus obtain a
functor from the underlying category of $\tilde \cD$ to the underlying
category of $\cC$.

Next we define the effects of $F$ on the pre-$2$-cells of
$\cT_{n-1}\dQ_{\cB,\cA_3,\dots,\cA_n}\cC$ (Definition \ref{d.T}). We take
$F(\iota^k_{g,f})=\id$ and $F(\theta^k_{g,f})=\id$ for all $1\le k\le n-1$.
We take $F(\gamma_{g,{h'}^k,h^k,f})=\id$ for $2\le k \le n-1$ and $h',h$ in
$\cA_{k+1}$ and take $F(\gamma_{g,{h'}^1,h^1,f})\colon
F(g)*H(h')*H(h)*F(f)\Rightarrow F(g)*H(h'h)*F(f)$ to be the $2$-cell induced
by the coherence constraint $H(h')*H(h)\Rightarrow H(h'h)$ of $H$ for
morphisms $h',h$ of $\cB$. In both cases, we take
$F(\delta_{g,{h'}^k,h^k,f})=F(\gamma_{g,{h'}^k,h^k,f})^{-1}$. Now let $D\in
(\dQ_{\cB,\cA_3,\dots,\cA_n}\cC)(\epsilon_k+\epsilon_{k'})$, $k<k'$. For
$k\ge 2$, we take $F(\sigma_{g,D,f})=\sigma_{F(g), D, F(f)}$. For $k=1$, we
view $D$ as a morphism $x\to y$ of $\cB_{k'+1}=\Ar(\cA_{k'+1};\cB)$. For an
object $\rho=(D',\pi)$ of $G_{k'+1}^{-1}(D)$, where $D'\colon x\to y$ is a
morphism of $\cG_{k'+1}$ and $\pi\colon G_{k'+1}(D')\Rightarrow D$ is a
$2$-cell of $\cB_{k'+1}$, we consider the $2$-cell
\[\alpha_\rho\colon H\tau_1 D*x \xRightarrow{\pi^{-1}} HG\tau_1 D'*x \xRightarrow{\eta^{-1}}
\tau_1 D'*x \xRightarrow{\sigma_{\one,D',\one}} y*\tau_0 D' \xRightarrow{\eta} y* HG\tau_0 D'
\xRightarrow{\pi} y*H\tau_0 D.
\]
It is straightforward to check that for a morphism $\rho\to \rho'$ of
$G_{k'+1}^{-1}(D)$, we have $\alpha_\rho=\alpha_{\rho'}$. Thus by the
connectedness of $G_{k'+1}^{-1}(D)$ (assumption \ref{p.glue2} of the
theorem), $\alpha_\rho$ does not depend on the choice of $\rho$, and we take
$F(\sigma_{g,D,f})$ to be the $2$-cell induced by $\alpha_\rho$.

To show that $F$ factors through a $2$-functor $\tilde\cD\to \cE$, it
suffices to check that the equivalence system defined by $\alpha\sim \beta$
if and only if $F(\alpha)=F(\beta)$ satisfies conditions \ref{d.T1} through
\ref{d.T9} of Definition \ref{d.T}. Conditions \ref{d.T1} through \ref{d.T5}
of Definition \ref{d.T} follow immediately from the construction. For the
other conditions, the cases that only concerns the directions
$\cA_3,\dots,\cA_n$ are also trivial. Let us check the nontrivial cases. The
case $k=1$ of condition \ref{d.T6} (associativity of composition) of
Definition \ref{d.T} follows from the composition axiom of the coherence
constraint of $H$. For the case $k'=1$ of condition \ref{d.T7} (unit square
in directions $k$ and $k'$, with units in direction $k'$) of Definition
\ref{d.T}, it suffices to take $\rho$ to be the object of $G_{k+1}^{-1}(D)$
given by the identity square \eqref{e.vidsquare}. For the case $k=1$ of
condition \ref{d.T7} of Definition \ref{d.T}, it suffices to take $\rho$ to
be the object of $G_{k'+1}^{-1}(D)$ given by the identity on an object of
the category $G^{-1}(f)$ of pairs $(g,\pi)$, where $g\colon X\to Y$ is a
morphism of $\cG$ and $\pi\colon g\Rightarrow f$ is a $2$-cell of~$\cB$. For
the case $k'=1$ of condition \ref{d.T8} (composition in direction $k$ of
squares in directions $k$ and $k'$) of Definition \ref{d.T}, it suffices to
take, for objects $\rho$ of $G_{k+1}^{-1}(D)$ and $\rho'$ of
$G_{k+1}^{-1}(D')$, their composite $\rho'\circ \rho$ in
$G_{k+1}^{-1}(D'')$. For the case $k=1$ of condition \ref{d.T8} of
Definition \ref{d.T}, the diagram defines a morphism of $\Tr(A_{k'+1};\cB)$,
and it suffices to take objects of $G_{k'+1}^{-1}(D)$, $G_{k'+1}^{-1}(D')$,
and $G_{k'+1}^{-1}(D'')$ given by assumption \ref{p.glue3} of the theorem.
For the case $k=1$ of condition \ref{d.T9} (cube in directions $k$, $k'$,
and $k''$) of Definition \ref{d.T}, the diagram defines a morphism in
$\Sq(\cA_{k'+1},\cA_{k''+1};\cB)$, and it suffices to take objects of
$G_{k'+1}^{-1}(K)$, $G_{k'+1}^{-1}(K')$, $G_{k''+1}^{-1}(J)$,
$G_{k''+1}^{-1}(J')$ given by assumption \ref{p.glue4} of the theorem. This
finishes the construction of the $2$-functor $F\colon \tilde\cD\to \cE$.

We define a pseudo natural isomorphism $\tilde\eta\colon R_\cE\to F\tilde E$
sending $X$ to $\one_X$ as follows. To every morphism $f$ of $\cA_i$ with
$3\le i\le n$, we associate $\one_f$. To every morphism $f$ of $\cA_1$
(resp.\ $\cA_2$), we associate $\eta(f)$.

We define a pseudo natural isomorphism $\tilde \epsilon\colon EF\Rightarrow
R_{\cD}$ sending $X$ to $\one_X$ as follows. To every morphism $f$ of
$\cA_i$ with $3\le i\le n$, we associate $\one_f$. To every morphism $f$ of
$\cB$ such that $H(f)=f_k*\dots *f_1$, we associate the composition
\[f_k*\dots*f_1 \Rightarrow f_k\circ \dots \circ f_1 \xRightarrow{\epsilon(f)} f,\]
where the first $2$-cell is given by $\gamma$.
\end{proof}

\begin{proof}[Proof of Theorem \ref{c.glue'}]
By Proposition \ref{p.GDn} \ref{p.GDn2}, it suffices to show that the
assumptions of Theorem \ref{c.glue'} imply the conditions of Theorem
\ref{p.glue'}. Let us first note that by assumption \ref{c.glue5} of Theorem
\ref{c.glue'}, each of $\cA_1,\dots,\cA_n$ contains all equivalences in
$\cC$.

To check condition \ref{p.glue1} of Theorem \ref{p.glue'}, it suffices to
apply Theorem \ref{p.Del} to the sub-$2$-categories $\cA_1$ and $\cA_2$ of
$\cB$. Here we used assumption \ref{c.glue1a} and the assumption that
$(\cA_2,\cA_2)$ is squarable in $\cB$ (assumption \ref{c.glue3}) of Theorem
\ref{c.glue'}.

Next we check condition \ref{p.glue2} of Theorem \ref{p.glue'}. To simplify
the notation, we put $\cB_i=\Ar(\cA_i;\cB)$, $\cA_{ki}=\Ar(\cA_i;\cA_k)$,
$k=1,2$.  Let $f$ be a morphism of $\cB_i$. Since $(\cB,\cA_i)$ is squaring
in $\cC$ (assumption \ref{c.glue4}), $f$ can be decomposed as follows
\[\xymatrix{X\ar[r]\ar[d] & X'\ar[r]\ar[d] & Y\ar[d]\\
Z\ar@{=}[r] & Z\ar[r] & W,}
\]
where the upper left horizontal arrow is in $\cB\cap \cA_i$ and the square
on the right is a Cartesian $(\cB,\cA_i)$-square in $\cC$. We fix some
notation in order to better analyze this decomposition. We let
$\tau_0,\tau_1\colon \cB_i\to \cB$ denote the source and target
$2$-functors. We let $\cB'_i\subseteq \Ar(\cA_i; \cB\cap \cA_i)$ denote the
locally full sub-$2$-category spanned by morphisms $f'$ such that $\tau_1
f'$ is an identity. We let $\cB''_i\subseteq \cB_{i}$ denote the locally
full sub-$2$-category spanned by morphisms corresponding to Cartesian
$(\cB,\cA_i)$-squares in $\cC$. The decomposition above provides morphisms
$f'$ in $\cB'_i$ and $f''$ in $\cB''_i$, and a $2$-cell $f''f'\Rightarrow f$
of $\cB_i$. We put $\cA'_{ki}=\cB'_i\cap \cA_{ki}$, $\cA''_{ki}=\cB''_i\cap
\cA_{ki}$.  By Theorem \ref{p.Del}, the $2$-functors
\[G'_i\colon \cL\cT_2^\red\dQ_{\cA'_{1i},\cA'_{2i}}\cB'_i\to \cB'_i,\quad
G''_i\colon \cL\cT_2^\red\dQ_{\cA''_{1i},\cA''_{2i}}\cB''_i\to \cB''_i
\]
are bi-equivalences. Here for $G'_i$, we used assumption \ref{c.glue1b} and
the second sentence of assumption \ref{c.glue3} (which implies that
$(\cA'_{2i},\cA'_{2i})$ is squarable in $\cB'_i$) of Theorem \ref{c.glue'}.
For $G''_i$, we used the assumption that $(\cA_2,\cA_i)$ is squarable in
$\cC$ and that $\cA_1$ is stable under base change in $\cC$ by $\cA_i$
whenever such base change exists (assumption \ref{c.glue4} of Theorem
\ref{c.glue'}) for condition \ref{p.Del1} of Theorem \ref{p.Del} and the
first sentence of assumption \ref{c.glue3} of Theorem \ref{c.glue'} (as well
as the fact that $\cA_2$ contains all equivalences in $\cC$) for the
condition that $(\cA''_{2i},\cA''_{2i})$ is squarable in $\cB''_i$
(condition \ref{p.Del2} of Theorem \ref{p.Del}). The $2$-cell
$f''f'\Rightarrow f$ induces a functor
\[F\colon G_i'^{-1}(f')\times
G_i''^{-1}(f'')\to G_i^{-1}(f),
\]
where $G_i'^{-1}(f')$ and $G_i''^{-1}(f'')$ are groupoids defined similarly
to $G_i^{-1}(f)$ and are contractible. Thus it suffices to show that every
object $\rho=(f_m*\dots*f_1,\pi)$ of $G_i^{-1}(f)$ is in the essential image
of $F$. Here each $f_s$, $1\le s\le m$ is either a morphism of $\cA_{1i}$ or
a morphism of $\cA_{2i}$. Note that, for every morphism $g$ of $\cA_{ki}$,
there exist $g'$ in $\cA'_{ki}$, $g''$ in $\cA''_{ki}$, and a $2$-cell
$g''g'\Rightarrow g$ in $\cA_{ki}$. Thus, up to replacing $m$ by $2m$, we
may assume that each $f_s$ is in $\cA'_{1i}$, or $\cA''_{1i}$, or
$\cA'_{2i}$, or $\cA''_{2i}$. Next we show that for $l=1,2$, morphisms of
$\cA''_{li}$ can be moved to the left. Let $g''\colon x\to y$ be a morphism
of $\cA''_{li}$, and let $g'\colon y\to z$ be a morphism of $\cA'_{ki}$.
Applying the assumption that $(\cB,\cA_i)$ is squaring, we obtain a $2$-cell
$g'g''\Rightarrow h''h'$, where $h'$ is in $\cB'_i$, and $h''$ is in
$\cA''_{li}$ by the assumption \ref{c.glue4} of Theorem \ref{c.glue'} (for
$l=2$ we also need the fact that $\cB\cap \cA_i$ contains all equivalences
in $\cC$). If $l=1$, then $h'$ is in $\cA'_{ki}$ by assumption \ref{c.glue5}
of Theorem \ref{c.glue'}, since $\tau_0 h'$ is a base change of $\tau_0 g'$.
For $l=2$, we may decompose $h'$ into $h'_2h'_1$ with $h'_1$ in $\cA'_{1i}$
and $h'_2$ in $\cA'_{2i}$, and we get a $2$-cell $g'g''\Rightarrow
(h''h'_2)h'_1$ in $\cG_i$. Thus we may assume that $\rho=(f''_{m''}*\dots
* f''_1*f'_{m'}*\dots*f'_1,\pi)$, where each $f'_s$ is in $\cA'_{1i}$ or
$\cA'_{2i}$ and each $f''_s$ is in $\cA''_{1i}$ or $\cA''_{2i}$. We may
further assume that the source of $f''_1$ is the source of $f''$. In this
case $\rho$ is in the image of $F$.

Next we check condition \ref{p.glue3} of Theorem \ref{p.glue'}. Let $f$ be a
morphism of $\Tr(\cA_i;\cB)$. There is a $2$-cell $f\Rightarrow g''g'g$ with
$g''g'g$ of the form
\[\xymatrix{X_0\ar[r]^{a_0}\ar[d] & Y_0\ar[d]\ar[r]\ar@{}[rd]|{D'} & Z_0\ar[r]\ar[d]\ar@{}[rd]|{D''} &W_0\ar[d]\\
Y_1\ar@{=}[r]\ar[d]\ar@{}[rd]|= & Y_1\ar[r]^{a'_1}\ar[d] & Z_1\ar[r]\ar[d]\ar@{}[rd]|{E''} & W_1\ar[d]\\
Z_2\ar@{=}[r] & Z_2\ar@{=}[r] & Z_2\ar[r]^{a''_2} & W_2,}
\]
where $E''$, $D''$, and $D'$ are Cartesian $(\cB,\cA_i)$-squares in $\cC$,
and the arrows $a_0$ and $a'_1$ are in $\cB\cap \cA_i$. The three Cartesian
squares are constructed successively using the assumption that $(\cB,\cA_i)$
is squaring. We decompose $a''_2$ by assumption \ref{c.glue1a} of Theorem
\ref{c.glue'} and decompose $a_0$ and $a'_1$ by assumption \ref{c.glue1b} of
Theorem \ref{c.glue'}. We get a $2$-cell $g\Rightarrow g_2g_1$, where $g_k$
is in $\Tr(\cA_i;\cA_k\cap \cA_i)$ for $k=1,2$. Applying the first two parts
of assumption \ref{c.glue4} of Theorem \ref{c.glue'}, we get $2$-cells
$g'\Rightarrow g'_2g'_1$ and $g''\Rightarrow g''_2g''_1$, where $g'_k$ and
$g''_k$ are in $\Tr(\cA_i;\cA_k)$ for $k=1,2$.

Finally we check condition \ref{p.glue4} of Theorem \ref{p.glue'}. Let $f$
be a morphism of $\Sq(\cA_i,\cA_j;\cB)$. We let
$\tau_{\to,0},\tau_{\to,1}\colon \Sq(\cA_i,\cA_j;\cB)\to \cB_i$ and
$\tau_{0,\to},\tau_{1,\to}\colon \Sq(\cA_i,\cA_j;\cB)\to \cB_j$ denote the
restriction $2$-functors. Applying the assumption that $(\cB,\cA_j)$ is
$\cA_i$-squaring in $\cC$ (assumption \ref{c.glue6} of Theorem
\ref{c.glue'}), we obtain a $2$-cell $f\Rightarrow f'g$, where $f'$ is a
morphism of $\Sq(\cA_i,\cA_j;\cB)$ such that $\tau_{0,\to}f'$ and
$\tau_{1,\to}f'$ are given by Cartesian $(\cB,\cA_j)$-squares in $\cC$, and
$g$ is a morphism in $\Sq(\cA_i,\cA_j;\cB\cap \cA_j)$ such that $\tau_{\to,
1}g$ is an identity. Applying the assumption that $(\cB,\cA_i)$ is
$\cA_j$-squaring in $\cC$, we obtain a $2$-cell $f'\Rightarrow f''g'$, where
$f''$ is a morphism of $\Sq(\cA_i,\cA_j;\cB)$ whose image under all four
restriction $2$-functors are given by Cartesian squares, and $g'$ is a
morphism in $\Sq(\cA_i,\cA_j;\cB\cap \cA_i)$ such that $\tau_{1,\to}g'$ is
an identity and $\tau_{0,\to}g'$ is given by a Cartesian $(\cA_j,\cB\cap
\cA_i)$-square in $\cC$. Applying assumption \ref{c.glue1a} and the first
two parts of assumption \ref{c.glue4} of Theorem \ref{c.glue'}, we get a
$2$-cell $f''\Rightarrow f''_2f''_1$, where $f''_k$ is in
$\Sq(\cA_i,\cA_j;\cA_k)$ for $k=1,2$. Applying assumption \ref{c.glue1b} and
the first two parts of assumption \ref{c.glue4b} of Theorem \ref{c.glue'},
we get a $2$-cell $g'\Rightarrow g'_2g'_1$, where $g'_k$ is in
$\Sq(\cA_i,\cA_j;\cA_k\cap \cA_i)$ for $k=1,2$. Applying assumptions
\ref{c.glue1b} and \ref{c.glue4b} of Theorem \ref{c.glue'}, we get a
$2$-cell $g\Rightarrow g_2g_1h$, where $g_k$ is in
$\Sq(\cA_i,\cA_j;\cA_k\cap \cA_j)$ for $k=1,2$ and $h$ is a morphism in
$\Sq(\cA_i,\cA_j;\cB\cap \cA_i\cap\cA_j)$ such that $\tau_{1,\to}h$ and
$\tau_{\to,1}h$ are identities. Finally applying assumption \ref{c.glue2} of
Theorem \ref{c.glue'}, we decompose $h$ into morphisms of
$\Sq(\cA_i,\cA_j;\cA_k\cap \cA_i\cap\cA_j)$ for $k=1,2$.
\end{proof}

This finishes the first part of the article, which relates pseudo functors
to gluing data. In the next part of the article (Sections \ref{s.6}
through~\ref{s.adjoint}), we develop several tools for constructing gluing
data. In Sections~\ref{s.6} and \ref{s.7}, we introduce Cartesian gluing
data, which are usually easier to construct in applications.

\section{Cartesian gluing data for two pseudo functors}\label{s.6}
Let $\cC$ be a $(2,1)$-category and let $\cA$ and $\cB$ be two locally full
sub-$2$-categories of $\cC$, each containing all objects of $\cC$. Let $\cD$
be a $2$-category. We studied the $2$-category $\GD_{\cA,\cB}(\cC,\cD)$ of
gluing data in Section \ref{s.two}. One way to construct such data is by
taking adjoints in base change isomorphisms (see Section \ref{s.adjoint}).
In many applications, these isomorphisms only exist for Cartesian squares.
In this section, we introduce a variant $\GD^\Cart_{\cA,\cB}(\cC,\cD)$ of
$\GD_{\cA,\cB}(\cC,\cD)$, whose objects only make use of $G_D$ for Cartesian
squares $D$. The main result of this section is a criterion for $\GD^\Cart$
and $\GD$ to be isomorphic (Theorem \ref{p.gdp}). This is used in the
construction of $Rf_!$ for Deligne-Mumford stacks in \cite{sixop} to produce
the desired gluing data.

The idea of using Cartesian squares as an intermediary step to construct
gluing data was already used by Deligne \cite[5.1.5]{Deligne} and Ayoub
\cite[Section 1.6.5]{Ayoub}. In \cite[Section 5.1]{Deligne} it is possible
to avoid this intermediary step by taking $\cA$ to be spanned by
\emph{dominant} open immersions and $\cB$ by proper morphisms, so that every
$(\cA,\cB)$-square is Cartesian. However, the intermediary step is necessary
in other applications.

\begin{Construction}[Pseudo natural isomorphism $\rho$ from a gluing
datum]\label{s.gdp}\index{rho@$\rho$} Let $(F_\cA,F_\cB,G)$ be an object of
$\GD_{\cA,\cB}(\cC,\cD)$, and let $F_0=\lvert F_\cA\rvert = \lvert F_\cB
\rvert$. Between the restrictions to $\cA\cap \cB$, we define an isomorphism
$\rho\colon F_\cB\res\cA\cap \cB \to F_\cA\res\cA\cap \cB$ in
$\PsFun(\cA\cap\cB,\cD)$ with $\lvert \rho \rvert = \one_{F_0}$ as follows.
For any morphism $f\colon X\to Y$ of $\cA\cap \cB$, let $\rho(f)$ be the
composition
  \[\xymatrix{F_\cB(f)\ar@=[r] & F_\cA(\one_Y) F_\cB(f)\ar@=[r]^{G_D} & F_\cB(\one_Y) F_\cA(f)\ar@=[r]& F_\cA(f)},\]
  where $D$ is the left square in the diagram
  \[\xymatrix{X\ar@{=}[r]\drtwocell\omit{=}\ar@{=}[d] &X\ar[d]^f\ar[r]^f\drtwocell\omit{=} & Y\ar@{=}[d]\\
  X\ar[r]^f & Y\ar@{=}[r] & Y.}\]
Denote the right square by $D'$. Applying axiom (a) of Remark \ref{r.gd} to
the outer square and axiom (\bprime) of Remark \ref{r.gd} to the above
diagram, one sees that $\rho(f)$ is the inverse of the composition
  \[\xymatrix{F_\cA(f)\ar@=[r] & F_\cA(f)F_\cB(\one_X) \ar@=[r]^{G_{D'}} & F_\cB(f) F_\cA(\one_X)\ar@=[r] & F_\cB(f).}\]
For any object $X$ of $\cC$, applying axiom (a) of Remark \ref{r.gd} to the
constant square $[1]\times [1]\to \cC$ of value $X$, one finds that the
following diagram commutes
  \[\xymatrix{\one_{F_0 X}\ar@=[d]\ar@=[dr]\\
  F_\cB(\one_X)\ar@=[r]^{\rho(\one_X)} & F_\cA(\one_X).}\]
For any sequence of morphisms $X\xrightarrow{f} Y\xrightarrow{g} Z$,
applying axioms (a), (b), (\bprime) of Remark \ref{r.gd} to
  \[\xymatrix{X\ar[r]^f\ar[d]_f\drtwocell\omit{=} & Y\ar[r]^g\ar@{=}[d]\drtwocell\omit{=} & Z\ar@{=}[d]\\
  Y\ar[d]_g\ar@{=}[r]\xtwocell[rrd]{}\omit{=} & Y\ar[r]^{g} & Z\ar@{=}[d]\\
  Z\ar@{=}[rr] && Z,}\]
  one finds that the following diagram commutes
  \[\xymatrix{F_\cB(g)F_\cB(f)\ar@=[r]^{\rho(g)\rho(f)}\ar@=[d] & F_\cA(g)F_\cA(f)\ar@=[d]\\
  F_\cB(gf)\ar@=[r]^{\rho(gf)} & F_\cA(gf).}\]
  Therefore, $\rho$ is a pseudo natural isomorphism.
\end{Construction}

\begin{Remark}\label{r.cgd}
The pseudo natural isomorphism $\rho$ of Construction \ref{s.gdp} has the
following properties:
    \begin{description}
    \item[(c)] If $D$ is an $(\cA,\cB)$-square \eqref{e.2} such that $p,q$
        are morphisms of $\cA\cap \cB$, then the following hexagon
        commutes
    \[\xymatrix{F_\cA(i)F_\cB(q)\ar@=[d]_{G_D} \ar@=[r]^{\rho(q)} & F_\cA(i)F_\cA(q)\ar@=[r] & F_\cA(iq)\ar@=[d]^{F_\cA(\alpha)}\\
    F_\cB(p)F_\cA(j) \ar@=[r]^{\rho(p)} & F_\cA(p)F_\cA(j)\ar@=[r] &F_\cA(pj).}\]

    \item[(\cprime)] If $D$ is an $(\cA,\cB)$-square \eqref{e.2} such that
        $i,j$ are morphisms of $\cA\cap \cB$, then the following hexagon
        commutes
    \[\xymatrix{F_\cA(i)F_\cB(q)\ar@=[d]_{G_D} \ar@=[r]^{\rho(i)^{-1}} & F_\cB(i)F_\cB(q)\ar@=[r] & F_\cB(iq)\ar@=[d]^{F_\cB(\alpha)}\\
    F_\cB(p)F_\cA(j) \ar@=[r]^{\rho(j)^{-1}} & F_\cB(p)F_\cB(j)\ar@=[r] &F_\cB(pj).}\]
  \end{description}
In fact, any square $D$ in (c) can be decomposed as
  \[\xymatrix{X\ar[r]^j\ar@{=}[d]\drtwocell\omit{=} & Y\ar@{=}[r]\ar@{=}[d]\drtwocell\omit{=} & Y\ar[d]^p\\
  X\ar[r]^j\ar@{=}[d]\xtwocell[rrd]{}\omit{^\alpha} & Y\ar[r]^p & W\ar@{=}[d]\\
  X\ar[r]^q\ar[d]_q\drtwocell\omit{=} & Z\ar[r]^i\ar@{=}[d]\drtwocell\omit{=} & W\ar@{=}[d]\\
  Z\ar@{=}[r] & Z\ar[r]^i & W.}\]
Denote the upper left, upper right, middle, lower left, lower right squares
by $D_1$, $D_2$, $D_3$, $D_4$ and $D_5$, respectively. Then $G_{D_2}$ can be
identified with $\rho(p)^{-1}$ and $G_{D_4}$ can be identified with
$\rho(q)$. By axiom (a) of Remark \ref{r.gd}, $G_{D_1}$ and $G_{D_5}$ can be
identified with identities and $G_{D_3}$ can be identified with
$F_\cA(\alpha)$.  Hence axioms (b) and (\bprime) of Remark \ref{r.gd} imply
that the hexagon in (c) commutes. Similarly, axioms (\aprime), (b) and
(\bprime) of Remark \ref{r.gd} imply (\cprime).
\end{Remark}

\begin{Definition}[Cartesian gluing data]\label{d.gdp}\index{GDCart@$\GD^\Cart_{\cA,\cB}(\cC,\cD)$}
Define the $2$-category $\GD^\Cart_{\cA,\cB}(\cC,\cD)$ of Cartesian gluing
data as follows. An \emph{object} of this category is a quadruple
$\left(F_\cA,F_\cB,(G_D), \rho\right)$ consisting of an object  $F_\cA$ of
$\PsFun(\cA,\cD)$, an object $F_\cB$ of $\PsFun(\cB,\cD)$, a family of
invertible $2$-cells of $\cD$
  \[G_D\colon F_\cA(i)F_\cB(q) \Rightarrow F_\cB(p)F_\cA(j),\]
$D$ running over \emph{Cartesian} $(\cA,\cB)$-squares in $\cC$ of the form
\eqref{e.2}, and an isomorphism $\rho\colon F_\cB\res\cA\cap \cB\to
F_\cA\res\cA\cap \cB$ in $\PsFun(\cA\cap\cB,\cD)$, such that $\lvert
\rho\rvert = \one_{F_0}$, where $F_0=\lvert F_\cA\rvert = \lvert F_\cB
\rvert$, and satisfying conditions (b), (\bprime) of Remark \ref{r.gd} and
conditions (c), (\cprime) of Remark \ref{r.cgd} for Cartesian
$(\cA,\cB)$-squares.

A \emph{morphism} $(F_\cA,F_\cB,G,\rho)\to (F'_\cA,F'_\cB,G',\rho')$ of
$\GD^\Cart_{\cA,\cB}(\cC,\cD)$ is a pair $(\alpha_\cA,\alpha_\cB)$
consisting of a morphism $\alpha_\cA\colon F_\cA \to F'_\cA$ of
$\PsFun(\cA,\cD)$ and a morphism $\alpha_\cB\colon F_\cB \to F'_\cB$ of
$\PsFun(\cB,\cD)$, such that $\lvert \alpha_\cA\rvert = \lvert \alpha_\cB
\rvert$, satisfying condition (m) of Remark \ref{r.gd} for Cartesian
$(\cA,\cB)$-squares, and the following condition
\begin{description}
\item[(n)] The following square commutes
  \[\xymatrix{F_\cB\res\cA\cap \cB\ar[r]^{\rho}\ar[d]_{\alpha_\cB\res\cA\cap\cB} & F_\cA\res\cA\cap\cB\ar[d]^{\alpha_\cA\res\cA\cap \cB}\\
  F'_\cB\res\cA\cap \cB\ar[r]^{\rho'} & F'_\cA\res\cA\cap\cB.}\]
\end{description}

A \emph{$2$-cell} of $\GD^\Cart_{\cA,\cB}(\cC,\cD)$ is a pair
$(\Xi_\cA,\Xi_\cB)\colon (\alpha_\cA,\alpha_\cB)\Rightarrow
(\alpha'_\cA,\alpha'_\cB)$ consisting of a $2$-cell $\Xi_\cA\colon
\alpha_\cA\Rightarrow \alpha'_\cA$ of $\PsFun(\cA,\cD)$ and a $2$-cell
$\Xi_\cB\colon \alpha_\cB\Rightarrow \alpha'_\cB$ of $\PsFun(\cB,\cD)$ such
that $\lvert \Xi_\cA\rvert = \lvert \Xi_\cB \rvert$.

We view $\GD^\Cart_{\cA,\cB}(\cC,\cD)$ as a $\cD^{\Ob(\cC)}$-$2$-category
via the $2$-functor given by
\[
  (F_\cA,F_\cB,G,\rho)\mapsto \lvert F_\cA \rvert = \lvert F_\cB \rvert,\quad
  (\alpha_\cA,\alpha_\cB)\mapsto \lvert \alpha_\cA\rvert =\lvert \alpha_\cB\rvert,\quad
  (\Xi_\cA,\Xi_\cB)\mapsto \lvert \Xi_\cA \rvert =\lvert \Xi_\cB \rvert.
\]
\end{Definition}

Construction \ref{s.gdp} defines a $\cD^{\Ob(\cC)}$-$2$-functor
  \begin{equation}\label{e.gdp}
  \GD_{\cA,\cB}(\cC,\cD) \to \GD^\Cart_{\cA,\cB}(\cC,\cD),
  \end{equation}
  which is clearly $2$-faithful.

\begin{Remark}\label{r.ff}
If $(\cA,\cB)$ is squaring in $\cC$ (Definition \ref{s.square}), then
\eqref{e.gdp} is $2$-fully faithful. In fact, for objects $(F_\cA,F_\cB,G)$
and $(F'_\cA,F'_\cB,G')$ of $\GD_{\cA,\cB}(\cC,\cD)$ and any morphism
  \[(\alpha_A,\alpha_B)\colon (F_\cA,F_\cB,G,\rho)\to (F'_\cA,F'_\cB,G',\rho')\]
of $\GD^\Cart_{\cA,\cB}(\cC,\cD)$ whose source and target are respectively
the images of $(F_\cA,F_\cB,G)$ and $(F'_\cA,F'_\cB,G')$ under
\eqref{e.gdp}, the pair $(\alpha_\cA,\alpha_\cB)\colon
(F_\cA,F_\cB,G)\to(F'_\cA,F'_\cB,G')$ is a morphism of
$\GD_{\cA,\cB}(\cC,\cD)$. Indeed, for any $(\cA,\cB)$-square $D$
\eqref{e.2}, decomposing it as \eqref{e.square}, we see that the following
diagram commutes
\[\xymatrix{\alpha_0(W)F_\cA(i)F_\cB(q)\ar@=[r]_-{F_\cB(\gamma)}\ar@/^1.5pc/@=[rrr]^{G_D}\ar@=[d]_{\alpha_{\cA}(i)}
  & \alpha_0(W)F_\cA(i)F_\cB(r)F_\cB(f)\ar@=[r]^{G_{D'}\rho(f)}\ar@=[d]^{\alpha_\cA(i)}
  & \alpha_0(W)F_\cB(p)F_\cA(k)F_\cA(f)\ar@=[r]_-{F_{\cA}(\delta)}\ar@=[d]^{\alpha_\cB(p)}
  & \alpha_0(W)F_\cB(p)F_\cA(j)\ar@=[d]^{\alpha_{\cB}(p)}\\
  F'_\cA(i)\alpha_0(Z)F_\cB(q)\ar@=[dd]_{\alpha_\cB(q)}\ar@=[r]^{F_\cB(\gamma)}
  & F'_\cA(i)\alpha_0(Z)F_\cB(r)F_\cB(f)\ar@=[d]^{\alpha_{\cB}(r)}
  & F'_\cB(p)\alpha_0(Y)F_\cA(k)F_\cA(f)\ar@=[d]^{\alpha_\cA(k)}\ar@=[r]^{F_\cA(\delta)}
  & F'_\cB(p)\alpha_0(Y)F_\cA(j)\ar@=[dd]^{\alpha_\cA(j)}\\
  & F'_\cA(i)F'_\cB(r)\alpha_0(X')F_\cB(f) \ar@=[r]^{G'_{D'}\rho(f)}\ar@=[d]^{\alpha_\cB(f)}
  & F'_\cB(p)F'_\cA(k)\alpha_0(X')F_\cA(f)\ar@=[d]^{\alpha_\cA(f)}\\
  F'_\cA(i)F'_\cB(q)\alpha_0(X)\ar@=[r]^-{F'_\cB(\gamma)}\ar@/_1.5pc/@=[rrr]_{G'_D}
  & F'_\cA(i)F'_\cB(r)F'_\cB(f)\alpha_0(X)\ar@=[r]^{G'_{D'}\rho'(f)}
  & F'_\cB(p)F'_\cA(k)F'_\cA(f)\alpha_0(X)\ar@=[r]^-{F'_{\cA}(\delta)}
  & F'_\cB(p)F'_\cA(j)\alpha_0(X).}\]
Here $\alpha_0=\lvert \alpha_\cA \rvert = \lvert \alpha_\cB \rvert$, and
$D'$ is the inner square of \eqref{e.square}.
\end{Remark}

\begin{Theorem}\label{p.gdp}
Let $\cC$ be a $(2,1)$-category and let $\cA$ and $\cB$ be locally full
sub-$2$-categories of~$\cC$, each containing all objects of $\cC$. Let $\cD$
be a $2$-category. Assume that every equivalence in $\cC$ is contained in
$\cA\cap\cB$, and the pairs $(\cA,\cB)$, $(\cA,\cA\cap \cB)$,
$(\cB,\cA\cap\cB)$ are squaring in $\cC$ (Definition \ref{s.square}). Then
\eqref{e.gdp} is an isomorphism of $\cD^{\Ob(\cC)}$-$2$-categories.
\end{Theorem}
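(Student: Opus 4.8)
The plan is to show that \eqref{e.gdp} is bijective on objects; fully faithfulness then follows from Remark \ref{r.ff}, since $(\cA,\cB)$ is squaring in $\cC$ by hypothesis. (For the $2$-cells there is nothing to check: a $2$-cell of either side is simply a compatible pair $(\Xi_\cA,\Xi_\cB)$.) Since \eqref{e.gdp} is already $2$-faithful, and injective on objects by construction (an object of $\GD_{\cA,\cB}(\cC,\cD)$ is determined by $(F_\cA,F_\cB,(G_D))$, and $\rho$ is recovered from $G$ in Construction \ref{s.gdp}), the essential content is that \eqref{e.gdp} is surjective on objects, i.e.\ that every quadruple $(F_\cA,F_\cB,(G_D),\rho)$ in $\GD^\Cart_{\cA,\cB}(\cC,\cD)$ extends the family $(G_D)$ indexed by $2$-Cartesian $(\cA,\cB)$-squares to a family indexed by all $(\cA,\cB)$-squares, in a way satisfying axioms (a), (\aprime), (b), (\bprime) of Remark \ref{r.gd}.

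First I would use the squaring hypothesis on $(\cA,\cB)$ to decompose an arbitrary $(\cA,\cB)$-square $D$ of the form \eqref{e.2} as in \eqref{e.square}, with inner square $D'$ which is $2$-Cartesian, $k$ in $\cA$, $r$ in $\cB$, $f$ in $\cA\cap\cB$, and $2$-cells $\gamma\colon rf\Rightarrow q$, $\delta\colon kf\Rightarrow j$ (and $\beta$ the $2$-cell of $D'$). Then I would be forced to \emph{define} $G_D$ as the composite
\[
F_\cA(i)F_\cB(q)\xRightarrow{F_\cB(\gamma)^{-1}} F_\cA(i)F_\cB(r)F_\cB(f)
\xRightarrow{G_{D'}} F_\cB(p)F_\cA(k)F_\cB(f)
\xRightarrow{\rho(f)} F_\cB(p)F_\cA(k)F_\cA(f)
\xRightarrow{F_\cA(\delta)} F_\cB(p)F_\cA(j),
\]
reading off $F_\cB(r)F_\cB(f)\to F_\cB(q)$ and $F_\cA(k)F_\cA(f)\to F_\cA(j)$ from the coherence constraints of $F_\cB$ and $F_\cA$. (This is the inverse of the square appearing in Remark \ref{r.ff}, so this definition is forced if the extension is to lie in the image of \eqref{e.gdp}.) The first task is to check this is \emph{independent of the choice of decomposition} \eqref{e.square}: given two decompositions, I would compare them through a common refinement, using that $(\cA,\cA\cap\cB)$ and $(\cB,\cA\cap\cB)$ are squaring in $\cC$ to produce a $2$-Cartesian square in $\cA\cap\cB$ comparing the two ``$f$'' pieces, and then use conditions (c), (\cprime) of Remark \ref{r.cgd} (built into $\GD^\Cart$) together with the composition axioms (b), (\bprime) for $G$ on $2$-Cartesian squares to see the two composites agree. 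The assumption that every equivalence of $\cC$ lies in $\cA\cap\cB$ enters here to guarantee that the comparison morphisms between different decompositions are themselves in $\cA\cap\cB$, so that $\rho$ applies to them.

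Having a well-defined $G_D$ for every $(\cA,\cB)$-square, I would then verify axioms (a), (\aprime), (b), (\bprime) of Remark \ref{r.gd}. For (a) and (\aprime) one plugs in a square with one pair of identity edges, chooses the trivial decomposition (the square is already essentially its own inner square after inserting identities), and the claim reduces to the definition of $\rho$ together with the fact that $\rho(\one_X)$ is the identity (Construction \ref{s.gdp}). Axioms (b) and (\bprime) are the pasting compatibilities: given a vertical (resp.\ horizontal) stacking of two $(\cA,\cB)$-squares, I would decompose each summand via \eqref{e.square}, then reorganise the stacked diagram into a single application of \eqref{e.square} for the outer square — again invoking the squaring hypotheses to slide the $\cA\cap\cB$-pieces past one another — and the pentagon follows from the pentagon (b), (\bprime) for $G$ on $2$-Cartesian squares plus pseudofunctoriality of $F_\cA$, $F_\cB$ and the pseudonaturality of $\rho$. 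Finally, on morphisms: given $(\alpha_\cA,\alpha_\cB)$ satisfying (m) for $2$-Cartesian squares and (n), condition (m) for a general square $D$ is exactly the outer rectangle of the big diagram in Remark \ref{r.ff}, which commutes because each inner cell does — the two end cells by (m) for $D'$ (a $2$-Cartesian square), the $\rho(f)$-cells by (n), and the remaining cells by naturality of $\alpha_\cA$, $\alpha_\cB$ with respect to the coherence constraints. This also shows \eqref{e.gdp} is full, completing the proof that it is an isomorphism.

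The main obstacle I expect is the independence of $G_D$ from the decomposition \eqref{e.square}: one must find a single canonical way to compare any two decompositions (rather than an ad hoc zig-zag), and organise the verification so that it reduces cleanly to the $2$-Cartesian axioms (b), (\bprime), (c), (\cprime) and the pseudonaturality of $\rho$, without an unmanageable pasting computation. Everything else — axioms (a), (\aprime), (b), (\bprime), and condition (m) — is then a matter of carefully stacking the decompositions and reading off the relevant coherence diagrams, which are the same ones already exhibited in Remarks \ref{r.gd}, \ref{r.cgd}, and \ref{r.ff}.
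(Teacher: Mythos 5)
Your proposal is correct and follows essentially the same route as the paper: define $\bar G_D$ for an arbitrary $(\cA,\cB)$-square via the squaring decomposition \eqref{e.square}, check independence of the decomposition (the paper does this by comparing the apexes of the two $2$-Cartesian inner squares through an equivalence, which lies in $\cA\cap\cB$ by hypothesis, rather than via a further common refinement), deduce (a), (\aprime) from (c), (\cprime) and (b), (\bprime) by pasting decompositions, and handle morphisms exactly via the diagram of Remark \ref{r.ff}.
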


\begin{proof}
We construct the inverse of \eqref{e.gdp} as follows. Let
$(F_\cA,F_\cB,G,\rho)$ be an object of $\GD^\Cart_{\cA,\cB}(\cC,\cD)$. For
any square $D$ \eqref{e.2}, decompose it as \eqref{e.square},
  and denote the inner square by $D'$. Let $\bar{G}_D$ be the composition
  \[F_\cA(i)F_\cB(q)\xRightarrow{F_\cB(\gamma)}
   F_\cA(i)F_\cB(r)F_\cB(f)\xRightarrow{G_{D'}\rho(f)}
  F_\cB(p)F_\cA(k)F_\cA(f)\xRightarrow{F_{\cA}(\delta)}
  F_\cB(p)F_\cA(j).
  \]
  This does not depend on the choice of the decomposition. In fact, if
  \[
  \xymatrix{X\ar[rd]^g\xuppertwocell[rrd]{}^j{^\zeta}\xlowertwocell[rdd]{}_q{^\epsilon}\\
  &X''\ar[r]^l\ar[d]^s\ar@{}[rd]|{D''} & Y\ar[d]^p\\
  &Z\ar[r]^i & W}
  \]
  is another decomposition with $l$ in $\cA$, $s$ in $\cB$, $g$ in $\cA\cap \cB$, and $D''$ Cartesian in $\cC$, then they can be combined into
  \[\xymatrix{X\ar[rd]^{g}\xlowertwocell[rrdd]{}_f{^\omega}\xuppertwocell[rrrdd]{}<12>^j{^\zeta}\xlowertwocell[dddrr]{}<-12>_q{^\epsilon} \\
  &X''\ar[rd]^h\xuppertwocell[rrd]{}^l{^\psi} \xlowertwocell[ddr]{}_s{^\phi}\\
  &&X'\ar[r]^k\ar[d]^r\ar@{}[rd]|{D'} & Y\ar[d]^p\\
  &&Z\ar[r]^i & W, }\]
  where $h$ is an equivalence. Applying the axioms of Remarks \ref{r.gd} and \ref{r.cgd} to the following decomposition of $D''$
  \[\xymatrix{X''\ar[rr]^l\ar@{=}[d]\xtwocell[rrd]{}\omit{^\psi} && Y\ar@{=}[d]\\
  X''\ar[r]^h\ar[d]_s\drtwocell\omit{^\phi} & X'\ar[r]^k\ar[d]^r\ar@{}[rd]|{D'} & Y\ar[d]^p\\
  Z\ar@{=}[r] & Z\ar[r]^i &W,}\]
  we obtain the following commutative diagram
  \[\xymatrix{F_\cA(i)F_\cB(s)\ar@=[rr]^{G_{D''}}_{\text{(b),(c)}}\ar@=[rrd]_{\text{(\bprime)}}\ar@=[rd]_{\text{(c)}}\ar@=[d]_{F_\cB(\phi)}
  && F_\cB(p)F_\cA(l)\\
  F_\cA(i)F_\cB(r)F_\cB(h)\ar@=[r]_{\rho(h)} & F_\cA(i)F_\cB(r)F_\cA(h)\ar@=[r]_{G_{D'}} & F_\cB(p)F_\cA(k)F_\cA(h).\ar@=[u]_{F_\cA(\psi)}}
  \]
  Hence the following diagram commutes
  \[\xymatrix{F_\cA(i)F_\cB(q)\ar@=[d]_{F_\cB(\epsilon)}\ar@=[rrd]^{F_\cB(\gamma)}\\
  F_\cA(i)F_\cB(s)F_\cB(g)\ar@=[d]_{\rho(g)}\ar@=[r]^-{F_\cB(\phi)}
  & F_\cA(i)F_\cB(r)F_\cB(h)F_\cB(g)\ar@=[d]^{\rho(g)}\ar@=@/^5pc/[dd]^{\rho(hg)}
  & F_\cA(r)F_\cB(r)F_\cB(f)\ar@=[dd]^{\rho(f)}\ar@=[l]^-{F_\cB(\omega)}\\
  F_\cA(i)F_\cB(s)F_\cA(g)\ar@=[dd]_{G_{D''}}\ar@=[r]^-{F_\cB(\phi)}
  & F_\cA(i)F_\cB(r)F_\cB(h)F_\cA(g)\ar@=[d]^{\rho(h)}\\
  & F_\cA(i)F_\cB(r)F_\cA(h)F_\cA(g)\ar@=[d]^{G_{D'}}
  & F_\cA(i)F_\cB(r)F_\cA(f)\ar@=[d]^{G_{D'}}\ar@=[l]^-{F_\cA(\omega)}\\
  F_\cB(p)F_\cA(l)F_\cA(g)\ar@=[d]_{F_\cA(\zeta)}
  & F_\cB(p)F_\cA(k)F_\cA(h)F_\cA(g)\ar@=[l]_-{F_\cA(\psi)}
  & F_\cB(p)F_\cA(k)F_\cA(f).\ar@=[l]_-{F_\cA(\omega)}\ar@=[lld]^{F_\cA(\delta)}\\
  F_\cB(p)F_\cA(j)}\]

Next we show that $(F_\cA,F_\cB,\bar{G})$ is an object of
$\GD_{\cA,\cB}(\cC,\cD)$. Axioms (a) and (\aprime) of Remark \ref{r.gd} for
$\bar{G}$ follow from axioms (c) and (\cprime) of Remark \ref{r.cgd}. Let
$D$, $D'$, and $D''$ be squares as in axiom (\bprime) of Remark \ref{r.gd}
for~$\bar{G}$. Decompose it as
  \[\xymatrix{X_1\xuppertwocell[rrrd]{}^j{^\epsilon}\ar[dr]^f\xlowertwocell[dddrr]{}<-12>_{p_1}{^\delta}\\
  &W\xtwocell[rrrd]{}\omit{^\eta} \ar[rr]^l\ar[dr]^g & & X_2\ar[rd]^h\xuppertwocell[rrd]{}^{j'}{^\gamma}\xlowertwocell[ddr]{}_{p_2}{^\beta}\\
  && Z_1\ar[rr]^(.4)k\ar[d]^{q_1}\ar@{}[rrd]|E && Z_2\ar[r]^{k'}\ar[d]^{q_2}\ar@{}[dr]|{E'} & X_3\ar[d]^{p_3}\\
  && Y_1\ar[rr]^i && Y_2\ar[r]^{i'} & Y_3,}\]
where horizontal arrows are morphisms of $\cA$, vertical arrows are
morphisms of $\cB$, oblique arrows are morphisms of $\cA\cap \cB$, the
squares $E$, $E'$ and the square $H$ containing $\eta$ are Cartesian in
$\cC$. Let $E''=E'\circ E$, $I=H\circ E$. Since $I$ is the outer square of
the diagram
  \[\xymatrix{W\ar[r]^l\ar[d]_{q_1 g}\ar@{}[rd]|J & X_2\ar@{=}[r]\ar[d]^{p_2}\drtwocell\omit{^\beta} & X_2\ar[d]^{q_2 h}\\
  Y_1\ar[r]^i & Y_2\ar@{=}[r] & Y_2,}\]
axiom (\bprime)  of Remark \ref{r.gd} and axiom (\cprime) of Remark
\ref{r.cgd} imply the commutativity of the following triangle
  \[\xymatrix{F_\cA(i)F_\cB(q_1 g)\ar@=[d]_{G_J} \ar@=[rd]^{G_I}\\
  F_\cB(p_2)F_\cA(l)\ar@=[r]^{F_\cB(\beta)} & F_\cB(q_2 h) F_\cA(l).}\]
  It follows that the following diagram commutes
  {\scriptsize\[\rotatebox{-90}{
  \xymatrix{F_\cA(i')F_\cA(i)F_\cB(p_1)\ar@=[d]^{F_\cB(\delta)}\ar@=[rr]\ar@=@/_6.5pc/[dddd]^{\bar{G}_D}
  && F_\cA(i'i)F_\cB(p_1)\ar@=[rrdd]^{\bar{G}_{D''}}\\
  F_\cA(i')F_\cA(i)F_\cB(q_1)F_\cB(g)F_\cB(f)\ar@=[rr]^{G_{E''}}\ar@=[d]^{\rho(f)}
  &&F_\cB(p_3)F_\cA(k')F_\cA(k)F_\cB(g)F_\cB(f)%\ar@=[rd]^{\rho(gf)}
  \ar@=[d]^{\rho(f)}\\
  F_\cA(i')F_\cA(i)F_\cB(q_1)F_\cB(g)F_\cA(f)\ar@=[d]^{G_J}\ar@=[r]^{G_E}\ar@=[rd]_{G_I}^{\text{(b)}}\ar@=@/^1.5pc/[rr]^{G_{E''}}_{\text{(\bprime)}}
  &F_\cA(i')F_\cB(q_2)F_\cA(k)F_\cB(g)F_\cA(f)\ar@=[d]^{G_H}\ar@=[r]^{G_{E'}} &
  F_\cB(p_3)F_\cA(k')F_\cA(k)F_\cB(g)F_\cA(f)\ar@=[d]^{G_H}\ar@=[r]^{\rho(g)}\ar@{}[rd]|{\text{(c)}} &
  F_\cB(p_3)F_\cA(k')F_\cA(k)F_\cA(g)F_\cA(f)\ar@=[d]^{F_\cA(\eta)}
  & F_\cB(p_3)F_\cA(j'j)\\
  F_\cA(i')F_\cB(p_2)F_\cA(l)F_\cA(f)\ar@=[d]^{F_\cA(\epsilon)}\ar@=[r]^{F_\cB(\beta')} &
  F_\cA(i')F_\cB(q_2)F_\cB(h)F_\cA(l)F_\cA(f)\ar@=[r]^{G_{E'}}
  &F_\cB(p_3)F_\cA(k')F_\cB(h)F_\cA(l)F_\cA(f)\ar@=[r]^{\rho(h)}
  &F_\cB(p_3)F_\cA(k')F_\cA(h)F_\cA(l)F_\cA(f) \ar@=[d]^{F_\cA(\epsilon)}\\
  F_\cA(i')F_\cB(p_2)F_\cA(j)\ar@=[r]^{F_\cB(\beta')}\ar@=@/_2pc/[rrrr]^{\bar{G}_{D'}}
  & F_\cA(i')F_\cB(q_2)F_\cB(h)F_\cA(j)\ar@=[r]^{G_{E'}}
  & F_\cB(p_3)F_\cA(k')F_\cB(h)F_\cA(j)\ar@=[r]^{\rho(h)}
  & F_\cB(p_3)F_\cA(k')F_\cA(h)F_\cA(j)\ar@=[r]^{F_\cA(\gamma)} & F_\cB(p_3)F_\cA(j')F_\cA(j).\ar@=[uu]}
  }
  \]
}One establishes axiom (b)  of Remark \ref{r.gd} for $\bar{G}$ in a similar
way.

Let $(\alpha_\cA,\alpha_\cB)\colon (F_\cA,F_\cB,G,\rho)\to
(F'_\cA,F'_\cB,G',\rho')$ be a morphism of $\GD^\Cart_{\cA,\cB}(\cC,\cD)$.
Then $(\alpha_\cA,\alpha_\cB)\colon (F_\cA,F_\cB,\bar{G})\to
(F'_\cA,F'_\cB,\bar{G}')$ is a morphism of $\GD_{\cA,\cB}(\cC,\cD)$ by
Remark \ref{r.ff}. Let
  \[(\Xi_\cA,\Xi_\cB)\colon (\alpha_\cA,\alpha_\cB)\Rightarrow (\alpha'_\cA,\alpha'_\cB)\]
be a $2$-cell of $\GD^\Cart_{\cA,\cB}(\cC,\cD)$. Then $(\Xi_\cA,\Xi_\cB)$ is
a $2$-cell of $\GD_{\cA,\cB}(\cC,\cD)$.

The $2$-functor defined in this way is clearly the inverse of \eqref{e.gdp}.
\end{proof}

Theorem \ref{p.gdp} relates gluing data and Cartesian gluing data for two
pseudo functors. In the next section we will establish an analogue for
finitely many pseudo functors.

\section{Cartesian gluing data for finitely many pseudo functors}\label{s.7}
In this section, we generalize the definitions and results of the previous
section to the case of finitely many pseudo functors. The main result is a
general criterion for $\GD^\Cart$ and $\GD$ to be isomorphic (Theorem
\ref{p.gdp'}).

Let $\cC$ be a $(2,1)$-category, let $\cA_1,\dots, \cA_n$ be locally full
sub-$2$-categories of $\cC$, each containing all objects of $\cC$. Let $\cD$
be a $2$-category.

\begin{Remark}[Properties of the pseudo natural isomorphisms $\rho_{ij}$ associated to a gluing
datum]\label{s.gdp'}\index{rhoij@$\rho_{ij}$} Let $((F_i),G)$ be an object
of $\GD_{\cA_1,\dots,\cA_n}(\cC,\cD)$. For $1\le i,j\le n$, the triple
$(F_i,F_j,G_{ij})$ is an object of $\GD_{\cA_i,\cA_j}(\cC,\cD)$. Let
  \[\rho_{ij}\colon F_j\res\cA_i\cap\cA_j \to F_i\res\cA_i\cap \cA_j\]
be the isomorphism in $\PsFun(\cA_i\cap \cA_j,\cD)$ associated to the triple
by \eqref{e.gdp}. Then $\rho_{ii}=\one_{F_i}$ and
$\rho_{ji}=\rho_{ij}^{-1}$. We claim that $\rho_{ij}$ has the following
properties:
\begin{description}
\item[(E)] For $1\le i,j,k\le n$ and any $(\cA_i,\cA_j\cap \cA_k)$-square
    $D$ \eqref{e.2'}, the following square commutes
    \[\xymatrix{F_i(a)F_k(q)\ar@=[r]^{\rho_{jk}(q)}\ar@=[d]_{G_{ikD}} & F_i(a)F_j(q)\ar@=[d]^{G_{ijD}}
    \\
    F_k(p)F_i(b)\ar@=[r]^{\rho_{jk}(p)} & F_j(p)F_i(b).
    }
    \]

\item[(F)] (cocycle condition) For $1\le i,j,k\le n$, the following
    triangle commutes
    \[\xymatrix{F_k\res\cA_{ijk} \ar[r]^{\rho_{jk}\res\cA_{ijk}}\ar[rd]_{\rho_{ik}\res\cA_{ijk}} & F_j\res\cA_{ijk}\ar[d]^{\rho_{ij}\res\cA_{ijk}}\\
    &F_i\res\cA_{ijk}.}
    \]
    Here $\cA_{ijk}=\cA_i\cap\cA_j\cap\cA_k$.
\end{description}
In fact, (E) follows from axiom (D) of Remark \ref{r.gdn} applied to the
cube
  \[\xymatrix{X\ar[rr]^b\ar[rd]^q\ar[dd]_q && Y\ar[rd]^p\ar[dd]^(.3)p|\hole\\
  &Z\ar@{=}[dd]\ar[rr]^(.3)a && W\ar@{=}[dd]\\
  Z\ar[rr]^(.3)a|\hole\ar@{=}[rd] && W\ar@{=}[rd]\\
  &Z\ar[rr]^a&& W}\]
whose top and back faces are $D$ and whose other faces have identity
$2$-cells. Condition (F) follows from (E) applied to the square
  \[\xymatrix{X\ar[r]^f \ar[d]_f\drtwocell\omit{=} & Y\ar@{=}[d]\\
  Y\ar@{=}[r] & Y}\]
  for every morphism $f$ of $\cA_{ijk}$.
\end{Remark}

We extend Definition \ref{d.gdp} as follows.

\begin{Definition}[Cartesian gluing data for finitely many pseudo
functors]\label{d.gdp'}\index{GDCart@$\GD^\Cart_{\cA_1,\dots,\cA_n}(\cC,\cD)$}
We define a $2$-category $\GD^\Cart_{\cA_1,\dots,\cA_n}(\cC,\cD)$ as
follows. An \emph{object} of this $2$-category is a triple
$\left((F_i)_{1\le i \le n},(G_{ij})_{1\le i<j\le n},(\rho_{ij})_{1\le
i<j\le n}\right)$, where $F_i\colon \cA_i\to \cD$ is an object of
$\PsFun(\cA_i,\cD)$ for $1\le i\le n$, and $(F_i,F_j,G_{ij},\rho_{ij})$ is
an object of $\GD^\Cart_{\cA_i,\cA_j}(\cC,\cD)$ for $1\le i<j\le n$,
satisfying condition (D) of Remark \ref{r.gdn} for $1\le i<j<k\le n$ and
cubes with Cartesian faces, condition (E) of Remark \ref{s.gdp'} for $1\le
i\le n$, $1\le j<k\le n$, $i\neq j,k$ and Cartesian squares (here we put
$G_{ij}=G_{ji}^*$ for $i>j$), and condition (F) of Remark \ref{s.gdp'} for
$1\le i<j<k\le n$.

A \emph{morphism} $\left((F_i),G,\rho\right)\to
\left((F'_i),G',\rho'\right)$ of $\GD^\Cart_{\cA_1,\dots,\cA_n}(\cC,\cD)$ is
a collection
  $(\alpha_i)_{1\le i\le n}$
  of morphisms $\alpha_i\colon F_i\to F'_i$ of $\PsFun(\cA_i,\cD)$, such that for $1\le i<j\le n$,
 the pair $(\alpha_i,\alpha_j)\colon
  (F_i,F_j,G_{ij},\rho_{ij})\to(F'_i,F'_j,G'_{ij},\rho'_{ij})$
  is a morphism of $\GD^\Cart_{\cA_i,\cA_j}(\cC,\cD)$.

A \emph{$2$-cell} of $\GD^\Cart_{\cA_1,\dots,\cA_n}(\cC,\cD)$ is a
collection
  $(\Xi_i)_{1\le i\le n}\colon (\alpha_i)_{1\le i \le n}\Rightarrow (\alpha'_i)_{1\le i \le
  n}$
  of $2$-cells $\Xi_i\colon \alpha_i\Rightarrow \alpha'_i$ of $\PsFun(\cA_i,\cD)$ such that $\lvert \Xi_1 \rvert =\dots = \lvert \Xi_n \rvert$.

  We view $\GD^\Cart_{\cA_1,\dots, \cA_n}(\cC,\cD)$ as a $\cD^{\Ob(\cC)}$-$2$-category via the $2$-functor given by
\[
    ((F_i),G,\rho)\mapsto \lvert F_1\rvert =\dots=\lvert F_n\rvert,\quad
    (\alpha_i)\mapsto \lvert \alpha_1\rvert = \dots=\lvert \alpha_n\rvert , \quad (\Xi_i)\mapsto \lvert \Xi_1\rvert = \dots = \lvert \Xi_n \rvert.
\]
\end{Definition}

Remark \ref{s.gdp'} defines a $\cD^{\Ob(\cC)}$-$2$-functor
  \begin{equation}\label{e.gdp'}
    \GD_{\cA_1,\dots,\cA_n}(\cC,\cD)\to \GD^\Cart_{\cA_1,\dots,\cA_n}(\cC,\cD),
  \end{equation}
which is clearly $2$-faithful. If, for all $1\le i<j\le n$, the pair
$(\cA_i,\cA_j)$ is squaring in $\cC$ (Definition \ref{s.square}), then
\eqref{e.gdp'} is $2$-fully faithful by Remark \ref{r.ff}.

The following generalizes Theorem \ref{p.gdp}.

\begin{Theorem}\label{p.gdp'}
Let $\cC$ be a $(2,1)$-category and let $\cA_1,\dots, \cA_n$ be locally full
sub-$2$-categories of $\cC$, each containing all objects of $\cC$. Let $\cD$
be a $2$-category. Assume that every morphism of $\cC$ that is an
equivalence is contained in $\cA_1\cap \dots \cap\cA_n$, and for $1\le
i,j\le n$, $i\ne j$, the pairs $(\cA_i,\cA_j)$ and $(\cA_i,\cA_i\cap \cA_j)$
are squaring in $\cC$ (Definition \ref{s.square}). Assume moreover that for
pairwise distinct numbers $1\le i,j,k\le n$, the pair $(\cA_i,\cA_j)$ is
$\cA_k$-squaring and the pair $(\cA_i\cap\cA_j,\cA_k)$ is squaring. Then
\eqref{e.gdp'} is an isomorphism of $\cD^{\Ob(\cC)}$-$2$-categories.
\end{Theorem}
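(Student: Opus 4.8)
The strategy is to reduce Theorem \ref{p.gdp'} to the case $n=2$ (Theorem \ref{p.gdp}) by establishing that \eqref{e.gdp'} is an isomorphism fibrewise over pairs of indices, using that an object (morphism, $2$-cell) of either $2$-category is a compatible collection of objects (morphisms, $2$-cells) of the corresponding $2$-categories attached to pairs $(i,j)$. Since \eqref{e.gdp'} is already $2$-faithful, and is $2$-fully faithful under our squaring hypotheses (Remark \ref{r.ff} applied pairwise), it remains to show that \eqref{e.gdp'} is essentially surjective; in fact we will produce an explicit inverse $\cD^{\Ob(\cC)}$-functor, just as in the proof of Theorem \ref{p.gdp}.

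\textbf{Construction of the inverse.} Given an object $\left((F_i),(G_{ij})_{i<j},(\rho_{ij})_{i<j}\right)$ of $\GD^\Cart_{\cA_1,\dots,\cA_n}(\cC,\cD)$, first I would set $\rho_{ii}=\one_{F_i}$ and $\rho_{ji}=\rho_{ij}^{-1}$ and $G_{ji}=G_{ij}^*$ for $i<j$, using the hypotheses that $(\cA_i,\cA_j)$ and $(\cA_i,\cA_i\cap\cA_j)$ are squaring in $\cC$ and that all equivalences of $\cC$ lie in $\cA_1\cap\dots\cap\cA_n$. For each pair $i\ne j$, Theorem \ref{p.gdp} (applied to $\cA=\cA_i$, $\cB=\cA_j$; note $(\cA_i,\cA_j)$, $(\cA_i,\cA_i\cap\cA_j)$, $(\cA_j,\cA_i\cap\cA_j)$ are all squaring in $\cC$ by assumption, the last one being the instance $(\cA_j,\cA_i\cap\cA_j)=(\cA_j,\cA_j\cap\cA_i)$ of the symmetric hypothesis) produces a unique extension of $(F_i,F_j,G_{ij},\rho_{ij})$ to a family $\bar G_{ij}\colon F_i(a)F_j(q)\Rightarrow F_j(p)F_i(b)$ of invertible $2$-cells defined for \emph{all} $(\cA_i,\cA_j)$-squares \eqref{e.2'}, obtained by decomposing an arbitrary such square as \eqref{e.square} and using $\bar G_{ijD}=F_j(\gamma)$ then $G_{ijD'}\rho_{ij}(f)$ then $F_i(\delta)$. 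The candidate inverse sends $\left((F_i),G,\rho\right)$ to $\left((F_i),(\bar G_{ij})_{i<j}\right)$, acts as the identity on morphisms $(\alpha_i)$ and $2$-cells $(\Xi_i)$. That this is the inverse of \eqref{e.gdp'} is then formal from Theorem \ref{p.gdp}: on each pair it inverts \eqref{e.gdp}, and both composites are the identity on the index-indexed components, hence the identity.

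\textbf{Verifying the cube axiom (D).} The one point that genuinely uses the new hypotheses beyond the pairwise statement is that $\left((F_i),(\bar G_{ij})\right)$ satisfies axiom (D) of Remark \ref{r.gdn} for \emph{arbitrary} cubes \eqref{e.3} (not just those with $2$-Cartesian faces). Here the hypothesis that $(\cA_i,\cA_j)$ is $\cA_k$-squaring in $\cC$, and that $(\cA_i\cap\cA_j,\cA_k)$ is squaring in $\cC$, enters: given a cube \eqref{e.3}, decompose it as in \eqref{e.cubing} with inner cube having $2$-Cartesian top and bottom faces $L,L'$ and with the auxiliary morphisms $f,f',v$ in the indicated intersections. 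One then checks that the hexagon of axiom (D) for the outer cube follows from: the hexagon for the inner cube (which holds because it has $2$-Cartesian faces, by the definition of $\GD^\Cart$); the pentagon axioms (b) and (\bprime) of Remark \ref{r.gd} for $\bar G$ (established pairwise via Theorem \ref{p.gdp}); the compatibility axioms (E), (F) of Remark \ref{s.gdp'} relating the $\bar G_{ij}$'s with the $\rho$'s; and the naturality of the $\bar G_{ij}$ in the $2$-cells $\gamma,\gamma',\delta,\delta'$ of \eqref{e.cubing}. This is a large but routine diagram chase, formally parallel to the verification of axioms (b) and (\bprime) for $\bar G$ inside the proof of Theorem \ref{p.gdp} (where one decomposes a square and glues the relevant pentagons and triangles).

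\textbf{Main obstacle.} The hard part is precisely the compatibility of the independence-of-decomposition argument across three directions: one must check that $\bar G_{ij}$ as extended on $(\cA_i,\cA_j)$-squares interacts correctly with $\bar G_{jk}$ and $\bar G_{ik}$ in the cube axiom, and that the decomposition \eqref{e.cubing} can be chosen so that all the faces one needs to control are $2$-Cartesian or lie in the appropriate intersections $\cA_i\cap\cA_j$, $\cA_i\cap\cA_j\cap\cA_k$. Checking well-definedness of this extended datum (independence of the choice in \eqref{e.cubing}, paralleling the independence argument for \eqref{e.square} in the proof of Theorem \ref{p.gdp}) is the most delicate bookkeeping; everything else descends from the $n=2$ case pair by pair.
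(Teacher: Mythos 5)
Your overall strategy coincides with the paper's: build the inverse by applying Theorem \ref{p.gdp} pairwise to each $(F_i,F_j,G_{ij},\rho_{ij})$, observe that the only new content is axiom (D) of Remark \ref{r.gdn} for arbitrary cubes, and attack that axiom via the decomposition \eqref{e.cubing}. However, there is a genuine gap in your treatment of axiom (D). You assert that the hexagon for the inner cube of \eqref{e.cubing} ``holds because it has $2$-Cartesian faces, by the definition of $\GD^\Cart$.'' But the $\cA_k$-squaring decomposition only guarantees that the \emph{top and bottom} faces $L,L'$ of the inner cube are $2$-Cartesian; the left/right and front/back faces of the inner cube are inherited from the original cube and need not be $2$-Cartesian, so the definition of $\GD^\Cart$ (which imposes (D) only for cubes all of whose faces are $2$-Cartesian) does not apply to it after a single decomposition. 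The paper closes this by a descending induction on the number $m$ of pairs of $2$-Cartesian \emph{opposite} faces: if $m<3$ one decomposes in a direction whose pair of opposite faces is not yet $2$-Cartesian, notes (using the remark in Definition \ref{s.square} that $2$-Cartesianness of the right, resp.\ front, face forces that of the left, resp.\ back, face) that the inner cube has strictly more such pairs, and invokes the induction hypothesis for the inner cube. You need this iteration (up to three successive decompositions), not one.

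A second, more minor omission: before the cube argument one must first upgrade axiom (E) of Remark \ref{s.gdp'} from $2$-Cartesian $(\cA_i,\cA_j\cap\cA_k)$-squares (where it holds by hypothesis) to \emph{all} such squares, using the squaring hypothesis on $(\cA_i,\cA_j\cap\cA_k)$ together with the cocycle condition (F); the extended (E) is then an input to the hexagon chase. You list (E) and (F) among your ingredients but do not note that (E) itself requires this intermediate extension. With the descending induction inserted and the extension of (E) made explicit, your plan matches the paper's proof.
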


One sufficient condition for the assumptions of Theorem \ref{p.gdp'} to hold
true is that $\cC$ admits pseudo fiber products, and $\cA_i$ is stable under
base change in $\cC$ and taking diagonals in $\cC$ for all $1\le i\le n$.

\begin{proof}
We construct the inverse of \eqref{e.gdp'} as follows. Let $((F_i),G,\rho)$
be an object of $\GD^\Cart_{\cA_1,\dots,\cA_n}(\cC,\cD)$. For $1\le i,j\le
n$, $i\neq j$, let $(F_i,F_j,\bar{G}_{ij})$ be the image of
$(F_i,F_j,G_{ij},\rho_{ij})$ under the inverse of \eqref{e.gdp}. To show
that $((F_i),\bar G)$ is an object of $\GD_{\cA_1,\dots,\cA_n}(\cC,\cD)$, it
suffices to check axiom (D) of Remark \ref{r.gdn} for $1\le i<j<k \le n$.

First note that for pairwise distinct numbers $1\le i,j,k \le n$, $G$
satisfies axiom (D)  of Remark \ref{r.gdn} for cubes with Cartesian faces,
$G$ and $\rho$ satisfy axiom (E) of Remark \ref{s.gdp'} for Cartesian
squares, and $\rho$ satisfies axiom (F) of Remark \ref{s.gdp'}. Here for
$1\le i<j\le n$ we put $G_{ji}=G_{ij}^*$ and $\rho_{ji}=\rho_{ij}^{-1}$.
Next we show that $\bar{G}$ satisfies axiom (E) of Remark \ref{s.gdp'} for
pairwise distinct numbers $1\le i,j,k \le n$ and all $(\cA_i,\cA_j\cap
\cA_k)$-squares $D$ \eqref{e.2'}. Decompose $D$ as
  \[\xymatrix{X\ar[rd]^f\xuppertwocell[rrd]{}^b{^\delta}\xlowertwocell[rdd]{}_q{^\gamma}\\
  &X'\ar[r]^c\ar[d]^r\drtwocell\omit{^\beta} & Y\ar[d]^p\\
  &Z\ar[r]^a & W}\]
where $c$ is a morphism of $\cA_i$, $r$ is a morphism of $\cA_j\cap \cA_k$,
$f$ is a morphism of $\cA_i\cap\cA_j\cap \cA_k$, and the inner square $D'$
is Cartesian. Then the following diagram commutes
\[\xymatrix{F_i(a)F_k(q)\ar@=[d]_{\rho_{jk}(q)}\ar@=[r]_-{F_k(\gamma)}\ar@=@/^2pc/[rrrr]^{\bar{G}_{ikD}} & F_i(a)F_k(r)F_k(f)\ar@=[d]_{\rho_{jk}(r)\rho_{jk}(f)}\ar@=[r]^{\rho_{ik}(f)}\ar@{}[rd]|{\text{(F)}} & F_i(a)F_k(r)F_i(f)\ar@=[r]^{G_{ikD'}}\ar@=[d]^{\rho_{jk}(r)}\ar@{}[rd]|{\text{(E)}} & F_k(p)F_i(c)F_i(f)\ar@=[d]^{\rho_{jk}(p)}\ar@=[r]_-{F_i(\delta)} & F_k(p)F_i(b)\ar@=[d]^{\rho_{jk}(p)}\\
  F_i(a)F_j(q)\ar@=[r]^-{F_j(\gamma)}\ar@=@/_2pc/[rrrr]^{\bar{G}_{ijD}} & F_i(a)F_j(r)F_j(f)\ar@=[r]^{\rho_{ij}(f)} & F_i(a)F_j(r)F_i(f)\ar@=[r]^{G_{ijD'}} & F_j(p)F_i(c)F_i(f)\ar@=[r]^-{F_i(\delta)} & F_j(p)F_i(b).}\]

For pairwise distinct numbers $1\le i,j,k \le n$, we show axiom (D) of
Remark \ref{r.gdn} for $\bar{G}$ by descending induction on the number $m$
of pairs of Cartesian opposite faces in the cube \eqref{e.3}. If $m=3$, all
the faces of the cube are Cartesian, so the assertion is identical to axiom
(D) for $G$. If $m<3$, by symmetry, we may assume that either the bottom
face $K$ or the top face $K'$ is not Cartesian. Decompose the cube as
\eqref{e.cubing}. The inner cube has more than $m$ pairs of Cartesian
opposite faces, hence axiom (D) holds for the inner cube by induction
hypothesis. Therefore, the following diagram commutes
  {\scriptsize\[\xymatrix{F_i(a)F_j(q)F_k(x)\ar@=[rr]^{\bar{G}_{jkI'}}\ar@=[d]^{F_j(\gamma)}\ar@=@/_5.5pc/[dddd]^{\bar{G}_{ijK}}
  && F_i(a)F_k(z)F_j(q') \ar@=[r]^{\bar{G}_{ikJ}}\ar@=[d]^{F_j(\gamma')}
  & F_k(w)F_i(a')F_j(q')\ar@=[d]_{F_j(\gamma')}\ar@=@/^5.5pc/[dddd]^{\bar{G}_{ijK'}} \\
  F_i(a)F_j(r)F_j(f)F_k(x) \ar@=[r]^{\bar{G}_{jkM}}\ar@=[d]^{\rho_{ij}(f)}\ar@{}[rd]|{\text{(E)}}
  & F_i(a)F_j(r)F_k(v)F_j(f') \ar@=[r]^{\bar{G}_{jkI''}}\ar@=[d]^{\rho_{ij}(f')}
  & F_i(a)F_k(z)F_j(r')F_j(f') \ar@=[r]^{\bar{G}_{ikJ}}
  & F_k(w)F_i(a')F_j(r')F_j(f')\ar@=[d]^{\rho_{ij}(f')}\\
  F_i(a)F_j(r)F_i(f)F_k(x) \ar@=[r]^{\bar{G}_{ikM}}\ar@=[d]^{G_{ijL}}
  & F_i(a)F_j(r)F_k(v)F_i(f')\ar@=[r]^{\bar{G}_{jkI''}}\ar@=[d]^{G_{ijL}}\ar@{}[rrd]|{\text{(D)}}
  & F_i(a)F_k(z)F_j(r')F_i(f')\ar@=[r]^{\bar{G}_{ikJ}}
  & F_k(w)F_i(a')F_j(r')F_i(f') \ar@=[d]^{G_{ijL'}}\\
  F_j(p)F_i(c)F_i(f')F_k(x)\ar@=[d]^{F_i(\delta)}\ar@=[r]^{\bar{G}_{ikM}}
  & F_j(p)F_i(c)F_k(v)F_i(f')\ar@=[r]^{\bar{G}_{ikJ''}}
  & F_j(p)F_k(y)F_i(c')F_i(f')\ar@=[r]^{\bar{G}_{jkI}}\ar@=[d]^{F_i(\delta')}
  & F_k(w)F_j(p')F_i(c')F_i(f')\ar@=[d]_{F_i(\delta')}\\
  F_j(p)F_i(b)F_k(x)\ar@=[rr]^{\bar{G}_{ikJ'}}
  && F_j(p)F_k(y)F_i(b')\ar@=[r]^{\bar{G}_{jkI}}
  & F_k(w)F_j(p')F_i(b').}\]
  }Here $M$ is the square $X'V'XV$.

  Any morphism
  $(\alpha_i)\colon ((F_i),G,\rho)\to ((F'_i),G',\rho')$
  of $\GD^\Cart_{\cA_1,\dots,\cA_n}(\cC,\cD)$ is a morphism $((F_i),\bar{G})\to ((F'_i),\bar{G}')$ of $\GD_{\cA_1,\dots,\cA_n}(\cC,\cD)$. Any $2$-cell $(\Xi_i)\colon (\alpha_i)\Rightarrow (\alpha'_i)$ of $\GD^\Cart_{\cA_1,\dots,\cA_n}(\cC,\cD)$ is a $2$-cell of $\GD_{\cA_n,\dots,\cA_n}(\cC,\cD)$.

The $2$-functor defined in this way is clearly the inverse of
\eqref{e.gdp'}.
\end{proof}

\begin{Remark}\label{r.inf}
We can consider a simpler $2$-category of gluing data
\[\GD^\Cart_{\cA_1,\dots,\cA_n}(\cC,\cD)'=\twoFunps(\cL\cT_n\dQ^\Cart_{\cA_1,\dots,\cA_n}\cC,\cD)\]
by dropping $(\rho_{ij})$ from the definition of $\GD^\Cart$, where
$\dQ^\Cart_{\cA_1,\dots,\cA_n}\cC\subseteq \dQ_{\cA_1,\dots,\cA_n}\cC$ is
the $n$-fold subcategory spanned by Cartesian squares. An
$\infty$-categorical variant of $\dQ^\Cart_{\cA_1,\dots,\cA_n}\cC$ is
studied in \cite[Section~5]{LZ1}.
\end{Remark}

In this section and the previous one, we studied the relationship between
gluing data and Cartesian gluing data. In the next section we discuss ways
to construct Cartesian gluing data.

\section{Gluing data from base change maps}\label{s.adjoint}
In this section we show how to produce (Cartesian) gluing data for two
sub-$2$-categories from base change maps (Constructions \ref{c.bc} and
\ref{s.bcA}) by taking adjoints. We deal with the axioms individually and
refer the reader to Remark \ref{r.gdadj} for a synthesis. A somewhat more
systematic treatment is possible in the $\infty$-categorical setting (for
the gluing data mentioned in Remark \ref{r.inf}) \cite[Section 1.4]{LZ2}.
Throughout this section, we fix a $2$-category $\cD$.

\begin{Definition}[$2$-Category of adjoint pairs
$\cD^{\adj}$]\label{s.adj}\index{-adj@$(-)^{\adj}$!$\cD^{\adj}$} We define
the \emph{$2$-category $\cD^\adj$ of adjoint pairs} with the same objects as
$\cD$ by taking $\cD^\adj(X,Y)$ to be the category of adjoint pairs from $X$
to $Y$ for every pair $(X,Y)$ of objects of~$\cD$. More explicitly, a
\emph{morphism} $X\to Y$ of $\cD^\adj$ is a quadruple $(f,g,\eta,\epsilon)$
consisting of morphisms $f\colon X\to Y$, $g\colon Y\to X$ and $2$-cells
$\eta\colon \one_Y\Rightarrow fg$, $\epsilon\colon gf\Rightarrow \one_X$
of~$\cD$ such that the following triangles commute
  \[\xymatrix{f\ar@=[r]^{\eta *f}\ar@=[rd]_{\one_f} & fgf\ar@=[d]^{f*\epsilon} & g\ar@=[r]^{g*\eta}\ar@=[rd]_{\one_g} & gfg\ar@=[d]^{\epsilon*g} \\
  & f & & g.}\]
  The composition of $(f_1,g_1,\eta_1,\epsilon_1)\colon X\to Y$ and $(f_2,g_2,\eta_2,\epsilon_2)\colon Y\to Z$ is
  \[(f_2f_1, g_1g_2,\eta_1\eta_2,\epsilon_1\epsilon_2)\colon X\to Z,\]
  where $\eta_1\eta_2$ is the composition
  \[\one_Z\xRightarrow{\eta_2}  f_2g_2\xRightarrow{f_2*\eta_1*g_2}  f_2f_1g_1g_2\]
  and $\epsilon_1\epsilon_2$ is the composition
  \[g_1g_2f_2f_1\xRightarrow{g_1*\epsilon_2*f_1} g_1f_1\xRightarrow{\epsilon_1} \one_X.\]
  The \emph{identity morphism} of an object $X$ is $(\one_X,\one_X,\one_{\one_X},\one_{\one_X})$. A \emph{$2$-cell} $(f,g,\eta,\epsilon)\Rightarrow (f',g',\eta',\epsilon')$ of $\cD^\adj$ is a pair $(\alpha,\beta)$ of $2$-cells $\alpha\colon f\Rightarrow f'$ and $\beta\colon g'\Rightarrow g$ of $\cD$ such that the following squares commute
  \[\xymatrix{\one_Y\ar@=[r]^{\eta}\ar@=[d]_{\eta'} & fg\ar@=[d]^{\alpha} & g'f\ar@=[d]_{\beta}\ar@=[r]^\alpha & g'f'\ar@=[d]^{\epsilon'}\\
  f'g'\ar@=[r]^{\beta} & f'g & gf \ar@=[r]^\epsilon & \one_X.}\]
The \emph{projection $2$-functors} $P_1\colon \cD^\adj\to \cD$ and
$P_2\colon \cD^\adj\to \cD^\coop$, sending $(f,g,\eta,\epsilon)$ to $f$ and
$g$ respectively, are locally fully faithful (Definition \ref{s.faith}).
\end{Definition}

\begin{Remark}
  Let $\cC$ be a $2$-category.  The projection $2$-functors $P_1$ and $P_2$ induce locally fully faithful $2$-functors
  \[P_1\colon \PsFun(\cC,\cD^\adj)\to \PsFun(\cC,\cD), \quad P_2\colon \PsFun(\cC,\cD^\adj)\to \PsFun(\cC,\cD^\coop).\]
An object $F$ of $\PsFun(\cC,\cD)$ (resp.\ $\PsFun(\cC,\cD^\coop)$) is in
the image of $P_1$ (resp.\ $P_2$) if and only if for every morphism $a$ of
$\cC$, the image $F(a)$ can be completed into an adjoint pair
$(f,F(a),\eta,\epsilon)$ (resp. $(F(a), g, \eta, \epsilon)$).
\end{Remark}

In the rest of this section, we fix a $2$-category $\cC$, a pseudo functor
$F\colon \cC\to \cD^\coop$, a locally full sub-$2$-category $\cB$ of $\cC$,
and a pseudo functor $B\colon \cB\to \cD^\adj$ such that $P_2(B)$ is the
restriction $F\res\cB$. We do \emph{not} assume that $\cC$ is a
$(2,1)$-category. We denote $F$ by\index{-*@$(-)^*$!$f^*$}
  \[f\mapsto f^*, \quad \alpha\mapsto \alpha^*,\]
and the pseudo functor $R=P_1(B)\colon \cB\to \cD$ by
  \[p\mapsto p_*, \quad \alpha\mapsto \alpha_*.\]

\begin{Construction}[Base change map $B_D$ associated to a $(\cC,\cB)$-down-square
$D$]\label{c.bc}\index{BD@$B_D$} In this section, it is convenient to use
down-squares in $\cC$
  \begin{equation}\label{e.-2}
  \xymatrix{X\ar[r]^j\ar[d]_q\drtwocell\omit{\alpha} & Y\ar[d]^p\\
  Z\ar[r]^i & W.}
  \end{equation}
Let $D$ be such a square with $p$ and $q$ in $\cB$. The \emph{base change
map} $B_D$ is by definition the following $2$-cell of $\cD$
  \[
  \xymatrix{i^* p_* \ar@=[r]^-{\eta_q} & q_*q^*i^*p_* \ar@=[r] & q_* (iq)^* p_* \ar@=[r]^{\alpha^*} & q_* (pj)^* p_* \ar@=[r] & q_* j^* p^*p_* \ar@=[r]^-{\epsilon_p} & q_* j^*.}
  \]
Here we have as usual denoted horizontal composition of $2$-cells with
morphisms simply by the $2$-cells.

If $i$ and $j$ are also morphisms of $\cB$, then $B_D$ is also the
composition
  \[\xymatrix{i^*p_*\ar@=[r]^-{\eta_j} & i^*p_*j_*j^*\ar@=[r] & i^* (pj)_* j^* \ar@=[r]^{\alpha_*} & i^* (iq)_* j^* \ar@=[r] & i^* i_* q_* j^* \ar@=[r]^-{\epsilon_i} & q_*j^*.}\]
In fact, the following diagram commutes
  \[\xymatrix{& i^*p_* \ar@=[rr]^{\eta_q} && q_*q^*i^*p_* \ar@=[dd]^{\alpha^*} \\
  i^* p_*\ar@=[r]^{\eta_i}\ar@=[d]^{\eta_p}\ar@=[ru]^{\one}\ar@=@/_3pc/[dd]_{\one} &i^*i_*i^*p_*\ar@=[u]^{\epsilon_i}\ar@=[r]^{\eta_q}
  & i^* i_* q_* q^* i^* p_* \ar@=[d]^{\alpha^*} \\
  i^*p_*p^*p_*\ar@=[d]^{\epsilon_p}\ar@=[r]^{\eta_j} & i^*p_*j_*j^*p^*p_* \ar@=[r]^{\alpha_*} & i^*i_* q_* j^* p^* p_*\ar@=[d]^{\epsilon_p}\ar@=[r]^{\epsilon_i} & q_* j^* p^* p_*\ar@=[d]^{\epsilon_p}\\
  i^* p_* \ar@=[r]^{\eta_j}& i^*p_*j_*j^*\ar@=[r]^{\alpha_*} & i^*i_*q_*j^*\ar@=[r]^{\epsilon_i} & q_* j^*.}\]
\end{Construction}

\begin{Proposition}\label{p.bc}\leavevmode
\begin{enumerate}
\item \label{p.bc1} Let $D$, $D'$, and $D''$ be respectively the upper,
    lower, and outer squares of the diagram in $\cC$
  \[\xymatrix{X_1\ar[r]^{i_1}\ar[d]_{q}\drtwocell\omit{\alpha} &  Y_1\ar[d]^{p}\\
  X_2\ar[r]^{i_2}\drtwocell\omit{\alpha'}\ar[d]_{q'} & Y_2\ar[d]^{p'}\\
  X_3\ar[r]^{i_3} & Y_3}\]
  where the vertical arrows are morphisms of $\cB$. Then the following diagram commutes
    \[\xymatrix{i_3^* p'_* p_*\ar@=[r]^{B_{D'}}\ar@=[d]& q'_*i_2^* p_*\ar@=[r]^{B_{D}}& q'_*q_*i_1^*\ar@=[d]\\
    i_3^*(p'p)_*\ar@=[rr]^{B_{D''}} & & (q'q)_*i_1^*.}\]

\item \label{p.bc2} Let $D$, $D'$, and $D''$ be respectively the left,
    right, and outer squares of the diagram in~$\cC$
  \[\xymatrix{X_1\ar[r]^j\ar[d]_{p_1}\drtwocell\omit{\alpha} &  X_2\ar[d]^{p_2}\ar[r]^{j'}\drtwocell\omit{\alpha'} & X_3\ar[d]^{p_3}\\
  Y_1\ar[r]^i & Y_2\ar[r]^{i'} &Y_3}\]
  where the vertical arrows are morphisms of $\cB$.
    Then the following diagram commutes
    \[\xymatrix{i^*i'^*p_{3*}\ar@=[r]^{B_{D'}}& i^*p_{2*}j'^*\ar@=[r]^{B_{D}}& p_{1*}j^*j'^*\\
    (i'i)^*p_{3*}\ar@=[rr]^{B_{D''}}\ar@=[u] & & p_{1*}(j'j)^*.\ar@=[u]}\]
\end{enumerate}
\end{Proposition}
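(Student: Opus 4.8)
The plan is to prove both parts by unwinding the definition of the base change maps and performing a diagram chase; the two parts are formally parallel, so I would carry out \ref{p.bc1} in detail and then indicate the (verbatim) modifications needed for \ref{p.bc2}.

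For \ref{p.bc1}: I would expand $B_{D''}$ from the definition of the base change map, taking ``$p$'' $=p'p$ and ``$q$'' $=q'q$, and expand the composite $(q'_*\ast B_D)\circ(B_{D'}\ast p_*)$ from the definitions of $B_D$ and $B_{D'}$. The ingredients are: (i) pseudofunctoriality of $B\colon\cB\to\cD^{\adj}$ together with the description of composition in $\cD^{\adj}$ in Definition \ref{s.adj}, which---after transport along the coherence isomorphisms $(q'q)_*\simeq q'_*q_*$, $(q'q)^*\simeq q^*q'^*$, $(p'p)_*\simeq p'_*p_*$, $(p'p)^*\simeq p^*p'^*$ of $R$ and $F$---decomposes the unit $\eta_{q'q}$ into $\eta_{q'}$ followed by a whiskered copy of $\eta_q$, and dually decomposes the counit $\epsilon_{p'p}$; (ii) pseudofunctoriality of $F$, which splits the coherence isomorphism $(i_3q'q)^*\simeq q^*q'^*i_3^*$ and, because the $2$-cell $\alpha''$ defining $D''$ is the pasting of $\alpha$ (whiskered by $p'$) with $\alpha'$ (whiskered by $q$), splits $(\alpha'')^*$ into the two corresponding pieces; (iii) naturality of the coherence constraints of $F$ and $R$ and of the units and counits, together with the triangle identities of the adjunctions $q^*\dashv q_*$, $q'^*\dashv q'_*$, $p^*\dashv p_*$, $p'^*\dashv p'_*$. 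Substituting (i) and (ii) into the expansion of $B_{D''}$ and then repeatedly sliding constraints and (co)units past one another by (iii), the composite separates into a whiskered copy of $B_{D'}$ (the block built from $\eta_{q'}$, $(\alpha')^*$, $\epsilon_{p'}$) followed by a whiskered copy of $B_D$ (the block built from $\eta_q$, $\alpha^*$, $\epsilon_p$), a residual zig-zag of the shape $q'^*q'_*$ (resp.\ $p'^*p'_*$) being contracted by a single triangle identity. This is exactly $(q'_*\ast B_D)\circ(B_{D'}\ast p_*)$ composed with the relevant coherence isomorphisms, which is the assertion.

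For \ref{p.bc2}: the same argument applies after interchanging the ``$F$-direction'' and the ``$\cB$-direction''. One expands $B_{D''}$ with ``$i$'' $=i'i$ and ``$j$'' $=j'j$, uses pseudofunctoriality of $F$ to split $(i'i)^*\simeq i^*i'^*$, $(j'j)^*\simeq j^*j'^*$ and to split $(\alpha'')^*$ (with $\alpha''$ the pasting of $i'\cdot\alpha$ and $\alpha'\cdot j$), and contracts a $p_2^*p_{2*}$ zig-zag using the triangle identity for $p_2^*\dashv p_{2*}$; the pieces reassemble into $(B_D\ast j'^*)\circ(i^*\ast B_{D'})$ conjugated by the coherence isomorphisms $(i'i)^*\simeq i^*i'^*$ and $(j'j)^*\simeq j^*j'^*$.

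The only real difficulty is organizational: one must keep careful track of the coherence isomorphisms of $F$ and $R$ as they are moved through the chase, while the underlying content is merely the triangle identities and naturality (indeed Proposition \ref{p.bc} is the $2$-categorical pasting law for mates, and under adjunction it corresponds to the two pentagon axioms (b) and (\bprime) of Remark \ref{r.gd}). To keep the computation surveyable I would present it, as is done elsewhere in the paper (cf.\ the proof of Theorem \ref{p.gdp}), as a single large pasting diagram whose inner cells are instances of (i)--(iii).
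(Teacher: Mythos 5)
Your proposal is correct and matches the paper's proof, which likewise just expands the two composites, uses that the coherence constraints of $B$ are $2$-cells of $\cD^{\adj}$ (so $\eta_{q'q}$ and $\epsilon_{p'p}$ decompose into their factors), and assembles everything into a single pasting diagram whose inner cells are naturality/interchange squares, proving \ref{p.bc1} in detail and declaring \ref{p.bc2} similar. One small correction: in \ref{p.bc1} the six constituent $2$-cells of $B_{D''}$ and of $B_D\circ B_{D'}$ already coincide up to order, so only interchange is needed and no triangle identity arises; the zig-zag cancellation you describe (via the triangle identity for $p_2^*\dashv p_{2*}$) is genuinely needed only in \ref{p.bc2}, where $B_D\circ B_{D'}$ contains an extra $\eta_{p_2}$ and $\epsilon_{p_2}$ absent from $B_{D''}$.
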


\begin{proof}
This is equivalent to \cite[Propositions 1.1.11, 1.1.12]{Ayoub}. We provide
a proof for the sake of completeness.

\begin{itemize}
\item[\ref{p.bc1}] The following diagram commutes
  \[\xymatrix{i_3^*p'_*p_*\ar@=[r]_{\eta_{q'}} \ar@=@/^1.5pc/[rrr]^{B_{D'}}\ar@=@/_4pc/[rrrddd]_{B_{D''}}
  &q'_*q'{}^*i_3^* p'_* p_* \ar@=[r]^{\alpha'{}^*}\ar@=[d]_{\eta_q}
  & q'_*i_2^* p'{}^* p'_* p_* \ar@=[r]_{\epsilon_{p'}}\ar@=[d]^{\eta_q}
  & q'_*i_2^* p_* \ar@=[d]_{\eta_q}\ar@=@/^3pc/[ddd]^{B_D} \\
  & q'_*q_*q^*q'{}^* i_3^* p'_*p_* \ar@=[r]^{\alpha'{}^*}
  & q'_*q_*q^*i_2^*p'{}^*p'_*p_* \ar@=[r]^{\epsilon_{p'}}\ar@=[d]_{\alpha^*}
  & q'_* q_* q^* i_2^* p_*\ar@=[d]^{\alpha^*}\\
  && q'_*q_*i_1^* p^*p'{}^* p'_* p_* \ar@=[r]^{\epsilon_{p'}} & q'_*q_*i_1^* p^*p_* \ar@=[d]^{\epsilon_p}\\
  &&& q'_* q_* i_1^*}\]

\item[\ref{p.bc2}] Similar to \ref{p.bc1}.
\end{itemize}
\end{proof}

In the rest of this section, we further fix a locally full sub-$2$-category
$\cA$ of $\cC$ and a pseudo functor $A\colon \cA\to (\cD^\coop)^\adj$ such
that $P_1(A)$ is the restriction $F\res \cA$. We denote the pseudo functor
$L=P_2(A)\colon \cA\to (\cD^\coop)^\coop=\cD$ by
  \[i\mapsto i_!, \quad \alpha\mapsto \alpha_!.\]

\begin{Construction}[Base change map $A_D$ associated to an $(\cA,\cC)$-down-square
$D$]\label{s.bcA}\index{AD@$A_D$}
  Let $D$ be a square \eqref{e.-2} in $\cC$ where $i$ and $j$ are morphisms of $\cA$. The \emph{base change map} $A_D$ is by definition the following $2$-cell in $\cD$
  \[
  \xymatrix{j_! q^* \ar@=[r]^-{\eta_i} & j_! q^* i^* i_! \ar@=[r] & j_!(iq)^* i_! \ar@=[r]^{\alpha^*} & j_! (pj)^* i_! \ar@=[r] & j_! j^* p^* i_! \ar@=[r]^-{\epsilon_j} & p^* i_!. }
  \]
  If $p$ and $q$ are also morphisms of $\cA$, then $A_D$ is also the composition
  \[\xymatrix{j_!q^* \ar@=[r]^-{\eta_p} & p^*p_! j_! q^* \ar@=[r] & p^* (pj)_! q^* \ar@=[r]^{\alpha_!} & p^* (iq)_! q^*\ar@=[r] & p^*i_!q_!q^*\ar@=[r]^-{\epsilon_q} & p^* i_!.}\]
  We have an analogue of Proposition \ref{p.bc} for $A_D$.
\end{Construction}

\begin{Construction}[Construction of $G_D$]\label{s.GD}\index{GD@$G_D$}
  Let $D$ be a square \eqref{e.-2} in $\cC$ where the horizontal arrows $i$ and $j$ are morphisms of $\cA$, and the vertical arrows $p$ and $q$ are morphisms of $\cB$. Then
  \[(B_D,A_D)\colon (i^*p_*,p^*i_!,\eta_p\eta_i,\epsilon_p\epsilon_i)\Rightarrow (q_*j^*, j_!q^*,\eta_q\eta_j,\epsilon_q\epsilon_j)\]
  is a $2$-cell of $\cD^\adj$. In fact, the following diagrams commute
  \[
  \xymatrix{\one_Z\ar@=[r]^{\eta_i}\ar@=[d]_{\eta_q} &i^* i_!\ar@=[r]^{\eta_p}\ar@=[d]^{\eta_q}
  & i^*p_*p^* i_!\ar@=[d]^{\eta_q}\ar@{=}[rd]\\
  q_*q^* \ar@=[r]^{\eta_i}\ar@=[d]_{\eta_j} & q_* q^* i^* i_!\ar@=[rd]^{\alpha^*} & q_* q^* i^* p_*p^* i_!\ar@=[rd]^{\alpha^*} & *{}\ar@=@/^3pc/[dd]^{B_D}\\
  q_*j^*j_!q^*\ar@=[r]^{\eta_i}\ar@=@/_3pc/[rrrd]^{A_D} & q_*j^*j_!q^*i^*i_!\ar@=[rd]^{\alpha^*}
  & q_*j^*p^* i_! \ar@=[r]^{\eta_p}\ar@=[d]^{\eta_j}\ar@=[rd]^{\one} & q_*j^*p^*p_*p^* i_!\ar@=[d]^{\epsilon_p}\\
  && q_*j^* j_!j^*p^* i_! \ar@=[r]^{\epsilon_j} & q_* j^* p^* i_!}
  \]
  \[
  \xymatrix{
  j_!q^*i^*p_*\ar@=@/^3pc/[rrrd]_{B_D}\ar@=[r]^{\eta_q}\ar@=[rd]^{\one}\ar@=[d]^{\eta_i}\ar@{=}@/_3pc/[dd]_{A_D} & j_!q^*q_*q^*i^*p_*\ar@=[d]^{\epsilon_q}\ar@=[rd]^{\alpha^*}\\
  j_!q^*i^*i_!i^*p_*\ar@=[r]^{\epsilon_i}\ar@=[rd]^{\alpha^*} & j_!q^*i^*p_*\ar@=[rd]^{\alpha^*} & j_!q^*q_*j^*p^*p_*\ar@=[r]^{\epsilon_p}
  & j_!q^*q_*j^*\ar@=[d]^{\epsilon_q}\\
  *{}\ar@=[rd] & j_!j^*p^*i_!i^*p_*\ar@=[d]^{\epsilon_j} & j_!j^*p^*p_*\ar@=[r]^{\epsilon_p}\ar@=[d]^{\epsilon_j} & j_!j^*\ar@=[d]^{\epsilon_j}\\
  & p^*i_!i^*p_*\ar@=[r]^{\epsilon_i} & p^*p_*\ar@=[r]^{\epsilon_p} & \one_X.}
  \]

  It follows that $B_D$ is invertible if and only if $A_D$ is. In this case, the diagram
  \[\xymatrix{i_! q_* \ar@=[r]^{\eta_j}\ar@=[d]_{\eta_p} & i_!q_*j^*j_!\ar@=[r]^{B_D^{-1}} & i_!i^*p_*j_!\ar@=[d]^{\epsilon_i} \\
  p_*p^*i_!q_*\ar@=[r]^{A_D^{-1}} & p_*j_!q^*q_*\ar@=[r]^{\epsilon_q} & p_*j_!}\]
commutes and we define $G_D\colon i_!q_*\Rightarrow p_* j_!$ to be the
composition. In fact, the following diagram commutes
  \[\xymatrix{& i_! q_*\ar@=[rr]^{\eta_j} && i_!q_*j^*j_!\ar@=[dd]^{B_D^{-1}}\\
  i_!q_*\ar@{}[rrd]|H\ar@=[ur]^{\one}\ar@=[r]^{\eta_q}\ar@=@/_2pc/[dd]_{\one}\ar@=[d]^{\eta_i} & i_!q_*q^*q_*\ar@=[r]^{\eta_j}\ar@=[u]^{\epsilon_q}
  & i_!q_*j^*j_!q^*q_* \ar@=[d]^{B_D^{-1}}\\
  i_!i^*i_!q_*\ar@=[r]^{\eta_p}\ar@=[d]^{\epsilon_i} & i_!i^*p_*p^*i_!q_*\ar@=[r]^{A_D^{-1}}
  & i_!i^*p_*j_!q^*q_*\ar@=[r]^{\epsilon_p}\ar@=[d]^{\epsilon_i} & i_!i^*p_*j_!\ar@=[d]^{\epsilon_i}\\
  i_!q_*\ar@=[r]^{\eta_p} & p_*p^*a_!q_*\ar@=[r]^{A_D^{-1}} & p_*j_!q^*q_* \ar@=[r]^{\epsilon_q} & p_*j_!,}\]
  where the hexagon $H$ commutes because the following diagram commutes
  \[\xymatrix{& q_*q^*\ar@=[r]^{\eta_j} & q_* j^*j_!q^* \ar@=[rd]_{B_D^{-1}} \ar@=[dl]^{A_D}\\
  \one_Z\ar@=[ru]^{\eta_q}\ar@=[rd]_{\eta_i} &q_*j^*p^*i_!\ar@=[rd]^{B_D^{-1}} && i^*p_*j_!q^* \ar@=[ld]^{A_D}\\
  & i^*i_! \ar@=[r]^{\eta_p} & i^*p_*p^*i_!.}\]
\end{Construction}

\begin{Construction}[Construction of $\rho_f$]\label{s.rho}\index{rho@$\rho$}
  Let $f\colon X\to Y$ be a morphism of $\cA\cap \cB$. Then
  \[(\epsilon_f^B,\eta^A_f)\colon (f^*f_*,f^*f_!,\eta_f^B\eta_f^A,\epsilon_f^B\epsilon_f^A)\Rightarrow (\one_X,\one_X,\one_{\one_X},\one_{\one_X})\]
  is a $2$-cell of $\cD^\adj$. It follows that $\epsilon_f^B$ is invertible if and only if $\eta_f^A$ is. In this case, the following diagram commutes
  \[\xymatrix{f_!\ar@=[r]^{\eta_f^B}\ar@=[d]_{(\epsilon_f^B)^{-1}} & f_*f^*f_!\ar@=[d]^{(\eta_f^A)^{-1}}\\
  f_!f^*f_*\ar@=[r]^{\epsilon_f^A} & f_*}\]
  and we define $\rho_f\colon f_!\Rightarrow f_*$ to be the composition. In fact, the following diagram commutes
  \[\xymatrix{&& f_*\ar@=[rd]^{\eta^A}\\
  & f_!f^*f_*\ar@=[r]^{\eta^A}\ar@=[d]^{\eta^B}\ar@=[ur]^{\epsilon^A}\ar@=[ld]_{\epsilon^B} & f_!f^*f_*f^*f_!\ar@=[r]^{\epsilon^A}\ar@=[d] & f_*f^*f_!\ar@=[d]^{\eta^B} \ar@=@/^3pc/[dd]^{\one}\\
  f_!\ar@=[rd]_{\eta^B} & f_*f^*f_!f^*f_*\ar@=[r]\ar@=[d]^{\epsilon^B} & f_*f^*f_!f^*f_*f^*f_! \ar@=[r]\ar@=[d]
  & f_*f^*f_*f^*f_!\ar@=[d]^{\epsilon^B}\\
  & f_*f^* f_! \ar@=[r]^{\eta^A} \ar@=@/_1.5pc/[rr]^{\one} & f_*f^*f_!f^*f_!\ar@=[r]^{\epsilon^A} & f_* f^*f_!.}\]

Let $X\xrightarrow{f}Y\xrightarrow{g} Z$ be a pair of composable morphisms
of $\cA\cap \cB$ with $\epsilon_f^B$ and $\epsilon_g^B$ invertible. Then the
following diagram commutes
  \[\xymatrix{g_!f_!\ar@=[r]_{\eta_f^B}\ar@=@/^1.5pc/[rr]^{\rho_f}\ar@=@/_3pc/[rrdd]_{\rho_{gf}}
  & g_!f_*f^*f_!\ar@=[r]_-{(\eta_f^A)^{-1}}\ar@=[d]_{\eta_g^B} & g_!f_*\ar@=[d]^{\eta_g^B}\ar@=@/^3pc/[dd]^{\rho_g}\\
  & g_*g^*g_!f_*f^*f_! \ar@=[r]^-{(\eta_f^A)^{-1}} & g_*g^*g_!f_*\ar@=[d]_{(\eta_g^A)^{-1}}\\
  && g_*f_*.}\]
\end{Construction}

The following properties of $G_D$ and $\rho_f$ are similar to axioms (b),
(\bprime) of Remark \ref{r.gd} and axioms (c), (\cprime) of Remark
\ref{r.cgd}.

\begin{Proposition}\label{p.ax}\leavevmode
\begin{enumerate}
\item \label{p.ax1} In the situation of Proposition \ref{p.bc}
    \ref{p.bc1}, assume that the horizontal arrows are morphisms of $\cA$,
    the vertical arrows are morphisms of $\cB$, and the $2$-cells $B_D$
    and $B_{D'}$ are invertible. Then the following diagram commutes
  \[\xymatrix{i_{3!}q'_* q_* \ar@=[r]^{G_{D'}}\ar@=[d] & p_*i_{2!}q_* \ar@=[r]^{G_{D}} & p'_*p_*i_{1!}\ar@=[d]\\
  i_{3!}(q'q)_* \ar@=[rr]^{G_{D''}} & & (p'p)_* i_{1!}.}
  \]

\item \label{p.ax2} In the situation of Proposition \ref{p.bc}
    \ref{p.bc2}, assume that the horizontal arrows are morphisms of $\cA$,
    the vertical arrows are morphisms of $\cB$, and the $2$-cells $B_D$
    and $B_{D'}$ are invertible. Then the following diagram commutes
  \[\xymatrix{i'_!i_!p_{1*}\ar@=[d]\ar@=[r]^{G_D} & i'_!p_{2*}j_! \ar@=[r]^{G_{D'}} & p_{3*}j'_!j_!\ar@=[d]\\
  (i'i)_!p_{1*}\ar@=[rr]^{G_{D''}} && p_{3*}(j'j)_!.}
  \]

\item \label{p.ax3} Let $D$ be a square \eqref{e.-2} in $\cC$ where the
    horizontal arrows $i$ and $j$ are morphisms of $\cA$, the vertical
    arrows $p$ and $q$ are morphisms of $\cA\cap \cB$, and the $2$-cells
    $\epsilon_p^B$, $\epsilon_q^B$ and $B_D$ are invertible. Then the
    following diagram commutes
  \[\xymatrix{i_!q_*\ar@=[d]_{G_D} & i_!q_!\ar@=[l]_{\rho_q} & (iq)_!\ar@=[l]\\
  p_*j_! & p_!j_!\ar@=[l]^{\rho_p} & (pj)_!.\ar@=[l]\ar@=[u]_{\alpha_!}}\]

\item \label{p.ax4} Let $D$ be a square \eqref{e.-2} in $\cC$ where the
    horizontal arrows $i$ and $j$ are morphisms of $\cA\cap \cB$, the
    vertical arrows $p$ and $q$ are morphisms of $\cB$, and the $2$-cells
    $\epsilon_i^B$, $\epsilon_j^B$ and $B_D$ are invertible. Then the
    following diagram commutes
  \[\xymatrix{i_!q_* \ar@=[r]^{\rho_i}\ar@=[d]_{G_D} & i_*q_* \ar@=[r] & (iq)_*\\
  p_*j_!\ar@=[r]^{\rho_j} & p_*j_* \ar@=[r] & (pj)_*.
  \ar@=[u]_{\alpha_*}}
  \]
\end{enumerate}
\end{Proposition}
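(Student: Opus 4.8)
The plan is to prove all four statements by explicit diagram chases, in each case unwinding the definition of $G_D$ from Construction~\ref{s.GD} and reducing to three inputs already established in this section: the compatibility of the base change maps $B_D$ with composition in both directions (Proposition~\ref{p.bc}), the corresponding compatibility of the maps $A_D$ (the analogue of Proposition~\ref{p.bc} noted in Construction~\ref{s.bcA}), and, for \ref{p.ax3} and \ref{p.ax4}, the compatibility of the $2$-cells $\rho_f$ with composition (Construction~\ref{s.rho}).

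For \ref{p.ax1}, I would express $G_D$ by means of the formula $G_D=\epsilon_i\circ B_D^{-1}\circ\eta_j$ read off the upper path of the commuting square in Construction~\ref{s.GD}, which leaves the functors $i_!$ and $p_*$ untouched. Pasting the instances of this formula for the two stacked squares $D$ and $D'$ of Proposition~\ref{p.bc}\ref{p.bc1}, one obtains a composite in which an intermediate unit--counit pair for $i_2$ cancels by a triangle identity, leaving $\epsilon_{i_3}$ composed with a suitable whiskering of $B_D^{-1}$, $B_{D'}^{-1}$ and $\eta_{i_1}$; by Proposition~\ref{p.bc}\ref{p.bc1} this whiskered composite of inverses equals $B_{D''}^{-1}$, so the result is exactly $G_{D''}$. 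Statement~\ref{p.ax2} is the mirror image, using instead the formula $G_D=\epsilon_q\circ A_D^{-1}\circ\eta_p$ through the lower path in Construction~\ref{s.GD} and the analogue of Proposition~\ref{p.bc}\ref{p.bc2} for the $A_D$; alternatively one can deduce \ref{p.ax2} from \ref{p.ax1} by the symmetry exchanging $\cA$ with $\cB$, $A$ with $B$, and $\cD$ with $\cD^{\coop}$, which turns a diagram of horizontally composed squares into one of vertically composed squares.

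For \ref{p.ax3} and \ref{p.ax4} I would substitute both descriptions of $G_D$ along with the defining formulas for the relevant maps $\rho_f$ from Construction~\ref{s.rho} --- recalling that $\rho_f$ equals both $(\eta^A_f)^{-1}\circ\eta^B_f$ and $\epsilon^A_f\circ(\epsilon^B_f)^{-1}$, so that for $f\in\cA\cap\cB$ the morphisms $f_!$ and $f_*$ are linked through the triangle identities for both adjunctions --- and then organize the chase so that the part coming from $B$ and the part coming from $A$ are treated symmetrically, using the naturality of units and counits and the coherence axioms of $A$ and $B$ on $2$-cells. I expect the only real difficulty here to be bookkeeping: the relevant pasting diagrams are sizeable hexagons and pentagons of composites of four or five functors, and the work lies in arranging the chase so that every cancellation by a triangle identity and every appeal to Proposition~\ref{p.bc} (or its $A$-analogue) is transparent. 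No idea beyond the constructions of this section is required.
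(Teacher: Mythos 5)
Your proposal is correct and follows essentially the same route as the paper: the paper likewise unwinds $G_D$ as $\epsilon^A\circ B_D^{-1}\circ\eta^A$, pastes the formulas for $D$, $D'$, $D''$, cancels the intermediate unit--counit pair by a triangle identity, and invokes Proposition~\ref{p.bc} for \ref{p.ax1}--\ref{p.ax2}, then combines the two descriptions of $\rho_f$ with naturality and the coherence of $B$ (in particular the compatibility of $B_D$ with $\alpha_*$) for \ref{p.ax3}--\ref{p.ax4}. The only cosmetic difference is which of the two equivalent expressions for $G_D$ you expand in each case (the paper writes out \ref{p.ax2} and \ref{p.ax4} via the $B$-path and declares \ref{p.ax1} and \ref{p.ax3} symmetric), which does not change the argument.
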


\begin{proof}
\begin{itemize}
\item[\ref{p.ax1}] Similar to \ref{p.ax2}.

\item[\ref{p.ax2}] The following diagram commutes
  \[\xymatrix{i'_!i_!p_{1*}\ar@=[r]_{\eta_j}\ar@=@/^1.5pc/[rrr]^{G_D}\ar@=@/_3.5pc/[rrrddd]_{G_{D''}}
  &i'_!i_!p_{1*}j^*j_!\ar@=[r]^{B_{D}^{-1}}\ar@=[d]_{\eta_{j'}}
  & i'_!i_*i^*p_{2*}j_!\ar@=[r]_{\epsilon_i}\ar@=[d]^{\eta_{j'}}
  & i'_! p_{2*} j_!\ar@=[d]^{\eta_{j'}} \ar@=@/^3pc/[ddd]^{G_{D'}}\\
  & i'_!i_!p_{1*}j^*j'{}^{*}j'_!j_! \ar@=[r]^{B_D^{-1}}
  & i'_!i_!i^*p_{2*}j'{}^*j'_!j_! \ar@=[r]^{\epsilon_i} \ar@=[d]_{B_{D'}^{-1}}
  & i'_!p_{2*}j'{}^*j'_!j_!\ar@=[d]^{B_{D'}^{-1}}\\
  && i'_!i_!i^*i'{}^*p_{3*}j'_!j_*\ar@=[r]^{\epsilon_i}
  & i'_1i'{}^*p_{3*}j'_!j_! \ar@=[d]^{\epsilon_{i'}}\\
  &&& p_{3*}j'_!j_!.}\]

\item[\ref{p.ax3}] Similar to \ref{p.ax4}.

\item[\ref{p.ax4}] The following diagram commutes
  \[\xymatrix{i_!q_*\ar@=[r]_{\eta_i^B}\ar@=[d]^{\eta_j^A}\ar@=@/^1.5pc/[rr]^{\rho_i}\ar@=@/_3pc/[ddd]_{G_D}
  & i_*i^*i_!q_*\ar@=[r]_{(\eta_i^A)^{-1}}\ar@=[d] & i_*q_*\ar@=[d]_{\eta_j^A}^{\cong}\\
  i_!q_*j^*j_!\ar@=[r]\ar@=[d]^{B_D^{-1}} & i_*i^*i_!q_*j^*j_!\ar@=[r]\ar@=[d] & i_*q_*j^*j_!\ar@=[d]_{B_D^{-1}}^\cong\\
  i_!i^*p_*j_!\ar@=[r]^{\eta_i^B}\ar@=[d]^{\epsilon_i^A}
  & i_*i^*i_!i^*p_*j_! \ar@=[r]^{(\eta_f^A)^{-1}}\ar@=[d]^{\epsilon_i^A} & i_*i^*p_*j_!\\
  p_*j_!\ar@=[r]^{\eta_i^B}\ar@=@/_1.5pc/[rr]_(.7){\eta_j^B}\ar@=@/_3pc/[rrr]_{\rho_j}
  & i_*i^*p_*j_!\ar@=[ur]^{\one} & p_*j_*j^*j_!\ar@=@/_3pc/[uu]^{\alpha_*}\ar@=[r]^-{(\eta_j^A)^{-1}}
  & p_*j_*,\ar@=@/_1.5pc/[uuul]_{\alpha_*}}\]
  where the pentagon commutes because the following diagram commutes
  \[\xymatrix{p_*\ar@=[r]^{\eta_i^B}\ar@=[d]^{\eta_p}\ar@=@/_3pc/[dd]_{\one_{p_*}}
  & i_*i^*p_*\ar@=[r]^{\eta_q}\ar@{=}@/^2pc/[rr] &  i_* q_*q^*i^*p_*\ar@=[d]^{\alpha^*} & *{}\ar@=@/^2pc/[ddl]^{B_D}\\
  p_*p^*p_*\ar@=[r]^{\eta_j^B}\ar@=[d]^{\epsilon_p} & p_*j_*j^*p^*p_*\ar@=[r]^{\alpha^*}
  & i_*q_*j^*p^*p_*\ar@=[d]^{\epsilon_p}\\
  p_*\ar@=[r]^{\eta_j^B} & p_*j_*j^*\ar@=[r]^{\alpha_*} & i_*q_*b^*.}\]
\end{itemize}
\end{proof}

\begin{Remark}\label{r.gdadj}
If $\cC$ is a $(2,1)$-category, $B_D$ and $G_D$ are invertible for every
Cartesian $(\cA,\cB)$-square $D$, and $\epsilon_f^B$ and $\rho_f$ are
invertible for every morphism $f$ of $\cA\cap \cB$, then Proposition
\ref{p.ax} shows that $(L,R,(G_D),\rho)$ is an object of
$\GD^\Cart_{\cA,\cB}(\cC,\cD)$ (Definition \ref{d.gdp}). Here we have used
the correspondence via inverting $2$-cells between down-squares \eqref{e.-2}
used in this section and up-squares \eqref{e.2} used in earlier sections.
\end{Remark}

We conclude this section by a couple of criteria for the axioms for
morphisms of (Cartesian) gluing data to hold true.

The following property is similar to condition (n) of Definition
\ref{d.gdp}.

\begin{Proposition}\label{p.rhoAB}
Let $D$ be a square \eqref{e.-2} in $\cC$ where $i$ and $j$ are morphisms of
$\cA\cap \cB$ and such that the $2$-cells $\epsilon_i^B$, $\epsilon_j^B$,
and $\alpha$ are invertible, and let $D'$  be the square obtained by
inverting $\alpha$. Then the following diagram commutes
  \[\xymatrix{j_!q^*\ar@=[d]_{A_D}\ar@=[r]^{\rho_j} & j_*q^*\\
  p^* i_!\ar@=[r]^{\rho_i} & p^*i_*.\ar@=[u]_{B_{D'}}}\]
\end{Proposition}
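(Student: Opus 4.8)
The statement is a compatibility between the two "comparison" $2$-cells $\rho_i\colon i_!\Rightarrow i_*$ and $\rho_j\colon j_!\Rightarrow j_*$ (Construction \ref{s.rho}) and the two base change maps $A_D$ and $B_{D'}$. The approach is the same bookkeeping argument used throughout Section \ref{s.adjoint}: write out all four arrows of the square as explicit composites of unit/counit $2$-cells and the structural $2$-cell $\alpha$, then fill in a large but commutative diagram of $2$-cells in $\cD$ whose outer boundary is the required square. Since all the maps involved are pasted from adjunction data, the interior of the diagram decomposes into triangles and squares that commute either by naturality of the unit/counit $2$-cells, by the triangle identities for the adjoint pairs $(i_!,i^*)$, $(j_!,j^*)$, $(p_*,p^*)$, $(q_*,q^*)$, or by the functoriality of the pseudofunctors $A$, $B$ with respect to $\alpha$.

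\textbf{Key steps.} First I would recall, from Construction \ref{s.bcA}, that since $i$ and $j$ are in $\cA$, $A_D\colon j_!q^*\Rightarrow p^*i_!$ is the composite $j_!q^*\xRightarrow{\eta_i} j_!q^*i^*i_!\to j_!(iq)^*i_!\xRightarrow{\alpha^*}j_!(pj)^*i_!\to j_!j^*p^*i_!\xRightarrow{\epsilon_j}p^*i_!$, and that $B_{D'}\colon q_*j^*\Rightarrow i^*p_*$ (note $D'$ has the $2$-cell $\alpha^{-1}$) admits the analogous description using $\eta_j^B$, $\alpha^{-1}_*$, $\epsilon_i^B$ because $i,j\in\cA\cap\cB$. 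Second, I would recall from Construction \ref{s.rho} that $\rho_j = (\eta_j^A)^{-1}\circ\eta_j^B$ read through $j_!\to j_*j^*j_!\to j_*$ (more precisely the composite displayed there), and similarly for $\rho_i$; the hypothesis that $\epsilon_i^B$, $\epsilon_j^B$ are invertible (equivalently $\eta_i^A$, $\eta_j^A$ invertible, by the $2$-cell of $\cD^\adj$ in Construction \ref{s.rho}) is exactly what makes $\rho_i$, $\rho_j$ defined. Third, I would assemble the pasting diagram: starting from $j_!q^*$, insert the defining factorisation of $\rho_j$ and of $A_D$, and use the triangle identities to cancel the $q^*\dashv q_*$ and $j^*\dashv j_*$ adjunction zig-zags against each other; the core of the commutation is then the identity relating $\alpha^*$ (used in $A_D$) and $\alpha^{-1}_*$ (used in $B_{D'}$), which is exactly the functoriality of the pseudofunctor $F\colon\cC\to\cD^\coop$ applied to $\alpha$ together with the defining square of a $2$-cell of $\cD^\adj$ (Definition \ref{s.adj}) applied to the pairs $A(i)$, $A(j)$ vs.\ $B(i)$, $B(j)$. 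Since the proof is entirely formal, I would present it, as elsewhere in this section, as a single displayed commutative diagram together with a one-line indication of why each region commutes, paralleling the proofs of Propositions \ref{p.bc} and \ref{p.ax}.

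\textbf{Main obstacle.} The genuine content is bookkeeping: matching up the various insertions of $\eta$'s and $\epsilon$'s for the four distinct adjunctions so that the zig-zags cancel cleanly, and correctly tracking that $A_D$ uses $\alpha^*$ while $B_{D'}$ uses the inverse $2$-cell so that the "$\alpha$ parts" of the two routes agree. Concretely, the hard region of the pasting diagram is the hexagon (or pentagon) expressing that $\rho_i\circ A_D = B_{D'}\circ\rho_j$ at the level of the $\alpha$-decorated edge; this is handled by the observation that $(A_D, B_D)$ (resp. the appropriate variant for $D'$) is a $2$-cell of $\cD^\adj$, established in Construction \ref{s.GD}, so that the compatibility of $A$-data with $B$-data is already built in. No new idea beyond the techniques already deployed in Propositions \ref{p.bc}, \ref{p.ax}, and Constructions \ref{s.GD}, \ref{s.rho} is needed; the proof is just one more diagram of the same flavour.
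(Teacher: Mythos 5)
Your plan matches the paper's proof: the paper establishes the square by one large pasting diagram whose regions commute by naturality of the units and counits, the triangle identities, the defining square of $\rho_i$, $\rho_j$ from Construction \ref{s.rho}, and the fact that the $\alpha$-part of $B_{D'}$ is $(\alpha^{-1})^*$, inverse to the $\alpha^*$ appearing in $A_D$. One notational slip to fix when writing it out: $B_{D'}$ uses $(\alpha^{-1})^*$ via its primary description (units and counits for $i,j\in\cB$), not $(\alpha^{-1})_*$, since $p$ and $q$ are not assumed to lie in $\cB$; likewise the compatibility you need is the one from Construction \ref{s.rho} rather than Construction \ref{s.GD}, whose hypotheses ($p,q\in\cB$) are not met here.
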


\begin{proof}
  The following diagram commutes
  \[\xymatrix{j_!q^*\ar@=[r]_{\eta_j^B}\ar@=@/^1.5pc/[rr]^{\rho_j}\ar@=[d]^{\eta_i^A}\ar@=@/_3pc/[ddd]_{A_D}
  & j_!j^*j_*q^*\ar@=[r]_{(\eta_j^A)^{-1}} & j_*q^*\ar@=[d]^{\eta_i^A}\\
  j_!q^*i^*i_!\ar@=[d]^{\alpha^*} && j_*q^*i^*i_!\ar@=[d]^{\alpha^*}\ar@=@/^3pc/[ddd]_{\eta_i^B}\ar@=@/^1.5pc/[dddr]^{\one}\\
  j_!j^*p^*i_!\ar@=[r]^{\eta_j^B}\ar@=[d]^{\epsilon_j^A}
  & j_!j^*j_*j^*p^*i_!\ar@=[d]^{\epsilon_j^A}\ar@=[r]^{(\eta_j^A)^{-1}}
  & j_*j^*p^*i_!\ar@=[dl]^{\one}\\
  p^*i_!\ar@=[d]^{\eta_i^B}\ar@=@/_3pc/[dd]_{\rho_i} & j_*j^*p^*i_!\ar@=[d]^{\eta_i^B}\\
  p^*i_*i^*i_!\ar@=[d]^{(\eta_i^A)^{-1}}\ar@=[r]^{\eta_j^B}
  & j_*j^*p^*i_*i^*i_!\ar@=[r]^{(\alpha^{-1})^*}\ar@=[d]^{(\eta_i^A)^{-1}}
  & j_*q^*i^*i_*i^*i_!\ar@=[r]^{\epsilon_i^B}
  & j_*q^*i^*i_! \ar@=[d]^{(\eta_i^A)^{-1}}\\
  p^*i_*\ar@=[r]^{\eta_j^B}\ar@=@/_1.5pc/[rrr]^{B_{D'}}
  & j_*j^*p^*i_*\ar@=[r]^{(\alpha^{-1})^*} & j_* q^*i^*i_*\ar@=[r]^{\epsilon_i^B} & j_*q^*.}\]
\end{proof}

The following property is similar to condition (m) of Remark \ref{r.gd}.

\begin{Proposition}\label{p.GAB}
Let
     \begin{equation}
    \xymatrix{X'\ar[dd]_{x}\ar[dr]^{q'}\ar[rr]^{j'} && Y'\ar[dd]^(.3){y}|\hole \ar[dr]^{p'}\\
    & Z'\ar[dd]^(.3){z}\ar[rr]^(.3){i'} && W'\ar[dd]^w \\
    X\ar[dr]_q\ar[rr]^(.3){j}|\hole && Y\ar[dr]^p\\
    & Z\ar[rr]^i && W }
     \end{equation}
be a cube in $\cC$, where the horizontal arrows $i,j,i',j'$ are morphisms of
$\cA$, the oblique arrows $p,q,p',q'$ are morphisms of $\cB$, and the
$2$-cells of the right, left, front, back, bottom, top faces,
$I,I',J,J',K,K'$, are respectively
\[py\Rightarrow wp', \quad qx \Rightarrow zq', \quad \beta\colon wi'\Rightarrow iz, \quad \beta' \colon yj'\Rightarrow jx,\quad pj\Rightarrow iq, \quad p'j'\Rightarrow i'q'.\]
Assume that $B_K$ and $B_{K'}$ are invertible. Then the following
diagram commutes
    \[\xymatrix{i'_!z^*q_*\ar@=[r]^{B_{I'}}\ar@=[d]_{A_J} & i'_!q'_*x^*\ar@=[r]^{G_{K'}} & p'_*j'_!x^*\ar@=[d]^{A_{J'}}\\
    w^*i_!q_*\ar@=[r]^{G_K} & w^* p_* j_!\ar@=[r]^{B_I} & p'_*y^* j_!.}\]
\end{Proposition}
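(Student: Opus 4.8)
The statement is the adjunction-theoretic counterpart of axiom (m) of Remark \ref{r.gd} (in its hexagonal form), in the same way that Proposition \ref{p.ax} supplies the counterparts of axioms (b), (\bprime), (c), (\cprime); accordingly the proof will be of the same nature as that of Proposition \ref{p.ax}, namely the display of one large commutative diagram. The plan is to unfold the two occurrences of $G$ appearing on the two sides of the claimed identity into composites of base change maps and of units and counits, and then to connect the two resulting composites by a diagram all of whose cells are of one of three kinds: (i) naturality squares for the units $\eta^A_{(-)}$, $\eta^B_{(-)}$ and counits $\epsilon^A_{(-)}$, $\epsilon^B_{(-)}$; (ii) triangle identities of the adjunctions; (iii) instances of Proposition \ref{p.bc} and of its analogue for $A$ (Construction \ref{s.bcA}), which compute the base change map of a square that is a vertical or horizontal pasting as the pasting of the corresponding base change maps.

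Concretely, I would first use Construction \ref{s.GD} to replace $G_K$ and $G_{K'}$. Each admits two descriptions, $G_D=\epsilon_i\circ B_D^{-1}\circ\eta_j=\epsilon_q\circ A_D^{-1}\circ\eta_p$ (with the evident whiskerings), whose equality is itself part of Construction \ref{s.GD}; the inverses make sense because $B_K$, $B_{K'}$ are invertible by hypothesis, hence so are $A_K$, $A_{K'}$, $G_K$, $G_{K'}$. For each of $G_K$, $G_{K'}$ I would keep whichever of the two descriptions makes its inverted face ($B^{-1}$ or $A^{-1}$) adjacent, along a common edge of the cube, to the transversal face ($I$, $I'$, $J$ or $J'$) with which it must be combined; at the place where the two halves of the diagram meet one passes between the two descriptions of the relevant $G$ using the equality just recalled. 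After this substitution, both composites in the statement are built from $B_I$, $B_{I'}$, $A_J$, $A_{J'}$, from $B_K^{-1}$, $B_{K'}^{-1}$ (equivalently $A_K^{-1}$, $A_{K'}^{-1}$), and from units and counits.

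It then remains to chase the diagram. The units and counits are slid along the edges $x$, $y$, $z$, $w$ of the cube and along the $\cA$- and $\cB$-edges, past the various base change maps, using the naturality squares of (i), with two of these slides collapsed by a triangle identity; and the base change maps are rearranged by recognizing sub-regions of the diagram as instances of (iii), applied to pastings of two faces of the cube along a shared edge (this is the step that glues the transversal faces $I$, $I'$, $J$, $J'$ onto the faces $K$, $K'$). The residual comparison between the two doubly-pasted squares produced in this way is an identity of base change maps that, for these particular composites, holds by construction.

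The difficulty is organizational rather than conceptual: the diagram is large, comparable to those in the proofs of Proposition \ref{p.bc} and Proposition \ref{p.ax}, and the real work is to lay it out so that every cell is visibly of type (i), (ii) or (iii) --- in particular to see each ``paste two adjacent faces of the cube'' step as an instance of Proposition \ref{p.bc} or its $A$-analogue, which must sometimes be invoked in the direction that decomposes a base change map, since the faces $K$ and $K'$ enter the diagram through their inverses. Once the diagram is drawn, each cell is routine.
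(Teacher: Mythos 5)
Your proposal matches the paper's proof in both structure and substance: the paper unfolds $A_J$, $A_{J'}$ from Construction \ref{s.bcA} and both $G$'s via the $\epsilon\circ B^{-1}\circ\eta$ description of Construction \ref{s.GD}, then chases one large diagram of naturality squares and triangle identities whose core --- your ``residual comparison between the two doubly-pasted squares'' --- is an octagon that commutes by Proposition \ref{p.bc}. The only differences are cosmetic: the paper uses the $B^{-1}$ description for both $G_K$ and $G_{K'}$ rather than switching between the two descriptions, and your aside that $G_K$, $G_{K'}$ are themselves invertible is neither established nor needed (only the invertibility of $B_K$, $B_{K'}$, equivalently of $A_K$, $A_{K'}$, is used).
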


\begin{proof}
  The following diagram commutes
  \[\xymatrix{&i'_!z^*q_*\ar@=[r]_{\eta_i}\ar@=[ddl]_{B_{I'}}\ar@=@/^1.5pc/[rrr]^{A_J}\ar@=[d]^{\eta_j}
  & i'_! z^* i^*i_!q_*\ar@=[r]^{\beta^*}
  & i'_!i'{}^* w^*i_! q_*\ar@=[r]_{\epsilon_{i'}} \ar@=[d]_{\eta_j}
  & w^*i_!q_*\ar@=[d]^{\eta_j}\ar@=@/^3pc/[ddd]^{G_K}\\
  & i'_!q^*q_*j^*j_!\ar@=[d]^{B_{I'}}\ar@=[r]^{\eta_i}
  & i'_!z^*i^*i_!q_*j^*j_!\ar@=[r]^{\beta^*}
  & i'_!i'{}^*w^*i_!q_*j^*j_!\ar@=[r]^{\epsilon_{i'}}\ar@=[d]_{B_{K}^{-1}}
  & w^*i_!q_*j^*j_!\ar@=[d]^{B_K^{-1}}\\
  i'_!q'_*x^*\ar@=[r]^{\eta_j}\ar@=[d]^{\eta_{j'}}\ar@=@/_3pc/[ddd]^{G_{K'}} & i'_!q'_*x^*j^*j_!\ar@=[d]^{\eta_{j'}}
  && i'_!i'{}^*w^*i_!i^*p_*j_!\ar@=[d]_{\epsilon_i} & w^*i_!i^*p_*j_!\ar@=[d]^{\epsilon_i}\\
  i'_!q'_*j'{}^*j'_!x^*\ar@=[d]^{B_{K'}^{-1}} & i'_!q'_*j'{}^*j'_!x^*j^*j_!\ar@=[d]^{B_{K'}^{-1}}
  && i'_!i'{}^*w^*p_*j_!\ar@=[r]^{\epsilon_{i'}}\ar@=[d]_{B_I} & w^*p_*j_!\ar@=[ddl]^{B_I}\\
  i'_!i'{}^*p'_*j'_!x^*\ar@=[r]^{\eta_j}\ar@=[d]^{\epsilon_{i'}}
  & i'_!i'{}^*p'_*j'_!x^*j^*j_!\ar@=[r]^{\beta'{}^*}\ar@=[d]^{\epsilon_{i'}}
  & i'_!i'{}^*p'_*j'_!j'{}^*y^*j_!\ar@=[r]^{\epsilon_{j'}}
  & i'_!i'{}^*p'_*y^*j_!\ar@=[d]_{\epsilon_{i'}}\\
  p'_*j'_!x^*\ar@=[r]^{\eta_j}\ar@=@/_1.5pc/[rrr]^{A_{J'}}
  & p'_*j'_!x^*j^*j_!\ar@=[r]^{\beta'{}^*} & p'_*j'_!j'{}^*y^*j_!\ar@=[r]^{\epsilon_{j'}}
  & p'_*y^*j_!,}\]
  where the decagon is the outline of the following commutative diagram
  \[\xymatrix{&&q'_*x^*j^*\ar@=[r]^{\eta_{j'}}\ar@=[d]^{\beta'{}^*} & q'_*j'{}^*j'_!x^*j^*\ar@=[r]^{B_{K'}^{-1}}\ar@=[d]^{\beta'{}^*}
  & i'{}^*p'_*j'_!x^*j^*\ar@=[d]^{\beta'{}^*}\\
  z^*q_*j^*\ar@=[r]_{B_K^{-1}}\ar@=[d]_{\eta_i}\ar@=[urr]^{B_{I'}}
  & z^*i^*p_*\ar@=[d]^{\eta_i}\ar@=[rd]^{\one}
  & q'_*j'{}^*y^*\ar@=[rd]^{\one} \ar@=[r]^{\eta_{j'}}
  & q'_*j'{}^*j'_!j'{}^*y^*\ar@=[d]^{\epsilon_{j'}}\ar@=[r]^{B_{K'}^{-1}}
  & i'{}^*p'_*j'_*j'{}^*y^*\ar@=[d]^{\epsilon_{j'}}\\
  z^*i^*i_!q_*j^*\ar@=[r]^{B_K^{-1}}\ar@=[d]_{\beta^*}
  & z^*i^*i_!i^*p_*\ar@=[d]^{\beta^*}\ar@=[r]^{\epsilon_i}
  & z^*i^*p_*\ar@=[d]^{\beta^*}
  & q'_*j'{}^*y^*\ar@=[r]^{B_{K'}^{-1}}
  & i'{}^*p'_*y^*j_!\\
  i'{}^*w^*i_!q_*j^*\ar@=[r]^{B_K^{-1}}
  & i'{}^*w^*i_!i^*p_*\ar@=[r]^{\epsilon_i}
  & i'{}^*w^*p_*,\ar@=[urr]^{B_I}}\]
  where the octagon commutes by Proposition \ref{p.bc}.
\end{proof}

This ends our discussion on the construction of gluing data. The next
section is included for completeness.

\section{Proof of Proposition \ref{p.equiv}}\label{s.proof}
\begin{proof}[Proof of Proposition \ref{p.equiv} \ref{p.equiv1}]
Let $G$ and $H$ be pseudo functors from $\cD$ to $\cE$. We need to show that
the functor
\[\PsNat(G,H)\to \PsNat(GF,HF)\quad \alpha\mapsto \alpha F\]
between categories of pseudo natural transformations and modifications is
fully faithful and injective on objects. Let $\alpha$ and $\beta$ be pseudo
natural transformations from $G$ to $H$.

We identify the set of modifications $\alpha\Rightarrow \beta$ and the set
of modifications $\alpha F\Rightarrow \beta F$ with subsets of the set of
$2$-cells $\lvert \alpha \rvert \Rightarrow \lvert \beta \rvert$ in
$\cE^{\Ob(\cC)}$. Let $\Xi\colon \alpha F\Rightarrow \beta F$ be a
modification. Let $g\colon X\to Y$ be a morphism of $\cD$. By assumption,
there exists a decomposition $ g\xRightarrow{\delta} h\xRightarrow{\gamma}
g$ in $\cD$, where $h=Ff$ for some morphism $f$ of $\cC$. All inner cells of
the diagram
\[\xymatrix{\alpha(Y)G(g)\ar@=[d]_{\alpha(g)}\ar@=[r]_{G(\delta)}\ar@=@/^1pc/[rrr]_{\Xi Y} &\alpha(Y)G(h)\ar@=[d]^{\alpha(h)}\ar@=[r]_{\Xi Y}& \beta(Y)G(h)\ar@=[d]^{\beta(h)}\ar@=[r]_{G(\gamma)}& \beta(Y)G(g)\ar@=[d]^{\beta(g)}\\
H(g)\alpha(X)\ar@=[r]^{H(\delta)}\ar@=@/_1pc/[rrr]^{\Xi X} & H(h)\alpha(X)\ar@=[r]^{\Xi
X}&H(h)\beta(X)\ar@=[r]^{H(\gamma)} & H(g)\beta(X)}
\]
commutes. Therefore, $\Xi$ is a modification $\alpha\Rightarrow \beta$.

Assume $\alpha F=\beta F$. Let $g$ and $\delta$, $\gamma$ be as above. The
square
\[\xymatrix{\alpha(Y)G(g)\ar@=[d]_{\alpha(g)}\ar@=[r]^{G(\delta)} &\alpha(Y)G(h)\ar@=[d]^{\alpha(h)}\ar@=[r]^{G(\gamma)}& \alpha(Y)G(g)\ar@=[d]^{\alpha(g)}\\
H(g)\alpha(X)\ar@=[r]^{H(\delta)} & H(h)\alpha(X)\ar@=[r]^{H(\gamma)} & H(g)\alpha(X)}
\]
commutes and the composition of each of the two lines is an identity. The
same holds for $\beta$. Since $\alpha (h)=(\alpha F)(f)=(\beta F)(f)=\beta
(h)$, we have
\[\alpha(g)=H(\gamma)\alpha(h)G(\delta)=H(\gamma)\beta(h)G(\delta)=\beta(g).\]
Therefore, $\alpha=\beta$.
\end{proof}

\begin{proof}[Proof of Proposition \ref{p.equiv} \ref{p.equiv2}]
It is clear that \ref{p.equivc} implies \ref{p.equivd}.

Next we show that \ref{p.equivd} implies \ref{p.equiva}. Since $\Phi_\cC$ is
pseudo surjective, there exists a pseudo functor $G\colon \cD\to \cC$ such
that $\Phi_\cC(G)=GF$ is equivalent to $\id_\cC$. Then $\Phi_\cD(FG)=FGF$ is
equivalent to $\Phi_\cD(\id_{\cD})=F$. Since $\Phi_\cD$ is a local
equivalence, it follows that $FG$ is equivalent to $\id_\cD$. Therefore, $F$
is a bi-equivalence.

To prove that \ref{p.equiva} implies \ref{p.equivb}, let $G\colon \cD\to
\cC$ be a pseudo functor endowed with pseudo natural equivalences
$\eta\colon \one_\cC\to GF$ and $FG\to \one_\cD$. For every object $Y$ of
$\cD$, let $G'Y=\Ob( F)^{-1} Y$. For every morphism $g\colon Y\to Y'$ of
$\cD$, choose a morphism $G'g\colon G'Y\to G'Y'$ and an invertible $2$-cell
$\psi_g$ in $\cC$:
\[\xymatrix{G'Y\ar[r]^{G'g}\ar[d]_{\eta(G'Y)}\drtwocell\omit{^{\psi_g}} & G'Y'\ar[d]^{\eta(G'Y')}\\
GY\ar[r]^{Gg}& GY'.}
\]
By Lemma \ref{l.ps}, this determines a pseudo functor $G'\colon \cD\to \cC$
such that $\Ob(G') \colon\Ob(\cD)\to \Ob(\cC)$ is the inverse of $\Ob(F)$
and a pseudo natural equivalence $\psi\colon G'\to G$ such that
$\psi(Y)=\eta(G'Y)$ for every object $Y$ of $\cD$. For any morphism $f\colon
X\to X'$ of $\cC$, $\eta$ induces the following square in $\cC$:
\[\xymatrix{X\ar[r]^{f}\ar[d]_{\eta(X)}\drtwocell\omit{^{\eta_f}} & X'\ar[d]^{\eta(X')}\\
GFX\ar[r]^{GFf}& GFX'.}
\]
Since $\eta (X')$ is an equivalence in $\cC$, there exists an invertible
$2$-cell $\eta'_f\colon f\Rightarrow G'Ff$ such that
$\eta_f^{-1}\eta'_f=\psi_{Ff}^{-1}$. This defines a pseudo natural
isomorphism $\eta'\colon \one_\cC\to G'F$ satisfying $\eta'(X)=\one_X$ for
every object $X$ of~$\cC$. Since $F$ is locally essentially surjective, for
every morphism $g\colon Y\to Y'$ of $\cD$, there exist a morphism $f\colon
G'Y\to G'Y'$ of $\cC$ and an invertible $2$-cell $\alpha\colon g\Rightarrow
Ff$. The composition
\[\epsilon_g\colon FG'g \xRightarrow{FG'\alpha} FG'Ff \xRightarrow{(\eta'_{Ff})^{-1}}
Ff\xRightarrow{\alpha^{-1}} g
\]
does not depend on the choice of $(f,\alpha)$. This defines a pseudo natural
isomorphism $\epsilon\colon FG'\to \one_\cD$ such that $\epsilon(Y)=\one_Y$
for every object $Y$ of $\cD$.

It remains to show that \ref{p.equivb} implies \ref{p.equivc}. By
Proposition \ref{p.UPs}, it suffices to show that the restriction of
$\Phi_\cE$ to $\UPsFun(\cD,\cE)$ is a $\cE^{\Ob(\cC)}$-$2$-equivalence. For
this, we may assume that $F$ is strictly unital. Then $G$ is strictly
unital. Consider the $\cE^{\Ob(\cC)}$-$2$-functors
\[\Phi'_\cE\colon\UPsFun(\cD,\cE)\to \UPsFun(\cC,\cE), \quad \Psi'_\cE\colon\UPsFun(\cC,\cE)\to \UPsFun(\cD,\cE)\]
induced by $F$ and $G$, respectively. Then $\eta$ and
    $\epsilon$ induce $\cE^{\Ob(\cC)}$-$2$-natural isomorphisms $\one \to
    \Phi'_\cE\Psi'_\cE$ and $\Psi'_\cE\Phi'_\cE\to \one$.
    Thus \ref{p.equivb} implies \ref{p.equivc}.
\end{proof}

\bibliographystyle{abbrv}
{\small\bibliography{glue}}

\printindex

\end{document}